%% Template for the submission to:
%%   Annales de l'Institut Henri Poincar\'e - Probabilit\'es et Statistiques [AIHP]
%%
%%%%%%%%%%%%%%%%%%%%%%%%%%%%%%%%%%%%%%%%%%%%%%
%% In this template, the places where you   %%
%% need to fill in your information are     %%
%% indicated by '???'.                      %%
%%                                          %%
%% Please do not use \input{...} to include %%
%% other tex files. Submit your LaTeX       %%
%% manuscript as one .tex document.         %%
%%%%%%%%%%%%%%%%%%%%%%%%%%%%%%%%%%%%%%%%%%%%%%

\documentclass[preprint,aihp]{imsart}

%% Packages
\RequirePackage{amsthm,amsmath,amsfonts,amssymb}
\RequirePackage[numbers,sort&compress]{natbib}
\RequirePackage[colorlinks,citecolor=blue,urlcolor=blue]{hyperref}
\RequirePackage{graphicx}

\startlocaldefs

\usepackage{amsthm,amsmath,amsfonts,amssymb,latexsym,amsfonts,amscd,mathrsfs,enumerate,graphicx,subfigure}
\usepackage{tikz}
\usepackage{enumitem}
\usepackage{bbm}

\usetikzlibrary{shapes.geometric, arrows}
\usetikzlibrary{decorations,decorations.markings}
\usetikzlibrary{arrows,shapes,chains}
\numberwithin{equation}{section}

\startlocaldefs
%%%%%%%%%%%%%%%%%%%%%%%%%%%%%%%%%%%%%%%%%%%%%%
%%                                          %%
%% Uncomment next line to change            %%
%% the type of equation numbering           %%
%%                                          %%
%%%%%%%%%%%%%%%%%%%%%%%%%%%%%%%%%%%%%%%%%%%%%%
%\numberwithin{equation}{section}
%%%%%%%%%%%%%%%%%%%%%%%%%%%%%%%%%%%%%%%%%%%%%%
%%                                          %%
%% For Axiom, Claim, Corollary, Hypothesis, %%
%% Lemma, Theorem, Proposition              %%
%% use \theoremstyle{plain}                 %%
%%                                          %%
%%%%%%%%%%%%%%%%%%%%%%%%%%%%%%%%%%%%%%%%%%%%%%
\theoremstyle{plain}
\newtheorem{theorem}{Theorem}[section]
\newtheorem{corollary}[theorem]{Corollary}
\newtheorem{lemma}[theorem]{Lemma}
\newtheorem{proposition}[theorem]{Proposition}

%\newtheorem{???}{???}
%\newtheorem*{???}{???}
%\newtheorem{???}{???}[???]
%\newtheorem{???}[???]{???}
%%%%%%%%%%%%%%%%%%%%%%%%%%%%%%%%%%%%%%%%%%%%%%
%%                                          %%
%% For Assumption, Definition, Example,     %%
%% Notation, Property, Remark, Fact         %%
%% use \theoremstyle{definition}            %%
%%                                          %%
%%%%%%%%%%%%%%%%%%%%%%%%%%%%%%%%%%%%%%%%%%%%%%
\theoremstyle{definition}
\newtheorem{definition}[theorem]{Definition}
\newtheorem{remark}[theorem]{Remark}

\newtheorem*{remark*}{Remark}
%\newtheorem{???}{???}
%\newtheorem*{???}{???}
%\newtheorem{???}{???}[???]
%\newtheorem{???}[???]{???}
%%%%%%%%%%%%%%%%%%%%%%%%%%%%%%%%%%%%%%%%%%%%%%
%% Please put your definitions here:        %%
%%%%%%%%%%%%%% Real numbers

%%%%%%%%%%%%%% Expectation
\def\e{{\mathbb E}}
%%%%%%%%%%%%%% Probability
\def\p{{\mathbb P}}
\def\q{{\mathbb Q}}
%%%%%%%%%%%%%% Rationals
%\def\q{{\mathbb Q}}
%%%%%%%%%%%%%% Law of environment

\def\E{{\mathbb E}}
%\def\Q{{\bf Q}}
%%%%%%%%%%%%%% Integers

%%%%%%%%%%%%%% Natural numbers
\def\N{{\mathbb N}}

%%%%%%%%%%%%%% Exponential

%%%%%%%%%%%%%% Differentiation
\def\d{\, \mathrm{d}}

\def\eps{{\varepsilon}}

\def\T{\mathscr T}

\def\cX{\mathcal X}

\def\kp{\mathfrak p}

\newcommand{\abs}[1]{\left\lvert #1 \right\rvert}
\allowdisplaybreaks

%%%%%%%%%%%%%%%%%%%%%%%%%%%%%%%%%%%%%%%%%%%%%%

\endlocaldefs

\begin{document}

\begin{frontmatter}

%%%%%%%%%%%%%%%%%%%%%%%%%%%%%%%%%%%%%%%%%%%%%%
%%                                          %%
%% Enter the title of your article here     %%
%%                                          %%
%%%%%%%%%%%%%%%%%%%%%%%%%%%%%%%%%%%%%%%%%%%%%%
\title{From Cannings model to Brownian motion conditioned on local time profile}
%\title{A sample article title with some additional note\thanksref{T1}}
\runtitle{Cannings model and conditioned Brownian motion}
%\thankstext{T1}{A sample of additional note to the title.}

\begin{aug}
%%%%%%%%%%%%%%%%%%%%%%%%%%%%%%%%%%%%%%%%%%%%%%%
%% Additional information such as            %%
%% identifying the corresponding author must %%
%% be included in in the Acknowledgments     %%
%% section if necessary.                     %%
%% ORCID can be inserted by command:         %%
%% \orcid{0000-0000-0000-0000}               %%
%%%%%%%%%%%%%%%%%%%%%%%%%%%%%%%%%%%%%%%%%%%%%%%
\author[A]{\inits{}\fnms{Xiaodan}~\snm{Li}\ead[label=e1]{lixiaodan@mail.shufe.edu.cn}},
\author[B]{\inits{}\fnms{Chengshi}~\snm{Wang}\ead[label=e2]{cswang17@fudan.edu.cn}}
\and
\author[C]{\inits{}\fnms{Yushu}~\snm{Zheng}\ead[label=e3]{yszheng666@gmail.com}}
%%%%%%%%%%%%%%%%%%%%%%%%%%%%%%%%%%%%%%%%%%%%%%
%% Addresses                                %%
%%%%%%%%%%%%%%%%%%%%%%%%%%%%%%%%%%%%%%%%%%%%%%
\address[A]{School of Mathematics, Shanghai University of Finance and Economics, Shanghai, China \printead[presep={,\ }]{e1}}

\address[B]{School of Mathematical Sciences, Fudan University, Shanghai, China \printead[presep={,\ }]{e2}}

\address[C]{National Center for Mathematics and Interdisciplinary Sciences, Academy of Mathematics and Systems Science, Chinese Academy of Sciences, Beijing,
   China \printead[presep={,\ }]{e3}}
\end{aug}

\begin{abstract}
We study the scaling limits of genealogical trees arising from Cannings models. Under suitable moment conditions, we show that the rescaled contour and height functions converge to a time change of Brownian motion conditioned on a given local time profile. This conditioned Brownian motion is a self-interacting diffusion constructed independently by Warren--Yor (1998) and Aldous (1998). A key ingredient in our proof is a sequential version of the coming-down-from-infinity property.
\end{abstract}

\begin{abstract}[language=french]
Nous étudions les limites d’échelle des arbres généalogiques issus de modèles de Cannings. Sous des conditions de moments appropriées, nous montrons que les fonctions de contour et de hauteur, après renormalisation, convergent vers un changement de temps d’un mouvement brownien conditionné par un profil de temps local donné. Ce mouvement brownien conditionné est une diffusion auto-interagissante, construite indépendamment par Warren--Yor (1998) et Aldous (1998). Un ingrédient clé de notre preuve est une version séquentielle de la propriété de descente depuis l’infini.
\end{abstract}

\begin{keyword}[class=MSC]
\kwd[Primary ]{60F17}
\kwd{60J90}
\kwd[; secondary ]{60J55}
\kwd{60J65}
\end{keyword}

\begin{keyword}
\kwd{Cannings model}
\kwd{height and contour functions}
\kwd{coalescent theory}
\kwd{coming-down-from-infinity property}
\kwd{conditioned Brownian motion}
\end{keyword}

\end{frontmatter}

%\title{A sample article title with some additional note\thanksref{t1}}

%\thankstext{T1}{A sample additional note to the title.}

%%%%%%%%%%%%%%%%%%%%%%%%%%%%%%%%%%%%%%%%%%%%%%%
%% Only one address is permitted per author. %%
%% Only division, organization and e-mail is %%
%% included in the address.                  %%
%% Additional information can be included in %%
%% the Acknowledgments section if necessary. %%
%% ORCID can be inserted by command:         %%
%% \orcid{0000-0000-0000-0000}               %%
%%%%%%%%%%%%%%%%%%%%%%%%%%%%%%%%%%%%%%%%%%%%%%%

%\orcid{0000-0000-0000-0000}}
%\and
%\author[B]{\fnms{Third}~\snm{Author}\ead[label=e3]{third@somewhere.com}}
%%%%%%%%%%%%%%%%%%%%%%%%%%%%%%%%%%%%%%%%%%%%%%
%% Addresses                                %%
%%%%%%%%%%%%%%%%%%%%%%%%%%%%%%%%%%%%%%%%%%%%%%

%%\pacs[JEL Classification]{D8, H51}

%%%%%%%%%%%%%%%%%%%%%%%%%%%%%%%%%%%%%%%%%%%%%%
%% Please use \tableofcontents for articles %%
%% with 50 pages and more  
%%
%%%%%%%%%%%%%%%%%%%%%%%%%%%%%%%%%%%%%%%%%%%%%%
\setcounter{tocdepth}{2}
\tableofcontents

\section{Introduction}\label{Introduction}
Cannings models, first introduced in \cite{cannings74,cannings1975}, form a class of discrete-time population models in which each generation consists of a fixed number of individuals, and reproduction is governed by an exchangeable offspring distribution. This framework includes many classical models such as the Wright--Fisher model and (discrete-time) Moran model, and has played a central role in mathematical population genetics.

In this paper, we study the genealogical trees arising from Cannings models, referred to as Cannings trees. While the scaling limits of various genealogical trees---most notably Galton--Watson trees---have been extensively studied (e.g., \cite{1993Aldous,marckert2003depth,jean2005random}), the scaling behavior of Cannings trees remains largely unexplored. This motivates our present work.

To obtain a richer class of limiting results, we extend the classical Cannings framework to a class of inhomogeneous Cannings models, in which the population size may vary across generations. We assume that the rescaled generation-size profiles converge to a continuous function. Our main result (Theorem~\ref{main}) shows that, under suitable moment assumptions, the rescaled height and contour functions of Cannings trees converge to a time change of Brownian motion conditioned on local time profile. 

\subsection{Setup and main results}\label{sec:setup}
\subsubsection*{Inhomogeneous Cannings model}
An inhomogeneous Cannings model describes the evolution of a finite population over discrete generations $s\in \mathbb{N}_+=\{1,2,\ldots\}$, with population sizes across generations given by a sequence $q=\big(q(s)\big)_{s\in \mathbb{N}_+}$ taking values in $\mathbb{N}=\{0,1,\ldots\}$. We assume that $q$ satisfies the following \textit{extinction closure} condition:
    \[
    q(s) = 0 \quad \text{for all }s\ge  h(q),\quad\text{where }h(q):=\inf\big\{i\in \mathbb{N}_+:q(i)=0\big\}\in \mathbb{N}\cup\{\infty\}.
    \]
We refer to such a sequence $q$ as a \textit{discrete profile}.

The offspring distribution in generation $s$ is described by a random vector
\[\nu^s=\big(\nu_1^s,\ldots,\nu_{q(s)}^s\big),\]
where $\nu_i^s\in \mathbb{N}$ denotes the number of offspring produced by the $i$-th individual in generation $s$.  The sequence \( (\nu^s)_{s\in \mathbb{N}_+} \) is assumed to satisfy the following conditions:
\begin{itemize}
    \item \textit{Mass conservation}: 
    \[
    \sum_{i=1}^{q(s)} \nu_i^s = q(s+1);
    \]
    \item \textit{Exchangeability}: for all permutations \( \pi \in S_{q(s)} \),
    \[
    (\nu_1^s, \ldots, \nu_{q(s)}^s) \overset{d}{=} (\nu_{\pi(1)}^s, \ldots, \nu_{\pi(q(s))}^s);
    \]
    \item \textit{Generation-wise independence}: the vectors \( \nu^s \) are independent across \( s \).
\end{itemize}

When \( q(s) \equiv N \) is constant and $(\nu^s)_{s\in\mathbb{N}}$ are i.i.d., the model reduces to the classical Cannings model. Important examples of the inhomogeneous case include the inhomogeneous Wright--Fisher model and the Galton--Watson model conditioned on generation-size profile.

The genealogical structure induced by an inhomogeneous Cannings model can be naturally represented as a random rooted ordered tree, which we call a \textit{Cannings tree}. A formal definition is given in Section~\ref{sec:Cannings_tree}.

\subsubsection*{Brownian motion conditioned on local time profile}
Let $W$ be a reflected Brownian motion on $\mathbb{R}_+=[0,\infty)$, and denote by $L_t(x)$ its local time at level $x$ up to time $t$. Define the inverse local time at the origin by
\[
\tau_u := \inf\{t \geq 0 : L_t(0) > u\}.
\] \pagebreak

\noindent A function $\ell: \mathbb{R}_+ \to \mathbb{R}_+$ is called a \textit{continuous profile} if it satisfies: 
\begin{itemize}
    \item $\ell(x)=0$ for all $x\ge h(\ell):=\inf\{y> 0:\ell(y)=0\}$;
    \item $\ell$ is continuous on $[0, h(\ell))$ and admits a finite left limit at $h(\ell)$ if $h(\ell)<\infty$. 
\end{itemize}
(Note that $\ell$ may be discontinuous at $h(\ell)$.)

Given a continuous profile \( \ell \), we define \( W^\ell \) as the process \( W \) up to time \( \tau_{\ell(0)} \), conditioned on the local time profile satisfying
\[
L_{\tau_{\ell(0)}}(x) = \ell(x) \quad \text{for all } x \ge 0.
\]
This conditioned process was constructed explicitly by Warren--Yor~\cite{warren98} and characterized via continuum random trees by Aldous~\cite{1998BROWNIAN}. In Section~\ref{sec:limiting}, we recall these constructions and introduce a canonical version of \(W^\ell\). Throughout the paper, we always work with that canonical version.

\subsubsection*{Convergence assumptions and results}
We now consider a sequence of inhomogeneous Cannings models indexed by $n$, with generation-size profiles $q_n=(q_n(s))_{s\in\mathbb{N}_+}$ and offspring vectors $(\nu^{n,s})_{s\in\mathbb{N}_+}$. For $n,s\in\mathbb{N}_+$, set
\[
\sigma_n(s)^2 := \mathbf{1}\{s < h(q_n)\}\,\mathrm{Var}(\nu_1^{n,s}).
\]
The assumptions for our main result are as follows:
\begin{enumerate}[label=(H\arabic*), ref=H\arabic*]
    \item\label{H1} There exist continuous profiles $\ell$ and $\sigma$ with $h(\ell)=h(\sigma)<\infty$ such that
    \begin{enumerate}[label=(\roman*)]
        \item\label{assump1} As $n\to\infty$,
        \begin{align*}
            &\frac{h(q_n)}{n}\to h(\ell),\qquad 
    \frac{q_n(\lfloor n\,\cdot\rfloor)}{n}\to \ell \ \text{uniformly on compact subsets of }(0,h(\ell)),\\
    &\text{and}\quad \sigma_n(\lfloor n\,\cdot\rfloor)\to \sigma \ \text{uniformly on compact subsets of }(0,h(\sigma)).
        \end{align*}
    \item\label{assump2} $\lim\limits_{\delta\rightarrow 0}\sup\limits_{n\in\mathbb{N}_+}\dfrac{1}{n^2}\sum\limits_{s\in\mathbb{N}_+\setminus [\delta n, h(q_n)-\delta n]}q_n(s)=0$;
        \item\label{assump3} Both\quad $\lim\limits_{s\rightarrow 0+}\dfrac{\ell(s)}{\sigma(s)^2}$\quad  and\quad   $\lim\limits_{s\rightarrow h(\ell)-}\dfrac{\ell(s)}{\sigma(s)^2}$\quad exist.
    \end{enumerate}

    \item\label{H2} 
    For every $\delta>0$, there exists $\varepsilon>0$ such that
    \begin{align}
        &\liminf_{n\to \infty}\inf_{\delta n\le s\le h(q_n)-\delta n} \e \bigg[(\nu^{n,s}_1)_2\mathbf{1}\bigg\{\nu_1^{n,s}\le  \frac{n}{(\log n)^{1+\varepsilon}} \bigg\}\bigg] > 0, \tag{H2a}\label{eq:liminf}\\
        &\lim_{n \rightarrow \infty} \ \sup_{\delta n \le s \le h(q_n)-\delta n} \frac{\mathbb{E}\big[(\nu_1^{n,s})^3\big]}{n}=0,\tag{H2b}\label{eq:3rd},
    \end{align}
    where $(m)_k := m(m-1)\cdots(m-k+1)$ denotes the falling factorial.
\end{enumerate}
\begin{remark*}
\begin{itemize}
    \item Condition~\ref{assump2} in \eqref{H1} ensures that the contribution to the rescaled population size from the very early and very late generations---near the beginning and near extinction---is negligible. Combined with \ref{assump1}, this yields the convergence of the rescaled total population size:
\begin{align}\label{eq:total_popuplation}
    \frac{1}{n^2}\sum_{s\in\mathbb{N}_+} q_n(s)
    \;\longrightarrow\;
    \int_{0}^{\infty} \ell(s)\, \mathrm{d}s.
\end{align}
    \item Condition~\ref{assump3} in \eqref{H1} ensures that the function
\[
\ell^\sigma(s):=\mathbf{1}\{s<h(\ell)\}\,\frac{4\ell(s)}{\sigma(s)^2}
\]
defines a continuous profile. As will be seen in Theorem~\ref{main}, the limiting object involves Brownian motion conditioned on the local time profile $\ell^\sigma$. Condition~\ref{assump3} ensures that this conditioning profile has well-defined endpoint values. In many applications, $\sigma_n$ is taken to be constant (and hence so is $\sigma$), in which case \ref{assump3} is automatically satisfied.
\item To clarify the meaning of condition~\eqref{eq:liminf}, note first that
uniformly for $\delta n \le s \le h(q_n)-\delta n$,
\[
    \E(\nu^{n,s}_1)
    = \frac{q_n(s+1)}{q_n(s)}
    = 1+o_n(1).
\]
Thus \eqref{eq:liminf} simply requires that the variance
$\text{Var}(\nu^{n,s}_1)$ retain a non‐negligible contribution from the
regime $\nu^{n,s}_1 \le n/(\log n)^{1+\varepsilon}$.
We also record two simple sufficient conditions for~\eqref{eq:liminf} under the assumption \eqref{H1}:
\begin{enumerate}[label=(a\arabic*)]
    \item\label{item:sufficient} There exists $\varepsilon>0$ such that, for every $\delta>0$,
    uniformly for $\delta n\le s\le h(q_n)-\delta n$,
    \[
        \E\big[(\nu^{n,s}_1)^3\big]
        = o\!\left( \frac{n}{(\log n)^{1+\varepsilon}} \right).\footnote{The sufficiency of \ref{item:sufficient} follows from the two estimates: for any $\delta,\eta>0$,
\begin{align*}
    \liminf_{n\to \infty}\inf_{\delta n\le s\le h(q_n)-\delta n} \e \bigg[(\nu^{n,s}_1)_2\bigg] > 0\quad\text{under \eqref{H1}},\quad
    \text{and}\quad \e \bigg[(\nu^{n,s}_1)_2\mathbf{1}\bigg\{\nu_1^{n,s}>  \frac{n}{(\log n)^{1+\eta}} \bigg\}\bigg]\le \frac{(\log n)^{1+\eta}}{n}\e \big[(\nu^{n,s}_1)^3 \big].
\end{align*}}
    \]

    \item For every $\delta>0$, the family
    $\{\nu^{n,s}_1: n\ge1,\ \delta n\le s\le h(q_n)-\delta n\}$
    is uniformly $L^2$-integrable; that is,
    \[
        \lim_{K\to\infty}
        \sup_{n\ge1}\sup_{\delta n\le s\le h(q_n)-\delta n}
        \E\!\left[(\nu^{n,s}_1)^2\mathbf{1}\{\nu^{n,s}_1>K\}\right] =0.
    \]
\end{enumerate}
See Remark~\ref{rmk:optimal} below for further discussion of conditions \eqref{eq:liminf} and \eqref{eq:3rd}.
\end{itemize}
\end{remark*}

Let \( \mathscr{T}^n \) denote the Cannings tree associated with the model indexed by \( n \), and let \( C_\cdot(\mathscr{T}^n) \) and \( H_\cdot(\mathscr{T}^n) \) be its contour and height functions, respectively (see Section~\ref{sec:geometric} for precise definitions). Our main result is as follows.

\begin{theorem}\label{main}
Assume \eqref{H1}--\eqref{H2}. Define $\alpha(t) :=\frac{1}{4}\int_0^t\sigma(W^{\ell^\sigma}_s)^2\d s$. Let $\alpha^{-1}(t)$ be the inverse of $\alpha(t)$.\footnote{We will verify in Lemma~\ref{lem:alpha} that $\alpha$ is a.s. strictly increasing, so $\alpha^{-1}(t)$ is well-defined.} Then
\begin{align}\label{eq:main_convergence}
    \Big(n^{-1}C_{\lfloor 2n^2 t_1\rfloor}(\mathscr{T}^{n}), n^{-1}H_{\lfloor n^2 t_2\rfloor}(\mathscr{T}^{n})\Big)_{0\le t_1,t_2<\infty} \overset{d}{\longrightarrow} \Big(W_{\alpha^{-1}(t_1)}^{\ell^\sigma}, W^{\ell^\sigma}_{\alpha^{-1}(t_2)}\Big)_{0\le t_1,t_2<\infty}
\end{align}
in the product space \(\big(D[0, \infty)\big)^2\) equipped with the product Skorokhod topology
(where the functions on both sides are extended by \(0\) outside their original domains).
\end{theorem}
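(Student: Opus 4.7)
My plan is to reduce everything to the convergence of the rescaled height process $\widetilde H^n_t := n^{-1}H_{\lfloor n^2 t\rfloor}(\mathscr{T}^n)$, and then derive the contour convergence from it. As a sanity check on the time change: if $\widetilde H^n \to Y$ with $Y_t := W^{\ell^\sigma}_{\alpha^{-1}(t)}$, then by the occupation-times formula the total local time of $Y$ at level $x$ is $\tfrac{1}{4}\sigma(x)^2\,\ell^\sigma(x) = \ell(x)$, which matches the rescaled occupation of $\widetilde H^n$ at $x$, namely $q_n(\lfloor nx\rfloor)/n \to \ell(x)$ by \eqref{H1}. The strategy therefore reduces to: (i) establishing tightness of $\widetilde H^n$ in $D[0,\infty)$; (ii) identifying the finite-dimensional distributions of any subsequential limit via its local time profile, using the Warren--Yor/Aldous characterization of $W^{\ell^\sigma}$.

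For tightness, upward jumps of $H$ are $+1$, while downward moves occur on backtrackings between sibling subtrees. The moment hypotheses \eqref{H2}, either the third-moment bound \eqref{item:H2a} or the uniform $L^2$-integrability \eqref{item:H2b}, supply the control on the size of these backtracking steps. The \emph{sequential coming-down-from-infinity} property flagged in the abstract is the crucial additional ingredient: it produces uniform-in-$n$ control on the number of ancestors of a finite sample at every macroscopic level, which in turn prevents pathological fluctuations of $\widetilde H^n$ above any fixed height $\delta>0$.

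For finite-dimensional convergence, I would exploit the exchangeability within each generation to reduce the joint law of $(\widetilde H^n_{t_1},\ldots,\widetilde H^n_{t_k})$ to an analysis of the coalescent of $k$ sampled individuals. Under \eqref{H1}--\eqref{H2}, this coalescent converges to a time-inhomogeneous Kingman coalescent whose instantaneous rate per pair is governed by $\sigma^2/\ell$, matching precisely the tree structure underlying Aldous's continuum-random-tree construction of $W^{\ell^\sigma}$. This identifies the joint limit as $(Y_{t_1},\ldots,Y_{t_k})$; combined with tightness, it gives $\widetilde H^n \to Y$ in Skorokhod topology. Joint convergence with the contour function then follows from an identity of the form $C_{\lfloor 2n^2 t\rfloor}(\mathscr{T}^n) = H_{K^n(2n^2 t)}(\mathscr{T}^n)$ for a (random) reparametrization $K^n$, combined with a law-of-large-numbers statement $K^n(2n^2 t)/n^2 \to t$; the factor $2$ in the rescaling reflects the fact that each edge is traversed exactly twice by the contour.

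The main obstacle is the finite-dimensional step: it requires the sequential coming-down-from-infinity tailored to the inhomogeneous Cannings setting with time-varying coalescence rate. Unlike the Galton--Watson case, the offspring vectors $\nu^{n,s}$ are only exchangeable and not independent, so moment estimates must be transferred between generations with care; the non-constant profile $\ell$ also forces an adaptive level-by-level analysis throughout the coalescent's descent from a diverging number of lineages, which is the technical heart of the argument.
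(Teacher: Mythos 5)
Your high-level ingredients match the paper's---exchangeability, a Kingman-type coalescent, and a coming-down-from-infinity property---and your Marckert--Mokkadem identity for linking height and contour is exactly what the paper uses (their Section~\ref{sec:joint}). However, the framework you propose has a genuine gap at the finite-dimensional step. You want to establish convergence of $\widetilde H^n$ at \emph{deterministic} times $t_1,\ldots,t_k$ by ``reducing to the coalescent of $k$ sampled individuals.'' The vertex at lexicographic position $\lfloor n^2 t\rfloor$ is not a uniformly sampled vertex of $\mathscr{T}^n$; the exchangeability of the offspring vectors makes the coalescent of \emph{uniformly sampled} vertices tractable, but does not directly give you access to the height at a fixed lexicographic position. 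The paper sidesteps this by invoking Aldous' equivalence (their Theorem~\ref{thm:convergence}): convergence of the rescaled contour is equivalent to tree tightness (the $\Delta(n,k)$ criterion in \eqref{def:Delta}) plus convergence of the \emph{uniform $k$-point subtrees} $\mathscr{T}^n_k$. Both of those targets are adapted to uniform sampling, so the coalescent/exchangeability analysis applies cleanly. Your route insists on the harder deterministic-time object, and you would need a separate equidistribution step to bridge the two; that step is not supplied.

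Secondly, the tightness step as you describe it is underspecified. You correctly identify that CDFI should control the number of ancestors of a mesoscopic sample, but turning that into either Skorokhod tightness of $\widetilde H^n$ or the $\Delta(n,k)$ tree-tightness condition is the technical core of the paper (Section~\ref{sec:tight}). The paper's argument is not merely ``CDFI bounds the lineage count'': it introduces the $\delta$-coalescent vertices, shows their number is $O(1)$ uniformly in $n$ via Theorem~\ref{thm:CDFI}, and---crucially, via Proposition~\ref{prop:size}---shows each such vertex carries $\Theta(n^2)$ descendants in a $\lfloor\delta n\rfloor$-window. That second ingredient (the descendant-count lower bound, proved through the trunk-path and uniform-selection-pair machinery in Sections~\ref{sec:pf_size}--\ref{sec:pf_size2}) is what makes a uniform $k$-sample hit a $\delta$-net with high probability, and it is entirely absent from your outline. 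Finally, your ``sanity check'' via the occupation-time formula is a useful consistency test, but identifying the local-time profile of a subsequential limit does not by itself pin down its law---the conditioned process $W^{\ell^\sigma}$ is a specific self-interacting diffusion, and you must verify the finite-dimensional distributions directly against Aldous' coalescent characterization, as the paper does in Proposition~\ref{sconvergence} and Theorem~\ref{thm:tree_construction}.
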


It is a standard fact that convergence of contour functions implies convergence of the associated trees in the Gromov-Hausdorff topology (see  \cite[Lemma 2.4]{jean2005random}). Applying this, we obtain the following corollary.

\begin{corollary}
    Assume \eqref{H1}--\eqref{H2}. Let \( \widetilde{\mathscr{T}}^n \) be the geometric tree obtained from \( \mathscr{T}^n \) by assigning length \(1/n\) to each edge. Let \( \mathcal{T}^{\ell,\sigma} \) denote the compact real tree encoded by the time-changed conditioned Brownian motion \( W^{\ell^\sigma}_{\alpha^{-1}(t)} \). Then
    \[\widetilde{\mathscr{T}}^n \overset{d}{\longrightarrow} \mathcal{T}^{\ell,\sigma} \quad \text{under the Gromov-Hausdorff topology.}
    \]
    (See Section~\ref{sec:foundation} below for precise definitions of geometric trees and tree encodings.)
\end{corollary}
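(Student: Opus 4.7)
The plan is to reduce the corollary directly to the contour-function convergence of Theorem~\ref{main}, using the classical principle (formalized in \cite[Lemma 2.4]{le2006random}) that the map sending a nonnegative continuous function $f$ of compact support to the real tree it codes---the quotient of its support by the pseudodistance $d_f(s,t):=f(s)+f(t)-2\min_{u\in[s\wedge t,\,s\vee t]}f(u)$---is $2$-Lipschitz from the uniform topology into the Gromov-Hausdorff space of compact metric spaces.

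First I would match both sides of the desired convergence with trees coded by contour functions. On the discrete side, the geometric tree $\widetilde{\mathscr{T}}^n$ (each edge of length $1/n$) is isometric, up to a trivial time rescaling by $1/(2n^2)$, to the tree coded by $t\mapsto n^{-1}C_{\lfloor 2n^2 t\rfloor}(\mathscr{T}^n)$. On the continuous side, $\mathcal{T}^{\ell,\sigma}$ is by definition the real tree coded by $t\mapsto W^{\ell^\sigma}_{\alpha^{-1}(t)}$. Theorem~\ref{main} supplies Skorokhod convergence of the former to the latter in $D[0,\infty)$; since the limit is almost surely continuous (both $W^{\ell^\sigma}$ and $\alpha^{-1}$ being continuous), this upgrades automatically to uniform convergence on compacts.

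To apply the $2$-Lipschitz coding map, I would further check that the convergence is uniform on all of $[0,\infty)$, which reduces to verifying that the supports of the rescaled $\mathscr{T}^n$-contour functions converge to the (bounded) support of the limit. The limiting support equals $\alpha(\infty)=\tfrac{1}{4}\int_0^\infty \sigma(x)^2\,\ell^\sigma(x)\d x=\int_0^\infty \ell(x)\d x$ by the occupation-time formula applied to $W^{\ell^\sigma}$, while the discrete support is of order $|\mathscr{T}^n|/n^2$, and by \eqref{eq:total_popuplation} both quantities converge to the same finite value. Combining uniform convergence with the $2$-Lipschitz property of the coding map and then applying the Skorokhod representation theorem (or the continuous mapping theorem) yields the claimed Gromov-Hausdorff convergence in distribution. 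The only nontrivial point is this support-matching; once it is in place, the rest of the argument is direct.
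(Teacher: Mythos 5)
Your proof is correct and follows essentially the same route as the paper, which simply invokes \cite[Lemma 2.4]{le2006random} (the $2$-Lipschitz coding map) together with Theorem~\ref{main}. You have spelled out the routine details the paper leaves implicit—the upgrade from Skorokhod to uniform convergence via continuity of the limit, and the matching of supports via \eqref{eq:total_popuplation}—and both checks are handled correctly; the only cosmetic slip is writing $\alpha(\infty)$ where $\alpha(I(\ell^\sigma))$ would be precise, but the occupation-time computation and conclusion are unaffected.
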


\begin{remark}\label{rmk:optimal}
By Aldous' theorem \cite[Theorem 20]{1993Aldous}, the convergence of the contour functions is equivalent to the combination of tightness and finite-dimensional convergence of the associated tree sequence. In \eqref{H2}, condition \eqref{eq:3rd} is in fact equivalent to the finite-dimensional convergence (see Remark \ref{rmk:equivalent});
while condition~\eqref{eq:liminf} is closely related to the tightness. In Appendix~\ref{sec:example}, we present a counterexample in which the offspring distribution slightly violates \eqref{eq:liminf}, leading to a failure of tightness. This indicates a form of sharpness of condition~\eqref{eq:liminf}.

We also remark that the counterexample in Appendix~\ref{sec:example} shows that the condition \eqref{eq:3rd}, although sufficient for finite-dimensional convergence, is not enough to guarantee tightness.
\end{remark}

\begin{remark}
Our convergence result \eqref{eq:main_convergence} extends to the case where the height profile
satisfies $h(q_n)=h(\ell)=\infty$.  
In this case, the height and contour processes are
defined by following the first infinite branch until it escapes to~$\infty$.
The associated rescaled processes then converge weakly under the topology of
uniform convergence on compact subsets of $\mathbb{R}_+$.

The key point is a \emph{local dependence property}:  
for both discrete and continuous conditioned (height or contour)
functions, the law of the path up to the first hitting time of a height $s$
depends only on the generation profile below height~$s$, and coincides with the corresponding law in a finite–height model with the same profile on $[0,s]$. This property is immediate in the discrete case from generation–wise independence, and in the continuous setting, it follows from excursion theory.

This property allows one to define the conditioned process $W^{\ell}$ for profiles
$\ell$ with $h(\ell)=\infty$, and together with the finite-height convergence \eqref{eq:main_convergence}
and a truncation argument yields the convergence in the infinite-height
setting as well.
\end{remark}

\begin{remark}
The convergence results can also be extended to other population models, such as the continuous-time Moran model and the diploid Cannings model (as defined in \cite{mohle2003coalescent}), using a very similar approach.
\end{remark}

\subsection{Related works}\label{sec:related_works}
\subsubsection*{Coalescent theory}
A major focus in the study of Cannings models lies in coalescent theory, which describes the genealogies of finite samples via limiting processes such as Kingman’s coalescent \cite{kingman1982genealogy}. As explained in Remark \ref{rmk:equivalent}, convergence to Kingman’s coalescent is in fact equivalent to the finite-dimensional convergence of the associated tree sequence.

In \cite[Theorem 4]{mohle2000total}, M\"ohle gave the necessary and sufficient condition for convergence to Kingman's coalescent in the classical Cannings model. 
Condition~\eqref{eq:3rd} serves as the analogue of this criterion in the inhomogeneous Cannings setting. Since it is equivalent to the finite-dimensional convergence, it constitutes a necessary condition of the convergence result~\eqref{eq:main_convergence}, as discussed in Remark~\ref{rmk:optimal}.

\subsubsection*{Encoding tree of conditioned Brownian motion}

It is classical that every continuous bridge function naturally encodes a compact real tree (see Section~\ref{sec:encode} for details). For the conditioned Brownian motion \( W^\ell \), Aldous \cite{1998BROWNIAN} derived the finite-dimensional distributions of the corresponding encoding tree. Subsequently, Warren \cite{warren1999result} provided two alternative, direct proofs of this result. This distribution plays a key role in identifying the limiting process in our setting.

\subsubsection*{Processes conditioned on local time profile}

The conditioned Brownian motion \( W^\ell \) is a self-interacting diffusion constructed by Warren and Yor \cite{warren98}, and independently by Aldous \cite{1998BROWNIAN}. Detailed constructions will be presented in Section~\ref{sec:limiting}. Subsequently, Warren \cite{warren05} further established a close connection between this conditioning problem and a stochastic flow of the Bass-Burdzy type.

More recently, Lupu, Sabot, and Tarr\`es \cite{lupu2021inverting}, as well as A\"id\'ekon, Hu, and Shi \cite{aidekon2023stochastic}, have further developed this framework by analyzing perturbed reflecting Brownian motions and Brownian loop soups conditioned on local time profiles\footnote{Specifically, Lupu, Sabot, and Tarr\`es \cite{lupu2021inverting} studied the Gaussian free field conditioned on local time profile, which is equivalent---via Dynkin's isomorphism---to a Brownian loop soup of intensity \(\frac12\), or equivalently, a perturbed reflecting Brownian motion with parameter \(2\), conditioned on local time profile. A\"id\'ekon, Hu, and Shi \cite{aidekon2023stochastic} later generalized this result to arbitrary loop soup intensities (or equivalently, arbitrary parameters for perturbed reflecting Brownian motion).}. A natural extension of our setting is to incorporate immigration into the inhomogeneous Cannings model. In that case, the rescaled contour and height processes will converge to the (time-changed) perturbed reflecting Brownian motion conditioned on its local time profile. 

On the discrete side, the conditional laws of Markov chains and Markovian loop soups given their local time profiles have been studied in~\cite{2016Inverting,lupu2019inverting,li2024inverting}.

\subsection{Proof outline}

Our approach to proving Theorem~\ref{main} builds on the framework developed by Aldous in \cite[Theorem 20]{1993Aldous}, where he showed that for a sequence of random rooted ordered trees, convergence of the contour process is equivalent to the combination of tightness and finite-dimensional convergence of the trees (see Theorem~\ref{thm:convergence} for a modified version of this result adapted to our setting).
In our case, finite-dimensional convergence is obtained via its equivalence with convergence to Kingman’s coalescent (see Proposition \ref{sconvergence}). The main technical challenge lies in proving tightness. 
Once tightness is established, the convergence of the contour process follows, and the joint convergence with the height process is then deduced using a deterministic relation between the two processes from \cite{marckert2003depth} (see Section~\ref{sec:joint}).

We now outline the key ideas used in the proof of tightness. Roughly speaking, tightness of a sequence of trees means that for any \( \varepsilon, \delta > 0 \), there exists a positive integer \( k \) such that for every tree in the sequence, with probability at least \( 1 - \varepsilon \), a uniformly chosen set of \( k \) vertices forms a \( \delta \)-net of the tree—that is, every vertex lies within distance \( \delta \) of at least one of the selected vertices. In the context of the (unrescaled) Cannings trees \( \mathscr{T}^n \), this corresponds to a \( \lfloor \delta n \rfloor \)-net formed by \( k \) uniformly chosen vertices. 

To this end, we analyze the ancestral structure of the tree from top to bottom. We partition the height of the tree into intervals of length \( \lfloor \delta n \rfloor \). For each such interval \(\big[ s\lfloor \delta n \rfloor, (s+1)\lfloor \delta n \rfloor\big) \), we consider the ancestors at height \( s\lfloor \delta n \rfloor \) of all vertices located at height \( (s+1)\lfloor \delta n \rfloor \). These ancestral vertices are referred to as the \textit{\(\delta\)-coalescent vertices} at the \( s \)-th height interval; see Definition~\ref{def:coalescent} for the precise definition.

The union of all \(\delta\)-coalescent vertices forms a \( 2\lfloor \delta n \rfloor \)-net of the tree. Tightness then follows by showing that, with high probability, these vertices lie within distance \( \lfloor \delta n \rfloor \) of the \( k \) uniformly chosen vertices. This relies on two key facts, which hold with high probability:
\begin{enumerate}
    \item The total number of \(\delta\)-coalescent vertices is bounded by a universal constant (Proposition~\ref{prop:coalescent});
    \item Each \(\delta\)-coalescent vertex has offspring of size \( \Theta(n^2) \) within distance \( \lfloor \delta n \rfloor \) (Proposition~\ref{prop:size}).
\end{enumerate}

We highlight that the first fact relies on a sequential version of the coming-down-from-infinity (CDFI) property, established in Theorem~\ref{thm:CDFI}. To this end, we introduce a \textit{coalescent process}, a Markov process that tracks the number of ancestral lineages backward from height $(s+1)\lfloor \delta n \rfloor$. In particular, at height $s\lfloor \delta n\rfloor$ this count is exactly the number of $\delta$-coalescent vertices at the $s$-th height interval. Our assumption~\eqref{eq:liminf} is used in the proof to control the merge rates of these lineages. Heuristically, under \eqref{eq:liminf} the process contracts at a uniform multiplicative rate on mesoscopic windows: on average, over each window of length
\(
\frac{n}{(\log n)^{1+\varepsilon}},
\)
the lineage count shrinks by a factor $(\log n)^{-c}$ (see Proposition \ref{prop:rely1}). So over a time horizon $\delta n$, the cumulative shrinkage is
\[
(\log n)^{-c\delta (\log n)^{1+\varepsilon}}
=\exp\!\big(-c\delta (\log n)^{1+\varepsilon}\log\log n\big)\ll 1/n.
\]
This results in a rapid decrease in the number of lineages—from $\Theta(n)$ down to a constant—within $\delta n$ steps. By contrast, in the counterexample of Appendix~\ref{sec:example} (where \eqref{eq:liminf} fails), a contraction occurs only every
\(
\frac{n}{(\log n)^{1-\varepsilon}}
\)
steps; over the same horizon $\delta n$ the cumulative shrinkage is
\[
(\log n)^{-c'\delta(\log n)^{1-\varepsilon}}
=\exp\!\big(-c'\delta(\log n)^{1-\varepsilon}\log\log n\big)\gg 1/n,
\]
which is insufficient to reduce the lineage count from order $\Theta(n)$ to $O(1)$. This leads to non-convergence of the rescaled contour functions in this example (see Proposition \ref{thm:example}).

For the second fact, we analyze the ancestral structure of all vertices at height \( (s+1)\lfloor \delta n \rfloor \). We show that, with high probability, there exists a stretch of height—around \( (s + \tfrac{1}{2})\lfloor \delta n \rfloor \) and of length \( \Theta(n) \)—during which the number of their ancestors remains constant and is bounded by a constant independent of $n$.
This stability implies that a large proportion of other vertices at these heights are likely to coalesce onto these few persistent ancestral lines within this time window. These vertices already account for \( \Theta(n^2) \) descendants of the \textit{\(\delta\)-coalescent vertices} at the \( s \)-th height interval. See Sections \ref{sec:pf_size}--\ref{sec:pf_size2} for details.

\subsection{Organization of the paper}

The remainder of the paper is organized as follows. 
In Section~\ref{sec:preliminaries}, we present several important preliminaries, including the formal definition of Cannings trees, Aldous' theorem on the convergence of contour functions, and the construction of the conditioned Brownian motion \( W^\ell \). 
Section~\ref{sec:tightness} introduces the coalescent process and derives some of its basic properties, including the CDFI property. 
Section~\ref{sec:tight} is devoted to proving the tightness of the rescaled trees. 
Finally, in Section~\ref{sec:proof}, we complete the proof of the main theorem by establishing the finite-dimensional convergence and the joint convergence of the contour and height functions.
Appendix~\ref{sec:example} provides a counterexample demonstrating the near-sharpness of condition \eqref{eq:liminf}.

\section{Preliminaries}\label{sec:preliminaries}
\subsection{Cannings tree}\label{sec:Cannings_tree}

Let
\[
\mathcal{U} := \bigcup_{n \geq 0} (\mathbb{N}_+)^n,
\]
where by convention $(\mathbb{N}_+)^0 := \{\emptyset\}$. Every rooted ordered tree can be canonically represented as a prefix-stable finite subset of $\mathcal{U}$. See, for example, \cite[Section 1.1]{jean2005random} for details.

Using this formalism, we now give a precise definition of the \textit{Cannings tree} associated with an inhomogeneous Cannings model. Every $u\in\mathcal U$  can be represented as $u = (u_1, \ldots, u_m) \in (\N_+)^m$ for some $m\in\N$, with the convention that $m=0$ corresponds to $u=\emptyset$; we write $|u| = m$ for its length. For $u\in\mathcal{U}$ and $j \in \mathbb{N}_+$, we denote by $uj$ the concatenated sequence $(u_1, \ldots, u_m, j)$. For $u, v \in \mathcal{U}$, we write \(u \prec v\) if \(u\) precedes \(v\) in the lexicographical order on $\mathcal{U}$.

\begin{definition}[Cannings tree]\label{def:CanningsTree}
Given a population profile $q = (q(s))_{s \in \mathbb{N}_+}$ and offspring vectors $\nu^s = (\nu_1^s, \ldots, \nu_{q(s)}^s)$, we define the associated Cannings tree $\mathscr{T}$ as a subset of $\mathcal{U}$ constructed recursively as follows:
\begin{itemize}
    \item $\emptyset\in \mathscr{T}$;
    \item Suppose all elements \( u \in \mathscr{T} \) with \( |u| = s \) have been defined. Label them in lexicographic order as \( u^1 \prec u^2 \prec \cdots  \prec u^{q(s)} \). Then the elements \( u \in \mathscr{T} \) with \( |u| = s + 1 \) are defined as
    \[
    \left\{ u^i j : 1 \le i \le q(s),\, 1 \le j \le \nu^s_i \right\},
    \]
    where we set \( q(0) := 1 \) and \( \nu^0_1 := q(1) \).
\end{itemize}
The resulting set $\mathscr{T}$ is a prefix-stable subset of $\mathcal{U}$ representing a random rooted ordered tree.
\end{definition}
Intuitively, this construction starts by introducing an artificial ancestor, the root node $\emptyset$, which serves as the common parent of all individuals in generation $1$. Each vertex at height $s$ (i.e., with label length $s$) represents an individual in generation $s$, and the corresponding entry $\nu^s_i$ in the offspring vector determines the number of children of the $i$-th individual (in lexicographic order) in generation $s$. These children are then represented by appending $1,2,\ldots,\nu^s_i$ to the parent's representation.
The tree structure constructed in this way encodes the entire genealogy of the population.

\subsection{Foundations of tree encoding and convergence}\label{sec:foundation}

This section introduces the framework for encoding trees by height and contour functions, and then provides necessary and sufficient conditions for the convergence of contour functions.
\subsubsection{Geometric trees}\label{sec:geometric}

In this section, we extend the notion of rooted ordered trees by assigning a positive length to each edge, and thereby introduce the concept of a \emph{rooted ordered geometric tree}. This enriched structure is better suited for studying convergence, as it allows for the rescaling of edge lengths.

\medskip

A rooted ordered geometric tree is defined as a pair \((T, w)\), where:
\begin{itemize}
    \item \(T\) is a rooted ordered tree with vertex set \(V(T)\) and edge set \(E(T)\);
    \item \( w: E(T) \to (0,\infty)\) is a function that assigns a positive length \(w(e)\) to each edge \(e \in E(T)\). 
\end{itemize}
The function \(w\) naturally induces a metric on \(V(T)\) defined by
\[
d(v,v') := \sum_{e \in [\![v,v']\!]} w(e),\quad v,v' \in V(T),
\]
where \([\![v,v']\!]\) denotes the unique path connecting \(v\) and \(v'\) in \(T\). In particular, with $\rho$ denoting the root of $T$, \[\abs{v}:=d(\rho,v)\] represents the height (or depth) of \(v\) measured in terms of edge lengths.

\subsubsection*{Height function}
The \emph{height function} \(H(T)\) of a rooted ordered tree \((T, w)\) records the height of each vertex according to the lexicographic order on \(T\). Specifically, we can rewrite
\[
V(T) = \{v_0, v_1, \dots, v_{k-1}\} \quad \text{with} \quad \rho=v_0 \prec v_1 \prec \cdots \prec v_{k-1}.
\]
The height function of \((T, w)\) is defined by
\[
H_n(T) := \abs{v_n}, \quad 0 \le n < k.
\]

\subsubsection*{Contour function}
The \emph{contour function} \(C(T)\) encodes the tree via a depth-first traversal that respects the order on \(T\). Imagine a particle starting at the root \(\rho\) and exploring \(T\) ``from left to right'' according to this order. The particle takes one time unit to traverse each edge, and since each edge is crossed twice (once forward and once backward), the total number of steps in the traversal is \(2(|V(T)|-1)\), where $|V(T)|$ is the number of vertices in $V(T)$. For each integer \(n \in \{0, 1, \dots, 2(|V(T)|-1)\}\), let \(f(n)\) denote the vertex visited at step \(n\). Then the contour function is defined by
\[
C_n(T) := \abs{f(n)}, \quad 0 \le n \le 2(|V(T)|-1).
\]
\begin{remark}
    A rooted ordered tree can be viewed as a special case of a rooted ordered metric tree in which all edge lengths are equal to one (i.e., \(w(e) \equiv 1\) for all \(e \in E(T)\)). In this case, the classical definitions of height and contour functions, typically defined by the number of edges,  are recovered.
\end{remark}

\subsubsection{Encoding compact real trees by bridge functions}\label{sec:encode}

Following \cite[Section 2.1]{jean2005random}, compact real trees are defined as compact metric spaces satisfying the standard tree properties. A canonical way to construct such trees, which simultaneously endows them with a root and an ordered structure, is through encoding functions drawn from the space  
\[
C_{\rm bridge} := \{g \in C[0,1] : g(0) = g(1) = 0, \ g(x) \geq 0 \text{ for } x \in [0,1]\}.
\]

\noindent Specifically, given \( g \in C_{\rm bridge} \), we define a pseudo-metric
\[
d_g(s,t) := g(s) + g(t) - 2\inf_{r \in [s \wedge t, s \vee t]} g(r), \quad \text{for } s,t \geq 0,
\]
which induces an equivalence relation:
\[
s \sim t \quad \text{if and only if} \quad d_g(s,t) = 0.
\]
The quotient space \(\mathcal{T}_g := [0,1]/\sim\), equipped with the inherited metric \(d_g\), is a compact real tree by \cite[Theorem~2.2]{jean2005random}.
Let \(p_g : [0,1]\to \mathcal{T}_g\) be the canonical projection.
We regard \(\mathcal{T}_g\) as rooted at \(p_g(0)\) and equip it with an order by declaring \(s_1 \prec_g s_2\) if and only if
\[
p_g^{-1}(s_1) < p_g^{-1}(s_2),
\qquad
p_g^{-1}(s) := \min\{t\in[0,1] : p_g(t)=s\}.
\]

Intuitively, this construction can be understood by visualizing the graph of \( g \) and identifying points that are connected through horizontal compression. This process ``flattens'' the graph into a tree structure.

\begin{remark}
Le Gall's encoding of compact real trees in \cite{jean2005random} uses continuous functions with compact support. Our encoding through \( C_{\rm bridge} \) functions is essentially equivalent because, in Le Gall's setting, the function \( g(x) \) and its rescaled version \( g(cx) \) encode the same real tree. Thus, without loss of generality, we can always assume \( g \in C_{\rm bridge} \).    
\end{remark}

We now consider a random rooted ordered real tree \((\mathcal{T}_g,d_g,\preceq_g)\) encoded by a random function \(g \in C_{\rm bridge}\).  
Its finite-dimensional distributions are captured by the uniform \(k\)-point subtrees defined below.

For any \(t_1,\ldots, t_k\in [0,1]\), let \(\mathcal{T}_g(t_1,\ldots, t_k)\) be the subtree of \(\mathcal{T}_g\) generated by the points
\(\{p_g(t_1),\ldots, p_g(t_k)\}\):
\[
\mathcal{T}_g(t_1,\ldots, t_k) 
= \bigcup_{i=1}^k [\![\rho,\, p_g(t_i)]\!],
\]
where \( [\![x,y]\!] \subset \mathcal{T}_g \) denotes the unique geodesic path between \(x,y\in\mathcal{T}_g\).
This subtree is naturally viewed as a rooted ordered geometric tree:  
its vertex set consists of the root \(\rho\), the sampled points \(p_g(t_1),\dots,p_g(t_k)\), and all branch points along their geodesics;  
edges are the geodesic segments connecting consecutive vertices, endowed with their metric lengths;  
and the order is inherited from \(\preceq_g\).

\begin{definition}[Uniform \(k\)-point subtree induced by \(g\)]\label{def:k-subtree}  
Let \(k\ge 1\), and let \(U_1,\dots,U_k\) be i.i.d.\ \(\mathrm{Uniform}[0,1]\), independent of \(g\).  
The uniform \(k\)-point subtree of \(\mathcal{T}_g\) is
\[
\mathcal{T}_{g,k}:=\mathcal{T}_g(U_1,\ldots,U_k).
\]
\end{definition}

\begin{remark}\label{treeiso}
Suppose \(g, \widetilde{g} \in C_{\rm bridge}\) differ only by a strictly increasing time change; that is, there exists a strictly increasing homeomorphism
\(\phi:[0,1]\to[0,1]\) such that
\(g(t)=\widetilde{g}\big(\phi(t)\big)\) for all \(t\in[0,1].\)
Then the map
\[
\Psi:\mathcal{T}_g \longrightarrow \mathcal{T}_{\widetilde{g}},\qquad
\Psi\big(p_g(t)\big) := p_{\widetilde{g}}\big(\phi(t)\big),
\]
defines an isomorphism between the rooted ordered real trees \(\mathcal{T}_g\) and \(\mathcal{T}_{\widetilde{g}}\).
In particular, for any $t_1,\ldots, t_k\in [0,1]$, the subtrees
\(\mathcal{T}_g(t_1,\ldots, t_k)\quad\text{and}\quad \mathcal{T}_{\widetilde{g}}\big(\phi({t_1}),\ldots, \phi({t_k})\big)\)
are isomorphic.
\end{remark}

\subsubsection{Convergence of contour functions}  
In this part, we present a fundamental criterion, originally due to Aldous, which characterizes the weak convergence of contour functions in terms of tightness and finite-dimensional convergence of the tree sequence.

We introduce a metric on the space of rooted ordered geometric trees with \(k\) distinguished vertices as follows.  
For each \(k \in \mathbb{N}_+\) and \(i=1,2\), let \(T^i\) be rooted ordered trees and 
\(\mathbf{V}^i\) be a multiset of \(k\) vertices in \(T^i\).
Let \((V^i_{(1)},\dots,V^i_{(k)})\) denote the non-decreasing rearrangement of \(\mathbf{V}^i\) with respect to the tree order \(\prec\).
We then define
\begin{align}\label{fidis}
\begin{aligned}
    d\big((T^1,\mathbf{V}^1), (T^2,\mathbf{V}^2)\big)
    :=\sum_{i=1}^k\Bigg(\Big|\, d_{T^1}\big(b^1_{(i-1)}, V^1_{(i)}\big) 
    - d_{T^2}\big(b^2_{(i-1)}, V^2_{(i)}\big)\,\Big|+\Big|\, d_{T^1}\big(V^1_{(i)}, b^1_{(i)}\big) 
    - d_{T^2}\big(V^2_{(i)}, b^2_{(i)}\big)\,\Big|\Bigg),    
\end{aligned}
\end{align}
where \(b^i_{(0)} = b^i_{(k)}\) is the root of \(T^i\), and for \(1 \le j \le k-1\), 
\(b^i_{(j)}\) denotes the branch point of \(V^i_{(j)}\) and \(V^i_{(j+1)}\) in \(T^i\).
It is straightforward to check that \(d\) induces a metric on rooted ordered geometric trees with \(k\) distinguished vertices, modulo rooted order-preserving isometries.
\begin{theorem}\label{thm:convergence}
Let \((\mathcal{T}, d) = (\mathcal{T}_g, d_g)\) be a random rooted ordered real tree encoded by a random \(g \in C_{\rm bridge}\).  
Consider a sequence \((\mathscr{T}^n : n \ge 1)\) of random ordered discrete trees with \(a_n := |V(\mathscr{T}^n)| \to \infty\), and suppose that the maximal edge length of \(\mathscr{T}^n\) converges to \(0\) in distribution.  
Then the following statements are equivalent:
\begin{enumerate}[label=(\roman*), ref=\roman*]
    \item\label{prop:convergence1} The contour functions
    \[
        C_{\lfloor2(a_n-1)\,\cdot\,\rfloor}(\mathscr{T}^n)\ \overset{d}{\longrightarrow}\  g\quad\text{in \(D[0,1]\).}
    \] 
    \item  For each \(1 \le k \le a_n\), let \((V^n_1, \dots, V^n_{a_n})\) be a uniform random ordering of the vertices of \(\mathscr{T}^n\), and denote by \(\mathscr{T}^n_k\) the rooted ordered subtree spanned by the root \(\rho\) and \(\{V^n_1,\dots,V^n_k\}\). Then it holds that:
    \begin{itemize}
        \item \textit{Tightness}: 
        for any \(\delta > 0\),
        \[
            \lim_{k \to \infty} \limsup_{n \to \infty} \mathbb{P}(\Delta(n,k) > \delta) = 0,
        \]
        where
        \begin{align}\label{def:Delta}
            \Delta(n,k) := \max_{v \in V(\mathscr{T}^n)} \min_{u \in V(\mathscr{T}^n_k)} d(v,u).
        \end{align}
        
        \item \textit{Finite-dimensional convergence}: for every fixed \(k \in \mathbb{N}_+\),
        \begin{equation}\label{finitedistribution}
            \big(\mathscr{T}^n_k, \{V^n_1,\dots,V^n_k\}\big) \overset{d}{\longrightarrow} \big(\mathcal{T}_{g,k}, \{p_g(U_1),\dots,p_g(U_k)\}\big)
        \end{equation}
        under the distance \eqref{fidis}, where \(\{V^n_1,\dots,V^n_k\}\) and \(\{p_g(U_1),\dots,p_g(U_k)\}\) are considered as multisets.
    \end{itemize}
\end{enumerate}
\end{theorem}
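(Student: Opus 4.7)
This is a discrete/geometric-tree variant of Aldous' Theorem~20 in~\cite{1993Aldous}, and my plan is to follow its architecture while being careful with the vanishing-maximum-edge-length hypothesis. The entire argument rests on a single identity: in any rooted ordered geometric tree with contour function $C$, if vertices $v,v'$ are first visited at contour times $s\le t$, then $|v|=C_s$, $|v'|=C_t$, the height of their branch point equals $\min_{r\in[s,t]} C_r$, and consequently $d(v,v') = C_s+C_t-2\min_{[s,t]}C \le 2\,\mathrm{osc}(C|_{[s,t]})$. In particular, once the first-visit times $t_{n,i}$ of the sampled vertices $V_{n,i}$ are known, the metric data of $\mathscr{T}^n_k$ is a continuous functional of $C(\mathscr{T}^n)$ and of the normalized times $\widehat U_{n,i}:=t_{n,i}/(2(a_n-1))$.

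For the direction (\ref{prop:convergence1})$\Rightarrow$(tightness $+$ finite-dimensional convergence), I first invoke Skorokhod representation and continuity of $g$ to assume $C_{\lfloor 2(a_n-1)\cdot\rfloor}(\mathscr{T}^n)\to g$ uniformly a.s., and then prove the auxiliary statement that $(\widehat U_{n,1},\dots,\widehat U_{n,k})\to(U_1,\dots,U_k)$ in distribution with $U_i$ i.i.d.\ $\mathrm{Uniform}[0,1]$. I plan to establish this by a coupling that assigns to each vertex $v$ the sub-interval of $[0,1]$ of length proportional to the number of contour visits to $v$ containing its normalized first-visit time, and verifying that the resulting empirical measure converges weakly to Lebesgue measure. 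The identity above then yields \eqref{finitedistribution} by continuity of the branch-point and height functionals, and simultaneously gives the deterministic bound
\[
\Delta(n,k) \;\le\; 2\max_{0\le i\le k}\ \mathrm{osc}\!\big(C(\mathscr{T}^n)|_{[\widehat U_{n,(i)},\widehat U_{n,(i+1)}]}\big),
\]
where the $\widehat U_{n,(i)}$ are the order statistics (with $\widehat U_{n,(0)}:=0$, $\widehat U_{n,(k+1)}:=1$). Letting $n\to\infty$ and then $k\to\infty$, the uniform continuity of $g$ gives $\Delta(n,k)\to 0$ in probability, which is tightness.

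For the reverse direction, the target is tightness of $C(\mathscr{T}^n)$ in $D[0,1]$; the identification of the limit then follows because the finite-dimensional hypothesis pins down all finite-dimensional marginals of any subsequential limit. Using $|C_s-C_t|\le 2\,\mathrm{osc}(C|_{[s,t]})$ and choosing, for any two times $s<t$, the sampled visit time $\widehat U_{n,i}$ nearest to $s$, I plan to bound this oscillation by $\Delta(n,k)$ plus the contour oscillation on a short interval around $\widehat U_{n,i}$, which is controlled by the finite-dimensional convergence of sampled subtrees. The hypothesis that $\Delta(n,k)\to 0$ in probability then yields the required modulus-of-continuity estimate for the contour. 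Any subsequential $D[0,1]$-limit is almost surely continuous by the vanishing-edge-length hypothesis, lies in $C_{\rm bridge}$, and its finite-dimensional distributions coincide with those of $g$ by the hypothesis, which by Remark~\ref{treeiso} identifies the limit law uniquely up to reparametrization.

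The principal obstacle will be the initial coupling step in the forward direction. Uniform vertex sampling is not the same as uniform contour-time sampling: each vertex $v$ is visited $\deg(v)$ (or $\deg(v)+1$) times by the contour, so the first-visit times $\widehat U_{n,i}$ are degree-biased rather than exactly uniform. Controlling this bias and proving the uniform limit law for $(\widehat U_{n,i})$ is where the vanishing-maximum-edge-length hypothesis must enter in a crucial, non-decorative way, since it is precisely what rules out a single vertex carrying non-negligible contour-time mass in the limit. Making this rigorous, in a manner robust enough to cover the full generality of the theorem, is the most delicate part of the plan.
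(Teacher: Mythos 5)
The paper does not give a from-scratch proof of this theorem. As the Remark immediately after the statement explains, the result is obtained by observing that Aldous' proof of his Theorem~20 in \cite{1993Aldous} carries through once the one step that relies on the binary-tree assumption --- the equivalence of his contour-time-sampled formulation \eqref{eq:aldous} with the vertex-sampled formulation \eqref{finitedistribution} --- is replaced by an appeal to his Lemma~12. Your proposal is a genuinely different, self-contained reproof, so a detailed comparison is warranted.

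Unfortunately, your argument has a real gap. The linchpin of your forward direction is the auxiliary claim that the normalized first-visit times $\widehat U_{n,i} := t_{n,i}/(2(a_n-1))$ of uniformly sampled vertices converge jointly to i.i.d.\ $\mathrm{Uniform}[0,1]$. This is false under the stated hypotheses. Take $\mathscr{T}^n$ to be the deterministic path on $a_n$ vertices with edge lengths $1/(a_n-1)$: the contour converges to the tent $g(s)=2\min(s,1-s)\in C_{\mathrm{bridge}}$, and the maximal edge length vanishes, yet the first-visit time of the $j$-th vertex is $j$, so $\widehat U_{n,i}$ converges to $\mathrm{Uniform}[0,1/2]$, not $\mathrm{Uniform}[0,1]$. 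This destroys the continuity argument you invoke for \eqref{finitedistribution}, and it simultaneously voids your tightness bound
\[
\Delta(n,k)\;\le\;2\max_{0\le i\le k}\mathrm{osc}\!\big(C(\mathscr{T}^n)|_{[\widehat U_{n,(i)},\widehat U_{n,(i+1)}]}\big),
\]
because the final gap $[\widehat U_{n,(k)},1]$ has length about $1/2$ and carries oscillation $\approx 1$, so the right-hand side stays near $2$ no matter how large $k$ is. (The theorem itself holds in this example, so the obstruction is to your proof, not to the statement.)

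Your diagnosis of the difficulty is also mislocated. You attribute the mismatch between vertex-sampling and contour-time-sampling to degree bias and suggest the vanishing-edge-length hypothesis is what controls it. Neither is right: the path has degree bounded by $2$ and exhibits the failure, and the maximal-edge-length condition says nothing about the number of contour visits to a vertex. The relevant relationship is the deterministic identity $\tau_i + H_i = 2i$ between first-visit times $\tau_i$ and (step-count) heights $H_i$ --- precisely \cite[Lemma~2]{marckert2003depth}, which the paper uses in Section~\ref{sec:joint} --- so $\widehat U_{n,i}\approx i/a_n$ holds only when step-count heights are $o(a_n)$, which is not a consequence of the hypotheses. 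A workable version of your strategy should sample \emph{contour times} uniformly, establishing \eqref{eq:aldous} directly, and then compare with \eqref{finitedistribution} not by matching the sampled times themselves (they do not match), but by arguing that the \emph{laws of the spanned subtrees} agree in the limit, via the reparametrization-invariance of the encoding in Remark~\ref{treeiso}. That is exactly what Aldous' Lemma~12, invoked in the paper's Remark, accomplishes.
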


\begin{remark}
Aldous originally proved this equivalence in \cite[Theorem~20]{1993Aldous} for the case where the limiting tree is a compact binary real tree encoded by an excursion. In that setting, his notion of finite-dimensional distributions differs from \eqref{finitedistribution}.  
Nevertheless, Theorem~\ref{thm:convergence} is already implicit in Aldous’ proof. Indeed, the first step of his proof shows that his definition is equivalent to requiring that, for every \(k \ge 1\),
\begin{align}\label{eq:aldous}
\begin{aligned}         &\big(C_{U^n_{(1)}}(\mathscr{T}^n),
    \inf_{U^n_{(1)}<t<U^n_{(2)}} C_{t}(\mathscr{T}^n),
   C_{U^n_{(2)}}(\mathscr{T}^n), \inf_{U^n_{(2)}<t<U^n_{(3)}} C_{t}(\mathscr{T}^n),\ldots,
   C_{U^n_{(k)}}(\mathscr{T}^n)\big)\\
    &\overset{d}{\longrightarrow}
    \big(g(U_{(1)}), \inf_{U_{(1)}<t<U_{(2)}} g(t), g(U_{(2)}),
    \inf_{U_{(2)}<t<U_{(3)}} g(t), \ldots, g(U_{(k)})\big),    
\end{aligned}
\end{align}
where \((U^n_{1},\ldots,U^n_{k})\) is a random vector sampled uniformly without replacement from \(\{0,\ldots,2(a_n-1)\}\), with \(U^n_{(i)}\) denoting its order statistics, and \(U_{(1)},\ldots,U_{(k)}\) are the order statistics of \(k\) i.i.d. uniform \((0,1)\) random variables.  
By \cite[Lemma~12]{1993Aldous}, \eqref{eq:aldous} is readily seen to be equivalent to \eqref{finitedistribution}. This is the only step in his argument that relies on the assumption of a binary limiting tree; the remainder of the proof proceeds without invoking it.
In our setting, as the root of the limiting tree is not necessarily binary, the formulation in \eqref{finitedistribution} is better suited.
\end{remark}

\subsection{Conditioned Brownian motion and its subtree distribution}\label{sec:limiting}

Recall the notion of a continuous profile introduced in Section~\ref{Introduction}.
In this section, we first construct a canonical version of \(W^\ell\) based on Aldous' construction \cite{1998BROWNIAN}. 
We then study the laws of the uniform \(k\)-point subtrees induced by \(W^\ell\) and by the time-changed  process \(W^{\ell^\sigma}_{\alpha^{-1}(t)}\). 
These results will be used in the proof of finite-dimensional convergence in Section~\ref{sec:fd}.

\subsubsection{Canonical version of \(W^\ell\)}

Recall that \(L_t(x)\) denotes the local time of \(W\) at level \(x\) up to time \(t\), and that \(\tau_u\) represents the inverse local time at the origin. For any non-negative measurable function $f$ on $\mathbb{R}_+$, we write
\[I(f):=\int_0^\infty f(x)\d x.\]

In \cite[Section~4]{1998BROWNIAN}, Aldous derived the law of the reflected Brownian bridge on the time interval \([0,1]\) conditioned on its local time profile \(L_1=\ell\), where \(\ell\) may lie in a broad class of functions that includes all continuous profiles.  
In our notation, this is exactly the law of \(W^\ell\) for continuous profiles $\ell$ satisfying \(h(\ell)<\infty\) and \( I(\ell) = 1\).

We now explain how this conditional law is described. 
Let \(\mathcal{T}^\ell\) denote the rooted ordered real tree encoded by \(W^\ell\), and let \(\mathcal{T}^\ell_k\) denote its uniform \(k\)-point subtree. 
Aldous~\cite{1998BROWNIAN} and, subsequently, Warren~\cite{warren1999result} determined the law of \(\mathcal{T}^\ell_k\), which will be described in detail later in Section~\ref{sec:time_change}.  
Since the family \(\{\mathcal{T}^\ell_k\}_{k \ge 1}\) is consistent in \(k\), the correspondence theorem of Aldous~\cite[Theorem~15]{1993Aldous}\footnote{A direct extension of the correspondence theorem from excursion functions to bridge functions applies here.} yields a version of \(W^\ell\).  
Moreover, this version of the conditional law is continuous in $\ell$ in a suitable sense as shown in \cite[Construction 1]{1998BROWNIAN}.

Next, we extend Aldous’ version of \(W^\ell\) to all continuous profiles \(\ell\) with \(h(\ell)<\infty\) (i.e., without the restriction \(I(\ell)=1\)).  
This follows from the scaling invariance of reflected Brownian motion and its local times.

\begin{lemma}\label{lem:scale}
For any \(c,u>0\),
\[
\big(W_t,\ L_{\tau_u}(x): 0\le t\le \tau_u,\ x\ge 0\big)
\ \overset{d}{=}\ 
\big(c^{-1} W_{c^2 t},\ c^{-1} L_{\tau_{cu}}(cx): 0\le t\le c^{-2}\tau_{cu},\ x\ge 0\big).
\]
\end{lemma}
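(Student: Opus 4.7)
The plan is to reduce the identity to the Brownian scaling property of reflected Brownian motion, together with the induced scaling of the local time field and of the inverse local time at the origin. I would introduce $\widetilde W_t := cW_{t/c^{2}}$; this is again a reflected Brownian motion on $\mathbb{R}^+$, so $\widetilde W \overset{d}{=} W$ as processes.

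Next, I would transfer this scaling to the local time field. Writing $\widetilde L_t(x)$ for the local time of $\widetilde W$ at level $x$ up to time $t$, the occupation times formula applied to $\widetilde W$ gives, for every bounded non-negative measurable $f$,
\begin{align*}
\int f(z)\,\widetilde L_t(z)\,\d z
&= \int_0^t f\big(cW_{s/c^{2}}\big)\,\d s
= c^{2}\int_0^{t/c^{2}} f(cW_u)\,\d u \\
&= c\int f(z)\,L_{t/c^{2}}(z/c)\,\d z,
\end{align*}
where the last step uses the occupation times formula for $W$ followed by the change of variable $z = cy$. Since $f$ is arbitrary, this forces $\widetilde L_t(x) = cL_{t/c^{2}}(x/c)$. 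Specializing to $x = 0$ and unwinding the definition $\widetilde\tau_u := \inf\{t\ge 0 : \widetilde L_t(0) > u\}$ yields $\widetilde\tau_u = c^{2}\tau_{u/c}$.

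Combining these three identities with $\widetilde W \overset{d}{=} W$, the joint law of $\big(\widetilde W_t,\ \widetilde L_{\widetilde\tau_u}(x) : 0\le t\le \widetilde\tau_u,\ x\ge 0\big)$ coincides with that of $\big(W_t,\ L_{\tau_u}(x) : 0\le t\le \tau_u,\ x\ge 0\big)$; substituting the scaled expressions produces exactly the claimed identity. The only mild obstacle is upgrading $\widetilde L_t(x) = cL_{t/c^{2}}(x/c)$ from a pointwise-in-$(t,x)$ statement—obtained by varying $f$ over a countable family—to the joint identity needed over all $(t,x)$. This is handled by the classical joint continuity of the local time field of reflected Brownian motion in both arguments.
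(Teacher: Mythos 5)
Your argument is correct and is the natural way to establish this scaling identity; the paper itself states the lemma as a standard consequence of Brownian scaling without supplying a proof, so there is no competing argument to compare against. Your chain — Brownian scaling for $\widetilde W_t = c\,W_{t/c^2}$, the occupation-density identity $\widetilde L_t(x) = c\,L_{t/c^2}(x/c)$ via the occupation times formula and a change of variables, the resulting identity $\widetilde\tau_u = c^2\tau_{u/c}$, and then transporting the joint law through $\widetilde W \overset{d}{=} W$ — fills in precisely the routine verification the authors left implicit. Your closing remark about upgrading the a.e.-in-$x$ identity to an everywhere identity via joint continuity of the local time field is the right way to tidy the one genuine (if small) technical point.
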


\begin{definition}[Canonical version of $W^\ell$]\label{def:W-ell}
Let $\ell$ be a continuous profile with $h(\ell)<\infty$.
Set
\[
\tilde{\ell}(s):= I(\ell)^{-\frac12}\,\ell\big(I(\ell)^{\frac12}s\big),\qquad s\ge 0,
\]
so that \(I(\tilde{\ell})=1\).  
Let \(W^{\tilde{\ell}}\) denote the version of the conditioned reflected Brownian motion associated with the normalized profile \(\tilde{\ell}\) constructed above.  
We then define
\[
W^\ell_t:= I(\ell)^{\frac12}\, W^{\tilde{\ell}}_{I(\ell)^{-1}t}\,,\qquad 0\le t\le I(\ell).
\]
By Lemma~\ref{lem:scale}, this defines a version of conditional law, which will serve as our \emph{canonical version} of \(W^\ell\) throughout the paper.
\end{definition}

Many properties that hold in the case \(I(\ell)=1\) (cf.~\cite[Construction~1]{1998BROWNIAN}) extend to this general setting, including continuity in \(\ell\) and the following:

\begin{proposition}\label{prop:aldous1}
\begin{enumerate}[label=(\roman*), ref=\roman*]
    \item\label{item:aldous1} For any continuous profile \(\ell\) with \(h(\ell)<\infty\), the local time profile of \(W^\ell\) is \(\ell\); that is, a.s. for every non-negative measurable \(f\) on \(\mathbb{R}_+\),
\begin{align}\label{eq:aldous1}
    \int_{0}^{I(\ell)} f\big(W^\ell_t\big)\,\mathrm{d}t \ = \ \int_{0}^{\infty} \ell(s)\,f(s)\,\mathrm{d}s.
\end{align}
In particular, the total time duration of $W^\ell$ is $I(\ell)$.
\item\label{item:aldous2} The process $W^\ell$ is a bridge from $0$ to $0$ on the time interval $[0,I(\ell)]$.
\end{enumerate}
\end{proposition}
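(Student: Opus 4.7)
The plan is to reduce both \eqref{item:aldous1} and \eqref{item:aldous2} to the normalized case $I(\ell)=1$ handled by Aldous \cite{1998BROWNIAN}, and then propagate the conclusions by Brownian scaling, exactly as in Lemma~\ref{lem:scale} and Definition~\ref{def:W-ell}.

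For the normalized case, I would appeal to Aldous's construction in \cite[Section~4]{1998BROWNIAN}: for any continuous profile $\tilde\ell$ with $h(\tilde\ell)<\infty$ and $I(\tilde\ell)=1$, the canonical version $W^{\tilde\ell}$ is a process on $[0,1]$ realizing the conditional law of the reflected Brownian bridge given its local time profile. The occupation density identity
\[
\int_0^1 f\big(W^{\tilde\ell}_t\big)\,\d t \;=\; \int_0^\infty f(s)\tilde\ell(s)\,\d s\quad\text{a.s.}
\]
and the bridge condition $W^{\tilde\ell}_0=W^{\tilde\ell}_1=0$ are both built into that construction. To promote the occupation identity to a statement valid simultaneously for all non-negative measurable $f$ on a single probability-one event, I would first verify it for indicators of intervals with rational endpoints (a countable family) and then extend by the monotone class theorem.

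For a general continuous profile $\ell$ with $h(\ell)<\infty$, Definition~\ref{def:W-ell} exhibits $W^\ell$ as a deterministic space--time rescaling of $W^{\tilde\ell}$, where $\tilde\ell$ is the normalized profile associated to $\ell$. Part \eqref{item:aldous2} is then immediate: evaluating the scaling relation at the endpoints yields $W^\ell_0 = W^\ell_{I(\ell)} = 0$, and the process is defined on $[0,I(\ell)]$. For part \eqref{item:aldous1}, I would substitute the scaling into the time integral $\int_0^{I(\ell)} f(W^\ell_t)\,\d t$, change variables $u = t/I(\ell)$ to absorb the time rescaling, apply the normalized occupation identity to the test function $s\mapsto f\big(I(\ell)^{1/2}s\big)$, and then change variables in space using the defining relation between $\tilde\ell$ and $\ell$. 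The Jacobian factors are designed to cancel precisely, giving $\int_0^\infty \ell(r)f(r)\,\d r$; in particular, choosing $f\equiv 1$ recovers the claim about the total duration being $I(\ell)$.

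There is no substantive obstacle in this argument: it consists of invoking Aldous's normalized construction, one temporal change of variables, and one spatial change of variables, with the scaling invariance of reflected Brownian local time (Lemma~\ref{lem:scale}) ensuring that the rescaled process still represents the correct conditional law. The only point requiring any care is the uniformity in $f$ of the occupation identity for $W^{\tilde\ell}$, which is handled by the monotone class argument indicated above.
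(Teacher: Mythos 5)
Your proposal is correct and takes essentially the same approach the paper intends: the paper states Proposition~\ref{prop:aldous1} with only a pointer to Aldous's normalized construction for the case $I(\ell)=1$ together with the scaling of Definition~\ref{def:W-ell}, and you carry out exactly that reduction (Aldous for $I(\ell)=1$, then a temporal and a spatial change of variables). One point worth noting when you write out the space change of variables: the formula for $\tilde\ell$ as printed, $\tilde\ell(s)=I(\ell)^{-1/2}\ell(I(\ell)s)$, actually gives $I(\tilde\ell)=I(\ell)^{-1/2}$ rather than $1$; the intended normalization is $\tilde\ell(s)=I(\ell)^{-1/2}\ell\big(I(\ell)^{1/2}s\big)$, and with that correction the Jacobian factors do cancel exactly as you claim, since
\begin{equation*}
\int_0^{I(\ell)}f(W^\ell_t)\,\d t
= I(\ell)\int_0^\infty\tilde\ell(s)\,f\big(I(\ell)^{1/2}s\big)\,\d s
= \int_0^\infty\ell(r)f(r)\,\d r.
\end{equation*}
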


We now extend the associated tree notation to this setting.

\begin{definition}[Real tree encoded by \(W^\ell\)]\label{def:W^ell}
Let \(\ell\) be a continuous profile with \(h(\ell)<\infty\).  
Since \(W^\ell_{I(\ell)t} \in C_{\mathrm{bridge}}\) by Proposition \ref{prop:aldous1} \eqref{item:aldous2}, we define \(\mathcal{T}^\ell\) to be the rooted ordered real tree encoded by \(W^\ell_{I(\ell)t}\).  
We denote by \(\mathcal{T}^\ell_k\) its uniform \(k\)-point subtree.
\end{definition}

\begin{remark}[Warren--Yor's construction]
In \cite{warren98}, Warren and Yor provided an explicit construction of \(W^\ell\) for continuous profiles~\(\ell\) satisfying \(\ell(0) > 0\) and \(h(\ell) < \infty\).  
Roughly speaking, they introduced a process called \emph{Brownian burglar}, which is obtained through a suitable time-space transformation of a Brownian path.  
A version of \(W^\ell\) can then be constructed from the Brownian burglar by applying another time-space transformation.  

In particular, this version of \(W^\ell\) is continuous in~\(\ell\) in the following sense (cf.~\cite[Proposition~5.5]{li2024inverting}):  
if \((\ell^n)\) and $\ell$ are continuous profiles with $
h(\ell_n)\rightarrow h(\ell)
$ and $\ell_n\rightarrow \ell$ uniformly on compact subsets of $(0,h(\ell))$,
then  
\[
    W^{\ell^n} \overset{d}{\longrightarrow}\, W^{\ell} 
    \qquad \text{in } C[0,\infty),
\]
where each process is extended by~\(0\) outside its original domain.
In particular, this continuity allows the definition of \(W^\ell\) to be extended to continuous profiles with \(\ell(0)=0\) by approximation.

By comparing the continuity of the canonical version \(W^\ell\) with respect to \(\ell\), one can readily deduce that the two versions coincide for all continuous profiles~\(\ell\) with \(h(\ell) < \infty\).
Hence, the Warren--Yor's construction gives an explicit realization of \(W^\ell\).
\end{remark}

\subsubsection{Distribution of $\mathcal{T}^\ell_k$}\label{sec:time_change}

In this part, we describe the distribution of the uniform \(k\)-point subtree \(\mathcal{T}^\ell_k\). 
We recall Aldous' construction and then reformulate it in terms of a piecewise Kingman's representation. The latter formulation will play a key role in Section~\ref{sec:fd}. In particular, it makes explicit that the finite-dimensional convergence of the tree sequence is equivalent to the convergence of the associated coalescent dynamics to Kingman's coalescent.

\subsubsection*{Aldous' construction via a coalescent process}
In \cite{1998BROWNIAN}, the uniform \(k\)-point subtree \(\mathcal{T}^\ell_k\) is described via a coalescent process of \(k\) particles. Fix a continuous profile \(\ell\) with \(h(\ell)<\infty\) and \(I(\ell) = 1\).  
Think of ``time'' \(t\) running downward from \(\infty\) to \(0\).  
Each of $k$ particles is born at independent random times with probability density $\ell(x)\d x$.  
As time decreases, clusters of particles merge according to the following rule: in the interval \([t,t-\mathrm{d}t]\), each pair of existing clusters coalesces into one cluster with probability \(\frac{4}{\ell(t)}\,\mathrm{d}t\).  
At time \(t=0\), all remaining clusters merge into a single cluster.

The coalescent dynamics described above determine a rooted unordered real tree with \(k\) leaves.  
The \(k\) birth times specify the heights of the leaves, and each merge event at height \(t>0\) creates a branching point at height \(t\).  
The root is located at height \(0\).

To obtain an ordered tree, we assign a left-right ordering at each branching point.  
Whenever two clusters merge at height \(t>0\), one is designated as the left subtree and the other as the right subtree, with the choice made uniformly at random.  
At time \(t=0\), when the final merge occurs, a uniformly random ordering is again assigned.  
This procedure yields a rooted ordered real tree. Aldous \cite{1998BROWNIAN} showed that the law of this tree coincides with the law of the uniform \(k\)-point subtree \(
\mathcal{T}^\ell_k\).

For a general continuous profile \(\ell\) with \(h(\ell)<\infty\), one easily checks that the law of \(\mathcal{T}^\ell_k\) (recall Definition \ref{def:W^ell}) is given by the same coalescent construction, except that the birth density \(\ell(x)\,\mathrm{d}x\) is replaced by its normalized form \(\frac{\ell(x)}{I(\ell)}\,\mathrm{d}x\).

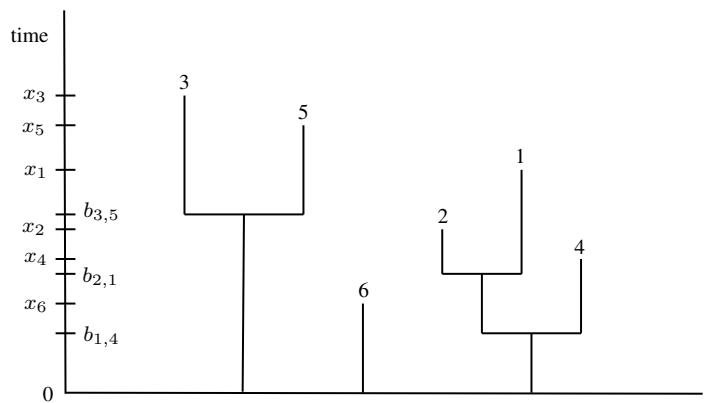
\begin{figure}
    \centering
\tikzset{every picture/.style={line width=0.75pt}} %set default line width to 0.75pt        

\begin{tikzpicture}[x=0.75pt,y=0.75pt,yscale=-0.75,xscale=1]
%uncomment if require: \path (0,379); %set diagram left start at 0, and has height of 379

%Straight Lines [id:da4063122992351065] 
\draw    (140,300) -- (461.33,300.67) ;
%Straight Lines [id:da3162475464504695] 
\draw    (139.33,42.67) -- (140,300.67) ;
%Straight Lines [id:da5492539709621052] 
\draw    (200,100) -- (200,180) ;
%Straight Lines [id:da23494459047155702] 
\draw    (260,120) -- (260,180) ;
%Straight Lines [id:da09768665586147307] 
\draw    (200,180) -- (260,180) ;
%Straight Lines [id:da3136204033234071] 
\draw    (370,150) -- (370,220) ;
%Straight Lines [id:da5729566983120131] 
\draw    (330,220) -- (370,220) ;
%Straight Lines [id:da9024055280452581] 
\draw    (400,210) -- (400,260) ;
%Straight Lines [id:da48507660913875084] 
\draw    (230,180) -- (229.33,299.33) ;
%Straight Lines [id:da9817814914189398] 
\draw    (375,260) -- (375,300) ;
%Straight Lines [id:da5452186643145105] 
\draw    (290,240) -- (290,300) ;
%Straight Lines [id:da2581623539935971] 
\draw    (135,100) -- (145,100) ;
%Straight Lines [id:da17589312665523038] 
\draw    (135,120) -- (145,120) ;
%Straight Lines [id:da33867519010620084] 
\draw    (135,180) -- (145,180) ;
%Straight Lines [id:da9175570778692592] 
\draw    (135,190) -- (145,190) ;
%Straight Lines [id:da7788183727799911] 
\draw    (135,150) -- (145,150) ;
%Straight Lines [id:da6561928635053342] 
\draw    (135,210) -- (145,210) ;
%Straight Lines [id:da46645286999390945] 
\draw    (135,260) -- (145,260) ;
%Straight Lines [id:da43439239370175153] 
\draw    (135,220) -- (145,220) ;
%Straight Lines [id:da7369669482024217] 
\draw    (135,240) -- (145,240) ;
%Straight Lines [id:da5742930487712468] 
\draw    (330,190) -- (330,220) ;
%Straight Lines [id:da7743415252166208] 
\draw    (350,220) -- (350,260) ;
%Straight Lines [id:da6056367026810212] 
\draw    (350,260) -- (400,260) ;

% Text Node
\draw (195.8,83.93) node [anchor=north west][inner sep=0.75pt]   [align=left] {{\footnotesize 3}};
% Text Node
\draw (255.8,103.93) node [anchor=north west][inner sep=0.75pt]   [align=left] {{\footnotesize 5}};
% Text Node
\draw (326.2,174.2) node [anchor=north west][inner sep=0.75pt]   [align=left] {{\footnotesize 2}};
% Text Node
\draw (365.27,133.53) node [anchor=north west][inner sep=0.75pt]   [align=left] {{\footnotesize 1}};
% Text Node
\draw (395.27,195) node [anchor=north west][inner sep=0.75pt]   [align=left] {{\footnotesize 4}};
% Text Node
\draw (286.33,224.23) node [anchor=north west][inner sep=0.75pt]   [align=left] {{\footnotesize 6}};
% Text Node
\draw (127,293.67) node [anchor=north west][inner sep=0.75pt]   [align=left] {{\footnotesize 0}};
% Text Node
\draw (117,94.15) node [anchor=north west][inner sep=0.75pt]  [font=\footnotesize]  {$x_{3}$};
% Text Node
\draw (116.5,116.15) node [anchor=north west][inner sep=0.75pt]  [font=\footnotesize]  {$x_{5}$};
% Text Node
\draw (117.5,235.65) node [anchor=north west][inner sep=0.75pt]  [font=\footnotesize]  {$x_{6}$};
% Text Node
\draw (116.5,183.65) node [anchor=north west][inner sep=0.75pt]  [font=\footnotesize]  {$x_{2}$};
% Text Node
\draw (117.5,203.65) node [anchor=north west][inner sep=0.75pt]  [font=\footnotesize]  {$x_{4}$};
% Text Node
\draw (117.5,145.65) node [anchor=north west][inner sep=0.75pt]  [font=\footnotesize]  {$x_{1}$};
% Text Node
\draw (147.5,169.65) node [anchor=north west][inner sep=0.75pt]  [font=\footnotesize]  {$b_{3,5}$};
% Text Node
\draw (147,213.4) node [anchor=north west][inner sep=0.75pt]  [font=\footnotesize]  {$b_{2,1}$};
% Text Node
\draw (147.5,253.15) node [anchor=north west][inner sep=0.75pt]  [font=\footnotesize]  {$b_{1,4}$};
% Text Node
\draw (111,52) node [anchor=north west][inner sep=0.75pt]  [font=\footnotesize] [align=left] {time};

\end{tikzpicture}
  \caption{An illustration of the coalescent construction of the uniform $6$-point subtree.  
The vertical axis represents time decreasing from top to bottom.  
Each leaf $i=1,\ldots, 6$ is placed at height $x_i$ equal to its birth time, and each $b_{i,j}$ denotes the branching point at which the ancestral clusters of $i$ and $j$ merge.  
The final merge occurs at height $0$, which corresponds to the root.}
    \label{coalescent}
\end{figure}

\subsubsection*{Piecewise Kingman representation}

The Aldous' coalescent construction suggests a useful way to understand the tree \(\mathcal{T}^\ell_k\) through the evolution of the number of ancestral lineages across heights. 
For \(s \ge 0\), let \(Y^{(k)}_s\) denote the number of points in \(\mathcal{T}^\ell_k\) at height \(s\). 
Then \(Y^{(k)}_s\) can be naturally linked to a piecewise Kingman's coalescent 
(here and below, ``Kingman's coalescent'' always refers to its block-counting process), defined as follows.
\begin{definition}[Piecewise Kingman's coalescent]
    For real numbers $x_1 > x_2 > \cdots > x_k >0$, the piecewise Kingman's coalescent $\big(Z^{(k)}_s=Z^{(k)}_s(x_1,\ldots, x_k):s > 0\big)$ with rate $\frac{4}{\ell(s)}$ is defined as:
\begin{itemize}
\item For \(s > x_1\), we set \(Z^{(k)}_s = 0\).
\item Recursively, for each \(i = 1,\ldots,k\), conditionally on \(\big(Z^{(k)}_s : s > x_i\big)\), the process \(Z^{(k)}_s\) on the interval \(s \in (x_{i+1}, x_i]\) (with $x_{k+1}:=0$) evolves as a Kingman's coalescent started with \(\lim_{s \downarrow x_i} Z^{(k)}_s + 1\) lineages, where each pair of lineages coalesces at rate \(\frac{4}{\ell(s)}\).
Here the height parameter \(s\) is interpreted as backward time, so the coalescent runs as \(s\) decreases.
\end{itemize}
\end{definition}
\begin{proposition}\label{prop:piecewise_Kingman}
    For any $k\ge 1$ and real numbers $x_1 > x_2 > \cdots > x_k>0$, conditionally on the leaf heights being $x_1,\ldots, x_k$, the process \(Y^{(k)}_s\) evolves as the piecewise Kingman's coalescent $\big(Z^{(k)}_s(x_1,\ldots, x_k):s > 0\big)$.
\end{proposition}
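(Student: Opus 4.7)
The plan is to read the statement directly off Aldous' coalescent construction of the uniform $k$-point subtree $\mathcal{T}^\ell_k$ recalled just before the proposition. Under that construction, the $k$ leaves are born, as time $t$ runs downward from $\infty$ to $0$, at i.i.d.\ times with density $\ell(x)/I(\ell)\,\mathrm{d}x$; between births each pair of currently active ancestral clusters coalesces at instantaneous rate $4/\ell(t)$; and a final deterministic merge of all remaining clusters occurs at $t=0$. The uniform left/right ordering imposed at each merge does not affect cluster counts, so it can be ignored for this statement. By the very definition of $Y^{(k)}_s$, its value equals the number of active ancestral clusters at height $s$ in this picture.

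The first step is to condition on the ordered leaf heights being $x_1>\cdots>x_k>0$. Because the birth times are independent of the family of pairwise coalescence clocks (which are independent Poisson clocks in the time-changed variable $\int_{t}^{\cdot} 4/\ell(u)\,\mathrm{d}u$), the conditional law of the cluster dynamics is unchanged: on each open interval $(x_{i+1},x_i)$ no new lineage appears, and each currently active pair continues to merge at instantaneous rate $4/\ell(s)$ (in the variable $s$ running backwards). This is precisely one time-inhomogeneous Kingman segment. At each height $s=x_i$ a new leaf is added, so the count jumps upward by one, giving the recursive initial value $\lim_{s\downarrow x_i}Y^{(k)}_s+1$ for the segment $(x_{i+1},x_i]$, which matches the definition of $Z^{(k)}_s(x_1,\ldots,x_k)$ verbatim.

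It remains to treat the two extremes. For $s>x_1$ no leaf is yet alive, so $Y^{(k)}_s=0=Z^{(k)}_s$, matching the initialisation in the definition of $Z^{(k)}$. On $(0,x_k]$ the final merge in Aldous' construction occurs strictly below $s$, so it does not affect $Y^{(k)}_s$, and the dynamics coincide with the last Kingman segment of $Z^{(k)}$. Combining these observations yields the claimed equality in distribution of processes. There is no substantive technical obstacle: the proof is a direct bookkeeping translation between Aldous' cluster-coalescent description and the block-counting formulation, and the only care required is in handling the boundary behaviour at each birth time $x_i$ and at $s=0$, which is handled above.
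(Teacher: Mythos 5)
Your argument is correct and is essentially the argument the paper has in mind: the paper states this proposition without a formal proof, offering only the one-line remark that the $+1$ at $x_{i+1}$ in the definition of $Z^{(k)}$ corresponds to the introduction of a new leaf at its birth height, since it regards the claim as a direct bookkeeping consequence of Aldous' coalescent construction. Your proof spells out precisely that bookkeeping (birth times independent of the coalescence dynamics, Kingman-rate segments between births, $+1$ jumps at births, and the harmless boundary behaviour at $s>x_1$ and $s=0$), so it matches the paper's approach, merely made explicit.
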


We mention that the additional term \(+1\) at height \(x_{i+1}\) in the definition of \(Z^{(k)}\) corresponds, in the process \(Y^{(k)}\), to the introduction of a new leaf at its birth height \(x_{i+1}\).

This proposition gives rise to the following equivalent construction of \(\mathcal{T}^\ell_k\) for continuous profile $\ell$ with $h(\ell)<\infty$:

\begin{enumerate}
\item Sample $k$ heights independently with density \(\frac{\ell(x)}{I(\ell)}\,\mathrm{d}x\), and reorder them so that \(x_1 > \cdots > x_k\).
\item Sample a piecewise Kingman's coalescent process \(\big(Z^{(k)}_s=Z^{(k)}_s(x_1,\ldots,x_k): s \ge 0\big)\).
\item Conditional on \((Z^{(k)}_s)_{s > 0}\), sample uniformly from the set of rooted ordered real trees whose lineage counts at each height agree with \(Z^{(k)}\).
\end{enumerate}

\begin{proposition}
    The tree obtained from this construction has the same distribution as \(\mathcal{T}^\ell_k\).
\end{proposition}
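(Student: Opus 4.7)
The plan is to show that each of the three steps in the new construction matches the corresponding layer of randomness in Aldous' coalescent description of $\mathcal{T}^\ell_k$. The first step matches by definition: in Aldous' construction the $k$ leaf heights are i.i.d.\ with density $\ell(x)/I(\ell)$, and sorting them produces $x_1>\cdots>x_k$. The second step is exactly the content of Proposition~\ref{prop:piecewise_Kingman}: conditional on the reordered heights, the lineage-count process $Y^{(k)}_s$ produced by Aldous' construction is distributed as the piecewise Kingman's coalescent $Z^{(k)}_s(x_1,\ldots,x_k)$. Thus the remaining task is to identify the conditional law of the full rooted ordered real tree given the heights and $Y^{(k)}$, and to verify that it is uniform on the finite set appearing in Step~3.

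For this, fix the heights $x_1>\cdots>x_k$ and a lineage-count profile $Y^{(k)}$, and let $\mathcal{F}$ denote the finite set of rooted ordered real trees with these leaf heights and this lineage-count profile. An element of $\mathcal{F}$ is specified by two pieces of combinatorial data: (i) at each downward jump of $Y^{(k)}$, the unordered pair of currently active lineages that coalesces; and (ii) at each internal node, the left–right ordering of its two (or, at the root, possibly more) children. In Aldous' construction every pair of active lineages coalesces at the common rate $\tfrac{4}{\ell(t)}\,\mathrm{d}t$; hence, conditional on a merger occurring at height $t$ with $m$ active lineages, the merging pair is uniform over the $\binom{m}{2}$ possibilities, independently across mergers. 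The left–right orderings — both at each binary merger and at the root at $t=0$ — are explicitly declared uniform in the construction. These uniform and independent choices set up a bijection between the residual randomness left after conditioning on $(x_1,\ldots,x_k)$ and $Y^{(k)}$ and the elements of $\mathcal{F}$, giving each element equal weight. This matches Step~3 and completes the identification of the two distributions.

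The point that I would expect to require the most care — and hence the main obstacle — is the correct parameterization of $\mathcal{F}$ in the presence of the birth events and of the simultaneous final merger at $t=0$. Upward jumps of $Y^{(k)}$ correspond to births, which insert a new leaf but contribute no combinatorial choice, while the jump of $Y^{(k)}$ down to $1$ at $t=0$ may join several surviving clusters at once and must be handled by the uniform-ordering convention. Once this bookkeeping is in place, the identification is a direct exchangeability argument using the constant pairwise coalescence rate, so no further estimates are needed beyond Proposition~\ref{prop:piecewise_Kingman} and the Aldous coalescent construction recalled above.
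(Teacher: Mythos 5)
The paper states this proposition without giving an explicit proof; it is presented as an immediate consequence of Proposition~\ref{prop:piecewise_Kingman} together with Aldous' coalescent construction, and the uniformity claim in Step~3 is left implicit. Your write-up fills in exactly the argument the authors presumably had in mind, and it is correct: Steps~1 and 2 match by definition and by Proposition~\ref{prop:piecewise_Kingman}, and for Step~3 you rightly identify that the residual randomness — which pair merges at each downward jump of $Y^{(k)}$, plus the left-right ordering at each binary node and at the root — is determined by independent uniform choices (since all pairs coalesce at the common rate $\tfrac{4}{\ell(t)}\,\mathrm{d}t$, the merging pair is uniform over the $\binom{m}{2}$ possibilities given a merger occurs, and the orderings are uniform by fiat). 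Because the number of choices at each stage is determined by the fixed profile $Y^{(k)}$, the resulting law on the finite set $\mathcal{F}$ of compatible rooted ordered real trees is uniform, matching Step~3. Your caveat about bookkeeping at births (no combinatorial choice) and at the final simultaneous merger at $t=0$ (handled by the uniform-ordering convention) is the right thing to flag and is dealt with correctly.
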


\subsection{Distribution of uniform \(k\)-point subtrees induced by \(W^{\ell^\sigma}_{\alpha^{-1}(t)}\)}\label{sec:time-changed}
We now turn to the time-changed process \(W^{\ell^\sigma}_{\alpha^{-1}(t)}\). As a first step, we verify that \(\alpha\) is strictly increasing, so that its inverse \(\alpha^{-1}\) is well-defined. 
We then show that the total time duration of \(W^{\ell^\sigma}_{\alpha^{-1}(t)}\) is equal to \(I(\ell)\), which implies that the rescaled process
\(W^{\ell^\sigma}_{\alpha^{-1}(I(\ell) t)} \in C_{\mathrm{bridge}}\). 
After that, we describe the law of the uniform \(k\)-point subtrees induced by this process.

\begin{lemma}\label{lem:alpha}
    For any continuous profiles $\ell$ and $\sigma$ satisfying \eqref{H1}, the function
    \[
        \alpha(t)=\frac{1}{4}\int_0^t \sigma\big(W^{\ell^\sigma}_s\big)^2\,\mathrm{d}s
    \]
    is a.s.\ strictly increasing.
\end{lemma}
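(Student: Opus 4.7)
The plan is to reduce strict monotonicity of $\alpha$ on $[0,I(\ell^\sigma)]$ to the claim that, on any nontrivial subinterval $[t_1,t_2]$, the process $W^{\ell^\sigma}$ spends positive Lebesgue time in the open set where $\sigma>0$. First I would observe that, by the definition of $h(\sigma)$ and continuity of the profile $\sigma$, we have $\sigma(x)>0$ precisely for $x\in(0,h(\sigma))=(0,h(\ell))$. Hence for any $0\le t_1<t_2\le I(\ell^\sigma)$,
\[
\alpha(t_2)-\alpha(t_1)=\tfrac14\int_{t_1}^{t_2}\sigma(W^{\ell^\sigma}_s)^2\,\d s,
\]
and positivity amounts to proving that $\mathrm{Leb}\{s\in[t_1,t_2]:W^{\ell^\sigma}_s\in(0,h(\ell))\}>0$ almost surely.

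The key input is the occupation-density identity from Proposition~\ref{prop:aldous1}\eqref{item:aldous1}, which (on a single almost-sure event) says that for every non-negative measurable $f$,
\[
\int_0^{I(\ell^\sigma)} f(W^{\ell^\sigma}_s)\,\d s=\int_0^\infty f(x)\,\ell^\sigma(x)\,\d x.
\]
Applying this with $f=\mathbf{1}_{\{0\}}$ and $f=\mathbf{1}_{[h(\ell),\infty)}$, and noting that $\ell^\sigma$ vanishes on $[h(\ell),\infty)$ while $\{0\}$ has Lebesgue measure zero, yields
\[
\mathrm{Leb}\{s\in[0,I(\ell^\sigma)]:W^{\ell^\sigma}_s=0\}=0,\qquad \mathrm{Leb}\{s\in[0,I(\ell^\sigma)]:W^{\ell^\sigma}_s\ge h(\ell)\}=0.
\]
Consequently, the set $\{s\in[0,I(\ell^\sigma)]:W^{\ell^\sigma}_s\in(0,h(\ell))\}$ has full Lebesgue measure in $[0,I(\ell^\sigma)]$, and in particular positive measure intersection with every subinterval $[t_1,t_2]\subset[0,I(\ell^\sigma)]$ with $t_1<t_2$. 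On this intersection $\sigma(W^{\ell^\sigma}_s)^2>0$, so the integral above is strictly positive, which gives $\alpha(t_2)>\alpha(t_1)$ and establishes the claim.

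I do not anticipate a serious obstacle: the argument rests only on the occupation-density identity already provided and on the definitional fact that continuous profiles are strictly positive on $(0,h(\cdot))$. The mildly delicate step is the almost-sure quantifier---one must realize that \eqref{eq:aldous1} holds on a single full-measure event simultaneously for all non-negative measurable $f$, so that both null-set conclusions (and hence the strict monotonicity for \emph{every} pair $t_1<t_2$) hold on that one event.
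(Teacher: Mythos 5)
Your proof is correct and follows essentially the same route as the paper: invoke the occupation-density identity \eqref{eq:aldous1} to conclude that $W^{\ell^\sigma}_s$ spends zero Lebesgue time outside $(0,h(\ell))$, then use $\sigma>0$ on $(0,h(\ell))$ to get strict positivity of $\alpha(t_2)-\alpha(t_1)$. Your explicit handling of the almost-sure quantifier (one full-measure event working simultaneously for all $f$) is a reasonable elaboration but does not change the substance of the argument.
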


\begin{proof}
    By Proposition~\ref{prop:aldous1}, the total time duration of \(W^{\ell^\sigma}\) equals \(I(\ell^\sigma)\).
    Using \eqref{eq:aldous1}, we have a.s.
    \[
        \mathrm{Leb}\{s\in[0,I(\ell^\sigma)] : W^{\ell^\sigma}_s \notin (0,h(\ell))\} = 0.
    \]
    Since $\sigma>0$ on $(0,h(\ell))$, it follows that $\sigma(W^{\ell^\sigma}_s)^2>0$ for a.e.\ $s$, hence $\alpha$ is a.s.\ strictly increasing.
\end{proof}

\begin{proposition}
    For any continuous profile $\ell$ with $h(\ell)<\infty$,
    The total time duration of \(W^{\ell^\sigma}_{\alpha^{-1}(t)}\) is $I(\ell)$. Consequently, \(W^{\ell^\sigma}_{\alpha^{-1}(I(\ell)t)} \in C_{\mathrm{bridge}}\).
\end{proposition}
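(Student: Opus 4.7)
The plan is to reduce both claims to a single computation, namely that $\alpha(I(\ell^\sigma)) = I(\ell)$, which follows directly from the occupation-time identity \eqref{eq:aldous1} applied to the test function $f = \sigma^2/4$.

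First, by Proposition~\ref{prop:aldous1}\eqref{item:aldous2}, the process $W^{\ell^\sigma}$ is defined on the time interval $[0, I(\ell^\sigma)]$ with $W^{\ell^\sigma}_0 = W^{\ell^\sigma}_{I(\ell^\sigma)} = 0$. Using \eqref{eq:aldous1} with $f(s) = \sigma(s)^2/4$, I would compute
\[
\alpha\bigl(I(\ell^\sigma)\bigr)
=\frac{1}{4}\int_0^{I(\ell^\sigma)} \sigma\bigl(W^{\ell^\sigma}_s\bigr)^2\,\mathrm{d}s
=\int_0^\infty \ell^\sigma(s)\,\frac{\sigma(s)^2}{4}\,\mathrm{d}s.
\]
Substituting the definition $\ell^\sigma(s) = \mathbf{1}\{s<h(\ell)\}\cdot 4\ell(s)/\sigma(s)^2$, the integrand simplifies to $\ell(s)$ on $(0,h(\ell))$, so the last integral equals $I(\ell)$. (The measure-zero set $\{s=h(\ell)\}$, on which $\sigma$ may vanish or be discontinuous, is harmless.)

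Second, combining this computation with Lemma~\ref{lem:alpha}, $\alpha$ is continuous and strictly increasing from $\alpha(0)=0$ to $\alpha(I(\ell^\sigma)) = I(\ell)$, hence $\alpha^{-1}$ is a continuous strictly increasing bijection from $[0, I(\ell)]$ onto $[0, I(\ell^\sigma)]$. Therefore the time-changed process $t\mapsto W^{\ell^\sigma}_{\alpha^{-1}(t)}$ has total duration $I(\ell)$, proving the first assertion.

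For the consequence, I would verify the three defining properties of $C_{\mathrm{bridge}}$ for the rescaled path $g(t) := W^{\ell^\sigma}_{\alpha^{-1}(I(\ell)t)}$ on $[0,1]$: continuity and non-negativity are inherited from $W^{\ell^\sigma}$ (which is a continuous reflected process) composed with the continuous map $t\mapsto \alpha^{-1}(I(\ell)t)$, while $g(0) = W^{\ell^\sigma}_0 = 0$ and $g(1) = W^{\ell^\sigma}_{I(\ell^\sigma)} = 0$ by the bridge endpoint property just quoted. Since the argument is essentially one application of the occupation density formula, there is no real obstacle; the only point requiring a moment's care is the cancellation $\ell^\sigma(s)\cdot \sigma(s)^2 = 4\ell(s)$ on the support of $\ell$, which is precisely why the profile $\ell^\sigma$ was introduced.
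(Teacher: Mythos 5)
Your proof is correct and follows exactly the paper's route: apply the occupation-time identity \eqref{eq:aldous1} with $f=\sigma^2/4$ to evaluate $\alpha(I(\ell^\sigma))$, simplify via $\ell^\sigma\cdot\sigma^2 = 4\ell$ on the support of $\ell$, and invoke Proposition~\ref{prop:aldous1}\eqref{item:aldous2} for the bridge endpoints. You have actually tracked the prefactor $\tfrac14$ more carefully than the paper's displayed chain, which silently drops it (the paper's middle terms should each carry an extra $\tfrac14$ for the equalities to hold; the final identity $=I(\ell)$ is what is intended).
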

\begin{proof}
    The total time duration of the time-changed process \(W^{\ell^\sigma}_{\alpha^{-1}(t)}\) is
\[\alpha\big(I(\ell^\sigma)\big)=\int_0^{I(\ell^\sigma)} \sigma(W^{\ell^\sigma}_s)^2\d s\overset{\eqref{eq:aldous1}}{=}\int_0^\infty \ell^\sigma(s) \sigma(s)^2 \d s=I(\ell).\]
The fact that \(W^{\ell^\sigma}_{\alpha^{-1}(I(\ell)t)} \in C_{\mathrm{bridge}}\) then follows from Proposition~\ref{prop:aldous1} \eqref{item:aldous2}.
\end{proof}

Let $\mathcal{T}^{\ell,\sigma}$ denote the rooted ordered real tree encoded by 
$W^{\ell^\sigma}_{\alpha^{-1}(I(\ell)t)}$, and let $\mathcal{T}^{\ell,\sigma}_k$ be its uniform 
$k$-point subtree. We now describe the distribution of $\mathcal{T}^{\ell,\sigma}_k$.

\begin{theorem}\label{thm:tree_construction}
For any $k\in\mathbb{N}_+$, the tree $\mathcal{T}^{\ell,\sigma}_k$ admits the following construction:
\begin{enumerate}
\item Sample $k$ heights independently with density \(\frac{\ell(x)}{I(\ell)}\,\mathrm{d}x\), and reorder them so that \(x_1 > \cdots > x_k\).
\item Sample a piecewise Kingman coalescent 
\(Z^{(k)}=\big(Z^{(k)}_s(x_1,\ldots,x_k): s>0\big)\) in which each pair of lineages coalesces at rate $\frac{4}{\ell^\sigma(s)}$.
\item Conditionally on \(Z^{(k)}\), sample uniformly from the set of rooted ordered real trees whose lineage counts at each height agree with \(Z^{(k)}\).
\end{enumerate}
\end{theorem}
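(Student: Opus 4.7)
The plan is to reduce the theorem to the corresponding description of $\mathcal{T}^{\ell^\sigma}_k$, which is given by Proposition~\ref{prop:piecewise_Kingman} together with Aldous's uniform-ordering rule, via the time-change isomorphism of Remark~\ref{treeiso}.

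First, I would set $\phi(t) := \alpha^{-1}(I(\ell)\, t)/I(\ell^\sigma)$, which is a strictly increasing homeomorphism of $[0,1]$ by Lemma~\ref{lem:alpha} (one checks $\phi(0)=0$ and, since $\alpha(I(\ell^\sigma)) = \int_0^\infty \ell^\sigma(x)\sigma(x)^2/4\,\mathrm{d}x = I(\ell)$, also $\phi(1)=1$). The encoding functions $g(t) := W^{\ell^\sigma}_{\alpha^{-1}(I(\ell)t)}$ of $\mathcal{T}^{\ell,\sigma}$ and $g'(s) := W^{\ell^\sigma}_{I(\ell^\sigma)s}$ of $\mathcal{T}^{\ell^\sigma}$ satisfy $g = g'\circ\phi$, so by Remark~\ref{treeiso} they encode the same rooted ordered real tree via the isomorphism $\Psi(p_g(t)) := p_{g'}(\phi(t))$. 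Consequently, if $U_1,\ldots,U_k$ are i.i.d.\ $\mathrm{Uniform}[0,1]$ and $S_i := \phi(U_i)$, then $\mathcal{T}^{\ell,\sigma}_k$ has the same law as the rooted ordered subtree $\mathcal{T}^{\ell^\sigma}(S_1,\ldots,S_k)$ of $\mathcal{T}^{\ell^\sigma}$.

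Next I would compute the conditional law of the $S_i$'s given $W^{\ell^\sigma}$. A direct change of variables shows the $S_i$'s are conditionally i.i.d.\ with density $f(s) = I(\ell^\sigma)\sigma(g'(s))^2/(4I(\ell))$ on $[0,1]$. Two consequences follow. First, pushing forward under $g'$ and applying the occupation-time formula from Proposition~\ref{prop:aldous1}\eqref{item:aldous1} together with the identity $\sigma(x)^2\ell^\sigma(x)/4 = \ell(x)$, the conditional density of the leaf height $h_i := g(U_i) = g'(S_i)$ given $W^{\ell^\sigma}$ equals the \emph{deterministic} density $\ell(x)/I(\ell)$; hence the $h_i$'s are unconditionally i.i.d.\ with density $\ell(x)/I(\ell)$, establishing Step~1. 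Second, since $f(s)$ depends on $s$ only through $g'(s)$, the density is constant on each level set of $g'$; by disintegration, the conditional law of $S_i$ given $g'(S_i)=y$ coincides with the conditional law of $U'_i$ given $g'(U'_i)=y$ when $U'_i \sim \mathrm{Uniform}[0,1]$.

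This second consequence is the heart of the argument: it implies that, conditionally on the heights $(h_1,\ldots,h_k)$, the rooted ordered subtree $\mathcal{T}^{\ell^\sigma}(S_1,\ldots,S_k)$ has the same law as $\mathcal{T}^{\ell^\sigma}_k$ conditioned on those same heights. Applying Proposition~\ref{prop:piecewise_Kingman} to the latter then identifies the conditional ancestral process as the piecewise Kingman coalescent with rate $4/\ell^\sigma(s)$, yielding Step~2. Step~3 (uniform ordering among compatible plane-tree structures) is inherited from the uniform-ordering rule built into $\mathcal{T}^{\ell^\sigma}$ through Aldous's coalescent construction recalled in Section~\ref{sec:time_change}. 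The main technical subtlety will be making the second consequence above rigorous: one must disintegrate the measure $f(s)\,\mathrm{d}s$ along the level sets of the random function $g'$ and check that, because $f$ factors through $g'$, the conditional measure on each level set matches the one induced by uniform sampling. This is a standard disintegration along a measurable map, made valid here by the positivity of $\sigma$ on $(0,h(\ell))$, which ensures that the relevant Radon-Nikodym factor is strictly positive on the support.
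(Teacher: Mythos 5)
Your argument is correct and rests on the same change-of-variables computation that drives the paper's proof; the essential observation in both is that the conditional density of the sampled times $S_i=\phi(U_i)$ given $W^{\ell^\sigma}$ factors through the path value $g'(s)$, so the time change alters only the marginal leaf-height density, not the conditional tree structure given those heights. The paper packages this differently: it first uses the change of variables \eqref{eq:change_of_variables} to prove, as a stand-alone identity \eqref{eq:RN}, that the Radon--Nikodym derivative of the law of $\mathcal{T}^{\ell,\sigma}_k$ with respect to that of $\mathcal{T}^{\ell^\sigma}_k$ is a function of the leaf heights alone, namely $\bigl(\tfrac{I(\ell^\sigma)}{I(\ell)}\bigr)^k\prod_j\sigma(h_j)^2$. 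It then notes that the three-step construction $\mathring{\mathcal{T}}^{\ell,\sigma}_k$ and $\mathcal{T}^{\ell^\sigma}_k$ already share the same conditional law given leaf heights (both being the piecewise Kingman with rate $4/\ell^\sigma$), so their unconditional RN derivative is likewise the ratio of height densities, which matches \eqref{eq:RN}; no disintegration along the level sets of the random map $g'$ is needed. Your route instead directly disintegrates the law of $S_i$ given $W^{\ell^\sigma}$ along $g'$, exploiting the factorization $f(s)=c\,\sigma(g'(s))^2$. This is valid but more delicate, and a complete write-up should also make explicit that conditioning on the heights leaves the law of $g'$ itself unchanged -- which follows because the occupation-time formula of Proposition~\ref{prop:aldous1}~\eqref{item:aldous1} makes the height density $\ell(x)/I(\ell)$ deterministic given $g'$, so the Bayes factor is constant. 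With that point added, your proof is a correct, slightly more hands-on alternative to the paper's RN-derivative formulation.
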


The proof proceeds by relating $\mathcal{T}^{\ell,\sigma}_k$ to $\mathcal{T}^{\ell^\sigma}_k$, the uniform $k$-point
subtree induced by $W^{\ell^\sigma}_{I(\ell^\sigma)t}$.
Note that $W^{\ell^\sigma}_{\alpha^{-1}(I(\ell)t)}$ differs from $W^{\ell^\sigma}_{I(\ell^\sigma)t}$ only by a strictly increasing time change:
\[
W^{\ell^\sigma}_{\alpha^{-1}(I(\ell)t)}
=
W^{\ell^\sigma}_{I(\ell^\sigma)\phi(t)},
\qquad 
\phi(t):=\frac{\alpha^{-1}(I(\ell)t)}{I(\ell^\sigma)}.
\]
Thus by Remark~\ref{treeiso}, for any $t_1,\ldots,t_k\in(0,1)$,
\begin{align}\label{eq:iso}
\mathcal{T}^{\ell,\sigma}(t_1,\ldots,t_k)
\text{ is isomorphic to }
\mathcal{T}^{\ell^\sigma}\big(\phi({t_1}),\ldots,\phi({t_k})\big).
\end{align}

\begin{proposition}
For any $k\in\mathbb{N}_+$, the laws of $\mathcal{T}^{\ell,\sigma}_k$ and $\mathcal{T}^{\ell^\sigma}_k$ 
are mutually absolutely continuous. More precisely, for any test function $f$,
\begin{align}\label{eq:RN}
\mathbb{E}\big[f(\mathcal{T}^{\ell,\sigma}_k)\big]
=
\left(\frac{I(\ell^\sigma)}{4 I(\ell)}\right)^k
\mathbb{E}\!\big[f(\mathcal{T}^{\ell^\sigma}_k)\,\sigma(\mathcal{T}^{\ell^\sigma}_k)^2\big],
\end{align}
where
\[
\sigma(\mathcal{T}^{\ell^\sigma}_k)
=
\prod_{j=1}^k \sigma(h_j),
\qquad
\text{$h_1,\ldots,h_k$ denote the leaf heights of $\mathcal{T}^{\ell^\sigma}_k$.}
\]
\end{proposition}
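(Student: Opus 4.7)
The plan is to exploit the tree isomorphism \eqref{eq:iso} to reduce the computation of $\mathbb{E}[f(\mathcal{T}^{\ell,\sigma}_k)]$ to a change-of-measure argument on the $k$ sampling points in $[0,1]$, where the Radon--Nikodym density will be read off from the Jacobian of the random time change $\phi$. The product structure in $k$, together with the identification of $W^{\ell^\sigma}_{I(\ell^\sigma)v}$ with the height of the corresponding sampled leaf, is what will recover the factor $\sigma(\mathcal{T}^{\ell^\sigma}_k)^2$ on the right-hand side of \eqref{eq:RN}.

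Concretely, I would first condition on the entire path $W^{\ell^\sigma}$ and let $U_1,\ldots,U_k$ be i.i.d.\ $\mathrm{Uniform}[0,1]$ variables independent of $W^{\ell^\sigma}$. By Definition~\ref{def:k-subtree} combined with \eqref{eq:iso}, the rooted ordered real tree $\mathcal{T}^{\ell,\sigma}_k$ is isomorphic to $\mathcal{T}^{\ell^\sigma}(V_1,\ldots,V_k)$, where $V_i:=\phi_{U_i}$. Conditional on $W^{\ell^\sigma}$, the $V_i$ are i.i.d.\ with density $f_V(v\mid W^{\ell^\sigma})=1/\phi'(\phi^{-1}(v))$. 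The key calculation is then to invoke $\phi_t=\alpha^{-1}(I(\ell)t)/I(\ell^\sigma)$ and $\alpha'(s)=\tfrac14\sigma(W^{\ell^\sigma}_s)^2$ together with the chain rule, which gives an explicit formula expressing $f_V$ as a deterministic constant times $\sigma(W^{\ell^\sigma}_{I(\ell^\sigma)v})^2$.

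Using independence of the $V_i$ and Fubini, I would then write
\[
\mathbb{E}\!\big[f(\mathcal{T}^{\ell,\sigma}_k)\,\big|\,W^{\ell^\sigma}\big]
=\int_{[0,1]^k}\! f\!\big(\mathcal{T}^{\ell^\sigma}(v_1,\ldots,v_k)\big)\prod_{i=1}^{k}f_V(v_i\mid W^{\ell^\sigma})\,dv_1\cdots dv_k.
\]
The crucial observation is that $W^{\ell^\sigma}_{I(\ell^\sigma)v_i}$ is exactly the height $h_i$ of the $i$-th sampled leaf of $\mathcal{T}^{\ell^\sigma}(v_1,\ldots,v_k)$, so the product of the $\sigma(W^{\ell^\sigma}_{I(\ell^\sigma)v_i})^2$ assembles into $\sigma(\mathcal{T}^{\ell^\sigma}(v_1,\ldots,v_k))^2$, and the bookkeeping of the remaining deterministic factor produces the prefactor $(I(\ell^\sigma)/I(\ell))^k$ in \eqref{eq:RN}. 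Re-expressing the integral as the expectation against the uniform sampling that defines $\mathcal{T}^{\ell^\sigma}_k$ and taking the outer expectation over $W^{\ell^\sigma}$ then completes the identity.

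The main obstacle is the Jacobian computation itself, since $\phi$ is a \emph{random} time change that is only a.e.\ differentiable; this requires establishing that $\phi$ is almost surely a strictly increasing homeomorphism of $[0,1]$ with a.e.\ positive derivative. Strict monotonicity is supplied by Lemma~\ref{lem:alpha}, while a.e.\ positivity of $\phi'$ follows from the continuity and positivity of $\sigma$ on $(0,h(\ell))$ combined with \eqref{eq:aldous1} applied to the indicator $\mathbf{1}\{\sigma(x)=0\}$, which rules out $W^{\ell^\sigma}$ spending a set of positive Lebesgue measure at zeros of $\sigma$. Aside from this regularity point, the entire argument is a direct change of variables and Fubini.
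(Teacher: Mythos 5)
Your proposal is correct and follows essentially the same route as the paper's own proof: apply the isomorphism \eqref{eq:iso}, Fubini, and the change of variables $s_i\mapsto\phi_{s_i}$, reading the Radon--Nikodym density off the Jacobian $\frac{\mathrm{d}}{\mathrm{d}s}\phi^{-1}(s)$ and identifying $W^{\ell^\sigma}_{I(\ell^\sigma)s_j}$ with the $j$-th leaf height; the regularity of $\phi$ that you flag is a legitimate (and, in the paper, unaddressed) technical point. One caveat if you carry out the ``bookkeeping'' explicitly: since $\alpha'(u)=\tfrac14\sigma(W^{\ell^\sigma}_u)^2$, the Jacobian is $\frac{\mathrm{d}}{\mathrm{d}s}\phi^{-1}(s)=\tfrac{I(\ell^\sigma)}{4I(\ell)}\,\sigma(W^{\ell^\sigma}_{I(\ell^\sigma)s})^2$, so the prefactor should be $\bigl(\tfrac{I(\ell^\sigma)}{4I(\ell)}\bigr)^k$ rather than $\bigl(\tfrac{I(\ell^\sigma)}{I(\ell)}\bigr)^k$ (the paper's displayed formula and its proof both silently drop the $\tfrac14$ from the definition of $\alpha$), which you can sanity-check by taking $\sigma\equiv c$ constant.
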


\begin{proof}
For any test function $f$,
\begin{align}\label{eq:change_of_variables}
\begin{aligned}
\mathbb{E}\big[f(\mathcal{T}^{\ell,\sigma}_k)\big]
&=\mathbb{E}\big[f(\mathcal{T}^{\ell,\sigma}(U_1,\ldots,U_k))\big] \\
&\overset{\eqref{eq:iso}}{=}\mathbb{E}\big[f\big(\mathcal{T}^{\ell^\sigma}(\phi({U_1}),\ldots,\phi({U_k})) \big)\big] \\
&=\int_{[0,1]^k}\mathbb{E}\big[f\big(\mathcal{T}^{\ell^\sigma}(\phi({s_1}),\ldots,\phi({s_k})) \big)\big]\,\mathrm{d}s_1\cdots\mathrm{d}s_k \\
&=\int_{[0,1]^k}\mathbb{E}\big[f(\mathcal{T}^{\ell^\sigma}(s_1,\ldots,s_k))\big]\,\mathrm{d}\phi^{-1}(s_1)\cdots\mathrm{d}\phi^{-1}(s_k).
\end{aligned}
\end{align}
By the definition of $\phi$,
\[
\phi^{-1}(s)=\frac{\alpha(I(\ell^\sigma)s)}{I(\ell)},
\qquad
\mathrm{d}\phi^{-1}(s)
=\frac{I(\ell^\sigma)}{4 I(\ell)}\,\sigma(W^{\ell^\sigma}_{I(\ell^\sigma)s})^2\,\mathrm{d}s,
\]
substituting into \eqref{eq:change_of_variables} yields
\begin{align*} \mathbb{E}\big[f(\mathcal{T}^{\ell,\sigma}_k)\big]&=\left(\frac{I(\ell^\sigma)}{4 I(\ell)}\right)^k \int_0^1\cdots \int_0^1\mathbb{E}\big[f(\mathcal{T}^{\ell^\sigma}(s_1,\ldots,s_k))\big]\prod_{j=1}^k \sigma(W^{\ell^\sigma}_{I(\ell^\sigma)s_j})^2 \d s_1\cdots \d s_k\\ 
&=\left(\frac{I(\ell^\sigma)}{4 I(\ell)}\right)^k\mathbb{E}\big[f(\mathcal{T}^{\ell^\sigma}_k)\sigma(\mathcal{T}^{\ell^\sigma}_k)^2\big]. \end{align*} This completes the proof. 
\end{proof}

\begin{proof}[Proof of Theorem \ref{thm:tree_construction}]
Let $\mathring{\mathcal{T}}^{\ell,\sigma}_k$ denote the random rooted ordered tree defined by the three-step procedure in the statement of the theorem. We assume that $\mathcal{T}^{\ell^\sigma}$ is realized via the piecewise Kingman representation in Proposition~\ref{prop:piecewise_Kingman}.

For any $x_1>\cdots>x_k>0$, the conditional laws of 
$\mathring{\mathcal{T}}^{\ell,\sigma}_k$ and $\mathcal{T}^{\ell^\sigma}_k$ given that the leaf heights equal $(x_1,\ldots,x_k)$ coincide.  
Thus, the only difference between their unconditional distributions comes from the respective sampling densities of the leaf heights.  
Comparing these densities yields, for any test function $f$,
\[
\mathbb{E}\big[f(\mathring{\mathcal{T}}^{\ell,\sigma}_k)\big]
=
\left(\frac{I(\ell^\sigma)}{4 I(\ell)}\right)^k\,
\mathbb{E}\!\big[f(\mathcal{T}^{\ell^\sigma}_k)\,\sigma(\mathcal{T}^{\ell^\sigma}_k)^2\big].
\]
Together with \eqref{eq:RN}, this implies 
$\mathring{\mathcal{T}}^{\ell,\sigma}_k\overset{d}{=}\mathcal{T}^{\ell,\sigma}_k$, thus completing the proof.
\end{proof}

\section{Coalescent process}\label{sec:tightness}
In this section, we first derive preparatory moment and probability estimates for elementary coalescence events (Section~\ref{sec:coal-prob}); these bounds will be used repeatedly in Sections~\ref{sec:tightness}--\ref{sec:proof}.
We then introduce the coalescent process arising in Cannings trees, show that it forms a Markov chain, and prove a coming-down-from-infinity (CDFI) property that serves as a central ingredient for the tightness of \(\widetilde{\mathscr{T}}^n\).

We begin with some notation. 
Denote by \(x_1^{n,s},\dots,x_{q_n(s)}^{n,s}\) the vertices at generation \(s\) of \(\mathscr{T}^n\), listed according to the lexicographic order.  
Let 
\[(u^{n,s}_1,\ldots,u^{n,s}_{q_n(s)})=(x^{n,s}_{\pi^{n,s}(1)},\ldots,x^{n,s}_{\pi^{n,s}(q_n(s))})\] 
be obtained from
$(x^{n,s}_1,\ldots,x^{n,s}_{q_n(s)})$ by a uniformly random permutation
$\pi^{n,s}$, where the permutations $\{\pi^{n,s}\}_s$ are
independent across generations and independent of $\mathscr{T}^n$. 
For any vertex \(x\), we write \(\xi(x)\) for the number of children of \(x\) and \(\mathfrak{p}(x)\) for the parent of \(x\). In particular, we have \(\xi(x_i^{n,s})=\nu_i^{n,s}\). 

For notational convenience, from this point onward we write \(\mathbb{P}_n\) and \(\mathbb{E}_n\) for the probability and expectation under the model indexed by \(n\), respectively, and we omit the superscript \(n\) in \(\nu^{n,s}\), \(x_i^{n,s}\), and \(u_i^{n,s}\) whenever it is clear from context.

\subsection{Coalescent probability}\label{sec:coal-prob}
In this subsection, we introduce coalescent events in Cannings trees and express their probabilities in terms of moments of the offspring counts \(\nu\). We then estimate these moments, which in turn yield quantitative estimates for the coalescent probabilities. 

\medskip

For $n \ge 1$ and $1 \le s < h(q_n)$, define  
\begin{align}\label{def:D}
    M^s=M^{n,s} := \Big\{ f : A \to \{1, \ldots, q_n(s)\}\,:\, A \subset \{1, \ldots, q_n(s+1)\} \Big\},
\end{align}
the collection of all mappings from a subset of $\{1, \ldots, q_n(s+1)\}$ to $\{1, \ldots, q_n(s)\}$.  

Each $f \in M^{s}$ naturally encodes a coalescent event:
\begin{align}\label{def:coal_event}
    \text{Coal}^{s}(f)=\text{Coal}^{n,s}(f) := \Big\{ \forall\, j \in \mathrm{Dom}(f),\ \mathfrak{p}(u^{n,s+1}_j) = x^{n,s}_{f(j)} \Big\}.
\end{align}
Suppose the range of $f$ is $\{j_1<j_2<\cdots<j_m\} \subset \{1,\ldots,q_n(s)\}$.  
We then define
\[
    N_f := \big(\#f^{-1}(j_1),\ldots,\#f^{-1}(j_m)\big),
\]
which records the multiplicities of coalescence in the event $\text{Coal}^{s}(f)$.

\subsubsection{Linking coalescent probability to offspring moments}

\begin{proposition}\label{prop:coal-prob}
For $n \ge 1$, $1 \le s < h(q_n)$, and $f \in M^{s}$ with $N_f=(N_1,\ldots,N_m)$ and $\#\mathrm{Dom}(f) :=\sum_{i=1}^m N_i$, we have
\begin{align}\label{eq:conclusion1}
    \p_n\big(\text{Coal}^{s}(f)\big)
    = \frac{1}{\big(q_n(s+1)\big)_{\#\mathrm{Dom}(f)}}\, \mathbb{E}_n\!\Bigg[\prod_{i=1}^{m}\big(\nu^s_{j_i}\big)_{N_i}\Bigg],
\end{align}
where $(l)_k := l(l-1)\cdots(l-k+1)$ denotes the falling factorial.
\end{proposition}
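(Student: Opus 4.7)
The plan is to condition on the generation-$s$ offspring vector $\nu^s=(\nu^s_1,\ldots,\nu^s_{q_n(s)})$ and reduce the computation to a uniform-injection counting argument.

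First I would note that by construction the uniform random permutation $u^{n,s+1}$ is independent of $\mathscr{T}^n$, and in particular independent of $\nu^s$. Conditionally on $\nu^s$, the set of vertices $\{x^{n,s+1}_1,\ldots,x^{n,s+1}_{q_n(s+1)}\}$ is deterministically partitioned into $q_n(s)$ ordered blocks, where the $i$-th block consists of exactly the $\nu^s_i$ children of $x^{n,s}_i$. Then I would observe that the marginal law of the tuple $\big(u^{n,s+1}_j\big)_{j\in\mathrm{Dom}(f)}$ is uniform over the set of injections from $\mathrm{Dom}(f)$ into $\{x^{n,s+1}_1,\ldots,x^{n,s+1}_{q_n(s+1)}\}$, the number of which is exactly $(q_n(s+1))_{\#\mathrm{Dom}(f)}$.

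Next I would count the injections realizing the event $\mathrm{Coal}^s(f)$. Since the preimage $f^{-1}(j_i)$ must be mapped into the children of $x^{n,s}_{j_i}$, and these children form a block of size $\nu^s_{j_i}$, there are $(\nu^s_{j_i})_{N_i}$ ordered choices. Because the blocks associated to distinct $j_i$'s are disjoint, these choices are independent across $i$, giving the product
\[
\prod_{i=1}^{m}(\nu^s_{j_i})_{N_i}
\]
favorable injections. Dividing by the total number of injections and taking expectation over $\nu^s$ yields \eqref{eq:conclusion1}.

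This argument is essentially a combinatorial identity and I do not expect any genuine obstacle; the only care needed is to keep the roles of the labels straight, namely that $j_1,\ldots,j_m$ index the range of $f$ (i.e.\ parents), while $N_i=\#f^{-1}(j_i)$ counts how many of the reshuffled offspring are forced to share the parent $x^{n,s}_{j_i}$. Once this bookkeeping is in place, the identity $\mathbb{P}_n(\mathrm{Coal}^s(f)\mid \nu^s)=\prod_{i=1}^m(\nu^s_{j_i})_{N_i}/(q_n(s+1))_{\#\mathrm{Dom}(f)}$ is immediate, and the claim follows by taking expectations.
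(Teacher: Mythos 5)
Your argument is correct and follows essentially the same route as the paper's proof: condition on $\nu^s$, exploit the fact that $(u^{s+1}_j)_{j\in\mathrm{Dom}(f)}$ is a uniform injection into the $q_n(s+1)$ vertices of generation $s+1$, count favorable injections block-by-block to get $\prod_{i}(\nu^s_{j_i})_{N_i}$, and take expectation. Your write-up just makes the block-partition and injection-counting more explicit than the paper's compressed version.
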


\begin{proof}
Denote by $\{j_1<\cdots<j_m\}$ the range of $f$.  
Conditionally on $(\nu^s_{j_i}: 1\le i\le m)$, since $(u^{s+1}_1,\dots,u^{s+1}_{q_n(s+1)})$ is a uniform random permutation of the $(s+1)$-st generation, the probability that the $\#\mathrm{Dom}(f)$ uniformly selected positions realize the multiplicities $N_i$ is
\[
\frac{\prod_{i=1}^m (\nu^s_{j_i})_{N_i}}{(q_n(s+1))_{\#\mathrm{Dom}(f)}}.
\]
Taking expectation over $(\nu^s_{j_i})$ yields \eqref{eq:conclusion1}.
\end{proof}
We now record several direct consequences of Proposition~\ref{prop:coal-prob}, which will be used in the sequel.

\begin{corollary}\label{puni}
For $n\ge 1$, $1\le s< h(q_n)$, $1\le l\le m\le q_n(s+1)$, it holds that
    \begin{gather}
    \p_n\!\Big(\{\mathfrak{p}(u_{1}^{s+1})=\cdots=\mathfrak{p}(u_{l}^{s+1})\}\cap\{\mathfrak{p}(u_{l}^{s+1}),\dots,\mathfrak{p}(u_{m}^{s+1})~\text{are distinct}\}\Big) 
    =\,\frac{(q_n(s))_{m-l+1}}{(q_n(s+1))_m}\,\e_n\!\Big[(\nu_1^s)_l\prod_{i=2}^{m-l+1}\nu_i^s\Big],
    \label{item:puni1} \\
    \p_n\!\Big(\mathfrak{p}(u_{1}^{s+1})=\mathfrak{p}(u_{2}^{s+1})\neq\mathfrak{p}(u_{3}^{s+1})=\mathfrak{p}(u_{4}^{s+1})\Big)=\frac{(q_n(s))_{2}}{(q_n(s+1))_4}\,\e_n\!\Big[(\nu_1^s)_2(\nu_2^s)_2\Big]. \label{item:puni2}
    \end{gather}
\end{corollary}

For later use in the proof of the main theorem, when applying \eqref{item:puni1}, we will mainly need the cases $l=1$ or $2$. Following the same argument as in the proof of Proposition~\ref{prop:coal-prob}, Corollary~\ref{puni} admits the following refinement.  

\begin{corollary}\label{coleprob}
For $n,K\ge 1$, $1\le s< h(q_n)$ and $1\le l\le q_n(s+1)$, it holds that:
  \begin{align*}
  &\p_n\!\Big(\mathfrak{p}(u_{1}^{s+1})=\cdots=\mathfrak{p}(u_{l}^{s+1}),\;
  \xi(\mathfrak{p}(u_{1}^{s+1}))\le K\Big)= \frac{q_n(s)}{(q_n(s+1))_l}\,
  \e_n\!\Big[(\nu_1^s)_l\,\mathbf{1}\{\nu_1^s\le K\}\Big]; \\
  &\p_n\!\Big(\mathfrak{p}(u_{1}^{s+1})=\mathfrak{p}(u_{2}^{s+1})\neq
  \mathfrak{p}(u_{3}^{s+1})=\mathfrak{p}(u_{4}^{s+1}),\, \xi(\mathfrak{p}(u_{1}^{s+1}))\le K,\;
  \xi(\mathfrak{p}(u_{3}^{s+1}))\le K\Big) \\
  =\,& \frac{(q_n(s))_{2}}{(q_n(s+1))_4}\,
  \e_n\!\Big[(\nu_1^s)_2(\nu_2^s)_2\,
  \mathbf{1}\{\nu_1^s,\,\nu_2^s\le K\}\Big].
  \end{align*}
\end{corollary}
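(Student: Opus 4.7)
The plan is to follow the argument of Proposition~\ref{prop:coal-prob} (and the specialization in Corollary~\ref{puni}) essentially verbatim, with the sole modification of carrying an indicator of the form $\mathbf{1}\{\nu_j^s\le K\}$ inside the expectation at the stage of conditioning on the offspring vector $\nu^s$. The key observation is that once a coalescent event is decomposed according to the identity of the common parent $x_j^s$, the constraint $\xi(\mathfrak{p}(u_\cdot^{s+1}))\le K$ on that parent's number of children is exactly the constraint $\nu_j^s\le K$, which depends only on $\nu^s$ and therefore commutes with the conditioning on that vector.

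For part~(1), I would first write
\[
\p_n\!\Big(\mathfrak{p}(u_{1}^{s+1})=\cdots=\mathfrak{p}(u_{l}^{s+1}),\; \xi(\mathfrak{p}(u_{1}^{s+1}))\le K\Big)
= \sum_{j=1}^{q_n(s)}\p_n\!\Big(\mathfrak{p}(u_{1}^{s+1})=\cdots=\mathfrak{p}(u_{l}^{s+1})=x_j^s,\; \nu_j^s\le K\Big).
\]
Then, conditioning on $\nu^s$ and exploiting that $(u_1^{s+1},\ldots,u_{q_n(s+1)}^{s+1})$ is a uniform random permutation of generation $s+1$, the conditional probability that the first $l$ sampled vertices form $l$ specific ordered children of $x_j^s$ equals $(\nu_j^s)_l/(q_n(s+1))_l$. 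Inserting $\mathbf{1}\{\nu_j^s\le K\}$, summing over $j$, and applying the exchangeability of $\nu^s$ yields the asserted prefactor $q_n(s)/(q_n(s+1))_l$ together with the single expectation $\e_n[(\nu_1^s)_l\mathbf{1}\{\nu_1^s\le K\}]$.

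For part~(2), the only difference is that the event specifies two \emph{distinct} common parents. I would decompose by ordered pairs $(j,j')$ with $j\ne j'$; each pair contributes conditional probability $(\nu_j^s)_2(\nu_{j'}^s)_2/(q_n(s+1))_4$ against the uniform permutation, now further truncated by $\mathbf{1}\{\nu_j^s,\,\nu_{j'}^s\le K\}$. Exchangeability then collapses the sum over pairs into the prefactor $(q_n(s))_2/(q_n(s+1))_4$ times the single expectation $\e_n[(\nu_1^s)_2(\nu_2^s)_2\mathbf{1}\{\nu_1^s,\nu_2^s\le K\}]$. There is no genuine obstacle here beyond careful bookkeeping: because the truncation depends only on $\nu^s$, it can simply be inserted inside the expectation at the end of the proofs of Proposition~\ref{prop:coal-prob} and Corollary~\ref{puni}, without altering any of the combinatorial steps.
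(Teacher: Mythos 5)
Your proof is correct and follows exactly the approach the paper takes: the paper's own justification is simply the remark that one follows the argument of Proposition~\ref{prop:coal-prob}, carrying the indicator $\mathbf{1}\{\nu_j^s\le K\}$ through the conditioning on $\nu^s$, which is precisely what you spell out. The decomposition over the parent index $j$ (resp.\ the ordered pair $(j,j')$), the computation of the conditional probability against the uniform permutation, and the collapse via exchangeability are all as intended.
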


\subsubsection{Estimates for offspring moments}
In this part, we provide estimates for the offspring moments appearing in Corollary~\ref{puni}. We start with the $(2,2)$-type moment appearing in \eqref{item:puni2}. Recall the conditions \eqref{H1} and \eqref{eq:3rd} from Section \ref{sec:setup}.
\begin{lemma}\label{22limit}
For any $n\ge 1$ and $1 \le s < h(q_n)$,
\begin{align}\label{eq:(2,2) comparison}
    \e_n[(\nu^s_1)^2(\nu^s_2)^2] \le \frac{q_n(s+1)}{q_n(s)-1}\e_n[(\nu^s_1)^3]
\end{align}
In particular, under conditions \eqref{H1} and \eqref{eq:3rd}, for any $\delta>0$,
\begin{align}\label{eq:(2,2)}
    \lim_{n\to\infty} \sup_{\delta n \le s \le h(q_n)-\delta n} \frac{\mathbb{E}_n[(\nu_1^{s})^2 (\nu_2^{s})^2]}{n} = 0.
\end{align}
\end{lemma}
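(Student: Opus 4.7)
The plan is to prove~\eqref{eq:(2,2) comparison} via a one-line AM--GM inequality combined with the mass-conservation identity, and then to obtain~\eqref{eq:(2,2)} as a short consequence of~\eqref{eq:(2,2) comparison} together with the convergence assumptions~\eqref{H1} and~\eqref{eq:H3'}.

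For~\eqref{eq:(2,2) comparison}, the key step is the elementary pointwise inequality
\[
a^{2}b^{2}\;\le\;\tfrac{1}{2}\bigl(a^{3}b+ab^{3}\bigr),\qquad a,b\ge 0,
\]
which follows from $a^{3}b+ab^{3}-2a^{2}b^{2}=ab(a-b)^{2}\ge 0$. Applying this to $(\nu_{1}^{s},\nu_{2}^{s})$, taking expectations, and using exchangeability to symmetrize the right-hand side gives
\[
\mathbb{E}_n\bigl[(\nu_{1}^{s})^{2}(\nu_{2}^{s})^{2}\bigr]\;\le\;\mathbb{E}_n\bigl[(\nu_{1}^{s})^{3}\nu_{2}^{s}\bigr].
\]
I will then invoke exchangeability once more together with the mass-conservation identity $\sum_{i=1}^{q_n(s)}\nu_{i}^{s}=q_n(s+1)$ to write
\[
(q_n(s)-1)\,\mathbb{E}_n\bigl[(\nu_{1}^{s})^{3}\nu_{2}^{s}\bigr]=\mathbb{E}_n\Bigl[(\nu_{1}^{s})^{3}\sum_{i=2}^{q_n(s)}\nu_{i}^{s}\Bigr]=\mathbb{E}_n\bigl[(\nu_{1}^{s})^{3}\bigl(q_n(s+1)-\nu_{1}^{s}\bigr)\bigr]\;\le\;q_n(s+1)\,\mathbb{E}_n\bigl[(\nu_{1}^{s})^{3}\bigr].
\]
Dividing by $q_n(s)-1$ yields~\eqref{eq:(2,2) comparison}.

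For~\eqref{eq:(2,2)}, the comparison~\eqref{eq:(2,2) comparison} immediately gives
\[
\frac{\mathbb{E}_n\bigl[(\nu_{1}^{s})^{2}(\nu_{2}^{s})^{2}\bigr]}{n}\;\le\;\frac{q_n(s+1)}{q_n(s)-1}\cdot\frac{\mathbb{E}_n\bigl[(\nu_{1}^{s})^{3}\bigr]}{n}.
\]
Under~\eqref{H1}, the uniform convergence of $q_n(\lfloor n\,\cdot\rfloor)/n$ to $\ell$, combined with the continuity and strict positivity of $\ell$ on $(0,h(\ell))$, implies that $q_n(s)$ and $q_n(s+1)$ are both of order $n$, uniformly for $s\in[\delta n,\,h(q_n)-\delta n]$ once $n$ is large. (Here I also use $h(q_n)/n\to h(\ell)$ to guarantee that both $s/n$ and $(s+1)/n$ lie in a fixed compact subinterval of $(0,h(\ell))$, on which $\ell$ is bounded above and bounded away from zero.) Consequently $q_n(s+1)/\bigl(q_n(s)-1\bigr)$ is uniformly bounded on this range, and~\eqref{eq:H3'}---which asserts $\mathbb{E}_n[(\nu_{1}^{s})^{3}]/n\to 0$ uniformly in the bulk---delivers~\eqref{eq:(2,2)}.

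There is no substantial obstacle: once the pointwise AM--GM identity is noted, both assertions follow by essentially routine computations. The only minor technical point is verifying that the denominator $q_n(s)-1$ remains of order $n$ all the way to the right endpoint $s=h(q_n)-\delta n$, which is handled by the extinction-closure condition together with the convergence $h(q_n)/n\to h(\ell)$ and the continuity of $\ell$.
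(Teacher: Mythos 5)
Your proof is correct and follows essentially the same route as the paper: bound the mixed fourth moment by $\mathbb{E}_n[(\nu_1^s)^3\nu_2^s]$ via AM--GM and exchangeability, then use mass conservation to compare with $\mathbb{E}_n[(\nu_1^s)^3]$, and finally appeal to \eqref{H1} to bound $q_n(s+1)/(q_n(s)-1)$ uniformly. One small point in your favor: the sharp form $a^2b^2\le\tfrac12(a^3b+ab^3)$ that you use is exactly what is needed to obtain \eqref{eq:(2,2) comparison} without an extraneous factor of $2$ (the paper writes the inequality without the $\tfrac12$, which after symmetrization would literally give $\mathbb{E}_n[(\nu_1^s)^2(\nu_2^s)^2]\le 2\,\mathbb{E}_n[(\nu_1^s)^3\nu_2^s]$; harmless for \eqref{eq:(2,2)}, but your version matches the stated bound).
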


\begin{proof}
We first note the elementary inequality $2(\nu^s_1)^2(\nu^s_2)^2\le \nu^s_1(\nu^s_2)^3+(\nu^s_1)^3\nu^s_2$. By exchangeability of the offspring vector $\nu^s$, this implies $\e_n[(\nu^s_1)^2(\nu^s_2)^2]\le \e_n[(\nu^s_1)^3\nu^s_2]$. Moreover,
\begin{align*}
     \e_n[(\nu^s_1)^3\nu^s_2] = \frac{1}{q_n(s)-1} \e_n\Big[(\nu^s_1)^3\sum_{j=2}^{q_n(s)}\nu^s_j\Big] = \frac{q_n(s+1)}{q_n(s)-1}\e_n[(\nu^s_1)^3] - \frac{1}{q_n(s)-1}\e_n[(\nu^s_1)^4] \le \frac{q_n(s+1)}{q_n(s)-1}\e_n[(\nu^s_1)^3],
\end{align*}
which proves \eqref{eq:(2,2) comparison}. 

Under condition~\eqref{H1}, the continuity of $\ell$ implies that for every
$\delta>0$,  
\[
    \sup_{\delta n \le s \le h(q_n)-\delta n}
    \frac{q_n(s+1)}{q_n(s)-1}
    \;\longrightarrow\; 1
    \qquad \text{as } n\to\infty.
\]
Combining this with \eqref{eq:(2,2) comparison} and \eqref{eq:3rd}
yields \eqref{eq:(2,2)}.
\end{proof}

Next, we consider the moments in \eqref{item:puni1} with $l=1,2$.

\begin{lemma}\label{estmom}
Assume \eqref{H1} and \eqref{eq:3rd}. For any $\delta>0$ and $m\ge 1$, uniformly for $\delta n \le s \le h(q_n)-\delta n$, we have
    \begin{align}\label{item1_1}
        \mathbb{E}_n\Big[\prod_{i=1}^m \nu_i^s\Big] = 1 + o_n(1),
    \end{align}
    and more precisely,
    \begin{align}\label{item1_2}
        \mathbb{E}_n\Big[\prod_{i=1}^m \nu_i^s\Big] 
        =\frac{(q_n(s+1))^m}{(q_n(s))_m}-\dfrac{\binom{m}{2}\big(1+\sigma_n^2(s)\big)}{q_n(s)}\big(1+o_n(1)\big); 
    \end{align}
    Furthermore, 
    \begin{align}\label{item2}
        \mathbb{E}_n\Big[(\nu_1^s)^2 \prod_{i=2}^m \nu_i^s\Big] = 1 + \sigma_n(s)^2 + o_n(1).
    \end{align}
\end{lemma}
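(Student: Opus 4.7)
The overall strategy is to exploit exchangeability together with mass conservation $\sum_{i=1}^{q_n(s)}\nu_i^s = q_n(s+1)$ to build recursions in $m$, and then carry out inductions with a few auxiliary moment bounds. For brevity, set $Q=q_n(s)$, $Q'=q_n(s+1)$, $f_m:=\mathbb{E}_n[\nu_1^s\cdots\nu_m^s]$, $g_m:=\mathbb{E}_n[(\nu_1^s)^2\nu_2^s\cdots\nu_m^s]$ (with $g_1:=\mathbb{E}_n[(\nu_1^s)^2]$). Writing $\nu_m^s = (Q-m+1)^{-1}\sum_{j\ge m}\nu_j^s$ in expectation and using $\sum_j\nu_j^s=Q'$ gives
\[
(Q-m+1)f_m = Q' f_{m-1} - (m-1)\,g_{m-1},
\]
\[
(Q-m+1)g_m = Q'g_{m-1} - \mathbb{E}_n\big[(\nu_1^s)^3\nu_2^s\cdots\nu_{m-1}^s\big] - (m-2)\mathbb{E}_n\big[(\nu_1^s)^2(\nu_2^s)^2\nu_3^s\cdots\nu_{m-1}^s\big].
\]

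Next I would establish two auxiliary bounds, uniformly for $\delta n\le s\le h(q_n)-\delta n$:
\[
\mathbb{E}_n\big[(\nu_1^s)^3\nu_2^s\cdots\nu_k^s\big]=o(n), \qquad \mathbb{E}_n\big[(\nu_1^s)^2(\nu_2^s)^2\nu_3^s\cdots\nu_k^s\big]=o(n),
\]
for every fixed $k\ge 1$. The argument is the same ``absorb one extra factor'' trick used to derive the recursions: applying mass conservation to the last factor and dropping the non-negative subtracted terms gives
\[
\mathbb{E}_n\big[(\nu_1^s)^3\nu_2^s\cdots\nu_k^s\big]\le \frac{Q'}{Q-k+1}\,\mathbb{E}_n\big[(\nu_1^s)^3\nu_2^s\cdots\nu_{k-1}^s\big],
\]
so iteratively $\mathbb{E}_n[(\nu_1^s)^3\nu_2^s\cdots\nu_k^s]\le C_k\,\mathbb{E}_n[(\nu_1^s)^3]=o(n)$ by \eqref{eq:H3'}. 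The same recursion (starting from Lemma \ref{22limit}) bounds the mixed $(2,2)$-type moments.

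With these bounds in hand, I would induct on $m$. For part (2) the recursion for $g_m$ gives $g_m = \tfrac{Q'}{Q-m+1}g_{m-1}+o(1)$, and since $g_1 = \sigma_n(s)^2+(Q'/Q)^2 = 1+\sigma_n(s)^2+o(1)$ by \ref{assump1}, this yields $g_m=1+\sigma_n(s)^2+o(1)$ at each step. For part (1), plugging $g_{m-1}=O(1)$ into the recursion for $f_m$ with the base case $f_1=Q'/Q=1+o(1)$ gives \eqref{item1_1}. For the sharper expansion \eqref{item1_2}, I would redo the $f$-induction tracking the next-order term: writing $f_{m-1}=\frac{(Q')^{m-1}}{(Q)_{m-1}}-\frac{\binom{m-1}{2}(1+\sigma_n(s)^2)}{Q}(1+o(1))$ and $g_{m-1}=1+\sigma_n(s)^2+o(1)$, the recursion produces
\[
f_m=\frac{(Q')^m}{(Q)_m}-\frac{1+\sigma_n(s)^2}{Q}\!\left[\binom{m-1}{2}+(m-1)\right](1+o(1)) = \frac{(Q')^m}{(Q)_m}-\frac{\binom{m}{2}(1+\sigma_n(s)^2)}{Q}(1+o(1)),
\]
using $\binom{m-1}{2}+(m-1)=\binom{m}{2}$; the uniformity follows from $Q'/Q\to 1$ uniformly on $[\delta n,h(q_n)-\delta n]$ under \ref{assump1}.

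The main obstacle is the auxiliary bound on $\mathbb{E}_n[(\nu_1^s)^3\nu_2^s\cdots\nu_k^s]$: a naive H\"older/Cauchy--Schwarz approach would require moments of order strictly higher than three, which are not assumed. The key observation is that the mass-conservation recursion automatically ``absorbs'' additional factors at cost $Q'/(Q-k+1)=1+o(1)$, reducing the problem cleanly to $\mathbb{E}_n[(\nu_1^s)^3]$ (respectively to $\mathbb{E}_n[(\nu_1^s)^2(\nu_2^s)^2]$), for which \eqref{eq:H3'} and Lemma \ref{22limit} give the required $o(n)$ control.
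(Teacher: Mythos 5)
Your proposal is correct and follows essentially the same path as the paper's proof: the paper also derives the mass-conservation recursions for $a_m=\mathbb{E}_n[\prod_{i\le m}\nu_i^s]$ and $b_m=\mathbb{E}_n[(\nu_1^s)^2\prod_{2\le i\le m}\nu_i^s]$, bounds the auxiliary third- and $(2,2)$-type moments by the same ``absorb one factor'' iteration, and feeds in $b_1$, $c_1$, $d_2$ via \eqref{H1}, \eqref{eq:H3'} and Lemma~\ref{22limit}. The only difference is presentational --- you run the recursion as an explicit induction while the paper unrolls it into a closed-form sum --- and the bookkeeping step $\binom{m-1}{2}+(m-1)=\binom{m}{2}$ matches the paper's derivation of \eqref{item1_2}.
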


\begin{proof}
Set
\begin{align*} a_m &:= \mathbb{E}_n\Big[\prod_{i=1}^m \nu_i^s\Big], \quad 1\le m\le q_n(s),\\
b_1 &:= \mathbb{E}_n[(\nu_1^s)^2], \quad b_m := \mathbb{E}_n\Big[(\nu_1^s)^2 \prod_{i=2}^m \nu_i^s\Big],\quad 2\le m\le q_n(s),\\
c_1 &:= \mathbb{E}_n[(\nu_1^s)^3],\quad c_m := \mathbb{E}_n\left[(\nu_1^s)^3 \prod_{i=2}^m \nu_i^s\right],\quad 2\le m\le q_n(s),\\
d_2 &:= \mathbb{E}_n[(\nu_1^s)^2 (\nu_2^s)^2],\quad d_m := \mathbb{E}_n\left[(\nu_1^s)^2 (\nu_2^s)^2 \prod_{i=3}^m \nu_i^s\right],\quad 3\le m\le q_n(s). 
\end{align*}
By conditioning on $\nu_1^s,\dots,\nu_{m-1}^s$, we obtain by induction that for any $m\ge 1$
\begin{align}\label{ind1}
\begin{aligned}
a_m &= \mathbb{E}_n\Bigg[\frac{q_n(s+1)-\sum_{i=1}^{m-1} \nu_i^s}{q_n(s)-(m-1)} \prod_{i=1}^{m-1} \nu_i^s\Bigg]
= \frac{q_n(s+1)}{q_n(s)-(m-1)} a_{m-1} - \frac{m-1}{q_n(s)-(m-1)} b_{m-1}\\
&=\cdots= \frac{(q_n(s+1))^m}{(q_n(s))_m} - \sum_{i=1}^{m-1} \frac{(q_n(s+1))^{m-i-1}}{(q_n(s)-i)_{m-i}}\cdot i b_i, 
\end{aligned}
\end{align}
 where in the last step we use the fact $\e_n[\nu^s_1]=\frac{q_n(s+1)}{q_n(s)}$. Similarly, for $b_m$,
\begin{align}\label{ind3}
    b_m = \frac{(q_n(s+1))^{m-1}}{(q_n(s)-1)_{m-1}} b_1 - \sum_{i=1}^{m-1} \frac{(q_n(s+1))^{m-i-1}}{(q_n(s)-i)_{m-i}} (c_i + (i-1) d_i),
\end{align}
and in particular,
\begin{align}\label{ind2}
b_m \le \frac{(q_n(s+1))^{m-1}}{(q_n(s)-1)_{m-1}} b_1.
\end{align}
 Combining \eqref{ind1} and \eqref{ind2}, we derive that
\begin{align*}
    \left|a_m-\frac{(q_n(s+1))^m}{(q_n(s))_m}\right|&\le \sum_{i=1}^{m-1}\frac{ (q_n(s+1))^{m-i-1}}{(q_n(s)-i)_{m-i}}\cdot \frac{(q_n(s+1))^{i-1}}{(q_n(s)-1)_{i-1}}\cdot i b_1\\
    &=\frac{m(m-1)}{2}\frac{q_n(s+1)^{m-2}}{(q_n(s)-1)_{m-1}}b_1.
\end{align*}
By \eqref{H1}, we have
\[
\frac{(q_n(s+1))^m}{(q_n(s))_m} = 1+o_n(1),
\qquad
\frac{(q_n(s+1))^{m-2}}{(q_n(s)-1)_{m-1}} = O_n\!\left(\tfrac{1}{n}\right),
\]
where both $o_n(1)$ and $O_n(\tfrac{1}{n})$ are uniform in $\delta n \le s \le h(q_n)-\delta n$. Hence, \eqref{item1_1} follows.

The bound \eqref{item2} follows similarly. Indeed, in the same way as \eqref{ind2},
\[
c_m \le \frac{(q_n(s+1))^{m-1}}{(q_n(s)-1)_{m-1}} c_1,\quad d_m \le \frac{(q_n(s+1))^{m-2}}{(q_n(s)-1)_{m-2}} d_2.
\]
Approximating $b_1$ by \eqref{H1}, $c_1$ by \eqref{eq:3rd} and $d_2$ by Lemma~\ref{22limit}, we deduce \eqref{item2} from \eqref{ind3}. 
Finally, by \eqref{H1}, uniformly over $i=1,\cdots,m-1$,
\[
\frac{(q_n(s+1))^{m-i-1}}{(q_n(s)-i)_{m-i}} = \frac{1}{q_n(s)}\big(1+o_n(1)\big),
\]
and thus \eqref{item1_2} follows directly from \eqref{item2} together with \eqref{ind1}.
\end{proof}

\begin{corollary}\label{distinctprob}
Assume \eqref{H1} and \eqref{eq:3rd}. For any $\delta>0$ and $m\ge 1$, uniformly for $\delta n \le s \le h(q_n)-\delta n$, we have:
        \begin{gather}
            \p_n\Big(\mathfrak{p}(u_{1}^{s+1}),\ldots, \mathfrak{p}(u_{m}^{s+1})~\text{are  distinct}\Big)=1-\dfrac{\binom{m}{2}\sigma_n(s)^2}{q_n(s)}\big(1+o_n(1)\big);    \label{eq:precise}  \\
            \p_n\Big(\{\mathfrak{p}(u_{1}^{s+1})=\mathfrak{p}(u_{2}^{s+1})\}\cap\{ \mathfrak{p}(u_{2}^{s+1}),\ldots, \mathfrak{p}(u_{m}^{s+1})~\text{are  distinct}\}\Big)
            =\,\frac{1}{q_n(s)} \big(\sigma_n(s)^2 + o_n(1)\big).     \label{eq:distinct2}  
        \end{gather}
\end{corollary}

\begin{proof}
Recall that by \eqref{H1}, $\frac{q_n(s+1)}{q_n(s)} = 1 +o_n(1)$ uniformly over $\delta n \le s \le h(q_n)-\delta n$. 
By \eqref{item:puni1} (with $l=1$) in Corollary~\ref{puni}  and \eqref{item1_2}, we deduce that 
\[
    \begin{aligned}
    &\p_n\!\Big(\mathfrak{p}(u_{1}^{s+1}),\dots,\mathfrak{p}(u_{m}^{s+1})~\text{are distinct}\Big) = \frac{(q_n(s))_m}{(q_n(s+1))_m}\,\e_n\!\Big[\prod_{i=1}^{m}\nu_i^s\Big]\\
    =\, &\frac{(q_n(s+1))^m}{(q_n(s+1))_m} -\dfrac{\binom{m}{2}\big(1+\sigma_n^2(s)\big)}{q_n(s)}\big(1+o_n(1)\big).
    \end{aligned}
\]
Expanding the first term on the last line gives
\[
\frac{(q_n(s+1))^m}{(q_n(s+1))_m} = \prod_{i=0}^{m-1}\left( 1-\frac{i}{q_n(s+1)}\right)^{-1} = 1 + \binom{m}{2}\frac{1}{q_n(s+1)} + o_n\left(\frac{1}{q_n(s+1)}\right),
\]
which yields \eqref{eq:precise}.
Moreover, by \eqref{item1_1} and \eqref{item2},
\[
\e_n\!\Big[(\nu_1^s)_2\prod_{i=2}^{m-1}\nu_i^s\Big] = \e_n\!\Big[(\nu_1^s)^2\prod_{i=2}^{m-1}\nu_i^s\Big] - \e_n\!\Big[\prod_{i=1}^{m-1}\nu_i^s\Big] = \sigma_n(s)^2 + o_n(1).
\]
Therefore \eqref{eq:distinct2} follows from \eqref{item:puni1} (with $l=2$).
\end{proof}

\subsection{Coalescent process}\label{sec:coalescence}

In this subsection, we introduce the coalescent process and study its transition probabilities.
The central result is Theorem~\ref{thm:Markov}, which identifies the coalescent process as a (time-inhomogeneous) Markov chain
and provides its transition formula.

\medskip

We begin with some relevant notation. For any vertex \(v \in \mathscr{T}^n\) and \(0 \le i \le |v|\), let \(\mathrm{Anc}(v,i)\) denote the ancestor of \(v\) at height \(i\).  
Define the set of descendants of \(v\) by
\[
\mathrm{Desc}(v)=\mathrm{Desc}(v,n)
:=\big\{u \in V(\mathscr{T}^n): |u|\ge |v| \text{ and } \mathrm{Anc}(u,|v|)=v\big\}.
\]

\begin{definition}[Coalescent process]\label{def:coal_process}
Fix a height $0 \le h^* \le h(q_n)$ and an integer $1\le k\le q_n(h^*)$. For $0 \le j \le h^*$, set
\begin{equation}\label{def:Xj}
\begin{aligned}
  \cX^{(k)}_j = \cX^{(k)}_j(n,h^*)&=\big\{\text{Anc}(u^{h^*}_i,j):1\le i\le k\big\} \\
  &=\big\{v \in V(\mathscr{T}^n) : |v| = j,\ \operatorname{Desc}(v) \cap \{u^{h^*}_i:1\le i\le k\} \neq \emptyset \big\}.  
\end{aligned}
\end{equation}
The coalescent process (based at height $h^*$) is then defined as
\[
X^{(k)}_j = X^{(k)}_j(n,h^*) := \#\cX^{(k)}_j,\qquad 0 \le j \le h^*.
\]
In the special case $k=q_n(h^*)$, we write $\cX_j$ and $X_j$ in place of $\cX^{(k)}_j$ and $X^{(k)}_j$, respectively.

In other words, $X^{(k)}_j$ represents the number of distinct ancestral lineages at height $j$ traced from $k$ uniformly sampled vertices at height $h^*$.  
At $j=h^*$, we have $X^{(k)}_0 = k$, since all sampled vertices are distinct.  
As $j$ decreases, the lineages are traced backward through the tree and may coalesce whenever two or more sampled vertices share a common ancestor.  
Thus, the process $\big(X^{(k)}_j : 0 \le j \le h^*\big)$ is non-increasing as $j$ decreases, and each downward jump corresponds to a coalescence event between ancestral lineages.

\end{definition}

We consider subtrees of $\mathscr{T}^n$ rooted at some vertex $v$ and truncated at height $h^*$. To make this precise, we recall the representation of a tree as a subset of \(\mathcal{U}\) in Section~\ref{sec:Cannings_tree}.  
For \(u=(u_1,\ldots,u_m)\), \(v=(v_1,\ldots,v_l)\in\mathcal{U}\), we write \(uv\) for the concatenated sequence \((u_1,\ldots,u_m,v_1,\ldots,v_l)\). For each $v\in V(\mathscr{T}^n)$, define the truncated subtree
\begin{align}\label{def:subtree}
    \mathscr{T}^{n}(v)=\mathscr{T}^{n}(v,h^*)
    :=\big\{u\in\mathcal{U}:vu\in\mathscr{T}^n,\; \abs{vu}\le h^* \big\}.
\end{align}
Moreover, for any $1\le k\le q_n(h^*)$ and $v\in V(\mathscr{T}^n)$, we define the truncated subtree consisting only of the ancestral lineages of the sampled vertices \(\{u^{h^*}_i:1\le i\le k\}\):
\[\mathscr{T}^{n,(k)}(v)=\mathscr{T}^{n,(k)}(v,h^*):=\big\{u\in\mathcal{U}:vu\in\mathscr{T}^n(v),\; \exists\, 1\le i\le k\text{ s.t. }u^{h^*}_i\in \text{Desc}(vu)\big\}.\]
Note that both \(\mathscr{T}^{n}(v)\) and \(\mathscr{T}^{n,(k)}(v)\) record only the suffix 
$u$ in the decomposition $vu$; the prefix $v$ itself is forgotten in this representation.
For \(0\le j\le h^*\) and \(1\le k\le q_n(h^*)\), set
\begin{align}\label{def:G}
    \mathcal{G}^{(k)}_j=\mathcal{G}^{(k)}_j(h^*)
    := \sigma\!\big(\big\{\big(\mathscr{T}^{n}(v),\mathscr{T}^{n,(k)}(v)\big): \abs{v}=j\big\}\big).
\end{align}
Here the collection \(\big\{\big(\mathscr{T}^{n}(v),\mathscr{T}^{n,(k)}(v)\big): |v|=j\big\}\) is regarded as a set (rather than a sequence), so no ordering information at generation \(j\) is retained. Clearly, as $j$ decreases, the $\sigma$-fields $\mathcal{G}^{(k)}_j$ form an increasing family. See Figure~\ref{fig:G filtration} for an illustration of the $\sigma$-fields.

When \(k=q_n(h^*)\), we simply write \(\mathcal{G}_j=\mathcal{G}^{(k)}_j\); in this case, since \(\mathscr{T}^{n}(v)\) already determines \(\mathscr{T}^{n,(q_n(h^*))}(v)\).
\begin{align}\label{def:Gj}
    \mathcal{G}_j=\sigma\!\big(\{\mathscr{T}^{n}(v): \abs{v}=j\}\big),
\end{align}

\begin{figure}
    \centering
    \includegraphics[width=0.45\linewidth]{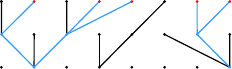}
    \caption{Illustration of the $\sigma$-field $\mathcal{G}^{(k)}_{j}$ with $h^*=3$, $j=1$ and $k=5$. 
    The red points represent the $k=5$ uniformly sampled vertices at height $h^*=3$.
    For each $\abs{v}=1$, the tree $\mathscr{T}^{n}(v)$ consists of the entire 
    descendant subtree of $v$ up to height $h^*$ (shown in both the black and blue), 
    while $\mathscr{T}^{n,(k)}(v)$ (blue only) records the portions of $\mathscr{T}^{n}(v)$ 
    that lie on the ancestral lineages of the sampled vertices.  
    The $\sigma$-field $\mathcal{G}^{(k)}_{j}$ is generated by these subtrees regarded as a set.}
    \label{fig:G filtration}
\end{figure}

We now present some basic properties of the coalescent process $(X^{(k)}_j)$.

\begin{proposition}\label{prop:uniform}
    For any $1 \le j \le h^*$ and $1\le k\le q_n(h^*)$, conditionally on $\mathcal{G}^{(k)}_j$, if $X^{(k)}_j=m$, then the set $\cX_j$ is distributed as $\{u_1^j, \dots, u_m^j\}$. 
\end{proposition}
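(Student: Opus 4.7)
The plan is to obtain this uniform conditional distribution by exploiting the exchangeability built into the Cannings model. Intuitively, the Cannings construction is invariant under permutations of subtree positions at height~$j$, and since $\mathcal{G}^{(k)}_j$ carries only the unordered subtree data (together with the unordered sample at height~$h^*$), conditioning on $\mathcal{G}^{(k)}_j$ leaves the assignment of subtrees to positions uniform.

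Concretely, for each $\pi\in S_{q_n(j)}$ I would construct a measure-preserving bijection $\Phi_\pi$ on the probability space carrying $(\mathscr{T}^n,(u^s_i)_{s,i})$, defined by: (i) keeping everything at heights $<j$ unchanged; (ii) placing the subtree previously rooted at $x^j_{\pi^{-1}(i)}$ at $x^j_i$; and (iii) propagating the induced relabelings through all heights above~$j$, adjusting the uniform permutations $(u^s_i)$ accordingly. Measure preservation follows from the exchangeability of $\nu^j$, which absorbs the primary permutation at height~$j$, together with that of $\nu^{j+1},\nu^{j+2},\ldots$ and the uniform structure of $(u^s_i)$, which absorb the further rearrangements propagated downward.

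The key observation is that $\mathcal{G}^{(k)}_j$ is $\Phi_\pi$-invariant: both the unordered collection of subtrees rooted at height~$j$ and the unordered sampled set at height~$h^*$ are preserved, since $\Phi_\pi$ merely reassigns which label at height~$j$ carries which subtree. On the other hand, $\mathcal{X}^{(k)}_j$ transforms equivariantly: an ancestor $x^j_r$ in the old tree becomes $x^j_{\pi(r)}$ in the new one, so $\mathcal{X}^{(k)}_j\circ\Phi_\pi=\pi(\mathcal{X}^{(k)}_j)$ under the action $x^j_\ell\mapsto x^j_{\pi(\ell)}$. Combining these yields, for every $\pi$ and every $m$-subset $S$,
\[
\p_n\!\bigl(\mathcal{X}^{(k)}_j=S\,\bigm|\,\mathcal{G}^{(k)}_j\bigr)
=\p_n\!\bigl(\mathcal{X}^{(k)}_j=\pi(S)\,\bigm|\,\mathcal{G}^{(k)}_j\bigr)\quad\text{a.s.}
\]
Since $\pi$ is arbitrary, the conditional law of $\mathcal{X}^{(k)}_j$ given $\mathcal{G}^{(k)}_j$ is $S_{q_n(j)}$-invariant; restricted to $\{X^{(k)}_j=m\}$ it is uniform over $m$-subsets of $\{x^j_1,\ldots,x^j_{q_n(j)}\}$, which is exactly the law of $\{u^j_1,\ldots,u^j_m\}$.

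The main technical point will be the careful construction of $\Phi_\pi$ and the verification of its measure-preservation property---in particular, bookkeeping the cascading relabelings at heights $>j$ and checking that the independent uniform permutations $(u^s_i)$ remain uniform after the relabeling. Once $\Phi_\pi$ is in place, the symmetry computation leading to the uniform conditional distribution is straightforward.
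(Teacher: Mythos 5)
Your proposal is correct and takes essentially the same approach as the paper: the paper's proof simply asserts that exchangeability of $\nu^j$ (together with $\nu^{j+1},\dots$) makes the vector of subtrees $(\mathscr{T}^n(v):|v|=j)$ exchangeable, declaring this "readily verified," and your measure-preserving bijection $\Phi_\pi$ is exactly the verification it has in mind. The invariance of $\mathcal{G}^{(k)}_j$ (since it only records the unordered multiset of subtrees and the unordered sample at height $h^*$) and the $\pi$-equivariance of $\mathcal{X}^{(k)}_j$ then give the uniformity of its conditional law, as you argue.
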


\begin{proof}
    By the exchangeability of the offspring vector $\nu^j=\big(\nu_1^j,\ldots,\nu_{q_n(j)}^j\big)$, we can readily verify that the law of the vector of subtrees
    \[
        (\mathscr{T}^n(v):|v|=j)
    \]
    is also exchangeable.  
    The claimed conditional distribution of $\cX^{(k)}_j$ now easily follows.
\end{proof}

\begin{theorem}\label{thm:Markov}
    For any $1\le k\le q_n(h^*)$, the process $(X^{(k)}_{h^*-j} : 0 \le j \le h^*)$ is a Markov chain with respect to $(\mathcal{G}^{(k)}_{h^*-j} : 0 \le j \le h^*)$. Moreover, for any $1 \le j \le h^*$, its transition probabilities satisfy
    \begin{align}\label{eq:transition}
        \p_n\big(X^{(k)}_{j-1}=m_2 \,\big|\, X^{(k)}_j=m_1 \big)
        = \p_n\!\left(\#\{\kp(u^j_i):1\le i\le m_1\}=m_2\right).
    \end{align}
\end{theorem}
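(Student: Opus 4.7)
The plan is to combine Proposition~\ref{prop:uniform} with the generation-wise independence of the offspring vectors to reduce the one-step conditional law of $X^{(k)}$ to a quantity depending only on $X^{(k)}_j$.

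First, I apply Proposition~\ref{prop:uniform}: conditional on $\mathcal{G}^{(k)}_j$ and the event $\{X^{(k)}_j = m_1\}$, the set $\cX^{(k)}_j$ has the same conditional distribution as the uniform sample $\{u^j_1,\ldots,u^j_{m_1}\}$. Since $X^{(k)}_{j-1} = \#\{\kp(v): v \in \cX^{(k)}_j\}$, this identifies the conditional law of $X^{(k)}_{j-1}$ given $(\mathcal{G}^{(k)}_j, X^{(k)}_j = m_1)$ with the conditional law, given $\mathcal{G}^{(k)}_j$, of $\#\{\kp(u^j_i): 1 \le i \le m_1\}$.

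Next, I would argue that the $\mathcal{G}^{(k)}_j$-conditioning on this latter quantity can be dropped. The value $\#\{\kp(u^j_i): 1 \le i \le m_1\}$ is a function of the offspring vector $\nu^{j-1}$ (which governs the parent assignment between heights $j-1$ and $j$), of the auxiliary permutation $u^j$, and of the count $q_n(j)$, which is itself $\mathcal{G}^{(k)}_j$-measurable. By generation-wise independence of $(\nu^s)_s$, the vector $\nu^{j-1}$ is independent of $(\nu^s)_{s\ge j}$, hence of the subtrees above height $j$; by construction, $u^j$ is independent of $\mathscr{T}^n$ and of $\{u^{h^*}_i:1\le i\le k\}$; and the ``unordered'' specification of $\mathcal{G}^{(k)}_j$ discards the lex-order at height $j$, which, together with the exchangeability of $\nu^{j-1}$, prevents any information about $\nu^{j-1}$ from leaking through the height-$j$ labels. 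Consequently, $(\nu^{j-1}, u^j)$ is independent of $\mathcal{G}^{(k)}_j$ modulo $q_n(j)$, and the conditioning may be dropped.

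Combining these two steps yields~\eqref{eq:transition} on the event $\{X^{(k)}_j = m_1\}$. Since the right-hand side depends on $\mathcal{G}^{(k)}_j$ only through $X^{(k)}_j$, the Markov property of $(X^{(k)}_{h^*-j})$ with respect to $(\mathcal{G}^{(k)}_{h^*-j})$ follows. The main technical obstacle lies in the independence argument of the second step: one must carefully verify that the ``unordered'' specification of $\mathcal{G}^{(k)}_j$ genuinely strips away the ordering information at height $j$, so that the parent map at generation $j-1$ is independent of $\mathcal{G}^{(k)}_j$ given $q_n(j)$. This is precisely where the exchangeability of $\nu^{j-1}$ is essential, since distinct lex-orderings of the height-$j$ vertices induce the same multi-set of subtrees above.
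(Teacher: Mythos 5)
Your proposal takes essentially the same route as the paper's proof: condition on $\mathcal{G}^{(k)}_j$, invoke Proposition~\ref{prop:uniform} to replace $\cX^{(k)}_j$ by a uniform $m_1$-subset $\{u^j_1,\ldots,u^j_{m_1}\}$, and exploit generation-wise independence to decouple $\nu^{j-1}$ (which drives the parent map $\kp$ at height $j$) from everything at heights $\ge j$. The paper packages the decoupling via Lemma~\ref{lem:basic} and the intermediate set-valued identity~\eqref{eq:markov1}, then sums over unordered target sets to obtain~\eqref{eq:transition}; your version computes $\p_n(X^{(k)}_{j-1}=m_2\mid\mathcal{G}^{(k)}_j)$ directly, which is valid and marginally shorter. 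Two small points worth tightening. First, your step~1 already uses the independence you defer to step~2: to replace $\cX^{(k)}_j$ by $\{u^j_i\}$ inside $\kp(\cdot)$ one needs the conditional law of the pair $(\cX^{(k)}_j,\nu^{j-1})$ given $\mathcal{G}^{(k)}_j$ on $\{X^{(k)}_j=m_1\}$ to factor as (uniform $m_1$-subset) $\otimes$ (law of $\nu^{j-1}$); that is precisely what Lemma~\ref{lem:basic} delivers with $X=\cX^{(k)}_j$, $Y=\nu^{j-1}$, $Z=\mathcal{G}^{(k)}_j$, $W=\{u^j_1,\ldots,u^j_{m_1}\}$, so the two steps should really be run in one stroke. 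Second, exchangeability of $\nu^{j-1}$ is not what prevents ``information leakage'': once the subtrees $\{\mathscr{T}^n(v):|v|=j\}$ are understood as relabeled objects (as the paper implicitly does when it asserts they are measurable with respect to $\sigma(\nu^j,\ldots,\nu^{h^*})$), independence from $\nu^{j-1}$ follows from generation-wise independence alone. The exchangeability that actually matters here is that of $\nu^j$ and higher, and it is already consumed in Proposition~\ref{prop:uniform}.
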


We give the proof only for the special case \(k = q_n(h^*)\).  
The argument for general \(k\) is entirely analogous and is therefore omitted.  
In the proof, we will use the following elementary property of conditional expectation.

\begin{lemma}\label{lem:basic}
Let $X,Y,Z$ be random elements on the same probability space, with $Y$ independent of $(X,Z)$. For any test function $f(x,y)$ on the value space of $(X,Y)$, define
\[
g(x) := \mathbb{E}[f(x,Y)].
\]
Then
\begin{align}\label{eq:basic1}
    \mathbb{E}\big(f(X,Y)\mid Z\big) = \mathbb{E}\big(g(X)\mid Z\big).
\end{align}
In particular, if there exists a random element $W$, independent of $Y$ and distributed as $(X \mid Z)$, then
\begin{align}\label{eq:basic2}
    \mathbb{E}\big(f(X,Y)\mid Z\big) = \mathbb{E}\big(f(W,Y)\big).
\end{align}
\end{lemma}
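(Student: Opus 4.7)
The plan is to establish \eqref{eq:basic1} directly via the tower property, and then to deduce \eqref{eq:basic2} as an immediate consequence. First I would compute $\mathbb{E}(f(X,Y)\mid X,Z)$: since $Y$ is independent of $(X,Z)$, its conditional distribution given $(X,Z)=(x,z)$ coincides with the marginal law of $Y$, so
\[
\mathbb{E}(f(X,Y)\mid X,Z) = g(X)\quad\text{a.s.}
\]
Applying the tower property with $\sigma(Z)\subseteq\sigma(X,Z)$ then yields
\[
\mathbb{E}(f(X,Y)\mid Z) = \mathbb{E}\!\big(\mathbb{E}(f(X,Y)\mid X,Z)\,\big|\,Z\big) = \mathbb{E}(g(X)\mid Z),
\]
which is \eqref{eq:basic1}. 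The only measure-theoretic points to verify along the way are that $g$ is measurable (by Fubini, using $Y\perp(X,Z)$) and that $g(X)$ is integrable, both of which follow routinely from the integrability of $f(X,Y)$.

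For \eqref{eq:basic2}, since $W\perp Y$, the same independence-and-Fubini argument applied to $(W,Y)$ gives $\mathbb{E}(f(W,Y)) = \mathbb{E}(g(W))$. Interpreting the phrase ``$W$ distributed as $(X\mid Z)$'' as asserting that the law of $W$ coincides with the regular conditional distribution $\mathcal{L}(X\mid Z)$, I would then identify $\mathbb{E}(g(W))$ with $\mathbb{E}(g(X)\mid Z)$ and conclude by combining with \eqref{eq:basic1}.

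The only genuine subtlety—and this is more a matter of convention than an actual obstacle—is the precise meaning of \eqref{eq:basic2}: its left-hand side is a random variable (a $\sigma(Z)$-measurable function), whereas the expression $\mathbb{E}(f(W,Y))$ looks scalar. The statement should therefore be read either pointwise in the value of $Z$ being conditioned on, or as an equality of $\sigma(Z)$-measurable functions in which $W$ is realized through a regular conditional kernel applied to $Z$, with $W$ and $Y$ still independent under the joint law. Once this convention is fixed, no substantial technical difficulty remains, and the entire lemma reduces to a clean substitution principle for conditional expectations under independence, ready to be invoked in the proof of Theorem~\ref{thm:Markov}.
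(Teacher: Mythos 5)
Your proof is correct and follows essentially the same route as the paper's: compute $\mathbb{E}(f(X,Y)\mid X,Z)=g(X)$ via independence of $Y$ from $(X,Z)$, apply the tower property to get \eqref{eq:basic1}, and then use $W\perp Y$ to identify $\mathbb{E}[f(W,Y)]=\mathbb{E}[g(W)]=\mathbb{E}(g(X)\mid Z)$ for \eqref{eq:basic2}. Your additional remarks on measurability, integrability, and the interpretation of the left-hand side of \eqref{eq:basic2} as a $\sigma(Z)$-measurable function are sound clarifications but do not change the argument.
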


\begin{proof}
By the standard property of conditional expectation for independent random elements (see, e.g., \cite[Example~4.1.7]{durrett2019probability}),
\[
\mathbb{E}\big(f(X,Y)\mid X,Z\big) = g(X).
\]
Applying the tower property of conditional expectation yields \eqref{eq:basic1}.  
For \eqref{eq:basic2}, note that
\[
\mathbb{E}\big(g(X)\mid Z\big) = \mathbb{E}[g(W)] = \mathbb{E}[f(W,Y)],
\]
since $W$ and $Y$ are independent.  
\end{proof}

\begin{lemma}
    Let $1\le m_2\le m_1\le q_n(j)$ and $k_1,\ldots,k_{m_2}\in[1,q_n(j-1)]$ be distinct. Then on the event $\{X_j=m_1\}$,
    \begin{align}\label{eq:markov1}
        \mathbb{P}_n\big(\cX_{j-1} = \{x^{j-1}_{k_1},\ldots, x^{j-1}_{k_{m_2}}\}\,\big|\,\mathcal{G}_j\big)=\binom{q_n(j-1)}{m_2}^{-1} \p_n\big(\#\{\kp(u^{j}_i):1\le i\le m_1\}=m_2\big).
    \end{align}
\end{lemma}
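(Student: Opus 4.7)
My plan is to prove \eqref{eq:markov1} in two steps. First, I would reduce the left-hand side to a function of the cardinality $m_2=|\cX_{j-1}|$ via a symmetry argument based on the exchangeability of $\nu^{j-1}$. Second, I would identify the resulting conditional cardinality distribution with the unconditional distribution of $\#\{\mathfrak p(u^j_i):1\le i\le m_1\}$ using Proposition~\ref{prop:uniform}.

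For the first step, fix any permutation $\pi$ of $\{1,\dots,q_n(j-1)\}$ and consider the action on $\mathscr{T}^n$ that permutes the components of $\nu^{j-1}$ by $\pi$ (and correspondingly relabels the children of height-$(j-1)$ vertices). By exchangeability of $\nu^{j-1}$ and generation-wise independence of the offspring vectors, this action preserves the law of $\mathscr{T}^n$. Moreover, since each height-$j$ vertex retains its own subtree under this relabeling, the action leaves the unordered multiset $\{\mathscr{T}^n(v):|v|=j\}$ unchanged and hence fixes $\mathcal{G}_j$, while transporting $\cX_{j-1}$ to $\{x^{j-1}_{\pi(k)}:x^{j-1}_k\in\cX_{j-1}\}$. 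It follows that, conditional on $\mathcal{G}_j$, the law of $\cX_{j-1}$ is invariant under every such $\pi$; therefore, given $|\cX_{j-1}|=m_2$, $\cX_{j-1}$ is uniform over the $m_2$-subsets of $\{x^{j-1}_1,\ldots,x^{j-1}_{q_n(j-1)}\}$, producing the combinatorial factor $\binom{q_n(j-1)}{m_2}^{-1}$.

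For the second step, I would apply Proposition~\ref{prop:uniform} with $k=q_n(h^*)$: on $\{X_j=m_1\}$, $\cX_j$ is distributed as $\{u^j_1,\ldots,u^j_{m_1}\}$ conditional on $\mathcal{G}_j$. The parent map is determined by $\nu^{j-1}$, which is independent of $\mathcal{G}_j$ (as the latter is generated by $\nu^s$ for $s\ge j$) and of $u^j$ (by construction). Applying Lemma~\ref{lem:basic} with $Y=\nu^{j-1}$ and $Z=\mathcal{G}_j$, one obtains, on $\{X_j=m_1\}$,
\[
\mathbb{P}_n\!\big(|\cX_{j-1}|=m_2\,\big|\,\mathcal{G}_j\big)
=\mathbb{P}_n\!\big(\#\{\mathfrak p(u^j_i):1\le i\le m_1\}=m_2\,\big|\,\mathcal{G}_j\big)
=\mathbb{P}_n\!\big(\#\{\mathfrak p(u^j_i):1\le i\le m_1\}=m_2\big).
\]
Combining this with the first step yields \eqref{eq:markov1}.

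The main subtlety, rather than any computational difficulty, lies in step~1: carefully checking that the $\pi$-action on parent indices preserves the law of $\mathscr{T}^n$ while fixing $\mathcal{G}_j$ (whose definition tracks only the unordered multiset of height-$j$ subtrees). Once this symmetry is in place, step~2 is a direct application of Proposition~\ref{prop:uniform} together with the independence structure built into the Cannings model.
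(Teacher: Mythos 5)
Your proof is correct and relies on the same two ingredients as the paper's --- Proposition~\ref{prop:uniform} combined with Lemma~\ref{lem:basic}, and the exchangeability of $\nu^{j-1}$ --- but applied in the reverse order. The paper first uses Lemma~\ref{lem:basic} to pass from the conditional probability of $\{\cX_{j-1}=S\}$ given $\mathcal{G}_j$ to the \emph{unconditional} quantity $\p_n\big(\{\mathfrak{p}(u^j_i):1\le i\le m_1\}=S\big)$, and only then invokes exchangeability of $\nu^{j-1}$ to show that this depends on $S$ solely through $|S|$. You instead establish the conditional symmetry first (your Step~1) and then apply Lemma~\ref{lem:basic} to the cardinality (your Step~2). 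Both factorizations yield \eqref{eq:markov1}, but the paper's ordering buys a simpler symmetry argument: it takes place unconditionally, where exchangeability of $\nu^{j-1}$ applies directly, whereas your ordering needs the stronger claim that the \emph{conditional} law of $\cX_{j-1}$ given $\mathcal{G}_j$ is invariant under permutations of the height-$(j-1)$ labels. Your $\pi$-action is the right tool, but it is more delicate than your justification suggests: permuting $\nu^{j-1}$ by $\pi$ forces a $\nu^{j-1}$-dependent relabeling of all later generations --- the offspring vectors $\nu^s$ for $s\ge j$ are indexed by the lexicographic order of vertices, which changes under the relabeling --- so verifying that the cascade is measure-preserving uses exchangeability of each $\nu^s$ for $s\ge j$, not merely ``exchangeability of $\nu^{j-1}$ and generation-wise independence'' as you write (one conditions on the offspring vectors above a given generation, notes the induced permutation is then deterministic, and applies exchangeability of that generation's offspring vector). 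You correctly flag Step~1 as the subtle step; the paper's ordering avoids it entirely by moving the exchangeability argument outside the conditioning.
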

\begin{proof}
    We shall apply Lemma \ref{lem:basic} to
\begin{align}\label{eq:apply_lem}
\begin{aligned}
    \mathbb{P}_n\big(\cX_{j-1} = \{x^{j-1}_{k_1},\ldots, x^{j-1}_{k_{m_2}}\}\,\big|\,\mathcal{G}_{j}\big)
    =\; \p_n \Big( \big\{\mathfrak{p}(x):x\in \cX_j\big\} = \{x^{j-1}_{k_1},\ldots, x^{j-1}_{k_{m_2}}\}\,\big|\, \big(\mathscr{T}^n(v):|v|=j\big) \Big).    
\end{aligned}
\end{align}
Observe the following:
\begin{itemize}
    \item[(a)] Both $\cX_j$ and $\big(\mathscr{T}^n(v):|v|=j\big)$ are measurable with respect to 
    \(\sigma\big(\nu^{j},\nu^{j+1}\ldots,\nu^{h^*}\big),\) therefore independent of $\nu^{j-1}$ due to generation-wise independence of $\nu^s$.
    \smallskip
    \item[(b)] The event $\Big\{\big\{\mathfrak{p}(x):x\in \cX_j\big\} = \big\{x^{j-1}_{k_1},\ldots, x^{j-1}_{k_{m_2}}\big\}\Big\}$ is determined by $\cX_j$ and $\nu^{j-1}$, i.e.,
    \[
    \Big\{\big\{\mathfrak{p}(x):x\in \cX_j\big\} = \big\{x^{j-1}_{k_1},\ldots, x^{j-1}_{k_{m_2}}\big\}\Big\} \in \sigma(\cX_j,\nu^{j-1}).
    \]
    \item[(c)] By Proposition \ref{prop:uniform}, on the event $\{X_j=m_1\}$,
    \[
    \big(\cX_{j} \mid \big(\mathscr{T}^n(v):|v|=j\big)\big) \overset{d}{=} \{u^{j}_1,\ldots,u^{j}_{m_1}\}.
    \]
\end{itemize}

\noindent Applying Lemma \ref{lem:basic} with 
\[
X=\cX_{j},\quad Y=\nu^{j-1},\quad Z=\big\{\mathscr{T}^n(v):|v|=j\big\},\quad W=\{u^j_1,\ldots,u^j_{m_1}\},
\]
and $f(X,Y)$ the indicator function determined by $\cX_{j}$ and $\nu^{j-1}$ corresponding to the event
\[
\Big\{\big\{\mathfrak{p}(x):x\in \cX_j\big\} = \{x^{j-1}_{k_1},\ldots, x^{j-1}_{k_{m_2}}\}\Big\},
\]
we obtain that on the event $\{X_{j}=m_1\}$,
\begin{equation}\label{eq:j+1 uniform}
\begin{aligned}
    &\p_n \Big( \big\{\mathfrak{p}(x):x\in \cX_j\big\} = \{x^{j-1}_{k_1},\ldots, x^{j-1}_{k_{m_2}}\}\,\big|\, \big(\mathscr{T}^n(v):|v|=j\big) \Big) \\
    =\;& \p_n\big(\{\kp(u^{j}_i):1\le i\le m_1\} = \{x^{j-1}_{k_1},\ldots, x^{j-1}_{k_{m_2}}\}\big)\\
    =\;&\binom{q_n(j-1)}{m_2}^{-1} \p_n\big(\#\{\kp(u^j_i):1\le i\le m_1\}=m_2\big).
\end{aligned}
\end{equation}
Substituting back into \eqref{eq:apply_lem} gives the desired conclusion.
\end{proof}

\begin{proof}[Proof of Theorem \ref{thm:Markov}]
It follows from \eqref{eq:markov1} that on $\{X_{j}=m_1\}$,
\begin{align*}
    \p_n\big(X_{j-1}=m_2 \,\big|\, \mathcal{G}_{j}\big)
        &=\sum_{1\le k_1<\cdots<k_{m_1}\le q_n(j-1)}\mathbb{P}_n\big(\cX_{j-1} = \{x^{j-1}_{k_1},\ldots, x^{j-1}_{k_{m_1}}\}\,\big|\,\mathcal{G}_{j}\big)\\
        &=\p_n\!\left(\#\{\kp(u^{j}_i):1\le i\le m_{1}\}=m_2\right).
\end{align*}
This confirms the Markov property and the transition formula \eqref{eq:transition}.
\end{proof}

We close this section with a simple estimate.
\begin{lemma}
Assume \eqref{H1}. Then for any $\delta>0$, uniformly for $1\le k\le q_n(h^*)$, $j\in [\delta n,\, h(q_n)-\delta n]$ and $1\le m\le k$, 
    \begin{align}\label{eq:upper}
        \p_n\big(X^{(k)}_{j-1} \le m-1 \,\big|\, X^{(k)}_j = m\big) \lesssim \frac{m^2}{n}.
    \end{align}
\end{lemma}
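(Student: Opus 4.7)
The plan is to reduce the bound to a one-step coalescence estimate and then apply the simplest special case of Corollary~\ref{puni}. By Theorem~\ref{thm:Markov}, the conditional probability in question equals
\[
\p_n\!\big(\#\{\kp(u^j_i):1\le i\le m\}\le m-1\big),
\]
so the ancestors $u^{h^*}_1,\dots,u^{h^*}_k$ play no further role and one only needs a pair-collision estimate uniform in $j$.

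Next I would apply a union bound over unordered pairs: the event that the $m$ parents are not all distinct is covered by $\binom{m}{2}$ pair-collision events, each of which has the same probability by exchangeability. Thus
\[
\p_n\!\big(\#\{\kp(u^j_i):1\le i\le m\}\le m-1\big)
\ \le\ \binom{m}{2}\,\p_n\!\big(\kp(u^j_1)=\kp(u^j_2)\big).
\]
By Corollary~\ref{puni}\,(\ref{item:puni1}) (with $s=j-1$, $l=m=2$),
\[
\p_n\!\big(\kp(u^j_1)=\kp(u^j_2)\big)
\ =\ \frac{q_n(j-1)}{(q_n(j))_2}\,\e_n\!\big[(\nu^{j-1}_1)_2\big]
\ \le\ \frac{q_n(j-1)}{(q_n(j))_2}\,\e_n\!\big[(\nu^{j-1}_1)^2\big].
\]

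Finally, I would use \eqref{H1} to bound the right-hand side uniformly on $[\delta n,\, h(q_n)-\delta n]$. Since $\e_n[\nu^{j-1}_1]=q_n(j)/q_n(j-1)$ and $\mathrm{Var}(\nu^{j-1}_1)=\sigma_n(j-1)^2$, we have
\[
\e_n\!\big[(\nu^{j-1}_1)^2\big]=\sigma_n(j-1)^2+\Big(\frac{q_n(j)}{q_n(j-1)}\Big)^2.
\]
Assumption \eqref{H1}\eqref{assump1} implies that $q_n(j)/n$ and $q_n(j-1)/n$ are both bounded above and below by positive constants on $[\delta n,\,h(q_n)-\delta n]$ (using continuity of $\ell$ on the compact set $[\delta,h(\ell)-\delta]\subset(0,h(\ell))$), and that $\sigma_n(j-1)$ is uniformly bounded on the same range (using continuity of $\sigma$ on $[\delta,h(\sigma)-\delta]$). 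Hence both factors are $O_\delta(1)$, and $q_n(j-1)/(q_n(j))_2=O_\delta(1/n)$, giving
\[
\p_n\!\big(\kp(u^j_1)=\kp(u^j_2)\big)\ \lesssim\ \frac{1}{n},
\]
so multiplying by $\binom{m}{2}$ yields \eqref{eq:upper}. There is no real obstacle here; the only mild point to be careful about is that \eqref{eq:H3'} is not assumed in this lemma, so the second-moment bound must be derived from \eqref{H1} alone, which the argument above does.
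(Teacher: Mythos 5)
Your proposal is correct and follows essentially the same route as the paper's proof: reduce via Theorem~\ref{thm:Markov}, union-bound over $\binom{m}{2}$ pairs, apply Corollary~\ref{puni} to the pair-collision probability, and bound uniformly via \eqref{H1}. You spell out the final moment estimate more explicitly (and correctly write $\nu^{j-1}$ where the paper's displayed formula has a typo reading $\nu^j$), but this is the same argument.
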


\begin{proof}
    
By \eqref{item:puni1} in Corollary~\ref{puni} and Theorem~\ref{thm:Markov},  
\begin{align*}
   & \p_n\big(X^{(k)}_{j-1} \le m-1 \,\big|\, X^{(k)}_j = m\big) 
    = \p_n\Big(\#\{\kp(u^{j}_i):1\le i\le m\}\le m-1\Big)\\
    &\le \binom{m}{2}\,\p_n\big(\mathfrak{p}(u^j_1)=\mathfrak{p}(u^j_2)\big)= \binom{m}{2}\,\frac{q_n(j-1)}{(q_n(j))_2}\,\e_n\big[(\nu_1^{j})_2\big].
\end{align*}
By \eqref{H1}, this is $O(m^2/n)$, yielding \eqref{eq:upper}.
\end{proof}

\subsection{Coming-down-from-infinity property}\label{sec:CDFI}
In this subsection, we establish the CDFI property for the coalescent process $X$.

Fix two sequences $h^*(n)$ and $h_*(n)$, $n \ge 1$, such that 
\begin{equation}\label{eq:h*condition}
\begin{aligned}
 & 0 < h_*(n) < h^*(n) < h(q_n), & \forall\, n \ge 1, \\
 &\frac{h_*(n)}{n} \to H_* \quad \text{and} \quad \frac{h^*(n)}{n} \to H^*, & n\to\infty,
\end{aligned}
\end{equation}
with some constants $0 < H_* < H^* < h(\ell)$.
In particular, \eqref{eq:h*condition} together with \eqref{H1} implies that there exists $\delta > 0$ such that, for all $n \ge 1$, 
\begin{align}\label{eq:h*condition2}
    \delta n \le h_*(n) < h^*(n) \le h(q_n) - \delta n\quad\text{and}\quad h^*(n)-h_*(n)\ge \delta n.
\end{align}
We consider the coalescent process $X_j=X_j(n,h^*(n))$ based at height $h^*(n)$ and focus on its value at height $h_*(n)$, namely $X_{h_*(n)}$.
For brevity, we write $h^*=h^*(n)$ and $h_*=h_*(n)$ when $n$
is clear from the context.

\begin{theorem}[CDFI property]\label{thm:CDFI}
    Assume \eqref{H1} and \eqref{eq:liminf}. 
    Then the sequence $(X_{h_*(n)})$ is tight, i.e.
    \[
    \lim_{M \to \infty}\, \limsup_{n \to \infty}\, \p_n(X_{h_*} > M) = 0.
    \]
\end{theorem}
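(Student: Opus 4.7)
The plan is to reformulate the problem in forward time and split the evolution into two complementary phases. Let $Z_t:=X_{h^*-t}$ for $t\in[0,h^*-h_*]$; by Theorem~\ref{thm:Markov} this is a non-increasing Markov chain with respect to the filtration $(\G_{h^*-t})$, starting from $Z_0=q_n(h^*)=\Theta(n)$. The goal is to show that $N_n=Z_L$ (with $L:=h^*-h_*\ge \delta n$) is tight. Write $m_0:=\lfloor(\log n)^{1+\varepsilon}\rfloor$. Condition~\eqref{H2'} combined with Lemma~\ref{lem:lower_Pn} furnishes two uniform (in $j\in[h_*,h^*]$) lower bounds on the one-step coalescence probabilities: (a) $p_n(m,j)\gtrsim m^2/n$ for $2\le m\le m_0$, and (b) $p_n(m,j)\ge p_n(m_0,j)\gtrsim m_0^2/n$ for $m\ge m_0$.

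\emph{Phase 1 (rapid descent from $\Theta(n)$ to $m_0$).} I would first upgrade (b) to a proportional per-step decrease. For $m\ge m_0$, partition the $m$ lineages into $\lfloor m/m_0\rfloor$ disjoint subsamples of size $m_0$: by exchangeability (using Proposition~\ref{prop:uniform}) each subsample has a parent collision with probability $\gtrsim m_0^2/n$, and each such collision contributes at least $1$ to $Z_t-Z_{t+1}$, so $\e_n[Z_t-Z_{t+1}\mid Z_t=m]\gtrsim m m_0/n$ for $m\ge m_0$. Jensen's inequality then yields $\e_n[\log Z_{t+1}\mid\G_{h^*-t}]\le\log Z_t-cm_0/n$ on $\{Z_t>m_0\}$. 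Setting $\tau:=\inf\{t\ge 0:Z_t\le m_0\}$ and $T:=\lfloor\delta n/2\rfloor$, the process $W_t:=\log Z_{t\wedge\tau}+(t\wedge\tau)\,cm_0/n$ is a non-negative supermartingale (the filtration $(\G_{h^*-t})$ is indeed increasing in $t$, since each subtree rooted at height $h^*-t-1$ contains all subtrees rooted at its children at height $h^*-t$). Optional stopping gives $\e_n[W_T]\le W_0=O(\log n)$, and on $\{\tau>T\}$ we have $W_T\ge Tcm_0/n\asymp c\delta m_0/2$, whence
\[
\p_n(\tau>T)\;\lesssim\;\log n/m_0\;=\;O\!\big((\log n)^{-\varepsilon}\big)\;\to\;0.
\]

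\emph{Phase 2 (Kingman-type descent from $m_0$ to $O(1)$).} On $\{\tau\le T\}$ I invoke the strong Markov property at $\tau$: conditionally on $\G_{h^*-\tau}$, the chain $Y_s:=Z_{\tau+s}$ starts from $Y_0=Z_\tau\le m_0$, and monotonicity forces $Y_s\le m_0$ throughout. Bound~(a) then gives $\e_n[Y_{s+1}\mid Y_s]\le Y_s-cY_s^2/n$; taking conditional expectations and using $\e_n[Y_s^2]\ge(\e_n Y_s)^2$ produces the recursion $a_{s+1}\le a_s-ca_s^2/n$ for $a_s:=\e_n[Y_s\mid\G_{h^*-\tau}]$. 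Passing to reciprocals yields $1/a_s\ge cs/n$, hence $a_s\le n/(cs)$; applying this at $s=L-\tau\ge \delta n/2$ produces $\e_n[Z_L\,\ind_{\{\tau\le T\}}]\le 2/(c\delta)$.

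Combining the two phases via Markov's inequality,
\[
\p_n(N_n>M)\;\le\;\p_n(\tau>T)+\frac{\e_n[Z_L\,\ind_{\{\tau\le T\}}]}{M}\;\le\;o_n(1)+\frac{2}{c\delta M},
\]
so $\limsup_{n}\p_n(N_n>M)\le 2/(c\delta M)\to 0$ as $M\to\infty$, establishing tightness. The main obstacle I anticipate is Phase~1: \eqref{H2'} provides a lower bound on $p_n$ only at the single value $m=m_0$, and it is the disjoint-subsample trick that upgrades this to the proportional descent rate $\gtrsim m m_0/n$ needed to collapse $\log q_n(h^*)=O(\log n)$ via a logarithmic supermartingale over a horizon of only $O(n)$ steps.
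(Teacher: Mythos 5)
Your proof is correct and takes a genuinely different route from the paper's. The paper's argument for the hard phase (dropping from $\Theta(n)$ to $\mathrm{poly}(\log n)$, i.e.\ bound~\eqref{bound1}) is a subsampling\,+\,combinatorial iteration: it subsamples $r=\lfloor(\log n)^{1+\varepsilon}\rfloor$ lineages, shows via geometric holding times (Lemma~\ref{prop:tdecrease}) that this subsample collapses to $(\log n)^{1+\varepsilon/2}$ blocks over a window of length $n/(\log n)^{1+\varepsilon/4}$, bounds the fraction of leftover lineages through a hypergeometric estimate (Lemma~\ref{lem:key_bound_B2}, using the spacing statistics of a uniform permutation), and iterates the resulting multiplicative contraction $(\log n)^{-\varepsilon/6}$ over $O((\log n)^{1+\varepsilon/4})$ mesoscopic windows (Proposition~\ref{prop:rely1} and Lemma~\ref{lem:rely2}). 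Your disjoint-partition observation replaces all of this in one step: splitting the $m$ lineages into $\lfloor m/m_0\rfloor$ blocks of size $m_0$ and noting that the number of blocks containing an internal collision is a lower bound on $Z_t-Z_{t+1}$ (each such block has at least one ``redundant'' lineage once you pick one representative per parent), you upgrade the collision-\emph{probability} bound in~\eqref{H2'}, which holds only at the single scale $m_0$, to a \emph{drift} bound $\gtrsim m\,m_0/n$ valid uniformly for all $m\ge m_0$, and the log-supermartingale collapses the $O(\log n)$ initial value over the $O(n)$ horizon. This is shorter, avoids the hypergeometric machinery entirely, and makes the role of the exponent $\varepsilon>0$ transparent (it appears exactly as the gap between $\log q_n(h^*)$ and $T\cdot cm_0/n$). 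Your Phase~2 recursion $1/a_s\ge cs/n$ plays the same role as the paper's bound~\eqref{bound2} (the geometric-holding-time sum), and both approaches yield identical conclusions.

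One small technical point: in Phase 2 the pointwise inequality $\e_n[Y_{s+1}\mid Y_s]\le Y_s-cY_s^2/n$ fails at $Y_s=1$ (the left side equals $1$, the right side is $1-c/n$). The fix is immediate: restate~\eqref{gelog1} as $p_n(m,j)\gtrsim m(m-1)/n$ for $1\le m\le m_0$ (true since $(m)_2\asymp m^2$ for $m\ge 2$ and both sides vanish at $m=1$), so that $\e_n[Y_{s+1}\mid Y_s]\le Y_s-cY_s(Y_s-1)/n$ holds pointwise; then $\e_n[Y_s(Y_s-1)]\ge a_s(a_s-1)$ by Jensen, and setting $b_s:=a_s-1$ recovers the recursion $b_{s+1}\le b_s-cb_s^2/n$, giving $a_s\le 1+n/(cs)$. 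The conclusion is unchanged up to constants.
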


\begin{remark*}
This can be regarded as a sequential version of the CDFI property, 
since the bound \( M \) must be chosen independently of \( n \).
\end{remark*}

\begin{proof}
For each \( M \ge 1 \), define the hitting time 
\[
    T_M = T_M(n) := \sup\{0 \le j \le h^*(n) : X_{j} \le M \}.
\]
Clearly,
\[
    \{X_{h_*} > M\} = \{T_M < h_*\}\subset \{T_M \le  h^*-\delta n\}
\]
by \eqref{eq:h*condition2}. Hence it suffices to prove
\begin{align}\label{eq:TM}
	\lim_{M \to \infty} \limsup_{n \to \infty} \p_n(T_M \le h^*-\delta n) = 0.
\end{align}
It is enough to show that for any \( \eta > 0 \), there exists \( M = M(\eta) \ge 1 \) such that, for all sufficiently large \( n \),\phantom\qedhere
\begin{align}
	&\p_n\Big(T_{\lfloor2(\log n)^{1+\frac{\varepsilon}{2}}\rfloor}\le h^*-\tfrac{1}{3}\delta n\Big)<\tfrac{\eta}{3},\label{bound1}\\
    &\p_n( T_{\lfloor (\log n)^{\frac{3+2\varepsilon}{6}}\rfloor}\le T_{\lfloor2(\log n)^{1+\frac{\varepsilon}{2}}\rfloor} -\tfrac13 \delta n \,\Big|\, T_{\lfloor 2(\log n)^{1+\frac{\varepsilon}{2}}\rfloor}>  h^*-\tfrac{1}{3}\delta n\Big)<\tfrac{\eta}{3}, \label{bound3}\\
	&\p_n\Big(T_M\le T_{\lfloor(\log n)^{\frac{3+2\varepsilon}{6}}\rfloor}-\tfrac{1}{3}\delta n \,\Big|\, T_{\lfloor (\log n)^{\frac{3+2\varepsilon}{6}}\rfloor}> T_{\lfloor2(\log n)^{1+\frac{\varepsilon}{2}}\rfloor} -\tfrac13 \delta n ,\, T_{\lfloor 2(\log n)^{1+\frac{\varepsilon}{2}}\rfloor}>  h^*-\tfrac{1}{3}\delta n\Big)<\tfrac{\eta}{3}.\label{bound2}
\end{align}

\subsubsection{Proof of \eqref{bound3} and \eqref{bound2}}
In this part, we prove \eqref{bound3} and \eqref{bound2}. The key step is to obtain a lower bound on the one-step transition probabilities of the process $X_j$ using \eqref{eq:liminf}. This allows us to stochastically dominate  
$T_{\lfloor 2(\log n)^{1+\frac{\varepsilon}{2}}\rfloor}
      - T_{\lfloor(\log n)^{\frac{3+2\varepsilon}{6}}\rfloor}$  
and  
$T_{\lfloor(\log n)^{\frac{3+2\varepsilon}{6}}\rfloor} - T_M$  
by sums of independent geometric random variables.  
Consequently, with high probability,
both increments can be made arbitrarily small.

Define for $n \ge 1$, $0 \le s \le h(q_n)$ and $1\le m\le q_n(s)$,
\[
p_n(m,s)
:= \p_n\!\left(\#\{\mathfrak{p}(u^{s}_1),\ldots,\mathfrak{p}(u^{s}_m)\}\le m-1\right).
\]
By Theorem \ref{thm:Markov}, this quantity also equals $\p_n(X_{s-1}\le m-1\,|\,X_s=m)$ for any coalescent process $X$ based at a height greater than $s$.

\begin{lemma}
    Assume \eqref{H1} and \eqref{eq:liminf}. For every $\delta>0$, there exists $\varepsilon'>0$ such that uniformly for all $n\ge 1$, 
    $\delta n\le s\le h(q_n)-\delta n$ and $2\le m\le (\log n)^{\frac{1+\varepsilon'}{2}}$,
    \begin{align}\label{gelog1}
        p_n\!\left(m,s\right)
        \gtrsim \frac{m^2}{n}.
    \end{align}
\end{lemma}

\begin{proof}
By \eqref{eq:liminf}, for each $\delta>0$ there exist $N=N(\delta)$ such that
\begin{align}\label{eq:til_sigma}
	\tilde{\sigma}^2 := \inf_{n\ge N,\,\delta n\le s\le h(q_n)-\delta n}
	\e_n \bigg[(\nu^s_1)_2\mathbf{1}\bigg\{\nu_1^s\le  \frac{n}{(\log n)^{1+\varepsilon}} \bigg\}\bigg]>0.
\end{align} 
Define $K_n=\lfloor \frac{n}{(\log n)^{1+\varepsilon}}\rfloor$ and truncated coalescent events
\[
B_{i_1i_2} := \big\{\mathfrak{p}(u^s_{i_1}) = \mathfrak{p}(u^s_{i_2}),\, \xi(\mathfrak{p}(u^s_{i_1})) \le K_n\big\}, 
\qquad 1\le i_1 < i_2\le m.
\]
Then
\begin{align}\label{eq:inclusion}
    \Big\{\#\{\kp(u^s_i):1\le i\le m\}\le m-1\Big\}
    = \bigcup_{1 \le i_1 < i_2 \le m}\big\{\mathfrak{p}(u^s_{i_1}) = \mathfrak{p}(u^s_{i_2})\big\}
    \supset \bigcup_{1 \le i_1 < i_2 \le m} B_{i_1i_2}.
\end{align}
Applying the inclusion--exclusion principle, we get
\begin{align*}
	\p_n\Big(\bigcup_{1 \le i_1 < i_2 \le m} B_{i_1i_2}\Big) 
	&\ge \binom{m}{2} \p_n(B_{12}) 
	- 3 \binom{m}{3} \p_n(B_{12} \cap B_{13}) 
	- 6 \binom{m}{4} \p_n(B_{12} \cap B_{34}).
\end{align*}
By Corollary~\ref{coleprob}, for all $n\ge N$,
\begin{align*}
    \p_n(B_{12}) &= \frac{q_n(s-1)}{(q_n(s))_2} \,
        \E_n\!\big[(\nu^{s-1}_1)_2 \mathbf{1}\{\nu^{s-1}_1 \le K_n\}\big]
        \;\ge\; \frac{q_n(s-1)}{(q_n(s))_2}\, \tilde{\sigma}^2, \\[0.5em]
    \p_n(B_{12} \cap B_{13}) &= \frac{q_n(s-1)}{(q_n(s))_3} \,
        \E_n\!\big[(\nu^{s-1}_1)_3 \mathbf{1}\{\nu^{s-1}_1 \le K_n\}\big]
        \;\le\; \frac{q_n(s-1)}{(q_n(s))_3}\, K_n \E_n\!\big[(\nu^{s-1}_1)_2 \big], \\[0.5em]
    \p_n(B_{12} \cap B_{34}) &\le \frac{q_n(s-1)}{(q_n(s))_4} \,
        \E_n\!\big[(\nu^{s-1}_1)_4 \mathbf{1}\{\nu^{s-1}_1 \le K_n\}\big]  + \frac{(q_n(s-1))_2}{(q_n(s))_4} \,
        \E_n\!\big[(\nu^{s-1}_1)_2 (\nu^{s-1}_2)_2
        \mathbf{1}\{\nu^{s-1}_1, \nu^{s-1}_2 \le K_n\}\big] \\
    &\le \frac{q_n(s-1)}{(q_n(s))_4}K_n^2 \E_n\!\big[(\nu^{s-1}_1)_2 \big] + \frac{(q_n(s-1))_2}{(q_n(s))_4}\, K_n \E_n\!\big[(\nu^{s-1}_1)_2\, \nu^{s-1}_2\big].
\end{align*}
We recall that $\E_n\!\big[(\nu^{s-1}_1)_2\, \nu^{s-1}_2\big]\le \frac{q_n(s)}{q_n(s-1)-1} \E_n\!\big[(\nu^{s-1}_1)_2\big]$ by \eqref{ind2}.
Hence by \eqref{H1}, there exist constants $c_i=c_i(\delta,\tilde{\sigma},N)>0$ $(i=1,2,3)$ such that, for all $n\ge N$, $\delta n\le s\le  h(q_n)-\delta n$ and $1\le m\le q_n(s)$,
\[
    \p_n\Big(\bigcup_{1\le i<j\le m} B_{ij}\Big)
    \;\ge\; c_1 \frac{m^2}{n} - c_2 \frac{m^3}{n(\log n)^{1+\varepsilon}} - c_3 \frac{m^4}{n(\log n)^{1+\varepsilon}}.
\]
Combining this with \eqref{eq:inclusion} and taking $\varepsilon'\in(0,\varepsilon)$, we obtain \eqref{gelog1}. 
\end{proof}

We are now ready to prove \eqref{bound3} and \eqref{bound2}.
Define
\[
\zeta_m := 
\begin{cases}
	\inf\{i \ge 1: X_{T_m-i}<m\}, & \text{if } X_{T_m}=m,\\
	0, & \text{otherwise}.
\end{cases}
\]
Then, for any \( m_1 < m_2 \),
\begin{align}\label{eq:zeta}
	T_{m_2} - T_{m_1} = \sum_{m = m_1 + 1}^{m_2} \zeta_m.
\end{align}
By the Markov property, conditionally on $\mathcal{G}_j$ with $T_m=j$ and $X_{T_m}=m$, 
\begin{align}\label{eq:zeta1}
    \zeta_m \overset{d}{=} \inf\{1\le i\le j : Y_i=1\},
\end{align}
where $Y_i$ are independent Bernoulli random variables with success probability
\[
    \p(Y_i=1)=\p_n(X_{j-i} \le m-1 \mid X_{j-i+1}=m).
\]
By \eqref{gelog1} with $\varepsilon'=2\varepsilon/3$, there exists constant $C^*>0$ such that for all $n\ge 1$, $h_*\le i\le h^*$ and $2\le m\le \lfloor (\log n)^{\frac{3+2\varepsilon}{6}}\rfloor$,
\begin{equation}\label{ydelta}
     \p(Y_i=1)\ge C^*\, \frac{m^2}{n}.
\end{equation}

We next record two auxiliary lemmas to be used in the proof.
\begin{lemma}\label{lem:auxiliary1}
    Assume \eqref{H1} and \eqref{eq:liminf}. Conditionally on $T_{\lfloor (\log n)^{\frac{3+2\varepsilon}{6}}\rfloor}> T_{\lfloor2(\log n)^{1+\frac{\varepsilon}{2}}\rfloor} -\tfrac13 \delta n$ and $T_{\lfloor 2(\log n)^{1+\frac{\varepsilon}{2}}\rfloor}>  h^*-\tfrac{1}{3}\delta n$, we have the stochastic bound:
\begin{equation}\label{zetasto}
     \zeta_m\wedge\left(\frac{1}{3}\delta n-\sum_{j=m+1}^{\lfloor (\log n)^{\frac{3+2\varepsilon}{6}}\rfloor}\zeta_j\right)^+\;\le_{\rm sto}\; {\rm Geometric}\bigg(\frac{C^* m^2}{n}\bigg),\quad \forall\, 2\le m<\lfloor (\log n)^{\frac{3+2\varepsilon}{6}}\rfloor.
\end{equation}
\end{lemma}
\begin{lemma}\label{lem:auxiliary2}
    If $T_{\lfloor (\log n)^{\frac{3+2\varepsilon}{6}}\rfloor}-T_M=\sum_{m=M+1}^{\lfloor(\log n)^{\frac{3+2\varepsilon}{6}}\rfloor}\zeta_m\ge\tfrac{1}{3}\delta n$, then
    \begin{align}\label{eq:obs12}
        \sum_{m=M+1}^{\lfloor(\log n)^{\frac{3+2\varepsilon}{6}}\rfloor} \zeta_m\wedge\left(\frac{1}{3}\delta n-\sum_{j=m+1}^{\lfloor (\log n)^{\frac{3+2\varepsilon}{6}}\rfloor}\zeta_j\right)^+\ge\frac{1}{3}\delta n.
    \end{align}
\end{lemma}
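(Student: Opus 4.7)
The plan is to reduce this to a purely deterministic telescoping identity; no probability is needed. Introduce the shorthand $K:=\lfloor 2(\log n)^{1+\varepsilon/2}\rfloor$ and $T:=\tfrac{1}{3}\delta n$, and define the partial sums $S_m:=\sum_{j=m+1}^{K}\zeta_j$ for $M\le m\le K$, so that $S_K=0$, $S_{m-1}-S_m=\zeta_m\ge 0$, and $(S_m)$ is non-increasing in $m$. The hypothesis of the lemma is exactly $S_M\ge T$, and the quantity to be bounded is
\[
\sum_{m=M+1}^{K}\zeta_m\wedge (T-S_m)^+
\;=\;\sum_{m=M+1}^{K}(S_{m-1}-S_m)\wedge (T-S_m)^+.
\]

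First, I would establish the elementary pointwise identity
\[
(b-a)\wedge (T-a)^+ \;=\; (b\wedge T)-(a\wedge T),\qquad 0\le a\le b,
\]
by separating the three cases $b\le T$, $a\le T<b$, and $T<a$. Applying it with $a=S_m$ and $b=S_{m-1}$ gives $\zeta_m\wedge (T-S_m)^+=(S_{m-1}\wedge T)-(S_m\wedge T)$. Summing over $m$ from $M+1$ to $K$ then telescopes to
\[
\sum_{m=M+1}^{K}\zeta_m\wedge (T-S_m)^+ \;=\; (S_M\wedge T)-(S_K\wedge T) \;=\; S_M\wedge T,
\]
since $S_K=0$. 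Under the assumption $S_M\ge T$, the right-hand side equals $T=\tfrac{1}{3}\delta n$, which proves the claimed inequality (in fact an equality).

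Since the statement is genuinely elementary once the partial sums $S_m$ and the truncation identity are written down, there is no real obstacle; the only thing to be careful about is getting the truncation identity right in the boundary case $a\le T<b$, which is precisely the case where the summand $\zeta_m\wedge (T-S_m)^+$ equals $T-S_m$ rather than $\zeta_m$. The structure of the argument also makes clear why this quantity is the right one to work with in Lemma \ref{lem:auxiliary1}: it truncates the running total at $T$, so that the resulting sum is exactly $S_M\wedge T$ and can therefore be compared directly to $T$.
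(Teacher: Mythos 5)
Your proof is correct, and it takes a genuinely different route from the paper's. The paper locates the explicit crossing index
\(
m^*:=\sup\{m\ge M+1:\sum_{j=m}^{K}\zeta_j\ge T\}
\)
(with $K=\lfloor2(\log n)^{1+\varepsilon/2}\rfloor$ and $T=\tfrac13\delta n$), then evaluates the summand case by case: for $m>m^*$ the minimum is $\zeta_m$, and at $m=m^*$ the minimum is $T-\sum_{j>m^*}\zeta_j$, so the tail sum from $m^*$ onward is already $T$. Your argument instead introduces the partial sums $S_m=\sum_{j>m}\zeta_j$, verifies the pointwise truncation identity $(b-a)\wedge(T-a)^+=(b\wedge T)-(a\wedge T)$ for $0\le a\le b$, and telescopes to get the clean identity
\(
\sum_{m=M+1}^{K}\zeta_m\wedge(T-S_m)^+=S_M\wedge T,
\)
from which the claim is immediate. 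Your version is arguably preferable: it avoids introducing and case-checking around $m^*$, gives an exact equality rather than just the inequality the paper records, and makes transparent \emph{why} the truncated summand is the natural quantity to feed into the geometric-variable stochastic domination of Lemma~\ref{lem:auxiliary1}. Both proofs are elementary and deterministic, so the difference is one of presentation rather than substance.
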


\begin{proof}[Proof of \eqref{bound2} assuming Lemmas \eqref{lem:auxiliary1} and \eqref{lem:auxiliary2}]
Set 
\[
A=A_n :=\left\{T_{\lfloor (\log n)^{\frac{3+2\varepsilon}{6}}\rfloor}> T_{\lfloor2(\log n)^{1+\frac{\varepsilon}{2}}\rfloor} -\tfrac13 \delta n ,\, T_{\lfloor 2(\log n)^{1+\frac{\varepsilon}{2}}\rfloor}>  h^*-\tfrac{1}{3}\delta n\right\}.
\]
Then
\begin{align}\label{eq:geometric}
\begin{aligned}
    &\quad\,\p_n\Big(T_{\lfloor(\log n)^{\frac{3+2\varepsilon}{6}}\rfloor}-T_M\ge\tfrac{1}{3}\delta n \,\Big|\, A\Big)=\p_n\bigg(\sum_{m=M+1}^{\lfloor(\log n)^{\frac{3+2\varepsilon}{6}}\rfloor}\zeta_m\ge\tfrac{1}{3}\delta n \,\bigg|\, A\bigg)\\
    &\overset{\eqref{eq:obs12}}{\le}\p_n\left(\sum_{m=M+1}^{\lfloor(\log n)^{\frac{3+2\varepsilon}{6}}\rfloor} \zeta_m\wedge\left(\frac{1}{3}\delta n-\sum_{j=m+1}^{\lfloor (\log n)^{\frac{3+2\varepsilon}{6}}\rfloor}\zeta_j\right)^+\ge\tfrac{1}{3}\delta n \,\bigg|\, A\right)\\
    &\le     \frac{3}{\delta n}\e_n\left(\sum_{m=M+1}^{\lfloor(\log n)^{\frac{3+2\varepsilon}{6}}\rfloor} \zeta_m\wedge\left(\frac{1}{3}\delta n-\sum_{j=m+1}^{\lfloor (\log n)^{\frac{3+2\varepsilon}{6}}\rfloor}\zeta_j\right)^+ \,\bigg|\, A\right)\overset{\eqref{zetasto}}{\le}\frac{3}{\delta n} \sum_{m=M+1}^{\lfloor(\log n)^{\frac{3+2\varepsilon}{6}}\rfloor}\frac{n}{C^* m^2}\le \frac{3}{C^* \delta M},
\end{aligned}
\end{align}
for some constant $c>0$.
This gives 
\eqref{bound2} once $M$ is chosen sufficiently 
\end{proof}

\begin{proof}[Proof of \eqref{bound3}]
    Observe that $\p_n(X_j \le m-1 \mid X_{j-1}=m) = \p_n\big(\#\{\kp(u^{j-1}_i):1\le i\le m\}\le m-1\big)$ is increasing in $m$. In particular, for any $n\ge 1$, 
    $\delta n\le s\le h(q_n)-\delta n$ and $(\log n)^{\frac{3+2\varepsilon}{6}}\le m<2(\log n)^{1+\frac{\varepsilon}{2}}$,
\[
    p_n(m,s)
    \;\ge\; C^*\dfrac{(\log n)^{1+\frac23\eps}}{n}.
\]
Repeating the same argument as above yields
\begin{align}\label{eq:geometric2}
\begin{split}
   &\p_n( T_{\lfloor (\log n)^{\frac{3+2\varepsilon}{6}}\rfloor}\le T_{\lfloor2(\log n)^{1+\frac{\varepsilon}{2}}\rfloor} -\tfrac13 \delta n \,\Big|\, T_{\lfloor 2(\log n)^{1+\frac{\varepsilon}{2}}\rfloor}>  h^*-\tfrac{1}{3}\delta n\Big)\\
   &\;\le\;\frac{3}{\delta n} \sum_{m=\lfloor(\log n)^{\frac{3+2\varepsilon}{6}}\rfloor+1}^{\lfloor2(\log n)^{1+\frac{\varepsilon}{2}}\rfloor}\dfrac{n}{C^*(\log n)^{1+\frac23\eps}}\le \frac{6}{C^*\delta (\log n)^{\frac{\varepsilon}{6}}},
   \end{split}
\end{align}
which implies \eqref{bound3}.
\end{proof}
\begin{proof}[Proof of Lemma \ref{lem:auxiliary1}]
    It is enough to show that conditionally on $\big(X_j:T_m\le j\le h^*\big)$ with $X_{T_m}=m$, $T_{\lfloor (\log n)^{\frac{3+2\varepsilon}{6}}\rfloor}> T_{\lfloor2(\log n)^{1+\frac{\varepsilon}{2}}\rfloor} -\tfrac13 \delta n ,\  T_{\lfloor 2(\log n)^{1+\frac{\varepsilon}{2}}\rfloor}>  h^*-\tfrac{1}{3}\delta n$ and $\frac{1}{3}\delta n-\sum_{j=m+1}^{\lfloor (\log n)^{\frac{3+2\varepsilon}{6}}\rfloor}\zeta_j>0$,
    \begin{align}\label{eq:sto12}
        \zeta_m\wedge\left(\frac{1}{3}\delta n-\sum_{j=m+1}^{\lfloor (\log n)^{\frac{3+2\varepsilon}{6}}\rfloor}\zeta_j\right)\;\le_{\rm sto}\; {\rm Geometric}\bigg(\frac{C^* m^2}{n}\bigg).
    \end{align}
  Note that under the conditioning, we have the following bound for the height $T_m$:
    \begin{align*}
        T_m=T_{\lfloor (\log n)^{\frac{3+2\varepsilon}{6}}\rfloor}-\sum_{i=j+1}^{\lfloor (\log n)^{\frac{3+2\varepsilon}{6}}\rfloor}\zeta_j&> h^*-\delta n + \left(\frac{1}{3}\delta n-\sum_{j=m+1}^{\lfloor (\log n)^{\frac{3+2\varepsilon}{6}}\rfloor}\zeta_j\right)\overset{\eqref{eq:h*condition2}}{>} h_*+\left(\frac{1}{3}\delta n-\sum_{j=m+1}^{\lfloor (\log n)^{\frac{3+2\varepsilon}{6}}\rfloor}\zeta_j\right).
    \end{align*}
    Therefore the LHS of \eqref{eq:sto12} is greater than $h_*$, and thus \eqref{eq:sto12} follows from \eqref{eq:zeta1} and \eqref{ydelta}. \pagebreak
\end{proof}

\begin{proof}[Proof of Lemma \ref{lem:auxiliary2}]
    Assume 
\(
\sum_{m=M+1}^{\lfloor (\log n)^{\frac{3+2\varepsilon}{6}}\rfloor}\zeta_m \ge \tfrac{1}{3}\delta n.
\)
Define
\[
m^*:=\sup\Big\{\,m\ge M+1:\ \sum_{j=m}^{\lfloor (\log n)^{\frac{3+2\varepsilon}{6}}\rfloor}\zeta_j\ge \tfrac{1}{3}\delta n\,\Big\}.
\]
Then by definition,
\[
\sum_{j=m^*}^{\lfloor (\log n)^{\frac{3+2\varepsilon}{6}}\rfloor}\zeta_j \ge \frac{1}{3}\delta n\quad\text{and}
\quad
\sum_{j=m}^{\lfloor (\log n)^{\frac{3+2\varepsilon}{6}}\rfloor}\zeta_j < \frac{1}{3}\delta n
\quad \text{for all } m>m^*.
\]

   \noindent Therefore, by separating the contributions of the cases $m=m^*$ and $m\ge m^*+1$, we derive that 
    \begin{align*}
        \sum_{m=m^*}^{\lfloor(\log n)^{\frac{3+2\varepsilon}{6}}\rfloor} \zeta_m\wedge\left(\frac{1}{3}\delta n-\sum_{j=m+1}^{\lfloor (\log n)^{\frac{3+2\varepsilon}{6}}\rfloor}\zeta_j\right)^+ 
        = \left(\frac{1}{3}\delta n-\sum_{j=m^*+1}^{\lfloor (\log n)^{\frac{3+2\varepsilon}{6}}\rfloor}\zeta_j\right)^+  +  \sum_{m=m^*+1}^{\lfloor(\log n)^{\frac{3+2\varepsilon}{6}}\rfloor} \zeta_m \ge \frac{1}{3}\delta n.
    \end{align*}
    This proves \eqref{eq:obs12}.
\end{proof}

\subsubsection{Proof of \eqref{bound1}}
We close this section with the proof of \eqref{bound1}. The main technical estimate we require is the following proposition.
\begin{proposition}\label{prop:rely1}
    Assume \eqref{H1} and \eqref{eq:liminf}. Then there exists constant $c>0$ such that for any $n$ sufficiently large, $h_*+\frac{n}{(\log n)^{1+\frac{\varepsilon}{8}}}\le  s \le h^*$ and $1\le k\le q_n(s)$, we have
    \begin{equation}\label{kinequal}
    \p_n\left(X_{s-{\frac{n}{(\log n)^{1+\frac{\varepsilon}{8}}}}}\le \dfrac{k}{(\log n)^{\frac{\varepsilon}{6}}}+(\log n)^{1+\frac{\varepsilon}{2}}\,\Big|\, X_s=k\right)\ge 1-c(\log n)^{-\frac{13}{24}\varepsilon}.
    \end{equation}
\end{proposition}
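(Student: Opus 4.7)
The plan is to reduce Proposition~\ref{prop:rely1} to a uniform one-step multiplicative contraction for the coalescent process $X$, which I will then iterate via a non-negative supermartingale and Markov's inequality. Set $T:=\lfloor n/(\log n)^{1+\frac{\varepsilon}{4}}\rfloor$, $\lambda_n:=c(\log n)^{1+\frac{\varepsilon}{2}}/n$, and $\tau:=\inf\{t\ge 0:X_{s-t}\le(\log n)^{1+\frac{\varepsilon}{2}}\}$. If $k\le (\log n)^{1+\frac{\varepsilon}{2}}$, monotonicity of $X$ forces $X_{s-T}\le k\le (\log n)^{1+\frac{\varepsilon}{2}}$ and \eqref{kinequal} holds trivially; similarly, on $\{\tau\le T\}$ we have $X_{s-T}\le(\log n)^{1+\frac{\varepsilon}{2}}$. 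Thus I may assume $k>(\log n)^{1+\frac{\varepsilon}{2}}$ and only need to control the probability on $\{\tau>T\}$.

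The key step is the one-step estimate
\[
\mathbb{E}_n\bigl[X_{s-1}\,\big|\,\mathcal{G}_s,\,X_s=m\bigr]\le m(1-\lambda_n)\qquad\text{for all } m>(\log n)^{1+\frac{\varepsilon}{2}},
\]
which I will establish in two regimes. For $(\log n)^{1+\frac{\varepsilon}{2}}\le m\le (\log n)^{1+\varepsilon}$, Theorem~\ref{thm:Markov} combined with~\eqref{gelog1} yields $\mathbb{E}_n[X_s-X_{s-1}\mid\mathcal{G}_s,X_s=m]\ge p_n(m,s)\gtrsim m^2/n\ge m(\log n)^{1+\frac{\varepsilon}{2}}/n$. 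For $m\ge(\log n)^{1+\varepsilon}$, I will realize $\mathcal{X}_s$ as the first $m$ entries of the uniform permutation $(u^s_1,\ldots,u^s_{q_n(s)})$ using Proposition~\ref{prop:uniform}, and then partition $\{1,\ldots,m\}$ into $K:=\lfloor m/\lfloor(\log n)^{1+\varepsilon}\rfloor\rfloor$ disjoint blocks of size $\lfloor(\log n)^{1+\varepsilon}\rfloor$. By exchangeability each block is a uniform random $\lfloor(\log n)^{1+\varepsilon}\rfloor$-subset of generation $s$, so \eqref{H2'} forces each block to exhibit an internal parental coincidence with probability $\gtrsim(\log n)^{2(1+\varepsilon)}/n$. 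Since the blocks are disjoint subsets of generation $s$, a double-counting inequality (bounding $\#\{\mathfrak{p}(u^s_i):1\le i\le m\}$ above by the sum of blockwise distinct-parent counts) yields
\[
X_s-X_{s-1}\ge\sum_{j=1}^K\mathbf{1}\{\text{block } j \text{ has an internal coincidence}\},
\]
whose conditional expectation is $\gtrsim m(\log n)^{1+\varepsilon}/n\ge m(\log n)^{1+\frac{\varepsilon}{2}}/n$. Both regimes deliver the uniform contraction factor $\lambda_n$.

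With the contraction in hand, I will consider the supermartingale
\[
M_t:=\frac{X_{s-t}}{\prod_{i=0}^{t-1}\bigl(1-\lambda_n\mathbf{1}\{X_{s-i}>(\log n)^{1+\frac{\varepsilon}{2}}\}\bigr)},\qquad 0\le t\le T,
\]
whose supermartingale property follows from the one-step estimate and the Markov property of Theorem~\ref{thm:Markov}. Thus $\mathbb{E}_n[M_T]\le M_0=k$, and on $\{\tau>T\}$ every indicator in the denominator equals $1$, so $M_T=X_{s-T}(1-\lambda_n)^{-T}$. This produces
\[
\mathbb{E}_n\bigl[X_{s-T}\mathbf{1}\{\tau>T\}\bigr]\le k(1-\lambda_n)^T\le k\exp(-\lambda_n T)=k\exp\!\bigl(-c(\log n)^{\frac{\varepsilon}{4}}\bigr),
\]
and Markov's inequality gives
\[
\p_n\!\Bigl(X_{s-T}>\tfrac{k}{(\log n)^{\frac{\varepsilon}{6}}},\ \tau>T\Bigr)\le (\log n)^{\frac{\varepsilon}{6}}\exp\!\bigl(-c(\log n)^{\frac{\varepsilon}{4}}\bigr)\le\exp\!\bigl(-c'(\log n)^{\frac{\varepsilon}{4}}\bigr)
\]
for some $c'>0$ and all sufficiently large $n$, the factor $(\log n)^{\varepsilon/6}$ being absorbed. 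Combined with the trivial control on $\{\tau\le T\}$, this yields~\eqref{kinequal}.

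The main obstacle will be the one-step contraction in the large-$m$ regime: I must verify (i)~that the blockwise partitioning of $\mathcal{X}_s$ really produces uniform-subset coalescence probabilities so that~\eqref{H2'} applies, and (ii)~that summing per-block coincidence indicators genuinely lower-bounds the total decrease $X_s-X_{s-1}$, rather than merely counting block-internal coincidences. Both points hinge on the exchangeability provided by Proposition~\ref{prop:uniform} together with the observation that merges inside disjoint blocks involve disjoint parents-of-interest, so their contributions to the overall deficit $m-\#\{\mathfrak{p}(u^s_i):1\le i\le m\}$ add up.
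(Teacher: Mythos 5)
Your proposal is correct, and it takes a genuinely different route from the paper's. The paper handles $k>(\log n)^{1+\varepsilon}$ by a \emph{covering/coupling} argument: it samples $r=\lfloor(\log n)^{1+\varepsilon}\rfloor$ vertices, runs Lemma~\ref{prop:tdecrease} to bring the auxiliary process $X^{(r)}$ down to $(\log n)^{1+\varepsilon/2}$, invokes Lemma~\ref{lem:perm_gap} to show that the set $B_2$ of level-$s$ vertices \emph{not} descended from those few lineages is small, and then bounds $X^{(k)}_{j^*}\le X^{(r)}_{j^*}+\#(B_2\cap\{u^s_i\})$ via a hypergeometric tail. Your proof instead packages everything into a one-step multiplicative contraction $\mathbb{E}_n[X_{s-1}\mid\mathcal{G}_s,X_s=m]\le m(1-\lambda_n)$ for all $m>(\log n)^{1+\varepsilon/2}$, established by \eqref{gelog1} in the small-$m$ window and by a clean blockwise double-count in the large-$m$ window (the inequality $m-\#\{\mathfrak{p}(u^s_i)\}\ge\sum_j\bigl(|\text{block}_j|-\#\{\text{parents in block}_j\}\bigr)$ is exactly right, since every distinct parent is distinct within at least one block, and the leftover indices only help). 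Iterating via the supermartingale and Markov's inequality then yields the exponential decay directly. This is arguably closer to the textbook proof of coming-down-from-infinity for $\Lambda$-coalescents (a Lyapunov/supermartingale drift estimate), and it avoids the auxiliary permutation-gap lemma and hypergeometric tail bound the paper needs. The paper's version, by working with the explicit covering $B_1/B_2$, gives slightly more structural information about \emph{which} lineages survive, which is reused in the tightness argument of Section~\ref{sec:tight}; your version is more modular but would not immediately provide that extra structure if it were needed downstream.

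One presentational caveat worth tightening in a write-up: the constant $c$ in $\lambda_n=c(\log n)^{1+\varepsilon/2}/n$ has to be chosen at most the minimum of the implied constants in the two regimes (the \eqref{gelog1} bound and the blockwise bound, the latter losing a factor from $K\ge m/(2\lfloor(\log n)^{1+\varepsilon}\rfloor)$ when $m\ge 2\lfloor(\log n)^{1+\varepsilon}\rfloor$ and from $K=1$ when $(\log n)^{1+\varepsilon}\le m<2(\log n)^{1+\varepsilon}$), and the final bound picks up a harmless factor $(\log n)^{\varepsilon/6}$ which you correctly absorb.
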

\begin{proof}[Proof of \eqref{bound1} assuming Proposition \ref{prop:rely1}]
Set 
\[l_i=h^*-\dfrac{n\cdot i}{(\log n)^{1+\frac{\varepsilon}{8}}}\] 
for $i=0,\cdots, \lfloor\frac{1}{3}\delta(\log n)^{1+\frac{\varepsilon}{8}}\rfloor$.
We use the following elementary fact:
\begin{lemma}\label{lem:rely2}
    For sufficiently large $n$, we have if 
    \begin{align*}
        \#\left\{i=0,\ldots,\left\lfloor\frac{1}{3}\delta(\log n)^{1+\frac{\varepsilon}{8}}\right\rfloor: X_{l_{i+1}}\le \dfrac{X_{l_{i}} }{(\log n)^{\frac{\varepsilon}{6}}}+(\log n)^{1+\frac{\varepsilon}{2}}\right\}\ge \frac{1}{6}\delta (\log n)^{1+\frac{\varepsilon}{8}},
    \end{align*}
    then $X_{h^*-\lfloor\frac{1}{3}\delta n\rfloor}< 2(\log n)^{1+\frac{\varepsilon}{2}}$.
\end{lemma}
Applying this, we obtain for $n$ large enough,
    \begin{align}\label{eq:qwer}
        \begin{aligned}
        &\quad\, \p_n\left(T_{\lfloor2(\log n)^{1+\frac{\varepsilon}{2}}\rfloor}\le h^*-\frac{1}{3}\delta n\right) \le \p_n\left(X_{h^*-\lfloor\frac{1}{3}\delta n\rfloor}\ge 2(\log n)^{1+\frac{\varepsilon}{2}}\right)\\
&\le\p_n\bigg(\#\left\{i=0,\ldots,\left\lfloor\frac{1}{3}\delta(\log n)^{1+\frac{\varepsilon}{8}}\right\rfloor: X_{l_{i+1}}\le \dfrac{X_{l_{i}} }{(\log n)^{\frac{\varepsilon}{6}}}+(\log n)^{1+\frac{\varepsilon}{2}}\right\}< \frac{1}{6}\delta (\log n)^{1+\frac{\varepsilon}{8}}\bigg).
   \end{aligned}
    \end{align}
    It follows from the Markov property of $X$ in Theorem \ref{thm:Markov} and \eqref{kinequal} that there exists constant $c>0$ such that for any $n$ sufficiently large and $i=0,\ldots,\left\lfloor\frac{1}{3}\delta(\log n)^{1+\frac{\varepsilon}{8}}\right\rfloor$, 
    \[\mathbb{P}_n\left(X_{l_{i+1}}\le \dfrac{X_{l_{i}} }{(\log n)^{\frac{\varepsilon}{6}}}+(\log n)^{1+\frac{\varepsilon}{2}}\,\Big|\,(X_j:l_i\le j\le h^*)\right)\ge 1-c(\log n)^{-\frac{13}{24}\varepsilon},\quad \text{a.s..}\] 
This implies that the RHS of \eqref{eq:qwer} goes to $0$ as $n\rightarrow\infty$, completing the proof of \eqref{bound1}.
\end{proof}
The remaining part is devoted to the proof of Proposition \ref{prop:rely1}.
Fix $h_*+\frac{n}{(\log n)^{1+\frac{\varepsilon}{8}}}\le s \le h^*$.
Consider the coalescent process 
\[
X^{(r)}_j = X^{(r)}_j(n,s),
\]
and define the hitting time
\[
T^{(r)}_m = T^{(r)}_m(s) := \sup\{0\le j \le s : X^{(r)}_j \le m\}.
\]
Denote
\(
r=r(n):=\lfloor(\log n)^{1+\frac23\varepsilon}\rfloor
\) throughout the proof. For $h_*+\frac{n}{(\log n)^{1+\frac{\varepsilon}{8}}}\le  s \le h^*$, denote
\[s^{\downarrow}=s^{\downarrow}(n):= s - \frac{n}{\lfloor (\log n)^{1+\frac{\varepsilon}{8}} \rfloor}.\]   
We will need the following CDFI-type estimate, whose proof is deferred to the end of this section.
\begin{lemma}\label{lem:tdecrease} 
Assume \eqref{H1} and \eqref{eq:liminf}. Then there exists constant $c>0$ such that for all $n$ sufficiently large and $h_*+\frac{n}{(\log n)^{1+\frac{\varepsilon}{8}}}\le s \le h^*$,
\[
\p_n\!\left(T^{(r)}_{\lfloor (\log n)^{1+\frac{\varepsilon}{2}} \rfloor}(s) > s^{\downarrow}\right)
\;\ge\;
1 - c(\log n)^{-\frac{13}{24}\varepsilon}.
\]
\end{lemma}
\begin{proof}[Proof of Proposition \ref{prop:rely1} assuming Lemma \ref{lem:tdecrease}]
For $1 \le k \le (\log n)^{1+\frac23\varepsilon}$, the bound \eqref{kinequal} follows by Lemma~\ref{lem:tdecrease}.  
We therefore focus on the case $k > (\log n)^{1+\frac23\varepsilon}$. 
By Theorem \ref{thm:Markov}, it suffices to prove that
\begin{equation}\label{kinequal1}
    \p_n\left(X^{(k)}_{s^{\downarrow}}\le \dfrac{k}{(\log n)^{\frac{\varepsilon}{6}}}+(\log n)^{1+\frac{\varepsilon}{2}}\right)\ge 1-c(\log n)^{-\frac{13}{24}\varepsilon}.
    \end{equation}
    
We first make an observation concerning the coalescent process $X^{(r)}$.  
Recall that $r=\lfloor(\log n)^{1+\frac23\varepsilon}\rfloor$.  
Since $(u^{s}_1,\ldots,u^{s}_{q_n(s)})$ is a uniformly random permutation of $(x^{s}_1,\ldots,x^{s}_{q_n(s)})$, we can write
\[
(u^{s}_1,\ldots,u^{s}_{q_n(s)})=(x^{s}_{\pi^n_1},\ldots,x^{s}_{\pi^n_{q_n(s)}}),
\]
where $\pi^n$ is a uniform permutation in the symmetric group $S_{q_n(s)}$.  
Let $\pi^n_{(1)}<\cdots<\pi^n_{(r)}$ denote the order statistics of $\{\pi^n_1,\ldots,\pi^n_r\}$.
For any $1 \le m \le r$, the event $\{X^{(r)}_{s^{\downarrow}} = m\}$ means that there exist indices
\[
0 = i_0 < i_1 < \cdots < i_m = r
\]
and $m$ distinct vertices $v_1\prec\cdots\prec v_m \in \cX^{(r)}_{s^{\downarrow}}$ such that for each $q = 1,\ldots,m$ and every
$t \in \{i_{q-1}+1,\ldots,i_q\}$,
\[
\text{Anc}(x^s_{\pi^n_{(t)}},\, s^{\downarrow}) = v_q.
\]
That is, the $r$ sampled vertices possess exactly 
$m$ distinct ancestors, and these ancestral identities partition
the index set $\{\pi^n_{(1)},\ldots,\pi^n_{(r)}\}$ into $m$ consecutive blocks
\[
I_q := \{\pi^n_{(i_{q-1}+1)},\ldots,\pi^n_{(i_q)}\}, \qquad q = 1,\ldots,m,
\]
such that all samples in $I_q$ share the same ancestor $v_q$ at height $s^{\downarrow}$.
On the event $\{X^{(r)}_{s^{\downarrow}} = m\}$, this gives a natural decomposition of all vertices at height $s$:
\begin{align*}
    B_1 &:= \{x^s_i : \text{Anc}(x^s_i, s^{\downarrow}) \in \{v_1,\ldots,v_m\}\}
           = \{x^s_i : i \in \cup_{q=1}^m I_q\},\\
    B_2 &:= \{x^s_1,\ldots,x^s_{q_n(s)}\} \setminus B_1.
\end{align*}

\noindent We now return to the coalescent process $X^{(k)}$.  
Recall that
\[
X^{(k)}_{s^{\downarrow}}
    = \#\{\text{Anc}(u^s_i, s^{\downarrow}): 1 \le i \le k\}.
\]
By splitting the $k$ sampled vertices according to whether $u^s_i \in B_1$ or $u^s_i \in B_2$, we obtain
\begin{align}\label{eq:B1B2}
    X^{(k)}_{s^{\downarrow}}
    \le
    X^{(r)}_{s^{\downarrow}}
    + \#\big( B_2 \cap \{u^s_i : 1 \le i \le k\} \big).
\end{align}
It follows from Lemma \ref{lem:tdecrease} that
\begin{align}\label{eq:Xr}
    \mathbb{P}_n\left(X^{(r)}_{s^{\downarrow}}\le \lfloor (\log n)^{1+\frac{\varepsilon}{2}} \rfloor\right)\ge \p_n\!\left(T^{(r)}_{\lfloor (\log n)^{1+\frac{\varepsilon}{2}} \rfloor} > s - \frac{n}{\lfloor (\log n)^{1+\frac{\varepsilon}{8}} \rfloor}\right)\ge  1-e^{-c(\log n)^{\frac{\eps}{12}}}.
\end{align}
To complete the proof, it suffices to prove that 
\[
\mathbb{P}_n\left[\#\big( B_2 \cap \{u^s_i : 1 \le i \le k\} \big)\le \frac{k}{(\log n)^{\frac\eps6}}\right]\ge 1-c(\log n)^{-\frac{13}{24}\varepsilon}.
\]
This follows from \eqref{eq:Xr} and the following lemma.
\end{proof}
\begin{lemma}\label{lem:key_bound_B2}
There exists constant $c>0$ such that for any $n$ sufficiently large, $h_*+\frac{n}{(\log n)^{1+\frac{\varepsilon}{8}}}\le s \le h^*$, $(\log n)^{1+\frac23\varepsilon}\le k\le q_n(s)$ and $1 \le m \le \lfloor (\log n)^{1+\frac{\varepsilon}{2}} \rfloor$,
\begin{align}\label{eq:condition_B2}
    \p_n\!\left(
        \#\big( B_2 \cap \{u^s_i : 1 \le i \le k\} \big)
        > \frac{k}{(\log n)^{\frac\eps6}}
        \,\Big|\, X^{(r)}_{s^{\downarrow}} = m
    \right)
    \le e^{-c(\log n)^{\frac\eps4}}.
\end{align}
\end{lemma}
\begin{proof}
We use the following elementary property of uniform permutations.

\begin{lemma}\label{lem:perm_gap}
Let $\pi^n_{(0)} := 0$ and $\pi^n_{(r+1)} := q_n(s)$.  
Then there exists constant $c' > 0$ such that for all $n \ge 1$ and 
$h_* + \frac{n}{(\log n)^{1+\frac{\varepsilon}{8}}} \le s \le h^*$,
\[
    \p\!\left(
        \max_{0 \le i \le r} (\pi^n_{(i+1)} - \pi^n_{(i)})
        \le \frac{n}{(\log n)^{1+\frac34 \varepsilon}}
    \right)
    \ge 1 - e^{-c' (\log n)^{\frac{\varepsilon}{4}}}.
\]
\end{lemma}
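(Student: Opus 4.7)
The plan is to bound $\p\bigl(\max_{0 \le i \le r}(\pi^n_{(i+1)} - \pi^n_{(i)}) > a\bigr)$ with $a := \lfloor n/(\log n)^{1+3\varepsilon/4}\rfloor$ by exploiting exchangeability of the inter-order-statistic gaps, which reduces a union bound from $\Theta(n)$ terms to only $r+1$ terms. Write $N := q_n(s)$. Since $\pi^n$ is a uniform permutation, $\{\pi^n_1,\ldots,\pi^n_r\}$ is a uniform random $r$-subset of $\{1,\ldots,N\}$. I would set $c_i := \pi^n_{(i)} - \pi^n_{(i-1)}$ for $1 \le i \le r$ and $c_{r+1} := N + 1 - \pi^n_{(r)}$. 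The standard stars-and-bars bijection identifies $r$-subsets of $\{1,\ldots,N\}$ with compositions of $N+1$ into $r+1$ parts each $\ge 1$, and under this bijection the uniform law on subsets transports to the uniform law on compositions, which is permutation-invariant. Hence $(c_1,\ldots,c_{r+1})$ is exchangeable. Since the lemma's gaps satisfy $\pi^n_{(i+1)}-\pi^n_{(i)} \le c_{i+1}$ for each $0 \le i \le r$ (the last one equaling $c_{r+1}-1$), a union bound combined with exchangeability gives
\[
\p\!\left(\max_{0\le i\le r}(\pi^n_{(i+1)}-\pi^n_{(i)}) > a\right) \;\le\; (r+1)\,\p(c_1 > a).
\]

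Next I would estimate $\p(c_1>a)$. Since $c_1 = \pi^n_{(1)} > a$ iff all sampled values exceed $a$,
\[
\p(c_1 > a) \;=\; \binom{N-a}{r}\!\Big/\!\binom{N}{r} \;\le\; (1 - a/N)^r \;\le\; e^{-ar/N}.
\]
Condition \eqref{eq:h*condition} together with \ref{assump1} ensures that $s/n$ lies eventually in a compact subset of $(0,h(\ell))$. Since $\ell$ is continuous and strictly positive on $(0,h(\ell))$ (by the definition of $h(\ell)$ as the first positive zero of $\ell$), it is bounded below by a positive constant on this compact subset, so $q_n(s) = \Theta(n)$ uniformly in the admissible range of $s$. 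Plugging in $r = \lfloor(\log n)^{1+\varepsilon}\rfloor$ and $a = \lfloor n/(\log n)^{1+3\varepsilon/4}\rfloor$ yields $ar/N \gtrsim (\log n)^{\varepsilon/4}$, and hence
\[
(r+1)\,e^{-ar/N} \;\le\; \exp\!\bigl((1+\varepsilon)\log\log n + O(1) - c(\log n)^{\varepsilon/4}\bigr) \;\le\; e^{-c'(\log n)^{\varepsilon/4}}
\]
for some $c'>0$ and all sufficiently large $n$, since $\log\log n = o\bigl((\log n)^{\varepsilon/4}\bigr)$ for any $\varepsilon>0$.

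The main subtlety is that a naive union bound over all $\Theta(n)$ positions at which an empty window of length $a$ might begin would produce only $N\,e^{-ar/N} = n\,e^{-c(\log n)^{\varepsilon/4}}$, which fails because $(\log n)^{\varepsilon/4} \ll \log n$. Exploiting exchangeability of the $r+1$ gaps replaces the factor $N$ by $r+1$, whose logarithm is merely $O(\log\log n)$, negligible compared with $(\log n)^{\varepsilon/4}$, and this single ingredient is what makes the argument close.
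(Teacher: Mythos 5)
Your proof is correct. The paper states Lemma~\ref{lem:perm_gap} as ``an elementary property of uniform permutations'' and supplies no proof, so there is no argument in the paper against which to compare; what you give is the natural and standard one. The stars-and-bars bijection between $r$-subsets of $\{1,\ldots,N\}$ and compositions of $N+1$ into $r+1$ positive parts correctly establishes exchangeability of the gap vector, and this is precisely the ingredient that reduces the union bound from $\Theta(n)$ terms to $r+1$ terms, which is what saves the estimate since $\log(r+1) = O(\log\log n) = o\bigl((\log n)^{\varepsilon/4}\bigr)$ while $\log n$ is not. Two small remarks: (i) you only need the upper bound $q_n(s)\le Cn$ (not the full $\Theta(n)$), which follows from \ref{assump1} and \eqref{eq:h*condition2} since $\ell$ is bounded on the compact set $[\delta, h(\ell)-\delta]$; (ii) the computation establishes the bound for $n\ge n_0$, and the ``for all $n\ge 1$'' in the statement is handled by shrinking $c'$ so that $1-e^{-c'(\log n)^{\varepsilon/4}}\le 0$ for $n<n_0$, making the inequality vacuous there. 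Neither affects the validity of the argument.
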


On the event $\{X^{(r)}_{s^{\downarrow}} = m\}$,
\[
    \#B_2
    = \sum_{q=1}^m \# I_q
    \le m \cdot \max_{0 \le i \le r}
        (\pi^n_{(i+1)} - \pi^n_{(i)}),
\]
and hence by Lemma \ref{lem:perm_gap}, for any $1 \le m \le \lfloor (\log n)^{1+\frac{\varepsilon}{2}} \rfloor$,
\begin{align}\label{eq:B2_est}
    \p_n\!\left(\#B_2 \le \frac{n}{(\log n)^{\frac{\varepsilon}{4}}}\,\Big|\, X^{(r)}_{s^{\downarrow}} = m\right)\ge \p_n\!\left(\#B_2 \le \frac{m}{(\log n)^{1+\frac{3}{4}\varepsilon}}\,\Big|\, X^{(r)}_{s^{\downarrow}} = m\right)
    \ge 1 - e^{-c'(\log n)^{\frac{\varepsilon}{4}}}.
\end{align}

Finally, note that $(u^s_1,\ldots,u^s_r) \subset B_1$, and conditionally on $(u^s_1,\ldots,u^s_r)$, the remaining
$(u^s_{r+1},\ldots,u^s_k)$ are chosen uniformly among the other
$q_n(s) - r$ vertices at height $s$.  
Conditionally on $\#B_2$, the random variable
\(
\#(B_2 \cap \{u^s_i : 1 \le i \le k\})
\)
follows a hypergeometric distribution 
with population size $q_n(s)$, number of successes $\#B_2$ 
and sample size $k$.
By the tail estimate of the hypergeometric distribution (Cf. \cite[Corollary 1.1]{Serfling}), we derive that there exists constant $c''>0$ such that
\[
\p_n\!\left(
        \#\big( B_2 \cap \{u^s_i : 1 \le i \le k\} \big)
        > \frac{k}{(\log n)^{\frac\eps6}}
        \,\Big|\, X^{(r)}_{s^{\downarrow}} = m,\, \#B_2 \le \frac{n}{(\log n)^{\frac{\varepsilon}{4}}}
    \right)
    \le e^{-c'' k(\log n)^{-\frac\eps6}}.
\]
for all $n$ sufficiently large and $s,k,m$ satisfying the condition in the lemma.
This together with \eqref{eq:B2_est} yields \eqref{eq:condition_B2}. 
\end{proof}
\end{proof}

Finally, let us prove Lemma~\ref{lem:tdecrease}. The key observation is that, to establish the CDFI property, it is already sufficient to figure out the contribution to the decrease of the coalescent process coming from mergers whose common parent produces only a “small’’ number of offspring (see \eqref{eq:tilde Nj} for the definition of such mergers).
For such mergers, the reduction in the number of lineages is, up to a factor of order $(\log n)^{\eta}$ (with $\eta>0$ arbitrarily small), comparable to the number of pairs of sampled individuals that share a common parent.  
Hence the task of estimating the total decrease of $X_j$ can be reduced to estimating the number of these coalescing pairs—a quantity that is much easier to estimate.  
The proof is then completed by obtaining suitable bounds on the expectation and variance of the number of such coalescing pairs.

\medskip

Let us begin with the notation.
\begin{definition}[Sampled vertex sets]\label{def:sampled-X}
    Fix $h_* \le j \le h^*$.  Conditionally on the tree $\mathscr{T}^n$, we define the sampled vertex set $\widetilde{\cX}_j$ as follows:
    \begin{itemize}
        \item If $X_j\ge  \lfloor(\log n)^{1+\frac{\varepsilon}{2}}\rfloor$, let $\widetilde{\cX}_j$ be a uniformly chosen 
        $\lfloor(\log n)^{1+\frac{\varepsilon}{2}}\rfloor$-tuple of $\cX_j$;
        \item Otherwise, let $\widetilde{\cX}_j$ be a uniformly chosen $\lfloor(\log n)^{1+\frac{\varepsilon}{2}}\rfloor$-tuple of $\{x^j_1,\ldots,x^j_{q_n(j)}\}$.
    \end{itemize}
    The choices are (conditionally) independent across heights. 
\end{definition}

Furthermore, for $h_* \le j < h^*$ and $1\le l\le q_n(j)$, define
\begin{align}\label{eq:xi_tilde}
    \widetilde\xi(x^j_l)
      := \#\big\{x\in \widetilde{\cX}_{j+1}:\ \kp(x)=x^j_l\big\},
\end{align}
the number of sampled children of $x^j_l$ among the vertices in $\widetilde{\cX}_{j+1}$, and
\begin{align}\label{eq:tilde Nj}
    \widetilde N_j
    := \sum_{\substack{x,x'\in\widetilde{\cX}_j\\ x\neq x'}}
        \mathbf{1}\!\left\{\kp(x)=\kp(x'),
        \ \xi(\kp(x))\le \Big\lfloor \frac{n}{(\log n)^{1+\frac{2\varepsilon}{3}}}\Big\rfloor
        \right\},
\end{align}
which counts the sampled sibling pairs in $\widetilde{\cX}_j$ whose common parent has at most 
\(\lfloor n/(\log n)^{1+\frac23\varepsilon} \rfloor\) children.

We prepare several auxiliary lemmas on $\widetilde\xi(x^j_l)$ and $\widetilde{N}_j$, whose proofs are deferred to the end of this section.
\begin{lemma}\label{lem:tilde_N}
    $\big(\widetilde{N}_j:h_*< j\le h^*\big)$ are independent. Each $\widetilde{N}_j$ follows the same law as 
    \begin{align}\label{def:tilde_N'}
        \widetilde N_j':= \sum_{1\le l_1<l_2\le \lfloor (\log n)^{1+\frac\varepsilon 2}\rfloor } \mathbf{1}\Big\{\kp(u^j_{l_1})=\kp(u^j_{l_2}),\, \xi(\kp(u^j_{l_1}))\le \Big\lfloor \frac{n}{(\log n)^{1+\frac23 \varepsilon}} \Big\rfloor \Big\}.
    \end{align}
\end{lemma}
\begin{lemma}\label{lem:Xi}
Assume \eqref{H1} and \eqref{eq:liminf}. Let
\begin{align*}
    \Xi_n:=\left\{\exists\, s\in[h_*,h^*],\, l\in[1, q_n(s)] \text{ s.t. } \xi(x^s_l)\le \frac{n}{(\log n)^{1+\frac23 \varepsilon}},\, \widetilde\xi(x^s_l)\ge (\log n)^{\frac\varepsilon{12}}\right\}.
\end{align*}
Then we have
\begin{align}\label{eq:Xi}
    \p_n\left(\Xi_n\right)= o_n\big( e^{-(\log n)^{\frac\varepsilon{12}}}\big).
\end{align}
\end{lemma}
Recall that we write 
\(
r=\lfloor(\log n)^{1+\frac23\varepsilon}\rfloor
\), $s^{\downarrow}=s - \frac{n}{\lfloor (\log n)^{1+\frac{\varepsilon}{8}} \rfloor}$ and $X_j^{(r)}=X_j^{(r)}(n,s)$.
\begin{lemma}\label{lem:X_N}
    For any $\delta>0$, $n\ge 1$ and $h_*+\frac{n}{(\log n)^{1+\frac{\varepsilon}{8}}}\le s \le h^*$, we have on the event $(\Xi_n)^{\rm c}\cap \Big\{T^{(r)}_{\lfloor (\log n)^{1+\frac{\varepsilon}{2}} \rfloor}(s) < s^{\downarrow} \Big\}$,
\begin{align}\label{eq:tilde N good}
    X^{(r)}_s - X^{(r)}_{s^{\downarrow}} \ge \frac{2}{(\log n)^{\frac\varepsilon{12}}}\sum_{j=s^{\downarrow}+1}^{s} \widetilde N_j.
\end{align}
\end{lemma}
\begin{proof}[Proof of Lemma~\ref{lem:tdecrease} assuming Lemmas~\ref{lem:tilde_N}--\ref{lem:X_N}]
Note that
\[\Big\{T^{(r)}_{\lfloor (\log n)^{1+\frac{\varepsilon}{2}} \rfloor}(s) < s^{\downarrow} \Big\}=\left\{X^{(r)}_s - X^{(r)}_{s^{\downarrow}}<r-\lfloor (\log n)^{1+\frac{\varepsilon}{2}} \rfloor\right\}.\]
Thus, by \eqref{eq:Xi} and \eqref{eq:tilde N good},
\begin{align*}
    \p_n\Big(T^{(r)}_{\lfloor (\log n)^{1+\frac{\varepsilon}{2}} \rfloor}(s) < s^{\downarrow}\Big)&\le o_n\big( e^{-(\log n)^{\frac\varepsilon{12}}}\big) + \p_n\bigg(\frac{2}{(\log n)^{\frac\varepsilon{12}}}\sum_{j=s^{\downarrow}+1}^{s} \widetilde N_j< r\bigg).
\end{align*}
It remains to prove
\begin{align}\label{eq:remain}
    \p_n\bigg(\frac{2}{(\log n)^{\frac\varepsilon{12}}}\sum_{j=s^{\downarrow}+1}^{s} \widetilde N_j< r\bigg)\lesssim (\log n)^{-\frac{13}{24}\varepsilon}.
\end{align}
For this, we shall estimate the expectation and variance of $\sum_{j=s^{\downarrow}+1}^{s}\widetilde N_j$. 
\begin{proposition}
    For any $\delta>0$, we have uniformly for $h_*+\frac{n}{(\log n)^{1+\frac{\varepsilon}{8}}}\le s \le h^*$,
    \begin{align}\label{eq:exp_var}
        \e_n\left[\sum_{j=s^{\downarrow}+1}^s \widetilde N_j\right]\asymp (\log n)^{1+\frac78 \varepsilon},\qquad
        \text{Var}_n\left[\sum_{j=s^{\downarrow}+1}^{s}\widetilde{N}_j \right]\lesssim (\log n)^{2+\frac{29}{24}\varepsilon}.
    \end{align}
\end{proposition}
\begin{proof}
    We write $b_n:=\lfloor (\log n)^{1+\frac{\varepsilon}{2}}\rfloor$, and denote
\[\widetilde{A}^j_{i_1,i_2}:= \bigg\{ \kp(u^j_1)=\kp(u^j_2),\, \xi(\kp(u^j_1))\le \bigg\lfloor \frac{n}{(\log n)^{1+\frac23 \varepsilon}} \bigg\rfloor \bigg\}.\]

For the expectation, it follows from Lemma \ref{lem:tilde_N} that
\begin{align*}
   \e_n\left[\sum_{j=s^{\downarrow}+1}^s \widetilde N_j\right] =  \sum_{j=s^{\downarrow}+1}^{s} \binom{b_n}{2} \p_n \big( A^j_{12} \big)&\overset{\text{Cor~\ref{coleprob}}}{=} \sum_{j=s^{\downarrow}+1}^{s} \binom{b_n}{2} \frac{q_n(j-1)}{(q_n(j))_2} \e_n\bigg[ (\nu_1^{j-1})_2 \mathbf{1}\bigg\{\nu_1^{j-1}\le \bigg\lfloor \frac{n}{(\log n)^{1+\frac{2}{3}\varepsilon}} \bigg\rfloor \bigg\} \bigg]\\
   &\ \asymp \frac{n}{(\log n)^{1+\frac\varepsilon 8}}(\log n)^{2(1+\frac\varepsilon 2)} \cdot \frac1n = (\log n)^{1+\frac78 \varepsilon},
\end{align*}
where ``$\asymp$'' follows from \eqref{H1} and the fact that there exists constants $c_1,c_2>0$ such that
\begin{align*}
    c_1 \overset{\eqref{eq:liminf}}{<} \e_n\bigg[ (\nu_1^{j-1})_2 \mathbf{1}\bigg\{\nu_1^{j-1}\le \bigg\lfloor \frac{n}{(\log n)^{1+\frac{2}{3}\varepsilon}} \bigg\rfloor \bigg\} \bigg] \le \e_n\big[(\nu^{j-1}_1)_2\big] \overset{\eqref{H1}}{<} c_2.
\end{align*}

Now turn to the variance. Using the independence in Lemma \ref{lem:tilde_N}, we obtain that
\begin{align}\label{eq:Var}
    \text{Var}_n\left[\sum_{j=s^{\downarrow}+1}^{s}\widetilde{N}_j \right]=\sum_{j=s^{\downarrow}+1}^{s}\text{Var}_n\left[\widetilde{N}_j \right]
\le  \sum_{j=s^{\downarrow}+1}^{s}\e_n\left[(\widetilde{N}_j)^2\right].
\end{align}
Denote $d_n:=(\log n)^{1+\frac{2}{3}\varepsilon}$. Then for each $j$,
\begin{align*}
    \e_n\left[(\widetilde{N}_j)^2\right]&= \binom{b_n}{2}\mathbb{P}_n(\widetilde{A}^j_{12})+3\binom{b_n}{3}\mathbb{P}_n(\widetilde{A}^j_{12}\cap \widetilde{A}^j_{13})+3\binom{b_n}{4}\mathbb{P}_n(\widetilde{A}^j_{12}\cap \widetilde{A}^j_{34})\\
    &\lesssim \frac{b_n^2}{n}+\frac{b_n^3}{n^2}\e_n\left[(\nu_1^{j-1})^3\mathbf{1}\big\{\nu_1^{j-1}\le \left\lfloor n d_n^{-1} \right\rfloor \big\}\right]\\
    &\quad\, +\frac{b_n^4}{n^3} \e_n\left[(\nu_1^{j-1})^4\mathbf{1}\big\{\nu_1^{j-1}\le \left\lfloor n d_n^{-1} \right\rfloor \big\}\right]+\frac{b_n^4}{n^2}\e_n\left[(\nu_1^{j-1})^2(\nu_2^{j-1})^2\mathbf{1}\big\{\nu_1^{j-1},\nu_2^{j-1}\le \left\lfloor n d_n^{-1} \right\rfloor \big\}\right]\\
    &\le \frac{b_n^2}{n}+\frac{b^3_n}{n d_n}\e_n\big[(\nu_1^{j-1})^2\big]+\frac{b^4_n}{n d_n^2}\e_n\big[(\nu_1^{j-1})^2\big]+\frac{b^4_n}{nd_n}\e_n\big[(\nu_1^{j-1})^2\cdot \nu_2^{j-1}\big]\\
    &\lesssim \frac{b^4_n}{nd_n}\asymp \frac{(\log n)^{3+\frac{4}{3}\varepsilon}}{n}.
\end{align*}
Substituting back to \eqref{eq:Var} yields the desired result.
\end{proof}
Return to the proof of \eqref{eq:remain}. By \eqref{eq:exp_var} and Markov's inequality, we obtain
\begin{align*}
     \text{LHS of }\eqref{eq:remain}&\le \p_n\bigg(\sum_{j=s^{\downarrow}+1}^{s} \widetilde N_j\le (\log n)^{1+\frac34 \varepsilon} \bigg) 
    \le \p_n\left(\sum_{j=s^{\downarrow}+1}^{s} \widetilde N_j-\e_n\bigg[\sum_{j=s^{\downarrow}+1}^{s} \widetilde N_j\bigg]\le -\frac12 \e_n\bigg[\sum_{j=s^{\downarrow}+1}^{s} \widetilde N_j \bigg]\right)\\
    &\le \frac{4\cdot\text{Var}_n\bigg[\sum_{j=s^{\downarrow}+1}^{s} \widetilde N_j \bigg]}{\left(\e_n\bigg[\sum_{j=s^{\downarrow}+1}^{s} \widetilde N_j \bigg]\right)^2}\lesssim (\log n)^{-\frac{13}{24}\varepsilon}.
\end{align*}
This completes the proof.
\end{proof}
Finally, let us prove Lemmas~\ref{lem:tilde_N}--\ref{lem:X_N}.
\begin{proof}[Proof of Lemma~\ref{lem:tilde_N}]
    Recall the definition of $\mathcal{G}_j$ in \eqref{def:Gj}, i.e., the $\sigma$-field generated by the set of subtrees rooted at some vertex at height $j$ and truncated at height $h^*$. 
Fix $h_*< j\le h^*$.  Conditionally on $\mathcal{G}_j$, we consider, for each
$l>j$, the vertices in the subtrees $\{\mathscr{T}^n(v): |v|=j\}$ that correspond
to the sampled set $\widetilde{\cX}_l$.  
More precisely, define
    \[\widetilde{\cX}'_l(j):=\big\{(\mathscr{T}^n(v),u):\abs{v}=j,\, vu\in \widetilde{\cX}_l\big\},\]
Here each element \((\mathscr{T}^n(v),u)\) retains the vertex \(u\) together with the full subtree \(\mathscr{T}^n(v)\) in which it lies.

    Conditionally on $\mathcal{G}_j$, each $\widetilde{\cX}'_l(j)$ has the following distribution:
    \begin{itemize}
        \item if $X_l\ge\lfloor(\log n)^{1+\frac{\varepsilon}{2}}\rfloor$, $\widetilde{\cX}'_l(j)$ is a uniformly chosen $\lfloor(\log n)^{1+\frac{\varepsilon}{2}}\rfloor$-tuple of $\{(\mathscr{T}^{n}(v),u):\abs{v}=j,\, vu\in \cX_l\}$; note that in this case, all these $\T^{(n)}(v)$'s have at least one descendant at height $h^*$.
        \item Otherwise, $\widetilde{\cX}'_l(j)$ is a uniformly chosen $\lfloor(\log n)^{1+\frac{\varepsilon}{2}}\rfloor$-tuple of $\{(\mathscr{T}^n(v),u):\abs{v}=j,\, \abs{vu}=l\}$.
    \end{itemize}
    Here since the sampled vertex sets are conditionally independent across height, the collections $\big(\widetilde{\cX}'_l(j):j<l\le h^*\big)$ are also independent across $l$ conditionally on $\mathcal{G}_j$.
    Observe that conditionally on $\mathcal{G}_j$, each $\widetilde N_l$, for
$l>j$, is determined by $\widetilde{\cX}'_l(j)$, whereas
$\widetilde N_j$ is determined by $(\widetilde{\cX}_j,\nu^{j-1})$.
Hence by conditional independence, for any test
functions $g_1$ and $g_2$,
    \begin{align}\label{eq:condi_indpt}
        \mathbb{E}_n\left[ g_1\big((\widetilde{N}_l:j<l\le h^*)\big)g_2(\widetilde{N}_j)\,\Big|\,\mathcal{G}_j\right]=\mathbb{E}_n\left[ g_1\big((\widetilde{N}_l:j<l\le h^*)\,\Big|\,\mathcal{G}_j\right]\mathbb{E}_n\left[ g_2(\widetilde{N}_j)\,\Big|\,\mathcal{G}_j\right].
    \end{align}
By Proposition~\ref{prop:uniform}, it is easily seen that
    \[\left(\widetilde{\cX}_j\,\big|\,\mathcal{G}_j\right)\overset{d}{=}\big\{u^j_1,\ldots,u^j_{\lfloor (\log n)^{1+\frac{\varepsilon}{2}}\rfloor}\big\}.\]
    Hence an application of Lemma \ref{lem:basic} with 
\[
X=\widetilde{\cX}_j,\quad Y=\nu^{j-1},\quad Z=\big\{\mathscr{T}^n(v):|v|=j\big\},\quad W=\big\{u^j_1,\ldots,u^j_{\lfloor (\log n)^{1+\frac{\varepsilon}{2}\rfloor}}\big\}
\]
and $f(X,Y)$ the function determined by $\widetilde{\cX}_j$ and $\nu^{j-1}$ corresponding to $g_2(\widetilde{N}_j)$
yields that
\[\mathbb{E}_n\left[ g_2(\widetilde{N}_j)\,\Big|\,\mathcal{G}_j\right]=\mathbb{E}_n\left[g_2(\widetilde{N}'_j)\right],\]
where $\widetilde N'_j$ is defined as in \eqref{def:tilde_N'}.
Substituting this identity into \eqref{eq:condi_indpt} and taking
expectations gives
\begin{align*}
    \mathbb{E}_n\left[ g_1\big((\widetilde{N}_l:j<l\le h^*)\big)g_2(\widetilde{N}_j)\right]=\mathbb{E}_n\left[g_1\big((\widetilde{N}_l:j<l\le h^*)\right]\mathbb{E}_n\left[ g_2(\widetilde{N}'_j)\right].
\end{align*}
Iterating this over $j=h^*,\,h^*\!-\!1,\ldots, h_*$,
we obtain the desired conclusion.
\end{proof}
\begin{proof}[Proof of Lemma~\ref{lem:Xi}]
Set
\[
a_1 := \frac{n}{(\log n)^{1+\frac23 \varepsilon}}, 
\qquad 
a_k := \frac{n}{\exp\big((\log n)^{\frac{k}{24}\varepsilon}\big)},\quad k\ge 2.
\]
For $k\ge 1$, define the event
\[
\Xi_{n,k} := \Big\{\exists\, s\in[\delta n,h(q_n)-\delta n],\, l\le q_n(s)\text{ s.t. }\ 
        a_{k+1}\le \xi(x^s_l)\le a_k,\ 
        \widetilde\xi(x^s_l)\ge (\log n)^{\frac{\varepsilon}{12}}\Big\}.
\]

Since $a_k<1$ for $k>24/\varepsilon$, we have  
\[
\p_n(A_k)=0,\qquad k>24/\varepsilon.
\]
On the other hand, by \eqref{H1} and the exchangeability of $\nu^{s}$, we obtain that uniformly for $n$ sufficiently large and 
\(1\le k\le 24/\varepsilon\),
\begin{align*}
\p_n(\Xi_{n,k}) 
&\lesssim 
n^2\,\E_n\!\left[
   \p_n\!\left(\widetilde\xi(x^s_1)\ge (\log n)^{\frac{\varepsilon}{12}}\mid \nu^s_1\right)
   \mathbf 1\{a_{k+1}\le \nu^s_1\le a_k\}
\right]
\\
&\overset{(*)}{\le}
n^2\,\E_n\!\left[
    \binom{\lfloor(\log n)^{1+\frac\varepsilon2}\rfloor}{\lfloor(\log n)^{\frac{\varepsilon}{12}}\rfloor}
    \frac{(\nu^s_1)_{\big\lfloor(\log n)^{\frac{\varepsilon}{12}}\big\rfloor}
    }{(q_n(s+1))_{\big\lfloor(\log n)^{\frac{\varepsilon}{12}}\big\rfloor}}
    \mathbf 1\{a_{k+1}\le \nu^s_1\le a_k\}
\right]
\\
&\lesssim
n^2\,[\,(\log n)^{1+\frac\varepsilon2}\,]^{(\log n)^{\frac{\varepsilon}{12}}}
\Big(\frac{a_k}{n}\Big)^{(\log n)^{\frac{\varepsilon}{12}}}
\,\p_n(\nu^s_1\ge a_{k+1})
\\
&\overset{(**)}{\lesssim}
n^2\,[\,(\log n)^{1+\frac\varepsilon2}\,]^{(\log n)^{\frac{\varepsilon}{12}}}
\Big(\frac{a_k}{n}\Big)^{(\log n)^{\frac{\varepsilon}{12}}}
(a_{k+1})^{-2}
\\
&\le 
\exp\!\Big(-c\,(\log n)^{\frac{\varepsilon}{12}}\log\log n\Big),
\end{align*}
for some constant $c=c(\varepsilon)>0$.  
Here we deduce \((*)\) by the standard upper bound for hypergeometric tails and \((**)\) by Markov's inequality.
Summing over all \(k\ge 1\) and applying a union bound yields \eqref{eq:Xi}.
\end{proof}
\begin{proof}[Proof of Lemma~\ref{lem:X_N}]
    Note that $\Big\{T^{(r)}_{\lfloor (\log n)^{1+\frac{\varepsilon}{2}} \rfloor}(s) < s^{\downarrow}\Big\} $ implies that $\widetilde{\cX}_j\subset \cX_j$ for all $j=s^{\downarrow},\ldots, s$.
    Thus on the event $(\Xi_n)^{\rm c}\cap \Big\{T^{(r)}_{\lfloor (\log n)^{1+\frac{\varepsilon}{2}} \rfloor}(s) < s^{\downarrow} \Big\}$,
    for any $j=s^{\downarrow}+1,\ldots, s$,
    \begin{align*}
        X^{(r)}_j-X^{(r)}_{j-1}\ge \sum_{x^j_l:\xi(x^j_l)\le \frac{n}{(\log n)^{1+\frac{2}{3}\varepsilon}}}\big(\widetilde{\xi}(x^j_l)-1\big)\overset{(*)}{\ge} \frac{2}{(\log n)^{\frac{\varepsilon}{12}}}\sum_{x^j_l:\xi(x^j_l)\le \frac{n}{(\log n)^{1+\frac{2}{3}\varepsilon}}}\binom{\widetilde{\xi}(x^j_l)}{2}=\frac{2}{(\log n)^{\frac{\varepsilon}{12}}}\widetilde{N}_j,
    \end{align*}
    where $(*)$ follows from the fact that on $(\Xi_n)^{\rm c}$, every $x^j_l$ with $\xi(x^j_l)\le \frac{n}{(\log n)^{1+\frac{2}{3}\varepsilon}}$ must satisfy that $\widetilde{\xi}(x^j_l)\le (\log n)^{\frac{\varepsilon}{12}}$. Summing over $j=s^{\downarrow}+1,\ldots,s$ yields \eqref{eq:tilde N good}.
\end{proof}

\section{Tightness}\label{sec:tight}
Recall that $\widetilde{\mathscr{T}}^n$ is the geometric tree obtained from \( \mathscr{T}^n \) by assigning length \(1/n\) to each edge.
In this section, we prove the tightness of the sequence \((\widetilde{\mathscr{T}}^n)_{n \in \mathbb{N}}\) (Theorem \ref{thm:tight}), which is the main technical step in establishing Theorem~\ref{main}. 

\medskip

Recall the definition of \(\Delta(n,k)\) from \eqref{def:Delta}. Throughout this section, we use this notation specifically for the sequence \((\mathscr{T}^n)\), i.e.,  
\[
\Delta(n,k) := \max_{v \in V(\mathscr{T}^n)} \min_{u \in V(\mathscr{T}^n_k)} d(v,u), \quad \text{for } 1 \leq k \leq \#V(\mathscr{T}^n),
\]
where \(\mathscr{T}^n_k = \mathscr{T}^n(V^n_1, \dots, V^n_k)\) is the subtree of \(\mathscr{T}^n\) spanned by \((V^n_1, \dots, V^n_k)\) and root $\rho$, with \((V^n_1, \dots, V^n_{\#V(\mathscr{T}^n)})\) being a uniform ordering of \(V(\mathscr{T}^n)\).  

\begin{theorem}\label{thm:tight}
Assume \eqref{H1}--\eqref{H2}. Then the sequence \((\widetilde{\mathscr{T}}^n)\) is tight. Equivalently, for any \(\delta > 0\),  
\[
\lim_{k\rightarrow\infty} \limsup_{n\rightarrow\infty} \p_n\left(\Delta(n,k) > \delta n \right) = 0.
\]  
\end{theorem}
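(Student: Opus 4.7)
The plan is to build, for each $n$, a random set $\mathcal{C}\subset V(\mathscr{T}^n)$ of ``landmarks'' enjoying three features with arbitrarily high probability: (a) $\mathcal{C}$ is a $2\lfloor\delta n\rfloor$-net of $\mathscr{T}^n$; (b) $|\mathcal{C}|\le M$ for some constant $M=M(\delta,\eta)$ independent of $n$; (c) every $v\in\mathcal{C}$ has at least $cn^2$ descendants at tree-distance $\le\lfloor\delta n\rfloor$. Since $|V(\mathscr{T}^n)|=\Theta(n^2)$ by \eqref{eq:total_popuplation}, (c) implies that each landmark is ``hit'' by a uniformly chosen vertex with probability bounded below by some $c'>0$. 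A union bound over $\mathcal{C}$ then gives, as $k\to\infty$, that every landmark has one of $V_{n,1},\ldots,V_{n,k}$ within distance $\lfloor\delta n\rfloor$, which together with (a) forces $\Delta(n,k)\le 3\lfloor\delta n\rfloor$.

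\textbf{Construction and first two properties.}
Partition the heights into blocks $I_s=[s\lfloor\delta n\rfloor,(s+1)\lfloor\delta n\rfloor)$ and set
\[
\mathcal{C}_s:=\bigl\{\mathrm{Anc}(u,\,s\lfloor\delta n\rfloor):u\in V(\mathscr{T}^n),\ |u|=(s+1)\lfloor\delta n\rfloor\bigr\},\qquad \mathcal{C}:=\bigcup_s\mathcal{C}_s.
\]
For any $v$ with $|v|\in I_s$ whose lineage reaches level $(s+1)\lfloor\delta n\rfloor$, the level-$s\lfloor\delta n\rfloor$ ancestor of $v$ belongs to $\mathcal{C}_s$ and is at distance $\le\lfloor\delta n\rfloor$ from $v$; otherwise the ancestor of $v$ at level $(s-1)\lfloor\delta n\rfloor$ (which exists for $s\ge 1$) lies in $\mathcal{C}_{s-1}$, since it has the level-$s\lfloor\delta n\rfloor$ ancestor of $v$ as a descendant, and it is within distance $2\lfloor\delta n\rfloor$ of $v$; this gives (a). For (b), note that $|\mathcal{C}_s|$ is precisely $X_{s\lfloor\delta n\rfloor}(n,(s+1)\lfloor\delta n\rfloor)$, where $X$ is the coalescent process of Section~\ref{sec:coalescence}. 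Applying Theorem~\ref{thm:CDFI} block-by-block to the $O(1/\delta)$ blocks whose endpoints both lie in $[\delta' n,h(q_n)-\delta' n]$ (small $\delta'>0$), and controlling the remaining extreme blocks via \ref{assump2}, yields $\p_n(|\mathcal{C}|\le M)\ge 1-\eta$.

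\textbf{Main obstacle: large descendant clusters.}
The hard step is (c). Following the strategy outlined in the introduction, I would look for a mesoscopic sub-window $J\subset I_s$ of length $\Theta(n)$ centered around $(s+\tfrac12)\lfloor\delta n\rfloor$ on which the ancestral process (traced backward from level $(s+1)\lfloor\delta n\rfloor$) is constant and equal to $|\mathcal{C}_s|$. Theorem~\ref{thm:CDFI} delivers the ``upper tightness'' half: the coalescent descends to $O(1)$ lineages well before entering $J$. The delicate complementary part is an \emph{absence-of-coalescence} estimate over a window of linear length, which I expect to prove via a second-moment argument using the per-step coalescence probability $O(1/n)$ from Corollary~\ref{distinctprob} together with \eqref{eq:H3'} for the third-moment control. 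Once $J$ is secured, exchangeability (Proposition~\ref{prop:uniform}) forces the $|\mathcal{C}_s|$ persistent lineages to each carry a $\Theta(n)$ share of the population at every height in $J$, and summing across $J$ supplies each $v\in\mathcal{C}_s$ with $\Theta(n)\cdot|J|=\Theta(n^2)$ descendants inside $I_s$. Producing this ``stable plateau''---i.e., a matching \emph{lower} bound to complement the upper bound of CDFI---is the main analytic difficulty.

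\textbf{Conclusion.}
On the intersection of the three good events---total probability $\ge 1-3\eta$---each $v\in\mathcal{C}$ satisfies $\p_n\bigl(V_{n,1}\in\mathrm{Desc}(v),\ |V_{n,1}|-|v|\le\lfloor\delta n\rfloor\,\big|\,\mathscr{T}^n\bigr)\ge c'>0$, so the probability that none of $V_{n,1},\ldots,V_{n,k}$ lies within distance $\lfloor\delta n\rfloor$ of a fixed $v\in\mathcal{C}$ is at most $(1-c')^k+o(1)$ (the correction absorbs without-replacement effects, which are negligible because $k$ is fixed while $|V(\mathscr{T}^n)|\to\infty$). A union bound over $\mathcal{C}$ bounds the probability that some landmark is missed by $M(1-c')^k+o(1)$; on the complement, combining with (a) gives $\Delta(n,k)\le 3\lfloor\delta n\rfloor$. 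Sending $n\to\infty$, then $k\to\infty$, then $\eta\to 0$, and finally replacing $\delta$ by $\delta/3$, completes the proof.
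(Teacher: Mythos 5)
Your overall architecture mirrors the paper's exactly: your landmark set $\mathcal{C}$ is the collection of $\delta$-coalescent vertices $\bar{\mathcal{X}}(n,\delta)$ from Definition~\ref{def:coalescent}; Theorem~\ref{thm:CDFI} bounds its cardinality (Proposition~\ref{prop:coalescent}); each such vertex is shown to have $\Theta(n^2)$ nearby descendants (Proposition~\ref{prop:size}); and the union-bound conclusion matches the paper's event $A(n,k,\delta)$ and inclusion~\eqref{eq:A}. However, you have reversed where the technical weight lies, and the step you dismiss with ``exchangeability forces the $|\mathcal{C}_s|$ persistent lineages to each carry a $\Theta(n)$ share of the population at every height in $J$'' is where the real work is, and the reason you give is not correct. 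Proposition~\ref{prop:uniform} only tells you \emph{which} vertices at a given height are the ancestors (distributionally uniform); it says nothing about how large their descendant subtrees are, and nothing prevents the $m$ surviving lineages from having wildly lopsided descendant counts, so that one of them carries almost the entire population and another carries $o(n)$ at every height of $J$. That would invalidate your claim (c) for some $\delta$-coalescent vertex.

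What the paper actually proves (Sections~\ref{sec:pf_size}--\ref{sec:pf_size2}) is that, conditionally on the plateau event $E(n,\delta,\eta,m)$, each trunk $Z_i$ absorbs, over the window, a total of $\Theta(n^2)$ non-trunk vertices $v^j_l$ whose lineages merge into $Z_i$. The merge probability~\eqref{eq:no merge} is of order $(j-h_r)/n$ for a vertex at depth $j-h_r$ into the window, so the first moment of $\mathcal{N}_{i,r}$ is $\Theta(\theta^2 n^2)$; the concentration then comes from a genuine second-moment computation using~\eqref{eq:no merge2}, and the independence across mesoscopic blocks required to boost $C_0$ to $1-\eta$ is restored by the ``uniform selection pair'' device (Lemma~\ref{lem:selection_pair}, Proposition~\ref{lem:Ur independence}). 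None of this is automatic from exchangeability. Meanwhile the plateau itself—which you call ``the main analytic difficulty''—falls out of a single Markov-inequality / geometric-domination estimate~\eqref{eq:upper},~\eqref{eq:pre2} with no moment conditions beyond the second. A further small point: \eqref{H1}\ref{assump2} controls the total \emph{population} of the extreme generations, not the number of $\delta$-coalescent vertices there, so it does not bound $|\mathcal{C}_s|$ for boundary $s$; the paper instead excludes those blocks from $\bar{\mathcal{X}}$ and absorbs the cost into a $4\lfloor\delta n\rfloor$-net radius, which you should do as well.
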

\subsection{Strategy of the proof}
We begin by introducing the notion of \textit{$\delta$-coalescent vertices}, which plays a central role in the argument.

\begin{definition}[$\delta$-coalescent vertices]\label{def:coalescent}
    For \(n \ge 1\) and \(\delta > 0\),  define
\begin{equation}\label{HH}  H = H(n,\delta) := \left\lfloor \frac{h(q_n)}{\lfloor\delta n\rfloor} \right\rfloor. 
\end{equation}
The interval $\big[s\lfloor\delta n\rfloor,(s+1)\lfloor\delta n\rfloor\big)$ is called the \textit{$s$-th height interval}. We focus on the coalescent process starting from height \((s+1)\lfloor\delta n\rfloor\) for \(s = 1, \dots, H-2\). Recall that $\cX_{s\lfloor\delta n\rfloor}\big(n, (s+1)\lfloor\delta n\rfloor\big)$
 consists of all vertices at height \(s\lfloor\delta n\rfloor\) that have at least one descendant at height \((s+1)\lfloor\delta n\rfloor\). Those vertices are referred to as the \textit{\(\delta\)-coalescent vertices} at the \( s \)-th height interval. The collection of all \(\delta\)-coalescent vertices at these height intervals is denoted by
    \[
        \bar{\cX}(n,\delta) := \bigcup_{s=1}^{H-2} \cX_{s\lfloor \delta n \rfloor}\big(n,(s+1)\lfloor \delta n \rfloor\big).
    \]
\end{definition}

These \(\delta\)-coalescent vertices serve as structural anchors within the tree, ensuring that every vertex lies within a distance of at most \(3\lfloor\delta n\rfloor\) from one of these vertices. (In general, the distance bound is $\lfloor\delta n\rfloor$, but it increases to $2\lfloor\delta n\rfloor$ near the root since  $s=0$ is not considered, and up to $3\lfloor\delta n\rfloor$ near height $h(q_n)$ as  $s=H-1$ is also excluded.)

To establish the tightness of the sequence \(\widetilde{\mathscr{T}}^n\), we need to show that the maximum distance $\Delta(n,k)$ divided by $n$ between any vertex \(v\) in \(\mathscr{T}^n\) and its closest vertex in the subtree \(\mathscr{T}^n_k\) vanishes as \(k \to \infty\). This is achieved by proving two key results: 
\begin{itemize}
    \item[(1)]  \(\#\bar{\cX}(n,\delta)\) is tight (see Proposition \ref{prop:coalescent});
    \item[(2)] with probability arbitrarily close to 1, each \(\delta\)-coalescent vertex has at least \(\Theta(n^2)\) descendants within distance \(\lfloor\delta n\rfloor\) (see Proposition \ref{prop:size}).
\end{itemize}
These two results imply that, for all $k$ sufficiently large, with high probability, every \(\delta\)-coalescent vertex will have a descendant among the \(k\) selected vertices $\{V^n_1,\ldots,V^n_k\}$ within its \(\lfloor\delta n\rfloor\)-neighborhood. Consequently, \(\frac{1}{n}\Delta(n,k)\) remains small as \(n \to \infty\) and \(k \to \infty\). 

We now formulate the two results as the following propositions.
 
\begin{proposition}\label{prop:coalescent}
Assume \eqref{H1} and \eqref{eq:liminf}. Then for any $\delta>0$, the number of 
$\delta$-coalescent vertices, $\#\bar{\cX}(n,\delta)$, is tight, i.e.,
\[
\lim_{M\rightarrow\infty}\limsup_{n\rightarrow\infty} \p_n\big(\#\bar{\cX}(n,\delta)>M\big)=0.
\]
\end{proposition}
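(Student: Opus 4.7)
The plan is to reduce the tightness of $\#\bar{\cX}(n,\delta)$ to the coming-down-from-infinity property (Theorem~\ref{thm:CDFI}), exploiting that the union defining $\bar{\cX}(n,\delta)$ has only a bounded number of terms. By \eqref{H1}, the integer $H(n,\delta)$ converges to $\lfloor h(\ell)/\delta\rfloor$ as $n\to\infty$, so there exists $H_0=H_0(\delta,\ell)$ with $H(n,\delta)\le H_0$ for all large $n$. The trivial bound
\[
\#\bar{\cX}(n,\delta)\;\le\;\sum_{s=1}^{H-2}\#\cX_{s\lfloor\delta n\rfloor}\!\big(n,(s+1)\lfloor\delta n\rfloor\big)
\]
therefore reduces the problem to showing tightness of the doubly-indexed family
$\{X^{n,s}:n\ge 1,\ 1\le s\le H(n,\delta)-2\}$, where
$X^{n,s}:=\#\cX_{s\lfloor\delta n\rfloor}(n,(s+1)\lfloor\delta n\rfloor)$.

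I would prove this tightness by a pigeonhole-plus-compactness argument. Suppose it failed; then there would exist $\eta>0$, $M_k\to\infty$, and pairs $(n_k,s_k)$ with $n_k\to\infty$ and $1\le s_k\le H(n_k,\delta)-2$ satisfying $\p_{n_k}(X^{n_k,s_k}>M_k)>\eta$. Since $s_k\lfloor\delta n_k\rfloor\in[\lfloor\delta n_k\rfloor,\,h(q_{n_k})-2\lfloor\delta n_k\rfloor]$, the ratios $s_k\lfloor\delta n_k\rfloor/n_k$ lie in a compact subinterval of $(0,h(\ell))$ for large $k$; after passing to a further subsequence I may assume $s_k\lfloor\delta n_k\rfloor/n_k\to T^*\in[\delta,\,h(\ell)-2\delta]$. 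Setting $h_*(n_k):=s_k\lfloor\delta n_k\rfloor$ and $h^*(n_k):=(s_k+1)\lfloor\delta n_k\rfloor$ along $n=n_k$, and extending arbitrarily otherwise so that $h_*(n)/n\to T^*$ and $h^*(n)/n\to T^*+\delta$ along the full sequence, the hypothesis \eqref{eq:h*condition} of Theorem~\ref{thm:CDFI} is met with $H_*=T^*$ and $H^*=T^*+\delta$, both strictly in $(0,h(\ell))$. Theorem~\ref{thm:CDFI} then yields tightness of $X^{n_k,s_k}=X_{h_*(n_k)}(n_k,h^*(n_k))$, contradicting the choice of $M_k$.

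I do not expect a serious obstacle: the essential content is already contained in Theorem~\ref{thm:CDFI}, and the argument above is a routine compactness reduction. The one minor technicality is the extension of the subsequential base-heights to a full sequence for direct application of Theorem~\ref{thm:CDFI}; this is easily accomplished by interpolation, or alternatively by noting that the proof of the CDFI property in Section~\ref{sec:CDFI} depends on $(h_*(n),h^*(n))$ only through the constant $\delta$ in \eqref{eq:h*condition2}, so its quantitative bounds are uniform across all such choices and adapt verbatim to subsequences.
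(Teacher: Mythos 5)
Your proof is correct and takes essentially the same route as the paper: bound $\#\bar{\cX}(n,\delta)$ by $\sum_{s=1}^{H-2}X^{n,s}$ and reduce to Theorem~\ref{thm:CDFI}. The paper dispenses with your compactness-and-subsequence argument by simply fixing each $s$ in the eventually-constant range $\{1,\ldots,H_\infty-2\}$ and applying Theorem~\ref{thm:CDFI} with $h_*(n)=s\lfloor\delta n\rfloor$, $h^*(n)=(s+1)\lfloor\delta n\rfloor$ (so $H_*=s\delta$, $H^*=(s+1)\delta$); since $s$ ranges over a finite set, the union bound then gives tightness of the sum, and the uniform-in-$s$ tightness you prove by contradiction is an unnecessary (though harmless) strengthening.
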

\begin{proof}
    By \eqref{H1}, the quantity \(H = H(n,\delta)\) converges as $n\rightarrow\infty$. Therefore, it suffices to prove that for any $\delta>0$ and \(1 \le s \le H-2\),
\begin{align*}
    \lim_{M\to\infty}\limsup_{n\to\infty} \p_n\Big( X_{s\lfloor\delta n\rfloor}\big(n,(s+1)\lfloor\delta n\rfloor\big) > M \Big) = 0.
\end{align*}
This follows directly from Theorem~\ref{thm:CDFI}, with $h^*=(s+1)\lfloor\delta n\rfloor$ and $h_*=s\lfloor\delta n\rfloor$.
\end{proof}

Recall that for \(v \in \mathscr{T}^n\), we write \(\text{Desc}(v)\) for the set of its descendants.  
For any \(\gamma > 0\), define \(B(n,\gamma,\delta)\) as the event that every \(\delta\)-coalescent vertex has at least \(\gamma n^2\) descendants within graph distance \(\delta n\) in \(\mathscr{T}^n\). That is,
\begin{align}\label{def:B}
    B(n,\gamma,\delta) := \Big\{\forall\,v \in \bar{\cX}(n,\delta):\ 
    \#\{u \in \text{Desc}(v) : d(u,v) < \delta n\} \ge \gamma n^2 \Big\}.
\end{align}

\begin{proposition}\label{prop:size}
Assume \eqref{H1}--\eqref{H2}. Then
\[
    \lim_{\gamma \to 0+}\,\liminf_{n \to \infty} 
    \p_n\big(B(n,\gamma,\delta)\big) = 1.
\]
\end{proposition}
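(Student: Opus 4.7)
My plan is to exploit the plateau phenomenon outlined in the introduction: within each height interval, the coalescent ancestor count stabilizes at a bounded number for a stretch of $\Theta(n)$ heights, and descendants of the persistent lineages collectively account for $\Theta(n^2)$ vertices, which are roughly evenly distributed among the bounded number of $\delta$-coalescent vertices. Since $H(n,\delta)$ is bounded under \eqref{H1}, a union bound over the height intervals reduces the problem to proving, for each fixed $s \in \{1,\ldots,H-2\}$ and every $\eta > 0$, the existence of $\gamma > 0$ with $\mathbb{P}_n\big(\forall v \in \bar{\cX}_s,\ |\mathscr{T}^n(v,h^*)| \ge \gamma n^2\big) \ge 1 - \eta$ for all large $n$, where $h^* = (s+1)\lfloor\delta n\rfloor$, $h_* = s\lfloor\delta n\rfloor$, and $\bar{\cX}_s := \cX_{h_*}(n,h^*)$.

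First I would apply Theorem~\ref{thm:CDFI} to the coalescent process $X_j = X_j(n, h^*)$ at several lower cut-offs $h_* + \alpha\lfloor\delta n\rfloor$ with $\alpha$ ranging over a finite subset of $[0,1)$; combined with the monotonicity of $j \mapsto X_j$ (non-decreasing in $j$), this produces a constant $M_1 = M_1(\eta)$ such that $X_j \le M_1$ throughout $[h_*, h_* + \lfloor 3\delta n/4\rfloor]$ with probability $\ge 1 - \eta/4$, and in particular $|\bar{\cX}_s| \le M_1$. Within the inner window $[h_* + \lfloor\delta n/4\rfloor, h_* + \lfloor 3\delta n/4\rfloor]$, the per-step coalescence bound \eqref{eq:upper}, namely $\mathbb{P}_n(X_{j-1} < X_j \mid X_j = k) \lesssim k^2/n$ for $k \le M_1$, implies (by Markov's inequality) that the total number of coalescence heights there is $O_\eta(1)$ with probability $\ge 1 - \eta/4$. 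A pigeonhole argument then produces a sub-interval $[j_1, j_2]$ of length $\ge \alpha_0 n$ on which $X_j$ is constantly equal to some $K_0 \le M_1$. Denote the $K_0$ persistent lineages at height $j_1$ by $w_1, \ldots, w_{K_0}$; each is a descendant of a unique vertex in $\bar{\cX}_s$.

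The delicate final step is to show that each persistent lineage $w_i$ accumulates $\Theta(n^2)$ descendants inside $[j_1, j_2]$, not merely that the sum does. For this, I would apply Theorem~\ref{thm:CDFI} a second time, now to bound by a constant $M_2$ the number of ancestors at $j_1$ of all vertices at height $j_2$; these vertices then decompose into $\le M_2$ clusters according to their $j_1$-ancestor, of which $K_0$ are persistent. Exchangeability of the Cannings offspring structure within each generation makes the $M_2$ cluster sizes exchangeable, and combined with the deterministic constraint $\sum_i C_i = q_n(j_2) = \Theta(n)$ this yields, for any $\gamma_1 > 0$, that each persistent cluster has size $\ge \gamma_1 n$ at $j_2$ with probability tending to $1$ as $\gamma_1 \to 0$. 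Choosing $\gamma_1$ small enough (and applying a union bound over the $\le M_1$ persistent clusters) gives probability $\ge 1 - \eta/4$, and propagating this lower bound back through $[j_1, j_2]$ via the same machinery at intermediate heights gives each persistent cluster at least $\gamma_1 n$ descendants at every $j\in[j_1,j_2]$, for a total $\ge \gamma_1 \alpha_0 n^2$. Since every $v \in \bar{\cX}_s$ is the ancestor at $h_*$ of at least one such $w_i$, we deduce $|\mathscr{T}^n(v, h^*)| \ge \gamma n^2$ with $\gamma := \gamma_1 \alpha_0$. The main obstacle will be the balanced-clusters claim, which requires combining Cannings exchangeability with a sharp lower tail bound on cluster sizes in the Kingman-type coalescent regime.
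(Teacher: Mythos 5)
Your overall scaffolding—reduce to a fixed height interval $s$, invoke Theorem~\ref{thm:CDFI} to bound $\#\bar{\cX}_s$, locate a plateau $[j_1,j_2]$ of length $\Theta(n)$ on which $X_j$ is a constant $K_0$, and then argue that each of the $K_0$ persistent lineages accumulates $\Theta(n^2)$ descendants—matches the paper's setup almost exactly (the paper bounds the holding time $\bar T$ via geometric domination rather than pigeonhole, but these are interchangeable). The crucial difference, and the gap, is in how you extract the $\Theta(n^2)$ count per persistent lineage.

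Your claim that ``exchangeability of the cluster sizes together with $\sum_i C_i=q_n(j_2)=\Theta(n)$ yields that each persistent cluster has size $\ge\gamma_1 n$ with probability tending to $1$ as $\gamma_1\to 0$'' is not a valid deduction. Exchangeability plus a deterministic sum only forces each cluster to have mean $\Theta(n)$; it says nothing about the lower tail. A two-cluster example where $(C_1,C_2)$ is $(0,n)$ or $(n,0)$ with probability $1/2$ each is exchangeable with $\sum C_i=n$, yet $\mathbb{P}(C_1<\gamma_1 n)=1/2$ for every $\gamma_1>0$. In the Cannings setting the cluster sizes under Kingman-type coalescence are in fact unbalanced (Chinese-restaurant-like), so this is not a pathological worry. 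Worse, the $K_0$ persistent clusters are distinguished precisely by the event that they survive down to $h_*$ (and up to $h^*$), so they are \emph{not} exchangeable with the remaining $M_2-K_0$ clusters, and conditioning on persistence changes their size distribution. The paper avoids all of this by not comparing cluster sizes at all: it instead defines the cumulative merge count $\mathcal{N}_i$ (the number of non-trunk vertices over the whole plateau whose lineage falls into trunk $Z_i$), decomposes the plateau into $K_\theta$ sub-windows (Proposition~\ref{lem:Ur independence}) where the merge counts $\mathcal{N}_{i,r}$ are independent under $\q_n^m$, and proves via a second-moment/Paley--Zygmund bound (Proposition~\ref{prop:Ur}, built on Proposition~\ref{p:tau_j}) that each $\mathcal{N}_{i,r}\ge\gamma n^2$ with uniform positive probability. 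Independence across sub-windows then boosts this to probability $\ge 1-(1-C_0)^{K_\theta}$, which is driven to $1$ by choosing $\theta$ small.

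Finally, your closing step—propagating the height-$j_2$ bound ``back through $[j_1,j_2]$ via the same machinery at intermediate heights''—hides a second gap: cluster sizes along a fixed lineage are not monotone in height, and a union bound over $\Theta(n)$ intermediate heights would require per-height probabilities of order $1-o(1/n)$, far stronger than what the exchangeability argument could give even if it were valid. The paper's choice of the cumulative quantity $\mathcal{N}_i$ (which intrinsically sums over all heights in the window) is exactly what makes this obstacle disappear. If you want to salvage your approach, the key ingredient you are missing is a quantitative lower bound on the probability that a given individual merges into a fixed surviving trunk within a sub-window, estimated via the coalescent probabilities in Corollary~\ref{distinctprob} and the uniform-selection-pair machinery of Definition~\ref{def:uniform} and Lemma~\ref{lem:selection_pair}.
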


We defer the proof of Proposition~\ref{prop:size}.

\subsection{Proof of Theorem \ref{thm:tight} assuming Proposition \ref{prop:size}} 
Define the event  
\[
A(n,k,\delta):=\big\{\forall\,v\in\bar{\cX}(n,\delta),\ \exists\,1\leq i\leq k\text{ such that }V^n_{i}\in \text{Desc}(v)\text{ and }d(v,V^n_{i})\leq \delta n\big\}.
\] 

\noindent By \eqref{eq:total_popuplation}, there exists a constant \(C>0\) such that  
\(\#V(\mathscr{T}^n)\leq Cn^2.\)
It follows that for any \(M\geq 1\) and \(\gamma>0\),  
\[
\p_n\Big(A(n,k,\delta)\,\big|\,\{\#\bar{\cX}(n,\delta)\leq M\}\cap B(n,\gamma,\delta)\Big)\geq 1- M \big(1-\gamma/C\big)^k.
\]
Combining Propositions \ref{prop:coalescent} and \ref{prop:size}, we conclude that  
\[
\lim_{k\rightarrow\infty}\liminf_{n\rightarrow\infty}\p_n\big(A(n,k,\delta)\big) = 1.
\]

The proof is then completed by the following key observation:  
\begin{align}\label{eq:A}
	A(n,k,\delta)\subset \big\{\Delta(n,k)\leq 4\delta n\big\}.
\end{align}
Indeed, since the root \(\rho\) is a vertex in \(\mathscr{T}^n_k\), for any vertex \(v\) with \(\abs{v}\in\big[0, 4\delta n\big]\), we have  
\[
\min_{u \in V(\mathscr{T}^n_k)} d(v,u)\leq d(v,\rho) \leq 4\delta n.
\]
For all vertex \(v\) with \(\abs{v}\in\big(s\delta n, (s+1)\delta n\big]\) for some \(s=4,\dots,H\), let \(v^*\) be its ancestor at height \((s-2)\lfloor \delta n\rfloor\). By definition,  \(v^*\in \bar{\cX}(n,\delta)\). Moreover, on the event \(A(n,k,\delta)\), there exists \(1\leq i_0\leq k\) such that \(V^n_{i_0}\in \text{Desc}(v^*)\) and $d(v^*, V^n_{i_0})<\delta n$. Consequently, 
\[
\min_{u \in V(\mathscr{T}^n_k)} d(v,u)\leq d(v,V^n_{i_0})\leq d(v,v^*)+d(v^*,V^n_{i_0})\leq 4\delta n, \quad\text{on }A(n,k,\delta).
\]
This verifies \eqref{eq:A} and thus concludes the proof.

%======================================

\subsection{Proof of Proposition \ref{prop:size}}\label{sec:pf_size}
Recall the definition of $B(n, \gamma, \delta)$ from \eqref{def:B}. For $s = 1, \ldots, H-2$, define the event
\begin{align}\label{eq:B_nk0}
    B(n, s, \gamma, \delta) := \Big\{\forall\,v\in X_{s\lfloor \delta n \rfloor}\big(n,(s+1)\lfloor \delta n \rfloor\big),\ 
    \#\{u\in \text{Desc}(v):d(u,v)<\delta n\}\ge \gamma n^2\Big\}.
\end{align}  
This represents the ``restriction'' of $B(n, \gamma, \delta)$ to the $s$-th height interval. It suffices to fix $\delta>0$ and $1\le s\le H-2$, and show that for any $\eta>0$, there exists $\gamma=\gamma(\eta)>0$ such that for all $n$ sufficiently large,  
\begin{align}\label{eq:B_nk}
     \p_n\big(B(n,s,\gamma,\delta)\big)\ge 1-\eta.
\end{align}

Our strategy is as follows. Recall the base height $h^*$ in the definitions of \(\cX_j=\cX_j(n,h^*)\) and \(X_j=X_j(n,h^*)\). In this subsection, we fix
\[
h^* := (s+1)\lfloor \delta n\rfloor
\]
and consider the coalescent process \((X_j: h^*-\lfloor \delta n\rfloor\le j\le h^*)\) from this base height. 
We first analyze the behavior of the coalescent process $(X_{j})$ and identify a typical event that occurs with high probability and is more tractable for subsequent estimates. 
We then condition on this event and establish the desired bound for the probability of $B(n,s,\gamma,\delta)$.

\subsubsection*{A typical event}

As preparation, we identify a typical event in the coalescent process, which will serve as the basis for our later estimates.  
By Theorem~\ref{thm:CDFI},  
\[
\lim_{M\to\infty} \liminf_{n\to\infty} \p_n\Big( X_{h^*-\lfloor \frac12 \delta n \rfloor} \leq M \Big) = 1.
\]  
Hence, for any $\eta>0$, there exists $M_0 = M_0(\eta) \ge 1$ such that  
\begin{align}\label{eq:pre1}
	\p_n\!\Big( X_{h^*-\lfloor \frac12 \delta n \rfloor} \leq M_0 \Big) \ge 1 - \tfrac\eta4, 
	\quad \text{for all } n \ge 1.
\end{align}
Next, consider
\[
T = T(n,\delta) := \sup\Big\{0\le j < h^*-\lfloor\tfrac{1}{2}\delta n\rfloor : X_{j} < X_{h^*-\lfloor \frac12 \delta n \rfloor} \Big\}.
\] 
Using the same argument as in \eqref{zetasto} together with \eqref{eq:upper}, we obtain:

\begin{proposition}
    There exists a constant $C=C(\delta,\varepsilon,M_0)>0$ such that for any $1\le m\le M_0$, conditionally on $X_{h^*-\lfloor \frac12 \delta n \rfloor}=m$,
    \[
    h^*- \lfloor \tfrac12 \delta n \rfloor-T \;\ge_{\rm sto}\; {\rm Geometric}\bigg(\frac Cn\bigg)\wedge \frac14 \delta n.
    \]
    Consequently, for any $\eta>0$, there exists $c_0=c_0(C,\delta,\eta)\in(0,\delta/2)$ such that for all $1\le m\le M_0$,  
    \begin{align}\label{eq:pre2}
        \p_n\!\Big(T \le  h^* - \left\lfloor \tfrac12 \delta n \right\rfloor - \lfloor c_0 n \rfloor \,\Big|\, X_{h^*-\lfloor \frac12 \delta n \rfloor}=m\Big) 
        \ge 1-\tfrac\eta4, 
        \quad \forall\, n\ge 1.
    \end{align}
\end{proposition}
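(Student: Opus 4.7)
The plan is to apply the backward Markov property of $(X_j)$ from Theorem~\ref{thm:Markov} together with the one-step decrease bound \eqref{eq:upper}. Since the process is non-increasing as $j$ decreases and we are conditioning on $X_{h^* - \lfloor \tfrac12 \delta n\rfloor} = m$ with $m \le M_0$, on the event $\{\bar T \ge j\}$ we still have $X_{h^* - \lfloor \tfrac12 \delta n\rfloor - (j-1)} = m$, and the conditional probability of a strict decrease at the next step is, by \eqref{eq:upper}, at most $C' m^2 / n \le C' M_0^2 / n$ for a constant $C' = C'(\delta)$. Coupling step by step, this yields that $\bar T$ stochastically dominates a geometric random variable of success probability $p_n := C' M_0^2/n$ truncated at $\lfloor \tfrac14 \delta n\rfloor$, which is the claimed bound with $C := C' M_0^2$.

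Before invoking \eqref{eq:upper}, I need to check that the heights visited during the first $\lfloor \tfrac14 \delta n\rfloor$ backward steps lie in the range $[\delta' n, h(q_n) - \delta' n]$ for some $\delta' = \delta'(\delta) > 0$ independent of $n$, so that the implicit constant in \eqref{eq:upper} can be taken uniform. Since $h^* = (s+1)\lfloor \delta n\rfloor$ with $1 \le s \le H-2$, one has $2\lfloor \delta n\rfloor \le h^* \le h(q_n) - \lfloor \delta n\rfloor$, so that after subtracting at most $\tfrac34 \delta n$ we remain at heights of order $\Theta(n)$ away from both $0$ and $h(q_n)$; any $\delta' < \delta/2$ works for large $n$.

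For the explicit bound \eqref{eq:pre2}, I would compute
\[
\mathbb{P}\bigl(\text{Geometric}(C/n) > \lfloor c_0 n\rfloor\bigr) = (1 - C/n)^{\lfloor c_0 n\rfloor} \xrightarrow{n\to\infty} e^{-C c_0},
\]
and choose $c_0 \in (0, \delta/2)$ small enough (depending only on $C$ and $\eta$) so that $e^{-C c_0} > 1 - \eta/4$, with uniformity in $n$ following from the monotone convergence $(1-C/n)^{c_0 n} \nearrow e^{-C c_0}$. I do not anticipate any real obstacle: the argument is a direct tail estimate for a truncated geometric variable, and the only delicate point is keeping track of the height range where \eqref{eq:upper} applies uniformly.
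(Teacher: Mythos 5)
Your main line of attack is the same as the paper's: use the backward Markov property of $(X_j)$ (Theorem~\ref{thm:Markov}) together with the one-step decrease bound~\eqref{eq:upper} to couple $\bar T$ with a truncated geometric. The paper itself just cites ``the same argument as in \eqref{zetasto} together with \eqref{eq:upper},'' and your step-by-step coupling is exactly what that argument amounts to. The range check on the visited heights is also correct: since $h^*=(s+1)\lfloor\delta n\rfloor$ with $1\le s\le H-2$, the heights $h^*-\lfloor\tfrac12\delta n\rfloor-j$ for $0\le j\le\lfloor\tfrac14\delta n\rfloor$ stay in $[\delta' n,\,h(q_n)-\delta' n]$ with $\delta'$ depending only on $\delta$, so the implied constant in \eqref{eq:upper} is uniform, and the truncation at $\tfrac14\delta n$ is exactly what this restriction forces.

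The one genuine flaw is the final uniformity argument for \eqref{eq:pre2}. You write that uniformity in $n$ ``follows from the monotone convergence $(1-C/n)^{c_0 n}\nearrow e^{-Cc_0}$.'' But the sequence increases \emph{to} $e^{-Cc_0}$, which means $e^{-Cc_0}$ is its supremum, not its infimum; having $e^{-Cc_0}>1-\eta/4$ therefore gives you nothing about the terms at finite $n$, which lie strictly below the limit. As stated, this step does not prove the bound ``for all $n\ge 1$.'' The fix is elementary and worth recording: pick $c_0<\min(1/C,\,\eta/(4C),\,\delta/4)$. For $n<1/c_0$ one has $\lfloor c_0 n\rfloor=0$, so the conditional probability in \eqref{eq:pre2} equals $1$. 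For $n\ge 1/c_0$ (hence $n>C$, so $0\le C/n<1$), Bernoulli's inequality yields
\[
(1-C/n)^{\lfloor c_0 n\rfloor}\ \ge\ 1-\lfloor c_0 n\rfloor\,\tfrac{C}{n}\ \ge\ 1-Cc_0\ \ge\ 1-\eta/4,
\]
and since $\lfloor c_0 n\rfloor<\tfrac14\delta n$, the truncation at $\tfrac14\delta n$ never bites. This gives \eqref{eq:pre2} for every $n\ge 1$ without appealing to the limit at all. So the strategy and conclusion are right, but the monotone-convergence justification is a gap you should replace with the direct Bernoulli estimate (or an explicit small-$n$ vs.\ large-$n$ split).
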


Set
\[
\bar{h} := h^* - \left\lfloor \tfrac12 \delta n \right\rfloor - \lfloor c_0 n \rfloor.
\]
Combining \eqref{eq:pre1} and \eqref{eq:pre2}, we arrive at the following high-probability event:
\begin{align}\label{eq:pre} 
\begin{aligned}
&E(n,\delta,m):= \Big\{ T\le \bar{h},\, X_{h^*-\lfloor \frac12 \delta n \rfloor} =m \Big\}=\Big\{X_k=m,\ \forall \,k\in [\, \bar{h}, \bar{h}+\lfloor c_0 n\rfloor]\Big\},\\
&\p_n\bigg(\bigcup_{m=1}^{M_0} E(n,\delta,m)\bigg) \ge 1-\tfrac\eta 2.   
\end{aligned}
\end{align}
We shall take the event $E(n,\delta,m)$ as the typical event on which our subsequent analysis will be conditioned.
Define for $1\le m\le M_0$ the conditional probability measure
\[
\q^m_n (\cdot):= \p_n\big(\,\cdot\, \big|\, E(n,\delta,m)\big).
\]

\subsubsection*{Structure of $\mathscr{T}^n$ on $E(n,\delta,m)$}

The purpose of this subsection is to set up the structural ingredients --- coalescent subtrees, trunk paths and vertices, which will be crucial for the forthcoming estimates.
 Recall the subtree notation \(\mathscr{T}^n(v)=\mathscr{T}^n(v,h^*)\) from \eqref{def:subtree}, which denotes the subtree of \(\mathscr{T}^n\) rooted at \(v\) and truncated at height \(h^*\).

\begin{definition}[Coalescent subtrees]
For \(0\le j\le h^*\), define the collection of the coalescent subtrees at height \(j\) by
\[
\mathbf{T}_j=\mathbf{T}_j(n,h^*):=\big\{\mathscr{T}^n(v): v\in \cX_j\big\}.
\]
Each element of \(\mathbf{T}_j\) is a subtree rooted at a vertex at height \(j\) that admits at least one descendant at height \(h^*\). Note that \(|\mathbf{T}_j|=|\cX_j|=X_j\).

We then write \(\mathsf{T}_1,\ldots,\mathsf{T}_{|\mathbf{T}_j|}\) for a uniform random ordering of the elements of \(\mathbf{T}_j\). Concretely, conditionally on \(\mathscr{T}^n\), we first list the elements of \(\mathbf{T}_j\) according to the lexicographical order of their roots, denoted by \(\mathbf{T}_j^{(1)},\ldots,\mathbf{T}_j^{(|\mathbf{T}_j|)}\). Then we choose a permutation \(\pi\) uniformly at random from the symmetric group on \(|\mathbf{T}_j|\) elements, and define
\[
(\mathsf{T}_1,\ldots,\mathsf{T}_{|\mathbf{T}_j|})
:= (\mathbf{T}_j^{(\pi(1))},\ldots,\mathbf{T}_j^{(\pi(|\mathbf{T}_j|))}).
\]
Finally, we introduce the labeling map
\begin{align}\label{def:labeling}
    \Lambda_j : \mathbf{T}_j \longrightarrow \{1,\ldots,|\mathbf{T}_j|\}, \qquad
\Lambda_j(\mathsf{T}_i) := i,
\end{align}
which assigns to each coalescent subtree its index in the random ordering.  
The random orderings (and hence the labeling maps \(\Lambda_j\)) are taken independently across different heights~\(j\).
\end{definition}

\begin{definition}[Trunk paths and vertices]
    Under $\q^m_n$, we have 
\[
X_j = m, \quad \text{for all } \bar{h} \le j \le \bar{h} + \lfloor c_0 n \rfloor.
\]
In other words, for each $j$ between $\bar{h}$ and $\bar{h}+\lfloor c_0 n\rfloor$, there are exactly $m$ vertices in $\cX_j$. These vertices belong to $m$ distinct ancestral lineages that remain separate at height $\bar{h}$. 
Let $\mathsf{T}_1,\ldots,\mathsf{T}_m$ be the $m$ coalescent subtrees at height $\bar{h}$.  
For $\bar{h}\le j \le \bar{h}+\lfloor c_0 n \rfloor$ and $1\le i\le m$, denote by 
\(z^j_i \in V(\mathsf{T}_i)\)
the unique vertex of $\mathsf{T}_i$ at height $j$. 
The sequence
\[
Z_i := \big(z^{\bar{h}+\lfloor c_0 n \rfloor}_i, z^{\bar{h}+\lfloor c_0 n \rfloor-1}_i, \ldots, z^{\bar{h}}_i\big)
\]
is called the trunk path of $\mathsf{T}_i$, and its elements are referred to as the trunk vertices of $\mathsf{T}_i$.  
The remaining $q_n(j)-m$ non-trunk vertices at height $j$ are denoted by $y^j_1, \ldots, y^j_{q_n(j)-m}$, where the subscripts are assigned arbitrary.

\end{definition}

\subsubsection*{Merging vertices and key theorem}
While the $m$ trunk paths remain disjoint, the lineages of the $y^j_l$ may merge into some $Z_i$, implying $y^j_l \in {\rm Desc}(z^{\bar{h}}_i)$. To quantify this, define
\[
\mathcal{N}_i := \sum_{j=\bar{h}+1}^{\bar{h}+\lfloor c_0 n \rfloor} \sum_{\ell=1}^{q_n(j)-m} \mathbf{1}\big\{ y^j_l \in {\rm Desc}(z^{\bar{h}}_i) \big\},
\]
the total number of vertices whose lineages merge into $Z_i$. The following theorem provides a lower bound on $\mathcal{N}_i$.

\begin{theorem}\label{thm:Uj}
Assume \eqref{H1}--\eqref{H2}. For fixed $\delta, s, \eta, M_0, c_0$, there exists $\gamma>0$ such that for all $1\le m\le M_0$ and sufficiently large $n$,
\begin{align}\label{eq:Nj}
    \q^m_n \big( \mathcal{N}_i \ge \gamma n^2,\ \forall 1\le i \le m \big) \ge 1 - \tfrac\eta 2.
\end{align}
\end{theorem}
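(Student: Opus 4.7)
The plan is to reduce the joint bound in \eqref{eq:Nj} to a single-trunk tail estimate via exchangeability, then obtain an aggregate lower bound $\sum_{i=1}^m\mathcal{N}_i\gtrsim n^2$ together with enough concentration to distribute it across the $m\le M_0$ trunks.

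\emph{Step 1 (reduction to one trunk).} The labeling $\Lambda_{\bar{h}}$ is an independent uniform random permutation of the $m$ coalescent subtrees at height $\bar{h}$, so $(\mathcal{N}_1,\dots,\mathcal{N}_m)$ is exchangeable under $\q^m_n$. Because $m\le M_0$ is a constant, a union bound reduces \eqref{eq:Nj} to the marginal statement $\q^m_n(\mathcal{N}_1<\gamma n^2)\to 0$ as $\gamma\to 0$, uniformly in $n$.

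\emph{Step 2 (aggregate trunk mass).} For each $j$ in the middle of the window $[\bar{h}+1,\bar{h}+\lfloor c_0 n\rfloor]$, partition the $q_n(j)$ vertices at height $j$ by their ancestor at $\bar{h}$, and let $K_j:=X_{\bar{h}}(n,j)$ be the number of such ancestors. Applying Theorem~\ref{thm:CDFI} with base height $j$ gives $K_j\le M$ with high probability for some constant $M=M(\eta)$, so the $q_n(j)$ vertices split into at most $M$ classes at $\bar{h}$, of which exactly $m$ are trunks and the remaining $K_j-m$ are spurious (their subpopulations die out before reaching $h^*$). The crucial claim is that the spurious residual $R_j:=q_n(j)-\sum_{i=1}^m \#\{u\in\mathrm{Desc}(z^{\bar{h}}_i):|u|=j\}$ is negligible compared to $q_n(j)$: a Kolmogorov-style survival heuristic---rigorously justified via the offspring moment estimates of Lemmas~\ref{22limit} and \ref{estmom} together with the coming-down-from-infinity---shows that a class of size $a$ at $\bar{h}$ reaches $h^*$ with probability $\Theta(a/n)$, so surviving classes are size-biased toward the largest few. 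Consequently $\sum_{i=1}^m\#\{u\in\mathrm{Desc}(z^{\bar{h}}_i):|u|=j\}\ge(1-o(1))\,q_n(j)$, and summing over a $\Theta(n)$-long subwindow yields $\sum_{i=1}^m\mathcal{N}_i\ge cn^2$ w.h.p., whence $\e^{\q^m_n}[\mathcal{N}_1]\ge cn^2/m$ by exchangeability.

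\emph{Step 3 (second-moment concentration).} Expanding $\mathcal{N}_1=\sum_j\bigl(\#\{u\in\mathrm{Desc}(z^{\bar{h}}_1):|u|=j\}-1\bigr)$ and computing pairwise correlations at distinct heights using Proposition~\ref{prop:coal-prob} together with Lemmas~\ref{22limit} and \ref{estmom} produces $\mathrm{Var}^{\q^m_n}[\mathcal{N}_1]=o(n^4)$. Chebyshev's inequality combined with the mean bound from Step~2 then yields the required single-trunk tail bound $\q^m_n(\mathcal{N}_1<\gamma n^2)<\eta/(2M_0)$ for a suitably small $\gamma=\gamma(\eta)>0$.

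\emph{Main obstacle.} The sharp statement $R_j=o(q_n(j))$ in Step~2 is the most delicate piece: the Cannings dependence structure (offspring within a generation are only exchangeable) together with the global conditioning on $E(n,\delta,\eta,m)$ make a clean Kolmogorov-type survival/extinction dichotomy nontrivial. A convenient fallback is the weaker pigeonhole bound $\sum_{i=1}^m\#\{u\in\mathrm{Desc}(z^{\bar{h}}_i):|u|=j\}\ge (m/M)\,q_n(j)$, which loses the $(1-o(1))$ factor but still provides a positive constant fraction of the mass and is sufficient to drive the second-moment computation of Step~3 through to the desired conclusion, at the cost of a smaller $\gamma$.
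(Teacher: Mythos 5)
Your Step 3 contains a claim that is not true, and it is the step that actually controls the tail probability. The variance $\mathrm{Var}^{\q^m_n}[\mathcal N_1]$ is $\Theta(n^4)$, not $o(n^4)$: writing $D_j:=\#\{u\in\mathrm{Desc}(z^{\bar h}_1):|u|=j\}$, the trunk subtree $\mathscr{T}^n(z^{\bar h}_1)$ captures a random fraction $f$ of order one of the population, so $D_j\approx f\cdot\Theta(n)$ across the whole window and $\mathcal N_1=\sum_j(D_j-1)\approx f\cdot c_0 n^2$ with $\mathrm{Var}[f]=\Theta(1)$. This is exactly the area-under-a-critical-branching-process fluctuation, and it means Chebyshev (or Paley--Zygmund, which is what the paper actually uses in Proposition~\ref{prop:Ur}) only delivers a \emph{constant} lower bound of the form $\q^m_n(\mathcal N_1\ge\gamma n^2)\ge C_0$ for some universal $C_0<1$. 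You cannot get the bound to $1-\eta/(2M_0)$ by shrinking $\gamma$: once $\gamma$ is below half the mean, the Chebyshev bound stops improving. The ingredient your outline is missing is precisely the paper's key structural observation (Proposition~\ref{lem:Ur independence}): after partitioning the window into $K_\theta$ subwindows of length $\lfloor\theta n\rfloor$, the per-subwindow merge counts $(\mathcal N_{i,r})_{1\le r\le K_\theta}$ are \emph{independent} under $\q^m_n$ (because the conditioning event $E(n,\delta,\eta,m)$ factorizes across heights and each $(A_r,\mathcal X_{h_r})$ is a uniform selection pair). One then applies Paley--Zygmund on each subwindow to get $\q^m_n(\mathcal N_{i,r}\ge\gamma n^2)\ge C_0$, and boosts to $1-(1-C_0)^{K_\theta}\ge 1-\eta/(2M_0)$ by taking $\theta$ small. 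Without this independence, there is no mechanism to upgrade a constant to $1-\eta/2$.

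Step 2 also has a gap that does not self-heal via the suggested fallback. Conditioning $K_j:=X_{\bar h}(n,j)\le M$ only says the $q_n(j)$ vertices at height $j$ partition into at most $M$ classes with ancestor at $\bar h$; it does \emph{not} imply that the $m$ specific classes corresponding to trunks collect a fraction $\ge m/M$ of the vertices. Pigeonhole bounds the maximum class size from below, not the sizes of distinguished classes. The size-bias heuristic (larger classes more likely to survive to $h^*$, hence more likely to be trunks) does point in the right direction, but making it rigorous under the conditional measure $\q^m_n$ is essentially a reformulation of the very estimate you were trying to avoid. The paper sidesteps all of this by computing the first and second moments of the per-subwindow count $\widetilde{\mathcal N}_{i,r}$ \emph{directly} from the single-step coalescent probabilities (Proposition~\ref{p:tau_j}, driven by Corollary~\ref{distinctprob}), which gives the required mean lower bound without any survival/extinction dichotomy.
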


\begin{proof}[Proof of \eqref{eq:B_nk} assuming Theorem \ref{thm:Uj}]
We observe that each $\delta$-coalescent vertex at height $s\lfloor\delta n\rfloor$ has at least one descendant in $\{z^0_1, \dots, z^0_m\}$. Hence, for all large $n$ and $1\le m\le M_0$,
\[
\q^m_n \big( B(n,s,\gamma,\delta) \big) \ge \q^m_n \big( \mathcal{N}_i \ge \gamma n^2,\ \forall 1\le i \le m \big) \ge 1 - \tfrac\eta 2.
\]
Combining with \eqref{eq:pre} completes the proof of \eqref{eq:B_nk}.
\end{proof}

\subsection{Proof of Theorem \ref{thm:Uj}}\label{sec:pf_size2}
The remainder of this section is devoted to the proof of Theorem \ref{thm:Uj}. 
Throughout, we fix $\delta, s, M_0, c_0$, and regard them as constants (i.e. omit the dependence of other constants on $\delta, s, M_0, c_0$).

\subsubsection{Outline of the proof}
We begin with a brief outline of the proof strategy. 

The first step is to partition the interval $\big[\bar{h},\bar{h}+\lfloor c_0 n\rfloor\big]$ into $K_\theta$ sub-intervals $(h_r,h_{r-1}]$, where $\bar{h}+\lfloor c_0 n\rfloor=h_0>h_1>\cdots>h_{K_\theta}$ and each sub-interval has length $\lfloor \theta n\rfloor$ (see \eqref{def:Ktheta} below for precise definitions). For each $r=0,\ldots,K_\theta -1$, let $\mathcal{N}_{i,r}$ be the number of $y^j_l$ in the height interval $(h_r,h_{r-1}]$ that merge into $Z_i$ before height $h_r$. 

We then establish the independence of the family $(\mathcal{N}_{i,r})$ under $\mathbb{Q}^m_n$ (Proposition~\ref{lem:Ur independence}) and obtain a uniform lower bound for their tail probabilities (Proposition~\ref{prop:Ur}). Since the event that $\mathcal{N}_i>\gamma n^2$ is implied by the event that some $\mathcal{N}_{i,r}>\gamma n^2$, the independence and uniform lower bound together yield the desired probability estimate in \eqref{eq:Nj}.

To prove the independence and the uniform lower bound, we introduce the notion of \emph{uniform selection pairs} (Definition~\ref{def:uniform}).  
Roughly speaking, a random vector $\mathcal{V}$ consisting of $k$ vertices of $\mathscr{T}^n$ at some height $h$, together with an event $A$ depending only on the genealogy above that height, forms a uniform selection pair if, conditionally on $A$, $\mathcal{V}$ is distributed as a uniformly chosen $k$-tuple of distinct vertices at height $h$.  
Such structures naturally appear in the events we consider.

For these events, by using the general lemma (Lemma \ref{lem:selection_pair}) for uniform selection pairs, we show that their probabilities can be decomposed into contributions from the genealogies above and below a given height.  
This decomposition then yields the desired independence property and simplifies the probability estimates required for proving the uniform lower bound.

Finally, the uniform lower bound is obtained via the second moment method, by relating the merging probabilities to the coalescent probabilities in Corollary \ref{distinctprob} (see Proposition~\ref{p:tau_j}).

\subsubsection{Key propositions}

Let $\theta\in (0,c_0)$. Define
\begin{align}\label{def:Ktheta}
    K_\theta := \bigg\lfloor \frac{\lfloor c_0 n\rfloor}{\lfloor \theta n \rfloor} \bigg\rfloor, 
\qquad 
h_r := \bar{h}+\lfloor c_0 n\rfloor - r \lfloor \theta n \rfloor, \quad 0 \le r \le K_\theta,
\end{align}
and for $1 \le i \le m$ and $1 \le r \le K_\theta$, set
\[
\mathcal{N}_{i,r} := \sum_{j=h_r+1}^{h_{r-1}} 
\sum_{l=1}^{q_n(j)-m} 
\mathbf{1}\big\{ y^j_l \in {\rm Desc}(z_i^{h_r}) \big\}.
\]
Clearly,
\(\mathcal{N}_i \;\ge\; \sum_{r=1}^{K_\theta} \mathcal{N}_{i,r}\,,\)
with equality when $\lfloor c_0 n \rfloor/\lfloor \theta n \rfloor \in \N$.

For later use, we introduce the quantity $\widetilde{\mathcal{N}}_{i,r}$, which serve as an analog of $\mathcal{N}_{i,r}$ for uniformly chosen vertices 
$(u^{h_{r-1}}_1,\ldots,u^{h_{r-1}}_m)$ and will naturally appear in subsequent arguments.

Recall the notion of $\text{Anc}(v,j)$ appearing at the beginning of Section~\ref{sec:coalescence}, which represents the ancestor of $v$ at height $j$.
For $1 \le r \le K_\theta$ and $1 \le i \le m$, denote the ancestral lineage of $u^{h_{r-1}}_i$ up to height $h_r$ by
\[
U_{i,r}:= \Big(u^{h_{r-1}}_i, \text{Anc}(u^{h_{r-1}}_i,h_{r-1}-1), \ldots, \text{Anc}(u^{h_{r-1}}_i,h_{r})\Big).
\]
For each $j$ with $h_r < j \le h_{r-1}$, denote the remaining $q_n(j)-m$ vertices at height $j$ by $v^j_1, \ldots, v^j_{q_n(j)-m}$, where the subscripts are assigned, conditionally on $\mathscr{T}^n$, uniformly at random and independently across heights. \footnote{Unlike $(y^j_l)$ before, we specify the distribution of subscripts to simplify later analysis.}
Then we set, for $1 \le r \le K_\theta$ and $1 \le i \le m$,
\[
\widetilde{\mathcal{N}}_{i,r} := 
\sum_{j=h_r+1}^{h_{r-1}} \sum_{l=1}^{q_n(j)-m} 
\mathbf{1}\Big\{v^j_l \in \text{Desc}\big(\text{Anc}(u^{h_{r-1}}_i,h_{r})\big)\Big\}.
\]
That is, $\widetilde{\mathcal{N}}_{i,r}$ counts the number of vertices $v^j_l$ at heights $(h_r, h_{r-1}]$ that are not part of any ancestral lineage $(U_{k,r}: k=1,\ldots,m)$ but eventually merge into $U_{i,r}$ at some height greater than or equal to $h_r$.

For $1 \le r \le K_\theta$, we define the event
\[
\widetilde{B}_r := \big\{U_{1,r}, \ldots, U_{m,r} \text{ are disjoint}\big\},
\]

\begin{proposition}\label{lem:Ur independence}
The random variables $(\mathcal{N}_{i,r}:1\le r\le K_\theta)$ are independent under $\q_n^m$. Moreover, for any $1\le r\le K_\theta$ and test function $f$ on $\mathbb{N}$,
\begin{align}\label{eq:condi_law}
    \mathbb{Q}^m_n\big(f(\mathcal{N}_{i,r})\big)=\mathbb{P}_n\left(f(\widetilde{\mathcal{N}}_{i,r})\,|\,\widetilde{B}_{r}\right)
\end{align}
\end{proposition}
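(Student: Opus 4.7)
The plan is to combine Lemma~\ref{lem:selection_pair} on uniform selection pairs with the generation-wise independence of $(\nu^s)_s$. Fix $1\le r\le K_\theta$. On the event $E(n,\delta,\eta,m)$ there is no coalescence between heights $\bar h$ and $\bar h+\lfloor c_0 n\rfloor$, so the $m$ coalescent subtrees at height $\bar h$ extend uniquely to $m$ coalescent subtrees at height $h_{r-1}$. Consequently, the independent uniform labeling $\Lambda_{\bar h}$ used to define $(\mathsf T_1,\ldots,\mathsf T_m)$ induces a uniform ordering on the corresponding elements of $\cX_{h_{r-1}}$, so $(z^{h_{r-1}}_1,\ldots,z^{h_{r-1}}_m)$ is a uniform ordering of an $m$-subset of $\cX_{h_{r-1}}$. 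Combining this with Proposition~\ref{prop:uniform}, I expect to check that, conditionally on the $\mathcal G_{h_r}$-measurable (``below $h_r$'') portion of $E(n,\delta,\eta,m)$, the $m$-tuple $(z^{h_{r-1}}_1,\ldots,z^{h_{r-1}}_m)$ is distributed as $(u^{h_{r-1}}_1,\ldots,u^{h_{r-1}}_m)$ further conditioned on $\widetilde B_r$. This is precisely the setup for a uniform selection pair at height $h_{r-1}$ in the sense of Definition~\ref{def:uniform}.

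\textbf{Reduction to $\widetilde{\mathcal N}_{i,r}$.} I will observe that $\mathcal N_{i,r}$ depends on $\mathscr T^n$ only through the strip of generations $(h_r,h_{r-1}]$ and the identification of the trunk roots $(z^{h_{r-1}}_1,\ldots,z^{h_{r-1}}_m)$; it is obtained from the same functional that defines $\widetilde{\mathcal N}_{i,r}$ after replacing $(u^{h_{r-1}}_i)_i$ by $(z^{h_{r-1}}_i)_i$. Applying Lemma~\ref{lem:selection_pair} to the selection pair from Step~1 then yields, for any test function $f$,
\[
\mathbb{E}_n\!\big[f(\mathcal N_{i,r})\,\big|\,\mathcal G_{h_r}\big]\,\mathbf 1_{E(n,\delta,\eta,m)}
\;=\;\mathbb{E}_n\!\big[f(\widetilde{\mathcal N}_{i,r})\,\big|\,\widetilde B_r\big]\,\mathbf 1_{E(n,\delta,\eta,m)},
\]
and integrating and dividing by $\p_n(E(n,\delta,\eta,m))$ gives \eqref{eq:condi_law}.

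\textbf{Joint independence and main obstacle.} For the independence of $(\mathcal N_{i,r})_{1\le r\le K_\theta}$ under $\mathbb{Q}^m_n$, I will exploit the generation-wise independence of the offspring vectors. Conditionally on the entire family of trunk root tuples $\big((z^{h_{r-1}}_1,\ldots,z^{h_{r-1}}_m)\big)_{1\le r\le K_\theta}$, the offspring data in different strips $(h_r,h_{r-1}]$ are independent, and $\mathcal N_{i,r}$ is a functional of only the $r$-th strip. Combined with the single-$r$ identity from the previous step (which reduces the conditional law to a factor depending solely on $\widetilde B_r$ and the $r$-th strip), this will yield the desired product structure. The main obstacle, I expect, will be making Step~1 precise: namely, verifying that the ``above $h_{r-1}$'' part of the conditioning $E(n,\delta,\eta,m)$ does not bias the position of $(z^{h_{r-1}}_1,\ldots,z^{h_{r-1}}_m)$ in any way beyond what is captured by $\widetilde B_r$. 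I plan to handle this by decomposing $E(n,\delta,\eta,m)$ into parts measurable above and below $h_{r-1}$ via the Markov property of $(X_j)$ from Theorem~\ref{thm:Markov}, and then showing that the ``above'' part is measurable with respect to the selection pair together with $\mathcal G_{h_r}$.
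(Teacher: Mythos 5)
Your overall strategy matches the paper's: both rely on the uniform selection pair machinery (Definition~\ref{def:uniform}, Lemma~\ref{lem:selection_pair}) plus the generation-wise independence of the offspring vectors. However, I see several concrete problems in the way you set this up.

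First, a direction error that propagates: $\mathcal G_{h_r}$ is the $\sigma$-algebra generated by $\{\mathscr T^n(v):|v|=h_r\}$, and $\mathscr T^n(v)$ consists of \emph{descendants} of $v$ (truncated at $h^*$). So $\mathcal G_{h_r}$ encodes the tree \emph{above} height $h_r$, not ``below $h_r$'' as you write. Since $h_r<h_{r-1}$, the strip $(h_r,h_{r-1}]$ and the roots $(z^{h_{r-1}}_i)_i$ are already $\mathcal G_{h_r}\vee\sigma(\Lambda)$-measurable, so ``conditioning on the $\mathcal G_{h_r}$-measurable portion of $E$'' does not leave the randomness you need.

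Second, your Step~1 misstates the selection pair. A $(h,k)$-uniform selection pair requires the conditional law of $\mathcal V$ given $A$ to be \emph{exactly} that of $(u^h_1,\dots,u^h_k)$, with no extra conditioning. You want the conditional law to be $(u^{h_{r-1}}_i)_i$ ``further conditioned on $\widetilde B_r$'', which is not a selection pair. In fact, by exchangeability $(u^{h_{r-1}}_i)_i$ conditioned on $\widetilde B_r$ has the same law as $(u^{h_{r-1}}_i)_i$, so the $\widetilde B_r$-conditioning would be vacuous at this stage and cannot be produced here. In the paper, the $\widetilde B_r$-conditioning on the right of \eqref{eq:condi_law} appears only at the end, as the ratio
\[
\p_n\big(\widetilde B_r\cap\{\widetilde{\mathcal N}_{i,r}\in D_r\}\big)\,/\,\p_n(\widetilde B_r),
\]
after applying Lemma~\ref{lem:selection_pair} separately to the events $A_r\cap B_r\cap\bigcap_{l\le r}\{\mathcal N_{i,l}\in D_l\}$ and $A_r\cap B_r\cap\bigcap_{l\le r-1}\{\mathcal N_{i,l}\in D_l\}$ and dividing. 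Your displayed identity
\[
\mathbb E_n\big[f(\mathcal N_{i,r})\,\big|\,\mathcal G_{h_r}\big]\,\mathbf 1_{E}
=\mathbb E_n\big[f(\widetilde{\mathcal N}_{i,r})\,\big|\,\widetilde B_r\big]\,\mathbf 1_{E}
\]
cannot follow from Lemma~\ref{lem:selection_pair}: the lemma conditions on an event $A\in\mathcal G_{h}\vee\sigma(\Lambda_h)$ and delivers an unconditional expectation, not a $\sigma$-algebra-conditional one; also, the left side is a nonconstant $\mathcal G_{h_r}\vee\sigma(\Lambda)$-measurable random variable (it genuinely depends on the offspring in the strip), whereas the right side is a constant, so the asserted equality on $E$ cannot hold.

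Third, the independence step. The paper proves independence iteratively: it establishes, using the selection pair at height $h_{r-1}$ with event $A_{r-1}\cap\bigcap_{l<r}\{\mathcal N_{i,l}\in D_l\}$, the recursion
\[
\mathbb Q^m_n\Big[\bigcap_{l\le r}\{\mathcal N_{i,l}\in D_l\}\Big]
=\mathbb Q^m_n\Big[\bigcap_{l\le r-1}\{\mathcal N_{i,l}\in D_l\}\Big]\,
\p_n\big(\widetilde{\mathcal N}_{i,r}\in D_r\,\big|\,\widetilde B_r\big),
\]
which yields a clean product. You instead propose to condition on \emph{all} trunk-root tuples simultaneously and invoke generation-wise independence of the strips. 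That claim is not automatic: you are under $\mathbb Q^m_n$, hence already conditioned on the cross-strip event $E$, and the trunk roots at height $h_{r-1}$ depend on the offspring above $h_{r-1}$. Showing that the strip variables become conditionally independent given $E$ and all trunk roots would itself require an argument of the same type as the iterative one, so this route does not obviously simplify. To make the proof go through, you should replace Steps~1–2 with the peel-off-one-strip argument, keeping the selection pair conditioning purely uniform and letting the $\widetilde B_r$-conditioning arise from the ratio.
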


\begin{proposition}\label{prop:Ur}
Assume \eqref{H1}--\eqref{H2}. Then there exists constant $C_0 \in (0,1)$ such that for any $\theta\in (0,c_0)$ sufficiently small, there exists $\gamma=\gamma(\theta, C_0)>0$ satisfying
\[
\q_n^m (\mathcal{N}_{i,r} \ge \gamma n^2) \;\ge\; C_0,
\]
for all sufficiently large $n$, $1\le i\le m \le M_0$ and $0\le r\le K_\theta -1$.
\end{proposition}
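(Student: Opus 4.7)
The plan is to use Proposition~\ref{lem:Ur independence} to reduce the target estimate to a second-moment (Paley--Zygmund) bound on $\widetilde{\mathcal{N}}_{i,r}$ conditioned on $\widetilde{B}_r$. First, tracking the $m$ trunks backward from $h_{r-1}$ to $h_r$ and applying Corollary~\ref{distinctprob}(1), the per-generation collision probability is $O(m^2/n)$, so summing over the $\lfloor\theta n\rfloor$ generations yields $\mathbb{P}_n(\widetilde{B}_r)\ge 1-Cm^2\theta\ge 1/2$ for $\theta$ small (depending on $M_0$); this ensures that conditioning on $\widetilde{B}_r$ inflates any moment by at most a factor of $2$, so it suffices to establish the corresponding moment bounds for the unconditional $\widetilde{\mathcal{N}}_{i,r}$ (the comparison between conditional and unconditional moments then follows via Cauchy--Schwarz).

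For the first moment, write $\widetilde{\mathcal{N}}_{i,r}=\sum_{j,l}X_{j,l}$ with $X_{j,l}:=\mathbf{1}\{v^j_l\in\mathrm{Desc}(\mathrm{Anc}(u^{h_{r-1}}_i,h_r))\}$. The event $\{X_{j,l}=1\}$ means that, tracing the $v^j_l$-lineage backward from height $j$, it coalesces with the $i$-th trunk at some generation $j'\in[h_r,j-1]$. Applying Corollary~\ref{distinctprob}(2) generation by generation to the $m+1$ lineages $\{u^{h_{r-1}}_1,\dots,u^{h_{r-1}}_m,v^j_l\}$ with the distinguished pair $(v^j_l,u^{h_{r-1}}_i)$, the per-generation probability of this specific coalescence is of order $\sigma_n(j')^2/q_n(j')\asymp 1/n$ by~\eqref{H1}. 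Summing over $j'\in[h_r,j-1]$, over $l\in\{1,\ldots,q_n(j)-m\}$ ($\Theta(n)$ many) and over $j\in(h_r,h_{r-1}]$ ($\lfloor\theta n\rfloor$ many) yields
\[
\mu:=\mathbb{E}_n[\widetilde{\mathcal{N}}_{i,r}]\;\gtrsim\;\theta^2 n^2.
\]

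For the second moment, diagonal terms contribute only $O(\mu)=O(\theta^2 n^2)=o(\mu^2)$. For off-diagonal pairs $((j_1,l_1),(j_2,l_2))$, I would apply Corollary~\ref{distinctprob} to the family of $m+2$ lineages obtained by augmenting the trunks with $v^{j_1}_{l_1}$ and $v^{j_2}_{l_2}$. Two scenarios arise: independent coalescence (each extra lineage hits the $i$-th trunk separately, at different generations) gives a factorized joint probability of size $O(\mu^2)$; cross-coalescence (the two extras merge with each other first, then the combined lineage hits trunk $i$) contributes $O(\mu^2)$ as well, since each of the two required coincidences has probability $\asymp\theta$ over the $\lfloor\theta n\rfloor$ generations. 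Hence $\mathbb{E}_n[\widetilde{\mathcal{N}}_{i,r}^2]\le C\mu^2$ for some constant $C>0$ independent of $\theta$ and $n$. Choosing $\gamma$ to be a small multiple of $\theta^2$, Paley--Zygmund yields $\mathbb{P}_n(\widetilde{\mathcal{N}}_{i,r}\ge\gamma n^2\mid\widetilde{B}_r)\ge C_0$ for some universal $C_0>0$, which via~\eqref{eq:condi_law} gives $\mathbb{Q}^m_n(\mathcal{N}_{i,r}\ge\gamma n^2)\ge C_0$. The main obstacle is the off-diagonal second-moment analysis: one must iteratively apply Corollary~\ref{distinctprob} via the Markov property of the $(m+2)$-lineage coalescent across $\lfloor\theta n\rfloor$ generations, and carefully verify that both the independent and cross-coalescence contributions remain of order $\mu^2$ (rather than blowing up). This is a tedious but essentially routine book-keeping exercise combining the one-step estimates from Section~\ref{sec:coal-prob}.
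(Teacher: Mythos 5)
Your high-level strategy matches the paper's: reduce via \eqref{eq:condi_law} to a statement about $\mathbb{P}_n(\widetilde{\mathcal{N}}_{i,r}\ge\gamma n^2\mid\widetilde{B}_r)$, estimate first and second moments by summing per-vertex merging probabilities over the $\Theta(\theta n)$ generations and $\Theta(n)$ vertices, and finish with Paley--Zygmund. Those are exactly the paper's steps, with the same resulting orders $\mu\gtrsim\theta^2 n^2$ and $\mathbb{E}[\widetilde{\mathcal{N}}_{i,r}^2\mid\widetilde{B}_r]\lesssim\mu^2$.

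Where you deviate is the detour through \emph{unconditional} moments of $\widetilde{\mathcal{N}}_{i,r}$, transferred to the conditional ones via Cauchy--Schwarz. The transfer itself is sound (the second moment bound passes with a factor $1/\mathbb{P}_n(\widetilde{B}_r)\le 2$, and the first-moment correction $\mathbb{E}_n[\widetilde{\mathcal{N}}_{i,r}\mathbf{1}_{\widetilde{B}_r^c}]\le\sqrt{\mathbb{E}_n[\widetilde{\mathcal{N}}_{i,r}^2]\,\mathbb{P}_n(\widetilde{B}_r^c)}\lesssim\theta^{5/2}n^2$ is negligible relative to $\theta^2n^2$ once $\theta$ is small compared to $M_0^{-2}$). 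But this detour does not actually simplify anything: unconditionally the trunks can themselves coalesce, so the event $\{v^j_l\in\mathrm{Desc}(\mathrm{Anc}(u^{h_{r-1}}_i,h_r))\}$ splits into a direct-merge scenario and several indirect ones (merge into trunk $i'\ne i$ which later merges into $i$, etc.), and the vertex count $q_n(j)-m$ in the definition of $\widetilde{\mathcal{N}}_{i,r}$ is only correct on $\widetilde{B}_r$. The paper's choice to compute directly on $\widetilde{B}_r$ avoids all of this bookkeeping.

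The genuine gap is in the step you dismiss as ``essentially routine book-keeping.'' Applying Corollary~\ref{distinctprob} generation by generation--whether conditionally or unconditionally--requires knowing that, at each intermediate height, the conditional law of the relevant ancestral vertices \emph{given the genealogy above and given that no coalescence among them has yet occurred} is again that of a uniformly chosen tuple of distinct vertices at that height. That is not automatic, and it is precisely what the paper isolates as the ``uniform selection pair'' property (Definition~\ref{def:uniform}), establishes in Lemma~\ref{lem:selection_pair}, and verifies in the concrete setting via Lemma~\ref{lem:AX}. Those ingredients are then used to prove the exact product formulas \eqref{eq:no merge} and \eqref{eq:no merge2} of Proposition~\ref{p:tau_j}, which do all the heavy lifting in the moment computation. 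When you invoke ``the Markov property of the $(m+2)$-lineage coalescent'' as an unproved tool, you are assuming the very structure that the selection-pair machinery exists to supply; without it, the chain of one-step estimates has no justification, and the proof is incomplete.
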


The proofs of Propositions \ref{lem:Ur independence} and \ref{prop:Ur} are postponed. We first explain how these results imply Theorem \ref{thm:Uj}.

\begin{proof}[Proof of Theorem~\ref{thm:Uj} assuming Propositions \ref{lem:Ur independence} and \ref{prop:Ur}]
Observe that if $\mathcal{N}_{i,r}\ge \gamma n^2$ for some $1\le r\le K_\theta$, then $\mathcal{N}_i\ge \gamma n^2$. Hence, by Propositions \ref{lem:Ur independence} and \ref{prop:Ur}, for any sufficiently small $\theta\in(0,c_0)$, there exists $\gamma>0$ such that for sufficiently large $n$, $1\le i\le m \le M_0$ and $0\le r\le K_\theta -1$,
\begin{align}\label{eq:Ni}
    \q_n^m(\mathcal{N}_i\ge \gamma n^2)\;\ge\; 1- \q_n^m\!\big(\mathcal{N}_{i,r}< \gamma n^2,\ \forall\, 1\le r\le K_\theta\big) 
\;\ge\; 1-(1-C_0)^{K_\theta}.
\end{align}

On the other hand, since $K_\theta \sim c_0/\theta$ as $n\to\infty$, for any $\eta>0$ one can choose $\theta=\theta(\eta)>0$ small enough such that
\begin{align}\label{eq:small_theta}
    (1-C_0)^{K_\theta} \le  \frac{\eta}{2M_0},\qquad\text{for all }n\ge 1.
\end{align}

Combining \eqref{eq:Ni} and \eqref{eq:small_theta}, we obtain that for any $\eta>0$, there exists $\gamma>0$ such that for all sufficiently large $n$,
\[
\q_n^m\big(\mathcal{N}_i\ge \gamma n^2,\, \forall\, 1\le i\le m\big)
    \;\ge\; 1- \sum_{j=1}^m \q_n^m\big(\mathcal{N}_i< \gamma n^2\big) 
    \;\ge\; 1 - \frac\eta 2,
\]
which completes the proof.
\end{proof}

\subsubsection{Proof of Propositions \ref{lem:Ur independence} and \ref{prop:Ur}}
We begin with a preliminary lemma, which will be used repeatedly in the sequel.
Recall the definitions of $\mathcal{G}^{(k)}_j$ in \eqref{def:G} and labeling map $\Lambda_j$ in \eqref{def:labeling}.

\begin{definition}[Uniform selection pair]\label{def:uniform}
Fix a base height $h^*$. For $0\le h\le h^*$, $1\le p\le q_n(h^*)$ and $1\le k\le q_n(h)$, let $(A,\mathcal{V})$ be a pair consisting of
\begin{itemize}
    \item an event $A$ measurable with respect to $\mathcal{G}_{h}^{(p)}\vee \sigma(\Lambda_h)$, and
    \item a random vector $\mathcal{V}$ of $k$ vertices taking values in $\{x^{h}_1,\ldots, x^{h}_{q_n(h)}\}$.
\end{itemize}
We say that $(A,\mathcal{V})$ is a $(p,h,k)$-uniform selection pair if, under $\mathbb{P}_n$,
\[
(\mathcal{V}\mid A)\,\overset{d}{=}\,\{u^{h}_1,\ldots,u^{h}_k\},
\]
that is, conditionally on $A$, the vector $\mathcal{V}$ has the same distribution as a uniformly chosen $k$-tuple of distinct vertices at height $h$. When \(p=q_n(h^*)\), we abbreviate this as an \((h,k)\)-uniform selection pair.
\end{definition}

Recall the notion of offspring vector $\nu^s$.

\begin{lemma}\label{lem:selection_pair}
Let $0\le h\le h^*$, $1\le p\le q_n(h^*)$ and $1\le k\le q_n(h)$, and let $(A, \mathcal{V})$ be a $(p,h,k)$-uniform selection pair. Then for any test function $f$ defined on the value space of $\big((u^{h}_i)_{i=1,\ldots,k}, (\nu^s)_{1\le s\le h-1}\big)$, we have
\[
\mathbb{P}_n\Big[f\big(\mathcal{V}, (\nu^s)_{1\le s\le h-1}\big) \,\big|\, A\Big]
= \mathbb{P}_n\Big[f\big((u^{h}_i)_{1\le i\le k}, (\nu^s)_{1\le s\le h-1}\big)\Big].
\]
\end{lemma}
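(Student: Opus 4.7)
The plan is to combine the marginal uniform selection property with the generation-wise independence of the offspring vectors. The key observation is that $(\nu^i)_{1\le i\le h-1}$ is independent of the $\sigma$-algebra $\mathcal{G}_h\vee\sigma(\Lambda_h)$, which in particular implies that conditioning on $A$ leaves the distribution of $(\nu^i)_{1\le i\le h-1}$ unchanged. To verify this independence, recall that by its definition $\mathcal{G}_h=\mathcal{G}^{(k)}_h(h^*)$ records only the unordered multiset $\{\mathscr{T}^n(v):|v|=h\}$ of subtrees rooted at height $h$, together with the unordered set $\{u^{h^*}_i\}$ viewed as positions within these subtrees---the definition explicitly emphasizes that no ordering among vertices at height $h$ is retained. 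Both pieces of information are functions of $(\nu^i)_{i\ge h}$ and of the independent uniform permutation defining $u^{h^*}$, and hence independent of $(\nu^i)_{1\le i\le h-1}$ by generation-wise independence. Since $\Lambda_h$ is an auxiliary independent random ordering, any event $A\in\mathcal{G}_h\vee\sigma(\Lambda_h)$ is independent of $(\nu^i)_{1\le i\le h-1}$.

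Next, I would use the uniform selection hypothesis to show that, conditional on both $A$ and $(\nu^i)_{1\le i\le h-1}=w$, the vector $\mathcal{V}$ is uniformly distributed over the ordered $k$-tuples of distinct vertices from the height-$h$ vertex set determined by $w$. For such a fixed $w$, both $\mathcal{V}$ and $(u^h_i)$ are supported on this finite set, the marginal identity $\mathcal{V}\mid A\,\overset{d}{=}\,(u^h_i)$ holds, and the conditional law of $(u^h_i)$ given $(\nu^i)_{1\le i\le h-1}=w$ is by construction the uniform law on ordered $k$-tuples; combined with the independence from the first step, this propagates the uniformity to the conditional law of $\mathcal{V}$ given $A$ and $(\nu^i)_{1\le i\le h-1}=w$. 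Letting $g(w)$ denote the average of $f(v,w)$ over such uniform ordered $k$-tuples $v$, the tower property then yields
\begin{align*}
\mathbb{E}_n\!\big[f(\mathcal{V},(\nu^i)_{i\le h-1})\mid A\big]
&=\mathbb{E}_n\!\big[g((\nu^i)_{i\le h-1})\mid A\big]\\
&=\mathbb{E}_n\!\big[g((\nu^i)_{i\le h-1})\big]\\
&=\mathbb{E}_n\!\big[f((u^h_i)_{1\le i\le k},(\nu^i)_{i\le h-1})\big],
\end{align*}
where the middle equality uses the independence established in the first step.

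The main obstacle is the careful justification that $\mathcal{G}_h$ is genuinely independent of $(\nu^i)_{1\le i\le h-1}$. Naively, the labels of $u^{h^*}_i$ and of the roots of the subtrees in $\{\mathscr{T}^n(v):|v|=h\}$ encode the full ancestry of a height-$h^*$ vertex and so depend on the above-$h$ structure; the point is to invoke the unordered-multiset convention built into the definition of $\mathcal{G}_h$ to quotient out exactly this dependence, leaving behind only the subtree shapes and the positions of the sampled vertices within them. Upgrading the marginal uniform selection to the stronger conditional statement used in the second step is a related subtlety, which I expect to handle by exploiting the exchangeability of the offspring vector $\nu^h$ together with the independence just established.
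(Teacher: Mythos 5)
Your plan runs in the same direction as the paper's---which deduces the lemma by rerunning the argument of Theorem~\ref{thm:Markov}, i.e.\ Lemma~\ref{lem:basic} with $X=\mathcal{V}$, $Y=(\nu^i)_{i<h}$ and $W=(u^h_i)$---but your second step contains a real gap. You correctly establish that $\mathcal{G}_h\vee\sigma(\Lambda_h)$, and hence $A$, is independent of $Y:=(\nu^i)_{1\le i\le h-1}$; that is the analogue of half of observation~(a) in the proof of Theorem~\ref{thm:Markov}. You then assert that combining this with $(\mathcal{V}\mid A)\overset{d}{=}(u^h_i)$ ``propagates uniformity'' to the conditional law of $\mathcal{V}$ given both $A$ and $Y=w$. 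It does not: independence of $A$ from $Y$ constrains the dependence between $A$ and $Y$, but says nothing about the joint dependence of $\mathcal{V}$ and $Y$. Definition~\ref{def:uniform} fixes only the marginal law of $\mathcal{V}$ given $A$; it imposes no measurability of $\mathcal{V}$ with respect to the above-height-$h$ structure. One can construct a $\mathcal{V}$ that is marginally uniform at height $h$ yet is a deterministic function of $\nu^1$, in which case $(A,\mathcal{V})$ is a uniform selection pair but the lemma's conclusion fails for any $f$ detecting that coupling.

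The missing hypothesis---exactly as in the other half of observation~(a) in Theorem~\ref{thm:Markov}, where $\cX_j$ is noted to be $\sigma(\nu^j,\ldots,\nu^{h^*})$-measurable and hence independent of $\nu^{j-1}$---is that $\mathcal{V}$ itself be independent of $Y$, i.e.\ that $(\mathcal{V},A)$ be jointly independent of $Y$. This holds in all of the paper's uses (Lemma~\ref{lem:AX} and Section~\ref{sec:pf_size2}), where $\mathcal{V}$ is built from ancestral sets measurable with respect to $\sigma(\nu^h,\ldots,\nu^{h^*})$ and the auxiliary randomizations, but it must be checked explicitly; it does not follow from Definition~\ref{def:uniform}. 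Your proposed remedy---invoking exchangeability of $\nu^h$---does not address this: exchangeability of a single offspring vector has no bearing on whether $\mathcal{V}$ depends on the below-$h$ generations. Once the joint independence of $(\mathcal{V},A)$ from $Y$ is in hand, either Lemma~\ref{lem:basic} directly, or your tower-property computation, finishes the proof.
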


The proof follows the same reasoning as that in the proof of Theorem~\ref{thm:Markov}, and is therefore omitted.

\begin{proof}[Proof of Proposition \ref{lem:Ur independence}]
Recall the definition of $E(n,\delta,m)$ in \eqref{eq:pre}. For each $0 \le r \le K_\theta$, we split the event $E(n,\delta,m)$ according to the behavior of the process above and below $h_r$:
\[
    E(n,\delta,m) = A_r \cap B_r,
\]
where
\[
    A_r := \big\{X_{\bar{h}+\lfloor c_0 n\rfloor} = X_{h_r} = m\big\}, 
    \qquad 
    B_r := \big\{X_{\bar{h}} = X_{h_r-1}=m\big\}.
\]
Then for any $1 \le r,r' \le K_\theta$ and subsets $a_1,\ldots,D_r$ of $\mathbb{N}$, we have
\begin{align}\label{eq:condi-new}
    \mathbb{Q}^m_n\Bigg[\bigcap_{l=1}^{r'}\{\mathcal{N}_{i,l}\in D_l\}\Bigg]
    =\frac{\mathbb{P}_n\Big[A_r\cap B_r\cap \bigcap_{l=1}^{r'}\{\mathcal{N}_{i,l}\in D_l\}\Big]}{\mathbb{P}_n(E(n,\delta,m))}.
\end{align}

\begin{lemma}\label{lem:AX}
For any $0\le r\le K_\theta$, both $\big(A_{r}, \cX_{h_r}\big)$ and $\big(A_{r}\cap \bigcap_{l=1}^{r}\{\mathcal{N}_{i,l}\in D_l\}, \cX_{h_r}\big)$ are $(h_r,m)$-uniform selection pairs.
\end{lemma}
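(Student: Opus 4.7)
Both parts will follow by reducing to Proposition~\ref{prop:uniform}: if we can exhibit, for each event, a $\mathcal{G}_{h_r}\vee\sigma(\widetilde{\Lambda})$-measurable representation, where $\widetilde{\Lambda}$ is some uniform random labeling independent of $\mathscr{T}^n$, contained in $\{X_{h_r}=m\}$, then Proposition~\ref{prop:uniform} combined with the independence of $\widetilde{\Lambda}$ from $\mathscr{T}^n$ immediately yields that $\cX_{h_r}$ is distributed as $\{u^{h_r}_1,\ldots,u^{h_r}_m\}$ when conditioned on that event.

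For the first pair $(A_r,\cX_{h_r})$, the key observation is that $A_r\in\mathcal{G}_{h_r}$: both $X_{h_r}$ and $X_{\bar{h}+\lfloor c_0 n\rfloor}$ can be read off directly from the unordered multiset $\{\mathscr{T}^n(v):|v|=h_r\}$, the former as the number of these subtrees reaching $h^*$, the latter as the total count of distinct vertices at level $\bar{h}+\lfloor c_0 n\rfloor$ within those subtrees whose own descendants reach $h^*$. Since $A_r\subset\{X_{h_r}=m\}$, Proposition~\ref{prop:uniform} then immediately gives the first claim.

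For the second pair the subtle point is that $\mathsf{T}_i$, and hence each $z^{h_l}_i$ appearing in $\mathcal{N}_{i,l}$, is indexed via $\Lambda_{\bar{h}}$, whereas the uniform selection pair framework refers to the height-$h_r$ labeling. I would observe that, under $A_r$, monotonicity of the coalescent process forces $X_{h_l}=m$ for every $0\le l\le r$, so the $m$ partial trunks joining $\cX_{h_r}$ to $\cX_{\bar{h}+\lfloor c_0 n\rfloor}$ are well defined; on the further event $B_r$ these extend down to $\cX_{\bar{h}}$ and induce a canonical bijection between $\mathbf{T}_{\bar{h}}$ and $\mathbf{T}_{h_r}$. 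Via this bijection, $\Lambda_{\bar{h}}$ pushes forward to an ordering $\widetilde{\Lambda}_{h_r}$ of $\mathbf{T}_{h_r}$, and extending $\widetilde{\Lambda}_{h_r}$ off $A_r\cap B_r$ by an independent uniform permutation, one obtains a labeling that is uniform on permutations of $\mathbf{T}_{h_r}$, independent of $\mathscr{T}^n$, and distributionally identical to $\Lambda_{h_r}$. Each $\mathcal{N}_{i,l}$ with $l\le r$ is then a function of the subtrees above $h_l\ge h_r$ (hence $\mathcal{G}_{h_r}$-measurable) together with the identity of the coalescent subtree at $h_r$ corresponding to $\mathsf{T}_i$ (hence $\sigma(\widetilde{\Lambda}_{h_r})$-measurable). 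Therefore $A_r\cap\bigcap_{l=1}^{r}\{\mathcal{N}_{i,l}\in D_l\}$ lies in $\mathcal{G}_{h_r}\vee\sigma(\widetilde{\Lambda}_{h_r})$, and Proposition~\ref{prop:uniform} closes the argument.

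The main difficulty is the labeling reconciliation just sketched: one must carefully verify that the labeling inherited from $\Lambda_{\bar{h}}$ may be substituted for $\Lambda_{h_r}$ without altering any joint distribution, and that extending the labeling off the good event introduces no bias in the conditional law of $\cX_{h_r}$. Once this bookkeeping is carried out, the exchangeability content of Proposition~\ref{prop:uniform} handles the rest.
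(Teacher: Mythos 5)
Your proof takes essentially the same route as the paper: identify both events as measurable with respect to $\mathcal{G}_{h_r}\vee\sigma(\text{a uniform labeling at }h_r)$ and invoke the exchangeability of $(\mathscr{T}^n(v):|v|=h_r)$ via Proposition~\ref{prop:uniform}. The paper simply asserts measurability with respect to $\mathcal{G}_{h_r}\vee\sigma(\Lambda_{h_r})$ as ``clear,'' whereas you explicitly flag and resolve the labeling mismatch between $\Lambda_{\bar h}$ (used to index $\mathsf{T}_i$) and the height-$h_r$ labeling by constructing the pushforward labeling $\widetilde\Lambda_{h_r}$, which is a legitimate way of making precise the same observation the paper leaves implicit.
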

\begin{proof}
    Clearly, both $A_{r}$ and $\bigcap_{l=1}^{r}\{\mathcal{N}_{i,l}\in D_l\}$ are measurable with respect to $\mathcal{G}_{h_r}\vee \sigma(\Lambda_{h_r})$. Furthermore, as in the proof of Proposition~\ref{prop:uniform}, the uniformity condition follows directly from the exchangeability of $\big(\mathscr{T}^n(v): |v|=h_r\big)$. This concludes the lemma.
\end{proof}

Returning to \eqref{eq:condi-new}, the next step is to apply Lemma~\ref{lem:selection_pair} to the numerator on the RHS. Recall the definitions of $\widetilde{B}_r$ and $\widetilde{\mathcal{N}}_{i,r}$ introduced before Proposition~\ref{lem:Ur independence}.
By Lemmas~\ref{lem:selection_pair} and \ref{lem:AX}, for $1 \le r \le K_\theta$,
\begin{align*}
    \mathbb{P}_n\Big[A_r\cap B_r\cap \bigcap_{l=1}^{r+1}\{\mathcal{N}_{i,l}\in D_l\}\Big]
    &=\mathbb{P}_n\Big[A_{r}\cap \bigcap_{l=1}^{r}\{\mathcal{N}_{i,l}\in D_l\}\Big]\,
      \mathbb{P}_n\big(\widetilde{B}_r\cap \{\widetilde{\mathcal{N}}_{i,r+1}\in D_r\}\big),\\
    \mathbb{P}_n\Big[A_r\cap B_r\cap \bigcap_{l=1}^{r}\{\mathcal{N}_{i,l}\in D_l\}\Big]
    &=\mathbb{P}_n\Big[A_{r}\cap \bigcap_{l=1}^{r}\{\mathcal{N}_{i,l}\in D_l\}\Big]\,
      \mathbb{P}_n(\widetilde{B}_r).
\end{align*}
Consequently,
\[
    \mathbb{P}_n\Big[A_r\cap B_r\cap \bigcap_{l=1}^{r+1}\{\mathcal{N}_{i,l}\in D_l\}\Big]
    = \mathbb{P}_n\Big[A_r\cap B_r \cap \bigcap_{l=1}^{r}\{\mathcal{N}_{i,l}\in D_l\}\Big]\,
      \mathbb{P}_n\big(\widetilde{\mathcal{N}}_{i,r+1}\in D_r\,\big|\,\widetilde{B}_r\big).
\]
Substituting into \eqref{eq:condi-new}, we obtain
\[
    \mathbb{Q}^m_n\Bigg[\bigcap_{l=1}^{r}\{\mathcal{N}_{i,l}\in D_l\}\Bigg]
    = \mathbb{Q}^m_n\Bigg[\bigcap_{l=1}^{r-1}\{\mathcal{N}_{i,l}\in D_l\}\Bigg]\,
      \mathbb{P}_n\big(\widetilde{\mathcal{N}}_{i,r}\in D_r\,\big|\,\widetilde{B}_r\big).
\]
Iterating this identity over $1 \le r \le K_\theta$ gives
\[
    \mathbb{Q}^m_n\Bigg[\bigcap_{l=1}^{K_\theta}\{\mathcal{N}_{i,l}\in D_l\}\Bigg]
    = \prod_{r=1}^{K_\theta}\mathbb{P}_n\big(\widetilde{\mathcal{N}}_{i,r}\in D_r\,\big|\,\widetilde{B}_r\big),
\]
which proves the desired results.
\end{proof}

Next, we turn to the proof of Proposition~\ref{prop:Ur}. 
By \eqref{eq:condi_law}, it suffices to show that
\[
\mathbb{P}_n\big(\widetilde{\mathcal{N}}_{i,r}\ge \gamma n^2\,\big|\,\widetilde{B}_{r}\big)\ge C_0.
\]
This will be established using the second moment method. 
To this end, 
we shall estimate, for each $v^j_l$, the probability
\[
    \mathbb{P}_n\Big[v^j_l \in \text{Desc}\big(\text{Anc}(u^{h_{r-1}}_i,h_{r})\big)\,\Big|\,\widetilde{B}_{r}\Big],
\]
and for distinct $v^j_l, v^{j'}_{l'}$,
\[
    \mathbb{P}_n\Big[v^j_l, v^{j'}_{l'} \in \text{Desc}\big(\text{Anc}(u^{h_{r-1}}_i,h_{r})\big)\,\Big|\,\widetilde{B}_{r}\Big].
\]

We begin with some notation.  
For any $1\le i\le m\le M_0$, $1\le r\le K_\theta$, and vertex $v^j_l$ with $h_r< j\le h_{r-1}$ and $1\le l\le q_n(j)-m$, define
\[
    \tau_i(v^j_l):=\inf\Big\{1\le k\le j-h_r : \text{Anc}(v^j_l,j-k)\in U_{i,r}\Big\},
\]
with the convention $\inf\emptyset=\infty$.  
In other words, $\tau_i(v^j_l)$ is the ``first time'' $k$ at which the ancestral lineage of $v^j_l$ merges into the trunk path $U_{i,r}$. Clearly,
\[
    \big\{v^j_l \in \text{Desc}\big(\text{Anc}(u^{h_{r-1}}_i,h_{r})\big)\big\}
    = \big\{\tau_i(v^j_l)<\infty\big\}.
\]

\begin{proposition}\label{p:tau_j}
Assume \eqref{H1}--\eqref{H2}. Then:
\begin{enumerate}[label=(\arabic*), ref=\arabic*]
    \item\label{item:tau1} Uniformly for all $1\le i\le m\le M_0$, $1\le r\le K_\theta$, and vertices $v^j_l$ with $h_r< j\le h_{r-1}$ and $1\le l\le q_n(j)-m$, 
    \begin{align}
        \p_n\big(\tau_i(v^j_l)<\infty\,\big|\,\widetilde{B}_{r}\big)
        =\frac{1}{m}\left\{1- \prod_{h=0}^{j-h_r-1}\Bigg[1-\frac{m\sigma^2_n(j-h)}{q_n(j-h)}\big(1+o_n(1)\big)\Bigg]\right\}.
        \label{eq:no merge}  
    \end{align}
    \item\label{item:tau2} Uniformly for all $1\le i\le m\le M_0$ and vertices $v^j_l,\,v^{j'}_{l'}$ with $h_r< j\le j' \le h_{r-1}$, $1\le l\le q_n(j)-m$, and $1\le l'\le q_n(j')-m$,  
    \begin{equation}\label{eq:no merge2} 
    \begin{aligned} 
        &\p_n\big( \tau_i(v^j_l) < \infty \,\big|\, \tau_i(v^{j'}_{l'})<\infty,\,\widetilde{B}_{r} \big) \\
        \le \,& \frac{1}{m}\left\{1-\big(1+o_n(1)\big)\prod_{h=0}^{j-h_r-1}\Bigg[1-\frac{(m+1)\sigma^2_n(j-h)}{q_n(j-h)}\big(1+o_n(1)\big)\Bigg]\right\}. 
    \end{aligned} 
    \end{equation}
\end{enumerate}
\end{proposition}

\begin{proof}[Proof of Proposition \ref{prop:Ur} assuming Proposition \ref{p:tau_j}]
Throughout the proof, we use $c_1, c_2, \dots$ to denote positive constants, independent of $\theta$ (though they may depend on other fixed parameters $\delta, s, M_0, c_0$).

We estimate the first and second moments of $\widetilde{\mathcal{N}}_{i,r}$. For the first moment, 
\begin{align}\label{eq:first_moment}
    \mathbb{E}_n \big(\widetilde{\mathcal{N}}_{i,r}\,\big|\,\widetilde{B}_{r}\big) 
    &= \sum_{j=h_r+1}^{h_{r-1}} \sum_{l =1}^{q_n(j)-m} \p_n\big(\tau_i(v^j_l)<\infty\,\big|\,\widetilde{B}_{r}\big).
\end{align}
We notice that $j-h_r\le \theta n$ for $h_r< j\le h_{r-1}$. By \eqref{eq:no merge}, \eqref{H1} and \eqref{H2},  we deduce that for all $n$ sufficiently large,
\begin{align*}
	\p_n\big(\tau_i(v^j_l)<\infty\,\big|\,\widetilde{B}_{r}\big)
	&\ge \frac{1}{m}\left\{1- \bigg[1-\frac{c_1m}{n}\bigg]^{j-h_r}\right\}
	\ge \frac{c_1(j-h_r)}{2n},
\end{align*}
provided that $\theta\le \frac{1}{2c_1 M_0}\log 2$ (in which case $\frac{c_1m}{n}\le  \frac{\log 2}{2\theta n}\le 1-(\frac12)^{\frac{1}{\theta n}}$ for all $n$ sufficiently large); the last equality then follows since $(1-x)^k\le 1-\frac k2 x$ for all $0\le x\le 1-(\frac12)^{\frac{1}{\theta n}}$ and $1\le k\le \theta n$.
Substituting this into \eqref{eq:first_moment} gives
\begin{align}\label{eq:first}
	\mathbb{E}_n \big(\widetilde{\mathcal{N}}_{i,r}\,\big|\,\widetilde{B}_{r}\big) 
	\ge \sum_{j=h_r+1}^{h_{r-1}} c_2 n\cdot \frac{c_1(j-h_r)}{2n}
	= \frac{c_1c_2}{4}\lfloor\theta n\rfloor(\lfloor\theta n\rfloor-1)
	\ge c_3 \theta^2n^2.
\end{align}

We next derive an upper bound for $\mathbb{E}_n \big((\widetilde{\mathcal{N}}_{i,r})^2\,\big|\,\widetilde{B}_{r}\big)$. Note that
\begin{align}\label{eq:2nd moment}
\begin{aligned}
	\mathbb{E}_n \big((\widetilde{\mathcal{N}}_{i,r})^2\,\big|\,\widetilde{B}_{r}\big) 
	&= \mathbb{E}_n \big(\widetilde{\mathcal{N}}_{i,r}\,\big|\,\widetilde{B}_{r}\big)
	+ \sum_{(j,l)\ne (j',l')} \mathbb{P}_n\big(\tau_i(y^j_l),\,\tau_i(y^{j'}_{l'})<\infty\,\big|\,\widetilde{B}_r\big)\\
	&\le \mathbb{E}_n \big(\widetilde{\mathcal{N}}_{i,r}\,\big|\,\widetilde{B}_{r}\big) +2\sum_{(j,l)\ne (j',l'):j\le j'}\mathbb{P}_n\big(\tau_i(y^j_l),\,\tau_i(y^{j'}_{l'})<\infty\,\big|\,\widetilde{B}_r\big),
\end{aligned}
\end{align}
where the sum on the first line runs over all $h_r<j,j'\le h_{r-1}$, $1\le l\le q_n(j)-m$, $1\le l'\le q_n(j')-m$ with $(j,l)\neq (j',l')$, while the sum on the last line further restricts to $j\le j'$.

By \eqref{eq:no merge2}, \eqref{H1} and \eqref{H2}, for $(j,l)\ne (j',l')$ with $j\le j'$,
\begin{align*}
    \p_n\big( \tau_i(v^j_l) < \infty \,\big|\, \tau_i(v^{j'}_{l'})<\infty,\,\widetilde{B}_{r} \big) 
    &\le \frac{1}{m}\bigg\{1-\big(1+o_n(1)\big)\bigg[1-\frac{c_4m}{n}\bigg]^{\theta n}\bigg\}\le c_5\theta,
\end{align*}
for all $1\le m\le M_0$ and sufficiently large $n$.  
Consequently,
\begin{align*}
    &\sum_{(j,l)\ne (j',l'):j\le j'}\mathbb{P}_n\big(\tau_i(y^j_l),\,\tau_i(y^{j'}_{l'})<\infty\,\big|\,\widetilde{B}_r\big)\\
    &= \sum_{(j,l)\ne (j',l'):j\le j'}\mathbb{P}_n\big(\tau_i(y^j_l)<\infty\,\big|\,\tau_i(y^{j'}_{l'})<\infty,\,\widetilde{B}_r\big)\,\mathbb{P}_n\big(\tau_i(y^{j'}_{l'})<\infty\,\big|\,\widetilde{B}_r\big)\\
    &\le c_5\theta\cdot \sum_{(j,l)\ne (j',l'):j\le j'} \mathbb{P}_n\big(\tau_i(y^{j'}_{l'})<\infty\,\big|\,\widetilde{B}_r\big)\le c_6 \theta^2n^2\cdot \mathbb{E}_n \big(\widetilde{\mathcal{N}}_{i,r}\,\big|\,\widetilde{B}_{r}\big).
\end{align*}
Substituting this estimate into \eqref{eq:2nd moment} yields that for all large $n$,
\begin{align*}
    \mathbb{E}_n \big((\widetilde{\mathcal{N}}_{i,r})^2\,\big|\,\widetilde{B}_{r}\big)
    \le (1+2c_6 \theta^2n^2)  \mathbb{E}_n \big(\widetilde{\mathcal{N}}_{i,r}\,\big|\,\widetilde{B}_{r}\big)
    &\overset{\eqref{eq:first}}{\le} \frac{1+2c_6 \theta^2n^2}{c_3\theta^2n^2} \Big[\mathbb{E}_n \big(\widetilde{\mathcal{N}}_{i,r}\,\big|\,\widetilde{B}_{r}\big)\Big]^2\le c_7\Big[\mathbb{E}_n \big(\widetilde{\mathcal{N}}_{i,r}\,\big|\,\widetilde{B}_{r}\big)\Big]^2.
\end{align*}
Finally, taking $\gamma=\tfrac{1}{2}c_3\theta^2$, we obtain
\[
\mathbb{P}_n(\mathcal{N}_{i,r}\ge \gamma n^2\,\big|\,\widetilde{B}_r)
\;\ge\; \mathbb{P}_n\!\left(\mathcal{N}_{i,r}\ge \tfrac{1}{2}\mathbb{E}_n(\mathcal{N}_{i,r})\,\Big|\,\widetilde{B}_r\right)
\;\ge\; \frac{\Big[\mathbb{E}_n \big(\widetilde{\mathcal{N}}_{i,r}\,\big|\,\widetilde{B}_{r}\big)\Big]^2}{4\,\mathbb{E}_n \big((\widetilde{\mathcal{N}}_{i,r})^2\,\big|\,\widetilde{B}_{r}\big)}
\;\ge\; \frac{1}{4c_7}.
\]
This completes the proof.
\end{proof}
Finally, we proceed to the proof of Proposition~\ref{p:tau_j}.  

\begin{proof}[Proof of Proposition~\ref{p:tau_j} \eqref{item:tau1}]
For any vertex $v^j_l$, set
\[
    \tau(v^j_l):=\inf\big\{\tau_i(v^j_l):1\le i\le m\big\}
    =\inf\Big\{1\le k\le j-h_r : \text{Anc}(v^j_l,j-k)\in \bigcup_{i=1}^m U_{i,r}\Big\}.
\]

We first record the following estimate, whose proof will be given afterwards.

\begin{lemma}\label{lem:tauvil}
Uniformly for all $1\le m\le M_0$, $1\le r\le K_\theta$, and vertices $v^j_l$ satisfying $h_r<j\le h_{r-1}$ and $1\le l\le q_n(j)-m$, we have
\begin{align}\label{eq:tau}
    \mathbb{P}_n\big(\tau(v^j_l)=\infty \,\big|\, \widetilde{B}_r\big)
    =\prod_{h=0}^{j-h_r-1}\Bigg[1-\frac{m\sigma^2_n(j-h)}{q_n(j-h)}\big(1+o_n(1)\big)\Bigg].
\end{align}
\end{lemma}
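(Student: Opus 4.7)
My approach is to express the conditional probability via Bayes' rule and then factorize each resulting probability as a telescoping product using the Markov property of the coalescent process (Theorem~\ref{thm:Markov}). Specifically, writing
\[
\mathbb{P}_n\big(\tau(v^j_l)=\infty\,\big|\,\widetilde{B}_r\big)
=\frac{\mathbb{P}_n\big(\{\tau(v^j_l)=\infty\}\cap\widetilde{B}_r\big)}{\mathbb{P}_n\big(\widetilde{B}_r\big)},
\]
the numerator is the probability that the $m+1$ lineages (the $m$ trunk lineages $U_{1,r},\ldots,U_{m,r}$ together with the ancestral lineage of $v^j_l$ traced from height $j$ down to $h_r$) remain pairwise disjoint at every height in $[h_r,h_{r-1}]$, while the denominator imposes this only on the $m$ trunk lineages.

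To evaluate each probability, I would split $\widetilde{B}_r=\widetilde{B}_r^{\rm above}\cap\widetilde{B}_r^{\rm below}$ according to heights above or below $j$, and apply Proposition~\ref{prop:uniform} conditional on $\widetilde{B}_r^{\rm above}$: this identifies the $m$ trunk vertices at height $j$ with a uniform $m$-subset of vertices at that height, and combined with the uniform choice of $v^j_l$ among the remaining non-trunk vertices, the augmented $(m+1)$-tuple becomes a uniform $(m+1)$-subset at height $j$. One can then invoke Theorem~\ref{thm:Markov} to factor the event of remaining pairwise distinct into a product over height-transitions $j'\to j'-1$; at each such transition, Corollary~\ref{distinctprob}(1) supplies the factor
\[
1-\binom{k}{2}\,\frac{\sigma_n(j'-1)^2}{q_n(j'-1)}\,\big(1+o_n(1)\big),
\]
with $k=m+1$ for the numerator and $k=m$ for the denominator.

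Taking the quotient factor by factor and using $\binom{m+1}{2}-\binom{m}{2}=m$, each combined factor simplifies to $1-m\,\sigma_n(j'-1)^2/q_n(j'-1)\,(1+o_n(1))$. Reindexing $h=j-j'$ and using the local slow variation of $\sigma_n$ and $q_n$ under~\eqref{H1} (which makes the discrepancy between $\sigma_n(j-h-1)$ and $\sigma_n(j-h)$, and similarly for $q_n$, absorbable into the $1+o_n(1)$ factor) then yields the claimed product over $h=0,\ldots,j-h_r-1$. The main technical care is to verify that the $o_n(1)$ errors arising in each of the $\Theta(n)$ factors are uniform in $m\le M_0$ and in the height index, so that they accumulate into a single $1+o_n(1)$ in the final expression; this uniformity is supplied by the bulk restriction $[h_r,h_{r-1}]\subset[\delta n, h(q_n)-\delta n]$ inherited from our earlier choice of $h_*$ and $h^*$, which ensures the uniformity statements in Corollary~\ref{distinctprob} apply throughout the relevant height range.
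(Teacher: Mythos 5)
Your proposal is correct and follows essentially the same route as the paper: both reduce the conditional probability to a telescoping product of ratios of ``distinct-parent'' probabilities via the uniform-selection-pair mechanism (Proposition~\ref{prop:uniform} and Lemma~\ref{lem:selection_pair}, which is what makes the conditional factorization of the genealogy above and below height $j-h$ rigorous), with the identity $\binom{m+1}{2}-\binom{m}{2}=m$ and Corollary~\ref{distinctprob} yielding the factor $1-m\sigma_n^2/q_n\,(1+o_n(1))$. The paper organizes the computation as a stepwise recursion in $h$ rather than your global Bayes-rule quotient, but the decomposition, the key lemmas, and the absorption of the one-step index shift into the $1+o_n(1)$ via the slow variation from \eqref{H1} are all the same.
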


Assuming Lemma~\ref{lem:tauvil}, the desired estimate~\eqref{eq:no merge} follows directly from the exchangeability of the vector 
$\big(\mathscr{T}^n(v): |v|=h_r\big)$.
\end{proof}

\begin{proof}[Proof of Lemma \ref{lem:tauvil}]
We first note that
\[
    \{\tau(v^j_l)=\infty\}=\{\tau(v^j_l)>j-h_r\}.
\]
Thus it suffices to prove that, uniformly for the quantities appearing in the lemma and $0\le h\le j-h_r-1$,
\begin{align}
    \mathbb{P}_n\big(\tau(v^j_l)> h+1\,\big|\, \tau(v^j_l)>h,\,\widetilde{B}_r\big)
    =1-\frac{m\sigma^2_n(j-h)}{q_n(j-h)}\big(1+o_n(1)\big).
\end{align}
The argument parallels that of Proposition~\ref{lem:Ur independence}.  
For $h_r\le j_2< j_1\le h_{r-1}$, set
\begin{align*}
    \widetilde{B}_r^{[j_1,j_2]}
    &:=\Big\{\#\big\{\text{Anc}(u^{h_{r-1}}_i,j_2):1\le i\le m\big\}
       =\#\big\{\text{Anc}(u^{h_{r-1}}_i,j_1):1\le i\le m\big\}=m\Big\}\\
    &=\Big\{\#\text{\,ancestors of }\{u^{h_{r-1}}_i:1\le i\le m\}\text{ remains $m$ between heights }[j_2,j_1]\Big\}.
\end{align*}
Then we have the decomposition
\(\widetilde{B}_r=\widetilde{B}^{[j-h,h_{r-1}]}_r\cap \widetilde{B}^{[h_r,j-h]}_r.\)
By Proposition~\ref{prop:uniform} and the uniform ordering of vertices $v^j_l$, the vector
\[
    \Big(
    \text{Anc}\big(u^{h_{r-1}}_1,j-h\big),\ldots,
    \text{Anc}\big(u^{h_{r-1}}_m,j-h\big),
    \text{Anc}\big(v^j_l,j-h\big)
    \Big),
\]
together with the event $\{\tau(v^j_l)> h\}\cap \widetilde{B}^{[j-h,h_{r-1}]}_r$, forms a $(m,j-h,m+1)$-uniform selection pair, with $h_{r-1}$ serving as the base height $h^*$ in Definition~\ref{def:uniform}.
Hence, by Lemma~\ref{lem:AX},
\begin{align}\label{eq:vil}
\begin{aligned}
    \mathbb{P}_n\big(\tau(v^j_l)>h+1,\, \widetilde{B}_r\big)
    &=\mathbb{P}_n\big(\tau(v^j_l)>h,\,\widetilde{B}^{[j-h,h_{r-1}]}_r \big)\\
    &\quad\cdot\mathbb{P}_n\Big[\mathfrak{p}(u^{j-h}_1),\ldots,\mathfrak{p}(u^{j-h}_{m+1})\text{ are distinct, }\#\big\{\text{Anc}(u^{j-h}_i,h_r):1\le i\le m\big\}=m\Big].
\end{aligned}
\end{align}
 For brevity, denote
\[
    p_n:=\mathbb{P}_n\Big[\mathfrak{p}(u^{j-h}_1),\ldots,\mathfrak{p}(u^{j-h}_{m+1})\text{ are distinct, }
    \#\big\{\text{Anc}(u^{j-h}_j,h_r):1\le i\le m\big\}=m\Big].
\]
Note that
\[
\Big(\{\mathfrak{p}(u^{j-h}_1),\ldots,\mathfrak{p}(u^{j-h}_{m+1})\text{ are distinct}\},\ \big\{\mathfrak{p}(u^{j-h}_j):1\le i\le m\big\}\Big)
\]
is a $(m+1,1,m)$-uniform selection pair with $j-h$ as the base height.  
Applying Lemma~\ref{lem:AX} again yields
\begin{align*}
    p_n=\,&\mathbb{P}_n\big(\mathfrak{p}(u^{j-h}_1),\ldots,\mathfrak{p}(u^{j-h}_{m+1})\text{ are distinct}\big)\cdot \mathbb{P}_n\big(\#\big\{\text{Anc}(u^{j-h-1}_j,h_r):1\le i\le m\big\}=m\big).    
\end{align*}
By \eqref{eq:distinct2} in Corollary~\ref{distinctprob},
\begin{align*}
    &\quad\,\mathbb{P}_n\Big(\mathfrak{p}(u^{j-h}_1),\ldots,\mathfrak{p}(u^{j-h}_{m+1})\text{ are distinct}\,\Big|\,\mathfrak{p}(u^{j-h}_1),\ldots,\mathfrak{p}(u^{j-h}_{m})\text{ are distinct} \Big)= 1-\dfrac{m\sigma_n^2(j-h)}{q_n(j-h)}\big(1+o_n(1)\big).
\end{align*}
Substituting into \eqref{eq:vil}, we obtain
\begin{align*}
    \mathbb{P}_n\big(\tau(v^j_l)>h+1,\, \widetilde{B}_r\big)
    &=\left[1-\dfrac{m\sigma_n^2(j-h)}{q_n(j-h)}\big(1+o_n(1)\big)\right]\mathbb{P}_n\big(\tau(v^j_l)>h,\, \widetilde{B}^{[j-h,h_{r-1}]}_r \big)\\
    &\quad\cdot \mathbb{P}_n\big(\mathfrak{p}(u^{j-h}_1),\ldots,\mathfrak{p}(u^{j-h}_{m+1})\text{ are distinct}\big)\cdot \mathbb{P}_n\big(\#\big\{\text{Anc}(u^{j-h-1}_j,h_r):1\le i\le m\big\}=m\big).
\end{align*} 
Applying Lemma~\ref{lem:AX} once more, we deduce
\begin{align*}
    \mathbb{P}_n\big(\tau(v^j_l)>h,\, \widetilde{B}_r\big)
    &=\mathbb{P}_n\big(\tau(v^j_l)>h,\,\widetilde{B}^{[j-h,h_{r-1}]}_r \big)\\
    &\quad\cdot \mathbb{P}_n\big(\mathfrak{p}(u^{j-h}_1),\ldots,\mathfrak{p}(u^{j-h}_{m+1})\text{ are distinct}\big)\cdot \mathbb{P}_n\big(\#\big\{\text{Anc}(u^{j-h-1}_j,h_r):1\le i\le m\big\}=m\big).
\end{align*}
Therefore,
\[
    \mathbb{P}_n\big(\tau(v^j_l)>h+1,\, \widetilde{B}_r\big)
    =\left[1-\dfrac{m\sigma_n^2(j-h)}{q_n(j-h)}\big(1+o_n(1)\big)\right]\mathbb{P}_n\big(\tau(v^j_l)>h,\, \widetilde{B}_r\big).
\]
This proves the lemma.
\end{proof}
\begin{proof}[Proof of Proposition~\ref{p:tau_j} \eqref{item:tau2}]
The argument for part~\eqref{item:tau2} is essentially the same as that for part~\eqref{item:tau1}, so we only give a sketch of the proof here.

Fix vertices \( v^j_l,\,v^{j'}_{l'} \) such that \( h_r < j \le j' \le h_{r-1} \), \( 1 \le l \le q_n(j)-m \), and \( 1 \le l' \le q_n(j')-m \).  
It suffices to show that, uniformly in \( 1 \le k' \le j' - h_r \),
\begin{equation}\label{eq:no_merge2}
\begin{aligned}
&\p_n\big( \tau(v^j_l) = \infty ~\big|~ \tau(v^{j'}_{l'}) = k',\, v^{j'}_{l'} \notin \text{Desc}(v^j_l),\, \widetilde{B}_r \big) \\
=~& (1 + o_n(1)) 
\prod_{h=0}^{j-(j'-k')-2}
\Bigg[ 1 - \frac{(m+1)\sigma^2}{q_n(j-h)} (1 + o_n(1)) \Bigg] \cdot
\prod_{h={j-(j'-k')}}^{j-{h_r-1}}
\Bigg[ 1 - \frac{m\sigma^2}{q_n(j-h)} (1 + o_n(1)) \Bigg].
\end{aligned}
\end{equation}

Define
\[
q_n(h) := \p_n\big( \tau(v^j_l) > h+1 ~\big|~ \tau(v^j_l) > h,\, \tau(v^{j'}_{l'}) = k',\, v^{j'}_{l'} \notin \text{Desc}(v^j_l),\, \widetilde{B}_r \big).
\]
Under this conditioning, \( j' - k' \) represents the height at which \( v^{j'}_{l'} \) merges into \( U_{i,r} \).
Taking \( h_{r-1} \) as the base height, we obtain by exchangeability that:

\begin{enumerate}[label=(\roman*)]
\item If \( j-h > j'-k' \), then the vector
\[
\Big(
\text{Anc}(u^{h_{r-1}}_1,j-h), \ldots,
\text{Anc}(u^{h_{r-1}}_m,j-h),
\text{Anc}(v^j_l,j-h), \text{Anc}(v^{j'}_{l'},j-h)
\Big),
\]
together with the event
\[
\Big\{
\tau(v^j_l) > h,\,
\tau(v^{j'}_{l'}) = k',\,
v^{j'}_{l'} \notin \text{Desc}(v^j_l),\,
\widetilde{B}_r^{[j-h, h_{r-1}]}
\Big\},
\]
forms an \((m,j-h, m+2)\)-uniform selection pair.

\item If \( j-h \le j'-k' \), then the vector
\[
\Big(
\text{Anc}(u^{h_{r-1}}_1,j-h), \ldots,
\text{Anc}(u^{h_{r-1}}_m,j-h),
\text{Anc}(v^j_l,j-h)
\Big),
\]
together with the event
\[
\Big\{
\tau(v^j_l) > h,\,
\tau(v^{j'}_{l'}) = k',\,
v^{j'}_{l'} \notin \text{Desc}(v^j_l),\,
\widetilde{B}_r^{[j-h, h_{r-1}]}
\Big\},
\]
forms an \((m,j-h, m+1)\)-uniform selection pair.
\end{enumerate}

Following the same reasoning as in the proof of Proposition~\ref{p:tau_j}~(1), we obtain:

\begin{enumerate}[label=(\roman*)]
\item Uniformly over \( 0 \le h \le j - h_r - 1 \) with \( j - h > j' - k' + 1 \),
\begin{align*}
q_n(h)
&= \mathbb{P}_n\Big(
\mathfrak{p}(u^{j-h}_1), \ldots, \mathfrak{p}(u^{j-h}_{m+2}) \text{ are distinct}
~\Big|~
\mathfrak{p}(u^{j-h}_1), \ldots, \mathfrak{p}(u^{j-h}_{m+1}) \text{ are distinct}
\Big) \\
&\overset{\eqref{eq:precise}}{=}
1 - \frac{(m+1)\sigma_n^2(j-h)}{q_n(j-h)} (1 + o_n(1)).
\end{align*}

\item Uniformly over \( 0 \le h \le j - h_r - 1 \) with \( j - h \le j' - k' \),
\begin{align*}
q_n(h)
&= \mathbb{P}_n\Big(
\mathfrak{p}(u^{j-h}_1), \ldots, \mathfrak{p}(u^{j-h}_{m+1}) \text{ are distinct}
~\Big|~
\mathfrak{p}(u^{j-h}_1), \ldots, \mathfrak{p}(u^{j-h}_{m}) \text{ are distinct}
\Big) \\
&\overset{\eqref{eq:precise}}{=}
1 - \frac{m\sigma_n^2(j-h)}{q_n(j-h)} (1 + o_n(1)).
\end{align*} 

\item For \( h \) such that \( j - h = j' - k' + 1 \),
\begin{align*}
q_n(h)
&= \mathbb{P}_n\Big( \mathfrak{p}(u_2^{j-h}), \ldots, \mathfrak{p}(u_{m+2}^{j-h}) \text{ are distinct}
~\Big|~ \mathfrak{p}(u_2^{j-h}), \ldots, \mathfrak{p}(u_{m+1}^{j-h}) \text{ are distinct},\, \mathfrak{p}(u_1^{j-h}) = \mathfrak{p}(u_2^{j-h})
 \Big)\\
 &\overset{\eqref{eq:distinct2}}{=} 1 + o_n(1).
\end{align*}
\end{enumerate}
Combining these estimates, we conclude that
\[
\p_n\big(
\tau(v^j_l) = \infty
~\big|~
\tau(v^{j'}_{l'}) = k',~
v^{j'}_{l'} \notin \text{Desc}(v^j_l),~
\widetilde{B}_r
\big)
= \prod_{h=0}^{j-{h_r-1}} p_n(h)
= \text{RHS of } \eqref{eq:no_merge2}.
\]
This completes the proof.
\end{proof}

\section{Proof of Theorem \ref{main}}\label{sec:proof}

In the previous section we established the tightness of $\widetilde{\mathscr{T}}^n$. 
Here we first prove the finite-dimensional convergence of $\widetilde{\mathscr{T}}^n$ (Section~\ref{sec:fd}); combined with tightness, this yields convergence of the contour functions. 
We then show in Section~\ref{sec:joint} that the discrepancy between the height and contour functions vanishes in the limit, so both processes converge to the same limiting process, which completes the proof of Theorem~\ref{main}.

\subsection{Finite-dimensional convergence of $\widetilde{\mathscr{T}}^{n}$}\label{sec:fd}
Recall that $\mathcal{T}^{\ell,\sigma}$ is the rooted ordered real tree encoded by
$W^{\ell^\sigma}_{\alpha^{-1}(I(\ell)t)}$, and that $\mathcal{T}^{\ell,\sigma}_k$ denotes its uniform
$k$-point subtree, whose law is described in Theorem~\ref{thm:tree_construction}. Let $L_1,\ldots, L_k$ be the $k$ leaves of $\mathcal{T}^{\ell,\sigma}_k$. \footnote{Recall that if $p=p_{W^{\ell^\sigma}_{\alpha^{-1}(I(\ell)t)}}:[0,1]\rightarrow \mathcal{T}^{\ell,\sigma}$ is the canonical projection, then \vspace{-0.2cm}
\[(L_1,\ldots, L_k)\overset{d}{=}(p(U_1),\ldots,p(U_k)),\]
with $U_i$ are i.i.d. Uniform$[0,1]$.}
Also recall that \( \widetilde{\mathscr{T}}^n \) is the geometric tree obtained from \( \mathscr{T}^n \) by assigning length \(1/n\) to each edge. Let \((\widetilde{V}^n_1,\ldots,\widetilde{V}^n_{\# V(\widetilde{\mathscr{T}}^n)})\) be a uniform random ordering of \(V(\widetilde{\mathscr{T}}^n)\), and let \(\widetilde{\mathscr{T}}^n_k\) be the rooted ordered subtree of \( \widetilde{\mathscr{T}}^n \) spanned by the root and \(\{\widetilde{V}^n_1,\ldots,\widetilde{V}^n_k\}\).
In this subsection we prove the following proposition on the finite-dimensional convergence of $\widetilde{\mathscr{T}}^{n}$.

\begin{proposition}\label{sconvergence}
  Assume \eqref{H1} and \eqref{eq:3rd}. Then for every $k\ge 1$,
  \begin{align}\label{eq:fd_convergence}
      \big(\widetilde{\mathscr{T}}^n_k,\{\widetilde{V}^n_i\}_{1\le i\le k}\big)\overset{d}{\longrightarrow} \big(\mathcal{T}_k^{\ell,\sigma},\{L_i\}_{1\le i\le k}\big)\text{ under the distance \eqref{fidis}}.
  \end{align}
\end{proposition}
\begin{proof}
To begin with, we reduce the proof by controlling the heights of the sampled leaves.  
Fix $k\ge 1$. Let $\mathbf{a}_1 \ge \mathbf{a}_2 \ge \cdots \ge \mathbf{a}_k$ denote the heights of the leaves of the tree $\mathcal{T}^{\ell,\sigma}_k$ and   
$\mathbf{a}^n_1 \ge \mathbf{a}^n_2 \ge \cdots \ge \mathbf{a}^n_k$ be the order statistics of the set 
\(\big\{\lvert \widetilde{V}^n_1\rvert,\ldots,\lvert \widetilde{V}^n_k\rvert\big\}\). For any $\delta >0$, define
\begin{align*}
        A^n_\delta&:=\big\{\mathbf{a}^n_i\in (\delta , h(\ell)-\delta )\ \forall\,i=1,\ldots,k\big\}\cap \big\{\mathbf{a}^n_i\neq \mathbf{a}^n_j\ \forall\, 1\le i\neq j\le k\big\},\\
        A_\delta&:=\big\{\mathbf{a}_i\in (\delta , h(\ell)-\delta )\ \forall\,i=1,\ldots,k\big\}\cap \big\{\mathbf{a}_i\neq \mathbf{a}_j\ \forall\, 1\le i\neq j\le k\big\}.
    \end{align*}
Then we have the following:\phantom\qedhere
\begin{lemma}
    Assume \eqref{H1} and \eqref{eq:3rd}. 
    Then for any $k\ge 1$ and $\eta>0$, there exists $\delta=\delta(k,\eta)>0$ such that
    \begin{align}
        \liminf_{n\rightarrow\infty}\mathbb{P}_n\left(A^n_\delta\right)>1-\eta,\quad
        \mathbb{P}(A_\delta)>1-\eta.\label{eq:height_tight}
    \end{align}
\end{lemma}
\begin{proof}
The first bound follows directly from \eqref{H1} \ref{assump1}--\ref{assump2}.
The second bound follows from the piecewise Kingman representation in Proposition~\ref{prop:piecewise_Kingman}, together with the definition of continuous profiles in Section~\ref{sec:setup}.
\end{proof}
Thanks to the lemma, it remains to prove for any $k\ge 1$ and $\eta>0$, there exists a coupling of $\Big(\big(\widetilde{\mathscr{T}}^n_k, \{\widetilde{V}^n_i\}_{1\le i\le k}\big)\,\Big|\,A^n_\delta\Big)$, $n\ge 1$ and $\Big(\big(\mathcal{T}^{\ell,\sigma}_k,\{L_i\}_{1\le i\le k}\big)\,\Big|\,A_\delta\Big)$ such that
    \begin{align}\label{eq:fd_convergence_suffice}
        \mathbb{P}\left\{d\Big(\Big(\big(\widetilde{\mathscr{T}}^n_k, \{\widetilde{V}^n_i\}_{1\le i\le k}\big)\,\Big|\,A^n_\delta\Big),\Big(\big(\widetilde{\mathscr{T}}^n_k, \{\widetilde{V}^n_i\}_{1\le i\le k}\big)\,\Big|\,A^n_\delta\Big)\Big)\le \eta\right\}\ge 1-2\eta,
    \end{align}
    where $d$ denotes the distance in \eqref{fidis}.
    Once \eqref{eq:fd_convergence_suffice} is verified, the desired convergence
\eqref{eq:fd_convergence} follows from the following proposition: 
    \begin{proposition}
Let $X$ and $(X_n)_{n\ge 1}$ be random elements on a metric space $(S,\rho)$.
	If there exists a sequence $\Big\{\big(X^k,(X^k_n)_{n\ge 1}\big):k\ge 1\Big\}$ of couplings of $X$ and $(X_n)_{n\ge 1}$ satisfying that for each $\varepsilon>0$,
	\begin{align}\label{rho}		\lim_{k\rightarrow\infty}\limsup_{n\rightarrow\infty}\p\Big(\rho(X^k,X^k_n)>\varepsilon\Big)=0,
	\end{align}
	then $X_n\overset{d}{\longrightarrow}X$.
\end{proposition}
\begin{proof}
	For each closed subset $F$ of $(S,\rho)$ and $\varepsilon>0$,
	\begin{align*}
		\p(X_n\in F)=\p(X^k_n\in F)\le \p\big(\rho(X^k,X^k_n)>\varepsilon\big)+\p(X\in F_\varepsilon),
	\end{align*}
	where $F_\varepsilon=\{x\in S:\rho(x,F)\le \varepsilon\}$. Consequently, 
	\begin{align*}
		\limsup_{n\rightarrow\infty}\p(X_n\in F)\le \lim_{k\rightarrow\infty}\limsup_{n\rightarrow\infty}\p\Big(\rho(X^k,X^k_n)>\varepsilon\Big)+\p(X\in F_\varepsilon)=\p(X\in F_\varepsilon).
	\end{align*}
	Letting $\varepsilon\downarrow 0$ yields
	\[\limsup_{n\rightarrow\infty}\p(X_n\in F)\le\p(X\in F),\]
    which is equivalent to $X_n\overset{d}{\longrightarrow}X$.
\end{proof}

For the proof of \eqref{eq:fd_convergence_suffice}, we introduce the following notation. For $s\in [\delta,h(\ell)-\delta]$, let
\begin{align*}
    Y_s=Y_s^{(k)}&:=\#\,\text{ancestors of $L_1,\ldots, L_k$ in $\mathcal{T}^{\ell,\sigma}_k$ at height $s$},\\
    Y^n_s=Y_s^{n,(k)}&:=\#\,\text{ancestors of $\widetilde{V}^n_1,\ldots,\widetilde{V}^n_k$ in $\widetilde{\mathscr{T}}^{n}_k$ at height $\lceil ns \rceil /n$}.
\end{align*}
The proof is accomplished by a chain of lemmas.
\begin{lemma}\label{lem:height_convergence}
    For any $k\ge 1$ and $\delta>0$,
    \[\big((\mathbf{a}^n_1,\ldots, \mathbf{a}^n_k)\,\big|\, A^n_\delta\big)\overset{d}{\longrightarrow}\big((\mathbf{a}_1,\ldots, \mathbf{a}_k)\,\big|\, A_\delta\big).\]
\end{lemma}
\begin{proof}
This follows directly from \eqref{H1}, \eqref{eq:total_popuplation} and the first step of the equivalence construction of $\mathcal{T}^\ell_k$ presented after Proposition~\ref{prop:piecewise_Kingman}.
\end{proof}
\begin{lemma}\label{lem:Z_convergence}
Fix $k\ge 1$ and $\delta>0$. 
Let $(a_1^n,\ldots,a_k^n)\in\big((\delta,h(\ell)-\delta)\cap\frac{1}{n}\mathbb{Z}\big)^k$ for $n\ge1$, and 
$(a_1,\ldots,a_k)\in(\delta,h(\ell)-\delta)^k$ satisfy
\[  
(a_1^n,\ldots,a_k^n)\longrightarrow(a_1,\ldots,a_k)\quad\text{and}\quad a_1>\cdots>a_k.
\]
Then
\begin{align}\label{eq:Z_convergence}
\begin{aligned} \Big(\big(Y^n(s):s\in[\delta, h(\ell)-\delta]\big)\,\big|\,(\mathbf{a}^n_1,\ldots, \mathbf{a}^n_k)=(a^n_1,\ldots,a^n_k)\Big)\\\overset{d}{\longrightarrow} \Big(\big(Y(s):s\in[\delta, h(\ell)-\delta]\big)\,\big|\,(\mathbf{a}_1,\ldots, \mathbf{a}_k)=(a_1,\ldots,a_k)\Big). \end{aligned}
\end{align}
\end{lemma}
\begin{lemma}\label{lem:Z_to_tree}
    Let $\delta>0$, and let 
    \[
        y^n=\big(y^n(s):s\in[\delta,h(\ell)-\delta]\big), \qquad
        y=\big(y(s):s\in[\delta,h(\ell)-\delta]\big)
    \]
    be the deterministic paths in the value spaces of 
    $(Y^n_s\,|\, A^n_\delta)$ and $(Y_s\,|\, A_\delta)$, respectively, such that
    \[
        y^n \;\longrightarrow\; y \quad \text{in } D[\delta,h(\ell)-\delta]
        \quad\text{and}\quad \text{$y$ has jumps of size at most $1$.}
    \]
    Then there exists a coupling of $\big(\widetilde{\mathscr{T}}^n_k \,\big|\, A^n_\delta,\; Y^n=y^n\big)$, $n\ge 1$, and $\big(\mathcal{T}^{\ell,\sigma}_k \,\big|\, A_\delta,\; Y=y\big)$ such that a.s.,
    \[
        \limsup_{n\rightarrow\infty}d\Big\{\Big(\big(\widetilde{\mathscr{T}}^n_k,\{\widetilde{V}^n_i\}_{1\le i\le k}) \,\big|\, A^n_\delta,\; Y^n=y^n\big)\Big),\Big(\big(\mathcal{T}^{\ell,\sigma}_k,\{L_i\}_{1\le i\le k}) \,\big|\, A_\delta,\; Y=y\big)\Big)\Big\}\le 2k\delta.
    \]
\end{lemma}

\begin{proof}[Proof of \eqref{eq:fd_convergence_suffice} assuming Lemmas~\ref{lem:Z_convergence} and \ref{lem:Z_to_tree}]
It follows from Lemmas~\ref{lem:height_convergence} and \ref{lem:Z_convergence} that
\[
    (Y^n \mid A^n_\delta)\;\overset{d}{\longrightarrow}\; (Y \mid A_\delta)
    \qquad\text{in }D[\delta,h(\ell)-\delta].
\]
    Moreover, by the piecewise Kingman representation of $Y$ in Proposition~\ref{prop:piecewise_Kingman}, we have a.s., $(Y\,|\,A_\delta)$ has jumps of size at most $1$.
    Thus, 
    further combining Lemma \ref{lem:Z_to_tree} with sufficiently small $\delta>0$ yields \eqref{eq:fd_convergence_suffice}. 
\end{proof}
We now turn to the proof of Lemmas~\ref{lem:Z_convergence} and \ref{lem:Z_to_tree}. 

\begin{proof}[Proof of Lemma~\ref{lem:Z_convergence}]
We begin by introducing a preparatory lemma, for which we set up some notation.

Let $(b_1^n,\ldots,b_m^n)\in\big((\delta,h(\ell)-\delta)\cap\frac{1}{n}\mathbb{Z}\big)^m$ for $n\ge1$, and 
$(b_1,\ldots,b_m)\in(\delta,h(\ell)-\delta)^m$ satisfy
\begin{align*}
    (b_1^n,\ldots,b_m^n)\longrightarrow(b_1,\ldots,b_m), 
\qquad  
b^n_1>\cdots>b^n_m, 
\qquad  
b_1>\cdots>b_m,\\
\{b^n_1,\ldots,b^n_m\}\cap\{a^n_1,\ldots,a^n_k\}=\emptyset,\quad \{b_1,\ldots,b_m\}\cap\{a_1,\ldots,a_k\}=\emptyset.
\end{align*}
Define the deterministic paths  
\(y^n=\big(y^n(s):s\in[\delta,h(\ell)-\delta]\big)\)  
(resp. \(y=\big(y(s):s\in[\delta,h(\ell)-\delta]\big)\))  
as follows: viewed backwards from \(h(\ell)-\delta\) down to \(\delta\), the path starts at  
\(y^n(h(\ell)-\delta)=0\) (resp. \(y(h(\ell)-\delta)=0\)), has upward jumps of size \(+1\) at times  
\(a_1^n,\ldots,a_k^n\) (resp. \(a_1,\ldots,a_k\)), and downward jumps of size \(-1\) at times  
\(b_1^n,\ldots,b_m^n\) (resp. \(b_1,\ldots,b_m\)). (If a time \(a_i^n\) coincides with some \(b_j^n\) (resp. \(a_i\) with \(b_j\)), then the two jumps cancel and the path has no jump at that time.)

For any $s\in (\delta,h(\ell)-\delta)\cap\frac{1}{n}\mathbb{Z}$, define the event
\begin{align}\label{eq:Bs}
    B_s=B_s(n,\delta):=\left\{Y^n|_{[s,h(\ell)-\delta)\cap\frac{1}{n}\mathbb{Z}}=y^n|_{[s,h(\ell)-\delta)\cap\frac{1}{n}\mathbb{Z}}\right\}\cap A^n_\delta.
\end{align}
    We now state the lemma (its proof is postponed):
\begin{lemma}\label{lem:local_limit}
For any $\delta>0$, $k\ge1$, and sequences $(a_i^n)$, $(a_i)$, $(b_i^n)$, $(b_i)$ satisfying the above assumptions, we have uniformly for all $s\in (\delta,h(\ell)-\delta)\cap\frac{1}{n}\mathbb{Z}$,
\begin{align*}
        \mathbb{P}_n\left(Y^n(s-\tfrac{1}{n})=y^n(s-\tfrac{1}{n})\,|\,B_s\right)
        =\,\begin{cases}
            1+o_n(1),&\quad\text{if }s-\tfrac{1}{n}\in \{a^n_1,\ldots,a^n_k\},\\
            \dfrac{\binom{y^n(s)}{2}\sigma_n(s)^2}{q_n(s)}\big(1+o_n(1)\big),&\quad\text{if }s-\tfrac{1}{n}\in \{b^n_1,\ldots,b^n_m\},\\
            1-\dfrac{\binom{y^n(s)}{2}\sigma_n(s)^2}{q_n(s)} \big(1 + o_n(1)\big),&\quad\text{otherwise}.
        \end{cases}
    \end{align*}
\end{lemma}
Let us use the lemma to prove Lemma~\ref{lem:Z_convergence}. Let
\[
p_n := \mathbb{P}_n(Y^n = y^n\,|\,A^n_\delta),
\]
and let \(p\) denote the density of the probability that the conditioned process 
\((Y\,|\,A_\delta)\) follows the same jump pattern as \(y\), with upward jump times 
perturbed by \(\mathrm{d}a_1,\ldots,\mathrm{d}a_k\).

It follows from Proposition~\ref{prop:piecewise_Kingman} that a.s.,
\[\{b_1,\ldots,b_m\}\cap\{a_1,\ldots,a_k\}=\emptyset.\]
Thus by \cite[Theorem~3.3]{billingsley2013convergence}, it is enough to show the following local limit estimate:
for any $\delta>0$, $k\ge1$, and sequences $(a_i^n)$, $(a_i)$, $(b_i^n)$, $(b_i)$ satisfying the above assumptions, 
\begin{align}\label{eq:local_limit}
    n^k\, p_n \;\longrightarrow\; p.
\end{align}

    To this end, we first note that
\[
\{Y^n = y^n\}
  = \Big\{\,Y^n\big|_{(\delta,h(\ell)-\delta)\cap \tfrac1n\mathbb{Z}}
      = y^n\big|_{(\delta,h(\ell)-\delta)\cap \tfrac1n\mathbb{Z}}\,\Big\},
\]
by the definition of $Y^n$.
Thus, it follows from Lemma \ref{lem:local_limit} and \eqref{H1} that
\begin{align}\label{eq:density_convergence}
    \begin{aligned}
        n^k p_n&=n^k\prod_{s\in (\delta+\frac{1}{n},h(\ell)-\delta)\cap \frac{1}{n}\mathbb{Z}}\mathbb{P}_n\left(Y^n(s-\tfrac{1}{n})=y^n(s-\tfrac{1}{n})\,|\,B_s\right)\\
        &\underset{n\rightarrow\infty}{\longrightarrow}\,\exp\left[-\int_{\delta}^{h(\ell)-\delta}\binom{y(s)}{2}\frac{\sigma(s)^2}{\ell(s)}\d s\right] \prod_{i=1}^k\binom{y(b_i)}{2}\dfrac{\sigma(b_i)^2}{\ell(b_i)}.       
    \end{aligned}
    \end{align}

    On the other hand, by Proposition~\ref{prop:piecewise_Kingman}, \((Y\,|\,(\mathbf{a}_1,\ldots,\mathbf{a}_k)=(a_1,\ldots,a_k))\) is distributed as the piecewise Kingman's coalescent \(Z_s^{(k)}(a_1,\ldots,a_k)\). A direct inspection of its transition mechanism shows that the RHS of \eqref{eq:density_convergence} is exactly the path density \(p\). This proves \eqref{eq:local_limit}.
\end{proof}
\begin{proof}[Proof of Lemma~\ref{lem:Z_to_tree}]
The proof also requires a preliminary lemma.

Assume the same setting as in Lemma \ref{lem:local_limit}. Let $\mathbf{W}^n_i$ (resp. $\mathbf{W}_i$), $1\le i\le m$, be the leaf of $\big(\widetilde{\mathscr{T}}^n_k \,\big|\, A^n_\delta,\; Y^n=y^n\big)$ (resp. $\big(\mathcal{T}^{\ell,\sigma}_k \,\big|\, A_\delta,\; Y=y\big)$) at height $a^n_i$ (resp. $a_i$), and $\mathbf{B}^n_i$ (resp. $\mathbf{B}_i$), $1\le i\le k$, be the branching point of $\big(\widetilde{\mathscr{T}}^n_k \,\big|\, A^n_\delta,\; Y^n=y^n\big)$ (resp. $\big(\mathcal{T}^{\ell,\sigma}_k \,\big|\, A_\delta,\; Y=y\big)$) at height $b^n_i$ (resp. $b_i$).
\begin{lemma}\label{lem:uniform_order}
   Assume the same setting as in Lemma \ref{lem:local_limit}. Then the tree order of the leaves $(\mathbf{W}^n_i)$ and branching points $(\mathbf{B}_i^n)$ of $\big(\widetilde{\mathscr{T}}^n_k \,\big|\, A^n_\delta,\; Y^n=y^n\big)$ are uniform over all possible orders. The analogous result also holds for $\big(\mathcal{T}^{\ell,\sigma}_k \,\big|\, A_\delta,\; Y=y\big)$.
\end{lemma}
We postpone the proof of the lemma. 
We now work under the assumptions of Lemma~\ref{lem:Z_to_tree}.
Observe that the convergence
\(y^n \overset{d}{\longrightarrow} y\)
in the statement of Lemma~\ref{lem:Z_to_tree} is equivalent to the convergence of the corresponding upward and downward jump times. Thus Lemma \ref{lem:uniform_order} implies that for sufficiently large $n$, there exists a coupling of $\big(\widetilde{\mathscr{T}}^n_k \,\big|\, A^n_\delta,\; Y^n=y^n\big)$ and $\big(\mathcal{T}^{\ell,\sigma}_k \,\big|\, A_\delta,\; Y=y\big)$ such that the tree order of their leaves and branching points agree. That is,
\begin{align*}
    &\mathbf{W}^n_i\prec \mathbf{W}_j\text{ iff }\mathbf{W}_i\prec \mathbf{W}_j,\qquad \mathbf{B}_i^n\prec \mathbf{B}_j^n\text{ iff }\mathbf{B}_i\prec \mathbf{B}_j,\qquad\mathbf{W}^n_i\prec \mathbf{B}_j^n\text{ iff }\mathbf{W}_i\prec \mathbf{B}_j.
\end{align*}

Consequently, let $\mathbf{W}^n_{(1)},\ldots, \mathbf{W}^n_{(m)}$ (resp. $\mathbf{W}_{(1)},\ldots, \mathbf{W}_{(m)}$) denote the order statistic of $\mathbf{W}^n_1,\ldots, \mathbf{W}^n_m$ (resp. $\mathbf{W}_1,\ldots, \mathbf{W}_m$). Then for any $1\le i\le m-1$,
\begin{enumerate}
    \item the branching point of $\mathbf{W}^n_{(i)}$ and $\mathbf{W}^n_{(i+1)}$ belongs to $\{\mathbf{B}_j^n\}$ (i.e. the height of branching points is in $[\delta,h(\ell)-\delta]$) iff the branching point of $\mathbf{W}_{(i)}$ and $\mathbf{W}_{(i+1)}$ belongs to $\{\mathbf{B}_j\}$;
    \item $\mathbf{B}_j^n$ is the branching point of $\mathbf{W}^n_{(i)}$ and $\mathbf{W}^n_{(i+1)}$\quad iff\quad $\mathbf{B}_j$ is the branching point of $\mathbf{W}_{(i)}$ and $\mathbf{W}_{(i+1)}$.
\end{enumerate}
Now recall the distance defined in \eqref{fidis}. For $1\le i\le m-1$, writing $\mathbf{B}^n_{(i)}$ (resp. $\mathbf{B}_{(i)}$) for the branching point of $\mathbf{W}^n_{(i)}$ and $\mathbf{W}^n_{(i+1)}$ (resp. $\mathbf{W}_{(i)}$ and $\mathbf{W}_{(i+1)}$), we distinguish two cases:
\begin{enumerate}
    \item if both $\mathbf{B}^n_{(i)}$ and $\mathbf{B}_{(i)}$ have heights in $(\delta, h(\ell)-\delta)$, then for some $j$, $\mathbf{B}^n_{(i)}=\mathbf{B}_j^n$ and $\mathbf{B}_{(i)}=\mathbf{B}_j$. Using the convergence of the heights of the leaves and branching points, we obtain a.s.
    \begin{align*}
        d(\mathbf{W}^n_{(i)},\mathbf{B}^n_{(i)})\longrightarrow d(\mathbf{W}_{(i)},\mathbf{B}_{(i)}),\qquad d(\mathbf{W}^n_{(i+1)},\mathbf{B}^n_{(i)})\longrightarrow d(\mathbf{W}_{(i+1)},\mathbf{B}_{(i)}),
    \end{align*}
    where $d$ denotes the tree distance in the corresponding space  
(either on $\big(\widetilde{\mathscr{T}}^n_k \mid A^n_\delta,\, Y^n=y^n\big)$ or on  
$\big(\mathcal{T}^{\ell,\sigma}_k \mid A_\delta,\, Y=y\big)$, respectively);
    \item otherwise, both $\mathbf{B}^n_{(i)}$ and $\mathbf{B}_{(i)}$  lie in $[0,\delta]$, and hence
    \[\limsup_{n\rightarrow\infty} \abs{d(\mathbf{W}^n_{(i)},\mathbf{B}^n_{(i)})-d(\mathbf{W}_{(i)},\mathbf{B}_{(i)})}\le 2\delta,\qquad \limsup_{n\rightarrow\infty} \abs{d(\mathbf{W}^n_{(i+1)},\mathbf{B}^n_{(i)})-d(\mathbf{W}_{(i)},\mathbf{B}_{(i)})}\le 2\delta.\]
\end{enumerate}

   It is also clear that
   \[d(\mathbf{W}^n_{(1)},\text{root})\longrightarrow d(\mathbf{W}_{(1)},\text{root}),\quad d(\mathbf{W}^n_{(m)},\text{root})\longrightarrow d(\mathbf{W}_{(m)},\text{root}).\]
Putting everything together, we obtain Lemma~\ref{lem:Z_to_tree} under the above coupling.
\end{proof}
\end{proof}
Finally, we verify Lemmas~\ref{lem:local_limit} and \ref{lem:uniform_order}.  
Recall the definition of \(B_s\) in \eqref{eq:Bs}.  
By recursively applying Proposition~\ref{prop:uniform} downward from height \(h(\ell)-\delta\), we obtain that for every  
\(s\in (\delta,h(\ell)-\delta)\cap \tfrac{1}{n}\mathbb{Z}\),  
conditionally on \(B_s\), the \(y^n(s)\) ancestors of  
\(\widetilde{V}^n_1,\ldots,\widetilde{V}^n_k\) at height \(s\) are distributed as a uniformly chosen
\(y^n(s)\)-tuple of vertices at that height.

\begin{proof}[Proof of Lemma~\ref{lem:local_limit}]
Using Lemma~\ref{lem:selection_pair}, we have for all  
\(s\in (\delta,h(\ell)-\delta)\cap \tfrac{1}{n}\mathbb{Z}\),
\begin{align*}
&\mathbb{P}_n\!\left(Y^n(s-\tfrac{1}{n}) = y^n(s-\tfrac{1}{n}) \,\middle|\, B_s\right)\\
=\;&
\begin{cases}
\displaystyle 
\Big(1-\frac{y^n(s)}{q^n(ns)}\Big)
\mathbb{P}_n\!\left(\#\{\mathfrak{p}(u^{ns}_1),\ldots,\mathfrak{p}(u^{ns}_{y^n(s)})\}=y^n(s)\right),
& \text{if } s-\tfrac{1}{n}\in\{a^n_1,\ldots,a^n_k\},\\
\displaystyle 
\mathbb{P}_n\!\left(\#\{\mathfrak{p}(u^{ns}_1),\ldots,\mathfrak{p}(u^{ns}_{y^n(s)})\}=y^n(s)-1\right),
& \text{if } s-\tfrac{1}{n}\in\{b^n_1,\ldots,b^n_m\},\\
\displaystyle 
\mathbb{P}_n\!\left(\#\{\mathfrak{p}(u^{ns}_1),\ldots,\mathfrak{p}(u^{ns}_{y^n(s)})\}=y^n(s)\right),
& \text{otherwise}.
\end{cases}
\end{align*}
Here, the factor \(1 - \tfrac{y^n(s)}{q^n(ns)}\) is the probability that the newly created leaf at height \(s-\tfrac{1}{n}\) avoids the \(y^n(s)\) ancestral lineages above height \(s\).  
Combining the above with \eqref{eq:precise} and \eqref{eq:distinct2} yields the desired result.
\end{proof}
\begin{proof}[Proof of Lemma \ref{lem:uniform_order}]
    The statement for 
    \(\big(\mathcal{T}^{\ell,\sigma}_k \,\big|\, A_\delta,\; Y=y\big)\)
    is a consequence of the final step in the piecewise Kingman's construction described in Proposition~\ref{prop:piecewise_Kingman}.
    
    We now treat 
    \(\big(\widetilde{\mathscr{T}}^n_k \,\big|\, A^n_\delta,\; Y^n=y^n\big)\).
    List the \(y^n(s)\) vertices at height \(s\) in tree order as
    \[
        z^{ns}_{i_1} \prec z^{ns}_{i_2} \prec \cdots \prec z^{ns}_{i_{y^n(s)}}.
    \]
    In the application of Lemma~\ref{lem:selection_pair} that appeared in the 
proof of Lemma~\ref{lem:local_limit}, we additionally keep track of the
probability that:
    \begin{enumerate}
        \item when \(s \in \{a^n_1,\ldots,a^n_k\}\), the newly created leaf is exactly \(z^{ns}_{i_j}\);
        \item when \(s \in \tfrac{1}{n}+\{b^n_1,\ldots,b^n_m\}\), the vertices  
              \(z^{ns}_{i_j}\) and \(z^{ns}_{i_{j+1}}\) are the pair that coalesces at height \(s\).
    \end{enumerate}
    In each case, one checks that the index \(j\) is uniformly distributed over all its possible values 
(that is, over \(\{1,\ldots,y^n(s)\}\) in the first case and 
\(\{1,\ldots,y^n(s)-1\}\) in the second). 

    Moreover, a straightforward combinatorial check shows that the possible choices of $j$ at each jump time $s$ are in bijection with all possible tree orders of the leaves and branch points. Hence all such orders occur with equal probability.
\end{proof}

\begin{remark}\label{rmk:equivalent}
Under \eqref{H1}, the following three statements are in fact equivalent:
\begin{enumerate}[label=(\roman*),ref=\roman*]
    \item\label{item:i} the finite-dimensional convergence in Proposition~\ref{sconvergence};
    \item\label{item:ii} convergence of the ancestral process to Kingman’s coalescent;
    \item\label{item:iii} the third-moment condition~\eqref{eq:3rd}.
\end{enumerate}

Indeed, we have already shown that \eqref{eq:3rd} implies the finite-dimensional convergence, and hence \eqref{item:iii} implies both \eqref{item:i} and \eqref{item:ii}.  
Conversely, if \eqref{eq:3rd} fails, then along any subsequence $(n_k)$ for which the limit in \eqref{eq:3rd} is strictly positive, the probability of a triple–merger event remains bounded away from \(0\). This prevents the ancestral process from converging to Kingman’s coalescent and therefore rules out the finite-dimensional convergence.  
\end{remark}

\begin{corollary}
Assume \eqref{H1}--\eqref{H2}. Then
\begin{align}\label{eq:contour_convergence}
    \big(n^{-1}C_{\lfloor 2n^2 t\rfloor}(\mathscr{T}^{n})\big)_{t\ge 0}
    \ \overset{d}{\longrightarrow}\
    \big(W_{\alpha^{-1}(t)}^{\,\ell^\sigma}\big)_{t\ge 0}\quad\text{in \(D[0,\infty)\)}.
\end{align}
\end{corollary}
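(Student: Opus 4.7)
The plan is to apply Aldous' equivalence (Theorem~\ref{thm:convergence}) to the sequence $(\widetilde{\mathscr{T}}^n)$ of geometric trees, with limiting encoding function
\[
g(t) \;:=\; W^{\ell^\sigma}_{\alpha^{-1}(I(\ell) t)},\qquad t \in [0,1],
\]
which belongs to $C_{\rm bridge}$ by the computations in Section~\ref{sec:time-changed}. Both hypotheses of Theorem~\ref{thm:convergence} are already in hand: tightness of $(\widetilde{\mathscr{T}}^n)$ is the content of Theorem~\ref{thm:tight}, while the finite-dimensional convergence $\widetilde{\mathscr{T}}^n_k \overset{d}{\longrightarrow} \mathcal{T}^{\ell,\sigma}_k$ is Proposition~\ref{sconvergence}. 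The auxiliary assumptions are immediate: $a_n := |V(\mathscr{T}^n)| \to \infty$, since $a_n/n^2 \to I(\ell) > 0$ by \eqref{eq:total_popuplation}, and the maximal edge length of $\widetilde{\mathscr{T}}^n$ equals $1/n \to 0$.

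The implication (ii)$\Rightarrow$(i) in Theorem~\ref{thm:convergence} would then yield
\[
\bigl(n^{-1} C_{\lfloor 2(a_n-1) t \rfloor}(\mathscr{T}^n)\bigr)_{t \in [0,1]} \overset{d}{\longrightarrow} \bigl(W^{\ell^\sigma}_{\alpha^{-1}(I(\ell) t)}\bigr)_{t \in [0,1]} \quad\text{in } D[0,1].
\]
To match the time parametrization of \eqref{eq:contour_convergence}, I would substitute $s = I(\ell) t$ and introduce the time change $\mu_n(s) := s(a_n-1)/(n^2 I(\ell))$, which converges to the identity uniformly on $[0, I(\ell)]$ by \eqref{eq:total_popuplation}. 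Since $n^{-1} C_{\lfloor 2 n^2 \mu_n(s) \rfloor}(\mathscr{T}^n) = n^{-1} C_{\lfloor 2(a_n-1) s/I(\ell)\rfloor}(\mathscr{T}^n)$, a standard Skorokhod time-change argument then converts the above display into the convergence
\[
\bigl(n^{-1} C_{\lfloor 2 n^2 s \rfloor}(\mathscr{T}^n)\bigr)_{s \in [0, I(\ell)]} \overset{d}{\longrightarrow} \bigl(W^{\ell^\sigma}_{\alpha^{-1}(s)}\bigr)_{s \in [0, I(\ell)]} \quad \text{in } D[0, I(\ell)].
\]

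Finally, since $W^{\ell^\sigma}_{\alpha^{-1}(\cdot)}$ is continuous and vanishes at $I(\ell)$ (because $\alpha^{-1}(I(\ell)) = I(\ell^\sigma)$ and $W^{\ell^\sigma}$ is a bridge to $0$ by Proposition~\ref{prop:aldous1}\eqref{item:aldous2}), extending both processes by $0$ outside $[0, I(\ell)]$ upgrades this to convergence in $D[0, \infty)$. This is exactly \eqref{eq:contour_convergence}, noting that $\ell^\sigma$ and $4\ell/\sigma^2$ coincide on $[0, h(\ell))$ and both vanish elsewhere. The only step with any content beyond invoking results already established is the rescaling of the time variable, which is a routine Skorokhod exercise once $a_n/n^2 \to I(\ell)$ is in hand; no substantial obstacle is anticipated.
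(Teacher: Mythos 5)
Your proposal is correct and follows essentially the same approach as the paper: invoke Theorem~\ref{thm:convergence} with the encoding function $W^{\ell^\sigma}_{\alpha^{-1}(I(\ell)t)}$, feed in tightness (Theorem~\ref{thm:tight}) and finite-dimensional convergence (Proposition~\ref{sconvergence}), and then renormalize the time variable from $2(a_n-1)t$ to $2n^2 t$ using $a_n/n^2 \to I(\ell)$ from \eqref{eq:total_popuplation}. The paper compresses the time-change step to a single clause (``Combining this with \eqref{eq:total_popuplation} gives \eqref{eq:contour_convergence}''), whereas you spell out the Skorokhod time-change and the extension by zero; both are routine and the content is identical.
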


\begin{proof}
Theorems~\ref{thm:tight} and~\ref{sconvergence} establish tightness and finite-dimensional convergence of the sequence \((\mathscr{T}^n)\) respectively. Recall that \(\mathcal{T}^{\ell,\sigma}_k\) is the uniform \(k\)-point subtree induced by \(W^{\ell^\sigma}_{\alpha^{-1}(I(\ell)t)}\), and note that
\[
\big(n^{-1}C_k(\mathscr{T}^{n})\big)_{0\le k\le 2(\# V(\mathscr{T}^{n})-1)}
\]
is exactly the contour function of \(\widetilde{\mathscr{T}}^{n}\).
Applying Theorem~\ref{thm:convergence} yields
\[
\big(n^{-1}C_{\lfloor 2(\# V(\mathscr{T}^{n})-1)t \rfloor}(\mathscr{T}^{n})\big)_{0\le t\le 1}
\ \overset{d}{\longrightarrow}\
\big(W_{\alpha^{-1}(I(\ell)t)}^{\,\ell^\sigma}\big)_{0\le t\le 1}.
\]
Combining this with \eqref{eq:total_popuplation} gives \eqref{eq:contour_convergence}.
\end{proof}

\subsection{Joint convergence of height and contour functions}\label{sec:joint}
In this subsection we prove the joint convergence of the (rescaled) contour and height functions as stated in Theorem~\ref{main}.

\begin{proof}[Proof of Theorem~\ref{main}]
Let $\rho = v^n_0 < v^n_1 < \cdots < v^n_{\#V(\mathscr{T}^n)}$ be the enumeration of all vertices of $\mathscr{T}^n$ in lexicographic order. This is also the order in which the vertices are visited during the depth-first traversal of $\mathscr{T}^n$. For $i = 0, \ldots, \#V(\mathscr{T}^n)$, let $\tau^n_i$ denote the first hitting time of $v^n_i$ in this traversal.

Recall the definition of the contour function from Section~\ref{sec:geometric}. It is immediate that 
\[
H_i(\mathscr{T}^n) = C_{\tau^n_i}(\mathscr{T}^n).
\]
Moreover, by \cite[Lemma~2]{marckert2003depth}, we have
\[
\tau^n_i + H_i(\mathscr{T}^n) = 2i.
\]
Hence, for $0 \le t \le 1$,
\begin{align*}
\left|\frac{C_{\lfloor 2n^2 t\rfloor}(\mathscr{T}^n)}{n}
- \frac{H_{\lfloor n^2 t\rfloor}(\mathscr{T}^n)}{n}\right|
&= \left|\frac{C_{\lfloor 2n^2 t\rfloor}(\mathscr{T}^n)}{n}
- \frac{C_{\tau^n_{\lfloor n^2 t\rfloor}}(\mathscr{T}^n)}{n}\right| \\
&= \left|\frac{C_{\lfloor 2n^2 t\rfloor}(\mathscr{T}^n)}{n}
- \frac{C_{2\lfloor n^2 t\rfloor - H_{\lfloor n^2 t\rfloor}(\mathscr{T}^n)}(\mathscr{T}^n)}{n}\right| \\
&\overset{(*)}{\le} 
\sup_{\big(t-\frac{h(q_n)+1}{2n^2}\big)\vee 0 \le s \le t}
\left|\frac{C_{\lfloor 2n^2 t\rfloor}(\mathscr{T}^n)}{n}
- \frac{C_{\lfloor 2n^2 s\rfloor}(\mathscr{T}^n)}{n}\right|,
\end{align*}
where $(*)$ follows from the inequality
\[
\lfloor 2n^2 t\rfloor - 
\big(2\lfloor n^2 t\rfloor - H_{\lfloor n^2 t\rfloor}(\mathscr{T}^n)\big)
\le h(q_n) + 1.
\]
Then, by the tightness of the rescaled contour functions  
\( \big\{n^{-1} C_{\lfloor 2n^2 \cdot \rfloor}(\mathscr{T}^n) : n \ge 1\big\} \)
and the fact that $h(q_n)/n \to h(q)$ as $n \to \infty$, we obtain  
\[
\sup_{t \ge 0} \Big| n^{-1} C_{\lfloor 2n^2 t \rfloor}(\mathscr{T}^n)
- n^{-1} H_{\lfloor n^2 t \rfloor}(\mathscr{T}^n) \Big|\ 
\overset{d}{\longrightarrow}\  0.
\]
Combining this with the convergence of the contour functions, we conclude the joint convergence.     
\end{proof}

%%%%%%%%%%%%%%%%%%%%

\appendix
\renewcommand{\thesection}{\Alph{section}}
\numberwithin{equation}{section}
\section{Near-sharpness of \eqref{H2}: a counterexample}\label{sec:example}
Fix $\alpha\in (0,1)$. For all $n\in\N$, denote by $r_n=\lfloor \frac{n}{(\log n)^\alpha}\rfloor$ and $p_n:=\frac{(\log n)^{2\alpha}}{n}$. Let $\nu=(\nu_1,\cdots,\nu_n)$ be a random exchangeable vector distributed as follows:
\begin{itemize}
    \item With probability $1-p_n$, set $\nu_i=1$ for all $1\le i\le n$;
    
    \item With probability $p_n$, choose an index $i_0$ uniformly from $\{1, \dots, n\}$ and set $\nu_{i_0}=r_n$. Then choose $r_n - 1$ distinct indices $i_1,\cdots,i_{r_n-1}$ uniformly from $\{1, \dots, n\} \setminus \{i_0\}$ and set $\nu_{i_1}=\cdots=\nu_{i_{r_n-1}}=0$. Set $\nu_j = 1$ for all remaining indices $j$.
\end{itemize}
In this construction, $\sum_{i=1}^n \nu_i =n$. As $n\to +\infty$, we have
\[
\e[\nu_1]=1,\quad \e[(\nu_1)_2] \to 1,\quad \e[(\nu_1)_3]\sim\frac{n}{(\log n)^\alpha}.
\]

Let $(q_n(s))_{0\le s\le h_n}$ be the generation-size profile defined by
\[h(q_n) = n+1,\qquad q_n(s) = n,\ \text{ for }1\le s\le n.\]
Let $(\nu^s:1\le s\le n)$ be i.i.d. copies of $\nu$. Consider the Cannings tree $\T^n$ associated with the generation-size profile $q_n$ and offspring vectors $(\nu^s)$. As before, let $\widetilde{\T}^n$ denote the geometric tree obtained by assigning length $1/n$ to each edge. Since $\e_n[(\nu_1)^3]/n\to 0$, the finite-dimensional convergence of the tree sequence still holds. However, the contour function fails to converge because tightness breaks down:
\begin{theorem}\label{thm:example}
Suppose $0<\alpha<1$. Then the sequence  $(\widetilde \T^n)$ is not tight. Consequently, the convergence stated in Theorem~\ref{main} fails.
\end{theorem}

The proof relies on the failure of the CDFI property. Recall the coalescent process $X_j$ defined in Definition \ref{def:coal_process} with $h^* = n+1$. In particular, $X_1$ represents the number of ancestors at height 1 of the entire population at height $n+1$.

\begin{lemma}\label{lem:CDFI}
 If the sequence $(\widetilde{\T}^n)$ is tight, then
\[
\lim_{M\to\infty} \limsup_{n\to\infty} \p_n(X_1>M) =0
\]
\end{lemma}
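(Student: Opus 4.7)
The plan is to argue by contraposition: if $X_1$ fails to be tight, then the sampling-based approximation error $\Delta(n,k)$ must be large with uniformly positive probability, which contradicts the assumed tightness of $(\widetilde T^n)$ (equivalent, by Theorem~\ref{thm:tight}, to $\lim_k\limsup_n \p(\Delta(n,k)>\delta n)=0$ for every $\delta>0$).

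The key geometric observation is the following. For each height-$1$ ancestor $v\in\cX_1$ of some top vertex, the subtree $\mathrm{Desc}(v)$ contains the entire path from $v$ up to that top vertex, so $|\mathrm{Desc}(v)|\ge n$. Since the subtrees $\{\mathrm{Desc}(v):v\in\cX_1\}$ are disjoint subsets of $V(T^n)\setminus\{\rho\}$, on the event $\{X_1\ge M\}$ the pigeonhole principle produces some $v^*\in\cX_1$ with $|\mathrm{Desc}(v^*)|\le |V(T^n)|/M$. Moreover, if none of the uniform samples $V_{n,1},\ldots,V_{n,k}$ lies in $\mathrm{Desc}(v^*)$, then the spanning subtree $T^n_k$ cannot enter $\mathrm{Desc}(v^*)$, since this subtree is accessible only via $v^*$. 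Consequently, the top vertex inside $\mathrm{Desc}(v^*)$ has graph distance exactly equal to its height to $T^n_k$, which forces $\Delta(n,k)\ge n$.

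Next, the probability that $k$ uniform samples (drawn without replacement from $V(T^n)$) avoid a fixed subset of size $s\le|V(T^n)|/M$ equals $\binom{|V(T^n)|-s}{k}/\binom{|V(T^n)|}{k}\ge (1-1/M)^{k}(1+o_n(1))$ for $k,M$ fixed as $n\to\infty$. Combining with the geometric observation, for any $\delta\in(0,1)$,
\[
    \p\big(\Delta(n,k) > \delta n\big) \;\ge\; \p(X_1 \ge M)\cdot (1-1/M)^k\cdot (1+o_n(1)).
\]

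Finally, choose $M=M_k:=k^2$, so that $(1-1/M_k)^k\to 1$ as $k\to\infty$. Applying $\limsup_n$ and then $\lim_k$ in the above display, the tightness hypothesis $\lim_k\limsup_n\p(\Delta(n,k)>\delta n)=0$ forces $\lim_k\limsup_n \p(X_1\ge k^2)=0$, and monotonicity of $M\mapsto \p(X_1>M)$ yields $\lim_M\limsup_n\p(X_1>M)=0$, as required. The only delicate point is the calibration of $M_k$: it must grow fast enough with $k$ so that the factor $(1-1/M_k)^k$ does not collapse to zero (otherwise the lower bound becomes vacuous), yet still tend to infinity; the concrete choice $M_k=k^2$ comfortably satisfies both constraints, so no serious technical obstacle remains.
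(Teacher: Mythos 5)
Your proof is correct, but it takes a genuinely different route from the paper's. The paper establishes a clean deterministic event containment $\{X_1 > k\} \subset \{\Delta(n,k) > n/2\}$: on the complementary event, every top vertex $v$ (at height $n$) has a sample $V_{n,l}$ with $d(v,V_{n,l})\le n/2$, hence their most recent common ancestor sits at height $\ge n/2 \ge 1$ (since $d(v,u)\ge |v|-|v\wedge u|$), so $v$ and $V_{n,l}$ share the same height-$1$ ancestor; as there are only $k$ samples, $X_1\le k$ follows. Taking $\limsup_n$ and then $k\to\infty$ immediately gives the result, with $M=k$ and no calibration. You instead argue via pigeonhole: if $X_1\ge M$, one of the $M$ disjoint descendant subtrees has size at most $|V(T^n)|/M$, and conditionally on the tree the $k$ samples miss that subtree with probability at least $(1-1/M)^k(1+o_n(1))$, forcing $\Delta(n,k)\ge n$. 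This yields the lower bound $\p(\Delta(n,k)>\delta n)\ge \p(X_1\ge M)(1-1/M)^k(1+o_n(1))$, and you then need the calibration $M_k=k^2$ (or any $M_k$ with $k/M_k\to 0$) so that the avoidance factor does not collapse. Both arguments are sound, and your hypergeometric tail bound and the passage from $\lim_k\limsup_n\p(X_1\ge k^2)=0$ to $\lim_M\limsup_n\p(X_1>M)=0$ via monotonicity are handled correctly. The paper's approach is shorter and needs no probability estimate at all; yours is a bit more machinery but captures a complementary structural fact (many ancestors imply a small, easily-missed subtree) that is interesting in its own right and would adapt to sampling schemes that are not exactly uniform.

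Two small cosmetic remarks: the observation $|\mathrm{Desc}(v)|\ge n$ is stated but never actually used (what matters for the distance bound is only that $\mathrm{Desc}(v^*)$ contains a top vertex at height $n$, so its distance to $T^n_k$ is exactly $n$ once no sample enters $\mathrm{Desc}(v^*)$); and you should note explicitly that the conclusion $\Delta(n,k)\ge n$ yields $\p(\Delta(n,k)>\delta n)$ for any $\delta\in(0,1)$, which is all the tightness hypothesis requires.
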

\begin{proof}
Recall the definition of $\Delta(n,k)$ in \eqref{def:Delta}. Tightness implies that for any $\delta > 0$,
\begin{align}\label{eq:tightness2}
    \lim_{k\to\infty} \limsup_{n\to \infty} \p_n\left(\max_{v\in V(\T^n)} d(v,\{V^n_1,\cdots,V^n_k\})>\delta n\right) =0,
\end{align}
where $\{V^n_{i}:1\le i\le k\}$ is a uniformly chosen 
$k$-tuple of distinct vertices.

On the event 
\[\left\{\max_{v\in V(T^n)} d(v,\{V^n_1,\cdots,V^n_k\})\le \frac12 n\right\},\] 
every vertex at height $n$ must share a common ancestor at height $\lfloor\frac12 n\rfloor$ with some $V^n_{l}$. Consequently, the number of ancestors at generation $1$ is bounded by $k$. Hence,
\[\mathbb{P}_n\left(\max_{v\in V(T^n)} d(v,\{V^n_1,\cdots,V^n_k\})\le \frac12 n\right)\le \mathbb{P}_n\left(X_1\le k\right).\]
The desired assertion follows from \eqref{eq:tightness2}.
\end{proof}
 
We now turn to the proof of Theorem~\ref{thm:example}.
In view of Lemma \ref{lem:CDFI}, it suffices to show that
there exists $\varepsilon_0 > 0$ such that for all $k \ge 1$,
\begin{align}\label{eq:eps0}
    \limsup_{n \to \infty} \mathbb{P}_n(X_1 > k) \ge \varepsilon_0.
\end{align}

The main idea is the following. In our construction, most generations are ``trivial’’ (every individual has exactly one offspring), so the tree behaves like a tall comb. Only in a small number of randomly chosen generations does one individual produce a large family of size $r_n$. We trace the first $k$ individuals at height $n+1$, and show that with high probability, they avoid merging into any of these large families. Thus they have $k$ distinct ancestors at height~1, which yields \eqref{eq:eps0}.

\begin{proof}[Proof of Theorem~\ref{thm:example}]
Let 
\[S := \#\big\{1 \le s \le n : \nu^s \neq (\overbrace{1,1,\ldots,1}^n)\big\}\] 
be the number of generations with non-trivial reproduction. Let $h_1 > \dots > h_S$ denote these ``active'' generations. For each $j \in \{1, \dots, S\}$, let $m_j$ be the unique index such that $\nu_{m_j}^{h_j} = r_n$. Conditionally on $S$, $\{h_j\}$ is a uniformly chosen $S$-tuple of $\{1, \dots, n\}$, and $\{m_j\}$ are i.i.d. uniform on $\{1, \dots, n\}$. Fix $k_0 = k$. For $1 \le i \le S$, define recursively:
$$k_i := \inf \left\{ j < m_i : \#\{1 \le l \le j : \nu_l^{h_i} = 1 \} = k_{i-1} \right\},$$
with $\inf \emptyset = \infty$. 
See Figure~\ref{fig:example} for a description.

\begin{figure}
    \centering
    \includegraphics[width=0.5\linewidth]{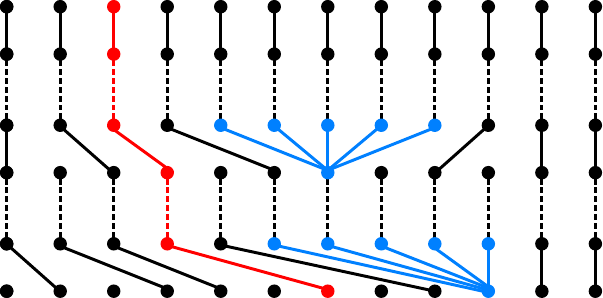}
    \caption{An example of the construction for $n=12$, $\alpha=0.9$ (so $r_n=5$), and $k=3$.
Here there are $S=2$ active generations, occurring at indices $m_1=7$ and $m_2=10$.
Starting from $k_0=3$, the recursive procedure yields $k_1=4$ and $k_2=7$.}
    \label{fig:example}
\end{figure}

Let us start with a simple observation.\phantom\qedhere
\begin{lemma}\label{lem:kS}
If $k_S < \infty$, then $X_1 \ge k$.
\end{lemma}
\begin{proof}
Suppose $k_S < \infty$, which implies that $k_i<m_i$ for all $1\le i\le S$. Then for every active generation $h_i$, the first $k$ individuals at height $n+1$ all trace their lineages to one of the $k_{i-1}$ size-$1$ parents rather than to the unique individual with offspring number $r_n$. Consequently, their ancestral lineages remain distinct throughout the sequence of active generations $\{h_i\}_{i=1}^S$, and hence they have $k$ distinct ancestors at generation $1$. In particular,
\(X_1 \ge k\).
\end{proof}
Recall \eqref{eq:eps0}. By Lemma \ref{lem:kS}, it suffices to prove that there exists $\varepsilon_0>0$ such that
\begin{align}\label{eq:k_S}
\limsup_{n\to\infty} \p_n(k_S<\infty) \ge \varepsilon_0.
\end{align}
Denote
\(m_* := \min \{m_i : 1 \le i \le S\}\).
We start with a preliminary bound on $S$ and $m_*$.
\begin{lemma}[Bounds on $S$ and $m_*$]\label{lem:S_m*_bound}
    For any $\varepsilon\in(0,1)$, we have for $n$ large,
    \begin{align}\label{eq:S_m*_bound}
        \p_n\bigg(m_*> \frac{\varepsilon n}{8(\log n)^{2\alpha}},\, S<3(\log n)^{2\alpha}\bigg)\ge 1-\varepsilon.
    \end{align}
\end{lemma}

\begin{proof}
    Note that $S \sim \text{Binomial}(n, p_n)$. By the Chernoff bound, there exists $c > 0$ such that
\begin{equation}\label{eq:s_estimate}
    \mathbb{P}_n(S \ge 3(\log n)^{2\alpha}) \le \exp(-c(\log n)^{2\alpha}).
    \end{equation}
On the other hand, it is seen that conditionally on $S$, $(m_1,\ldots, m_S)$ are i.i.d. uniform random variables on $\{1,\cdots,n\}$. Therefore
    \begin{align}\label{eq:m* estimate} 
    \begin{aligned}
    &\p_n\bigg(m_*\le \frac{\varepsilon n}{8(\log n)^{2\alpha}}\,\bigg|\, S<3(\log n)^{2\alpha}\bigg) 
    \le  1 - \bigg(1- \frac{\varepsilon}{8(\log n)^{2\alpha}}\bigg)^{\lfloor 3(\log n)^{2\alpha}\rfloor} \le \frac{\varepsilon}{2}.
    \end{aligned}
\end{align}
Combining \eqref{eq:s_estimate} and \eqref{eq:m* estimate} yields \eqref{eq:S_m*_bound}.
\end{proof}

Henceforth, we fix a small $\varepsilon\in (0,1)$ and 
define the conditional probability measure
\[\widetilde{\mathbb{P}}_n(\cdot):=\mathbb{P}_n\Big(\,\cdot\,\Big|\,m_* > \frac{\varepsilon n}{8(\log n)^{2\alpha}}, \, S < 3(\log n)^{2\alpha}\Big).\]
The proof of \eqref{eq:k_S} is accomplished by the following two propositions.
\begin{proposition}[Growth of $k_i$]\label{prop:grwoth_ki}
    There exists $c=c(\varepsilon)>0$ such that for all $n,i\ge 1$,
\begin{equation}\label{eq:k_i estimate}
\widetilde{\p}_n\bigg( k_{i+1}> k_i\Big( 1+ \frac{2}{(\log n)^\alpha}\Big) \,\bigg|\, k_i\bigg) \mathbf{1}\left\{i<S,\,k_i < \frac{\varepsilon n}{16(\log n)^{2\alpha}}\right\}
\le \exp\Big(-\frac{c k_i}{(\log n)^{2\alpha}}\Big).
\end{equation}
\end{proposition}
\begin{proposition}[Size of $k_i$ at the $\sqrt{n}$ threshold]\label{prop:threshold_size}
    Let $i_* := \inf \{1 \le i \le S : k_i > \sqrt{n}\}$. Then there exists $c=c(\varepsilon)>0$ such that for all $n\ge 1$,
    \begin{align}\label{eq:i* estimate}
\widetilde{\p}_n\bigg( k_{i_*}> \sqrt{n}\Big( 1+ \frac{2}{(\log n)^\alpha}\Big) \,\bigg|\, i_*\le S\bigg) \le \exp\Big(-\frac{c \sqrt{n}}{(\log n)^{2\alpha}}\Big).
\end{align}
\end{proposition}
\begin{proof}[Proof of \eqref{eq:k_S} assuming Propositions~\ref{prop:grwoth_ki} and \ref{prop:threshold_size}]
We first note that if \(i_*=\infty\), then necessarily \(k_S\le \sqrt{n}<m_*\), and hence \(k_S<\infty\).
It therefore suffices to treat the case \(i_*\le S\).

Define the event
\[
A=A_n:=\bigg\{\, i_*\le S,\quad k_{i_*}\le \sqrt{n}\Big(1+\frac{2}{(\log n)^\alpha}\Big),\quad 
k_{j+1}\le k_j\Big(1+\frac{2}{(\log n)^\alpha}\Big)\ \ \forall\, i_*\le j<S \bigg\}.
\]
On the event \(A\), we have \(\widetilde{\mathbb{P}}_n\)-a.s. that
\[
k_S\le \sqrt{n}\Big(1+\frac{2}{(\log n)^\alpha}\Big)^{S+1-i_*}
 \le \sqrt{n}\Big(1+\frac{2}{(\log n)^\alpha}\Big)^{3(\log n)^{2\alpha}}
 < m_*,
\]
for all sufficiently large \(n\), since \(\alpha<1\). Consequently,
\begin{equation}\label{eq:k_S_lower}
\widetilde{\mathbb{P}}_n(k_S<m_*,\, i_*\le S)\ge \widetilde{\mathbb{P}}_n(A).
\end{equation}

Combining \eqref{eq:S_m*_bound}, \eqref{eq:k_i estimate} and \eqref{eq:i* estimate}, we obtain that for any \(\varepsilon\in(0,1)\) there exists a constant \(c=c(\varepsilon)>0\) such that, for all sufficiently large \(n\),
\[
\widetilde{\mathbb{P}}_n(A\,|\,i_*\le S)\ge (1-\varepsilon)\Bigg(1-3(\log n)^{2\alpha}\exp\Big(-\frac{c \sqrt{n}}{(\log n)^{2\alpha}}\Big)
- \exp\Big(-\frac{c \sqrt{n}}{(\log n)^{2\alpha}}\Big)\Bigg).
\]
Substituting this bound into \eqref{eq:k_S_lower} and accounting for the case \(i_*=\infty\) yields \eqref{eq:k_S}.
\end{proof}

Finally, let us prove Propositions \ref{prop:grwoth_ki} and \ref{prop:threshold_size}.
\begin{proof}[Proof of Proposition \ref{prop:grwoth_ki}]
Note that conditionally on $S$ and $(h_i,m_i:1\le i\le S)$, for each $1\le i\le S$ and $1\le j\le m_i$, the count
    \[\#\{1 \le l \le j : \nu_l^{h_i} = 1\}\]
has the same law as the Hypergeometric$(n,n-r_n, j)$ random variable; that is, the number of successes in $j$ draws, without replacement, from a finite population of size $n$ that contains exactly $n-r_n$ objects with that feature. Therefore,
\begin{align*}
    \text{LHS of \eqref{eq:k_i estimate}} \;&\le\; \mathbb{P}\!\left( \mathrm{Hypergeometric}\!\left(n,n-r_n,\big\lfloor k_i(1+2(\log n)^{-\alpha})\big\rfloor\right)\ge k_i \right).
\end{align*}
\eqref{eq:k_i estimate} then follows by standard hypergeometric tail bounds (e.g., \cite[Corollary 1.1]{Serfling}).
\end{proof}
\begin{proof}[Proof of Proposition \ref{prop:threshold_size}]
    We first rewrite
\begin{equation*}
\begin{aligned}
\widetilde{\p}_n\bigg( k_{i_*}> \sqrt{n}\Big( 1+ \frac{2}{(\log n)^\alpha}\Big) \,\bigg|\, i_*\le S\bigg)
=\, \frac{\sum_{i=1}^{\infty}\widetilde{\p}_n(k_i> \sqrt{n}( 1+ \frac{2}{(\log n)^\alpha}),\, k_{i-1}\le \sqrt{n},\, i\le S)}{\sum_{i=1}^{\infty} \widetilde{\p}_n(k_i>\sqrt{n},\, k_{i-1}\le \sqrt{n},\, i\le S)}.
\end{aligned}
\end{equation*}
It suffices to prove that for any $i\ge 1$ and sufficiently large $n$,
\begin{align}\label{eq:i* total}
    \frac{\widetilde{\p}_n\bigg(k_i\ge \sqrt{n}\Big( 1+ \frac{2}{(\log n)^\alpha}\Big) \,\Big|\,k_{i-1}\le \sqrt{n},\, i\le S\bigg)}{\widetilde{\p}_n\Big(k_i\ge \sqrt{n} \,\Big|\,k_{i-1}\le \sqrt{n},\, i\le S\Big)}\le \left(\frac45\right)^{\frac{\sqrt{n}}{2(\log n)^\alpha}}.
\end{align}

Observe that for any $i\ge 1$, on the event
\[\left\{i\le S,\, k_{i-1}\le \sqrt{n},\, m_*>\frac{\varepsilon n}{8(\log n)^{2\alpha}}\right\},\]
for any $\sqrt{n}\le l\le \sqrt{n}( 1+ \frac{2}{(\log n)^\alpha})$, the event $\{k_i=\ell\}$ occurs if and only if $\nu^{h_i}_l=1$ and exactly \(k_{i-1}-1\) entries among
\((\nu^{h_i}_j : 1\le j\le \ell-1)\) equal \(1\). Thus, for any $1\le l'\le \sqrt{n}$ and $i\ge 1$,
\[
\widetilde{\p}_n(k_i= l \,|\, k_{i-1}=l',\, i\le S) = \frac{\binom{l-1}{l'-1}\binom{n-l}{n-l'-r_n}}{\binom{n}{n-r_n}}=:b_{l,l'}.
\]
 A direct computation gives
\[
\frac{b_{\ell+1,\ell'}}{b_{\ell,\ell'}}
=
\frac{\ell}{\ell-\ell'+1}\cdot 
\frac{\ell'+r_n-\ell}{\,n-\ell\,},
\qquad
\ell'\le \ell\le ( \ell'+r_n )\wedge (n-1),
\]
and this ratio is decreasing in \(\ell\) and increasing in \(\ell'\).
In particular, uniformly for 
\(\ell\ge \sqrt{n}(1+\tfrac{3}{2}(\log n)^{-\alpha})\) and 
\(\ell'\le\sqrt{n}\),
\[
\frac{b_{\ell+1,\ell'}}{b_{\ell,\ell'}}\le \frac23 + o_n(1).
\]
This immediately leads to \eqref{eq:i* total}.
\end{proof}
\end{proof}

\begin{funding}
    XL is supported by NSFC, China (No.\ 12301184 and No.\ 12371144). YZ is supported by China Postdoctoral Science Foundation (No.\ 2023M743721 and No.\ 2025T180850).
\end{funding}
\begin{acks}[Acknowledgments]
    The authors are grateful to Elie A\"id\'ekon for suggesting the research problem that motivated this work.
\end{acks}

\bibliographystyle{imsart-number}
\bibliography{Cannings}

\begin{thebibliography}{20}
% BibTex style file: imsart-number.bst, 2017-11-03
% Default style options (sort=1,type=number).
% Used options (sort=1,type=number).

\bibitem{aidekon2023stochastic}
\begin{barticle}[author]
\bauthor{\bsnm{A{\"\i}d{\'e}kon},~\bfnm{Elie}\binits{E.}}, \bauthor{\bsnm{Hu},~\bfnm{Yueyun}\binits{Y.}} \AND \bauthor{\bsnm{Shi},~\bfnm{Zhan}\binits{Z.}}
(\byear{2023}).
\btitle{The stochastic {Jacobi} flow}.
\bjournal{arXiv preprint arXiv:2306.12716}.
\end{barticle}
\endbibitem

\bibitem{1993Aldous}
\begin{barticle}[author]
\bauthor{\bsnm{Aldous},~\bfnm{David}\binits{D.}}
(\byear{1993}).
\btitle{The continuum random tree {III}}.
\bjournal{The Annals of Probability}
\bvolume{21}
\bpages{248 -- 289}.
\end{barticle}
\endbibitem

\bibitem{1998BROWNIAN}
\begin{barticle}[author]
\bauthor{\bsnm{Aldous},~\bfnm{D.~J.}\binits{D.~J.}}
(\byear{1998}).
\btitle{Brownian excursion conditioned on its local time}.
\bjournal{Electronic communications in probability}
\bvolume{3}
\bpages{79-90}.
\end{barticle}
\endbibitem

\bibitem{billingsley2013convergence}
\begin{bbook}[author]
\bauthor{\bsnm{Billingsley},~\bfnm{Patrick}\binits{P.}}
(\byear{2013}).
\btitle{Convergence of probability measures}.
\bpublisher{John Wiley \& Sons}.
\end{bbook}
\endbibitem

\bibitem{cannings74}
\begin{barticle}[author]
\bauthor{\bsnm{Cannings},~\bfnm{Chris}\binits{C.}}
(\byear{1974}).
\btitle{The latent roots of certain {Markov} chains arising in genetics: a new approach, {I}. Haploid models}.
\bjournal{Advances in Applied Probability}
\bvolume{6}
\bpages{260--290}.
\end{barticle}
\endbibitem

\bibitem{cannings1975}
\begin{barticle}[author]
\bauthor{\bsnm{Cannings},~\bfnm{Chris}\binits{C.}}
(\byear{1975}).
\btitle{The latent roots of certain {Markov} chains arising in genetics: a new approach, {II}. Further haploid models}.
\bjournal{Advances in Applied Probability}
\bvolume{7}
\bpages{264--282}.
\end{barticle}
\endbibitem

\bibitem{durrett2019probability}
\begin{bbook}[author]
\bauthor{\bsnm{Durrett},~\bfnm{Rick}\binits{R.}}
(\byear{2019}).
\btitle{Probability: theory and examples}
\bvolume{49}.
\bpublisher{Cambridge university press}.
\end{bbook}
\endbibitem

\bibitem{jean2005random}
\begin{barticle}[author]
\bauthor{\bsnm{Jean-Fran{\c{c}}ois Le~Gall},~\bfnm{ENS}\binits{E.}}
(\byear{2005}).
\btitle{Random trees and applications}.
\bjournal{Probability Surveys}
\bvolume{2}.
\end{barticle}
\endbibitem

\bibitem{kingman1982genealogy}
\begin{barticle}[author]
\bauthor{\bsnm{Kingman},~\bfnm{John~FC}\binits{J.~F.}}
(\byear{1982}).
\btitle{On the genealogy of large populations}.
\bjournal{Journal of applied probability}
\bvolume{19}
\bpages{27--43}.
\end{barticle}
\endbibitem

\bibitem{li2024inverting}
\begin{barticle}[author]
\bauthor{\bsnm{Li},~\bfnm{Xiaodan}\binits{X.}} \AND \bauthor{\bsnm{Zheng},~\bfnm{Yushu}\binits{Y.}}
(\byear{2024}).
\btitle{Inverting {Ray-Knight} identities on trees}.
\bjournal{Electronic Journal of Probability}
\bvolume{29}
\bpages{1--44}.
\end{barticle}
\endbibitem

\bibitem{lupu2019inverting}
\begin{barticle}[author]
\bauthor{\bsnm{Lupu},~\bfnm{Titus}\binits{T.}}, \bauthor{\bsnm{Sabot},~\bfnm{Christophe}\binits{C.}} \AND \bauthor{\bsnm{Tarr{\`e}s},~\bfnm{Pierre}\binits{P.}}
(\byear{2019}).
\btitle{Inverting the coupling of the signed {G}aussian free field with a loop-soup}.
\bjournal{Electronic Journal of Probability}
\bvolume{24}
\bpages{1--28}.
\end{barticle}
\endbibitem

\bibitem{lupu2021inverting}
\begin{barticle}[author]
\bauthor{\bsnm{Lupu},~\bfnm{Titus}\binits{T.}}, \bauthor{\bsnm{Sabot},~\bfnm{Christophe}\binits{C.}} \AND \bauthor{\bsnm{Tarr{\`e}s},~\bfnm{Pierre}\binits{P.}}
(\byear{2021}).
\btitle{Inverting the {Ray-Knight} identity on the line}.
\bjournal{Electronic Journal of Probability}
\bvolume{26}
\bpages{1--25}.
\end{barticle}
\endbibitem

\bibitem{marckert2003depth}
\begin{barticle}[author]
\bauthor{\bsnm{Marckert},~\bfnm{Jean-Fran{\c{c}}ois}\binits{J.-F.}} \AND \bauthor{\bsnm{Mokkadem},~\bfnm{Abdelkader}\binits{A.}}
(\byear{2003}).
\btitle{The depth first processes of {Galton-Watson} trees converge to the same {Brownian} excursion}.
\bjournal{The Annals of Probability}
\bvolume{31}
\bpages{1655--1678}.
\end{barticle}
\endbibitem

\bibitem{mohle2000total}
\begin{barticle}[author]
\bauthor{\bsnm{M{\"o}hle},~\bfnm{Martin}\binits{M.}}
(\byear{2000}).
\btitle{Total variation distances and rates of convergence for ancestral coalescent processes in exchangeable population models}.
\bjournal{Advances in Applied Probability}
\bvolume{32}
\bpages{983--993}.
\end{barticle}
\endbibitem

\bibitem{mohle2003coalescent}
\begin{barticle}[author]
\bauthor{\bsnm{M{\"o}hle},~\bfnm{Martin}\binits{M.}} \AND \bauthor{\bsnm{Sagitov},~\bfnm{Serik}\binits{S.}}
(\byear{2003}).
\btitle{Coalescent patterns in diploid exchangeable population models}.
\bjournal{Journal of mathematical biology}
\bvolume{47}
\bpages{337--352}.
\end{barticle}
\endbibitem

\bibitem{2016Inverting}
\begin{barticle}[author]
\bauthor{\bsnm{Sabot},~\bfnm{C.}\binits{C.}} \AND \bauthor{\bsnm{Tarr\`es},~\bfnm{P.}\binits{P.}}
(\byear{2016}).
\btitle{Inverting {R}ay-{K}night identity}.
\bjournal{Probability Theory and Related Fields}
\bvolume{165}
\bpages{559-580}.
\end{barticle}
\endbibitem

\bibitem{Serfling}
\begin{barticle}[author]
\bauthor{\bsnm{Serfling},~\bfnm{R.~J.}\binits{R.~J.}}
(\byear{1974}).
\btitle{{Probability inequalities for the sum in sampling without replacement}}.
\bjournal{The Annals of Statistics}
\bvolume{2}
\bpages{39 -- 48}.
\end{barticle}
\endbibitem

\bibitem{warren1999result}
\begin{barticle}[author]
\bauthor{\bsnm{Warren},~\bfnm{Jon}\binits{J.}}
(\byear{1999}).
\btitle{On a result of {David Aldous} concerning the trees in a conditioned excursion.}
\bjournal{Electronic Communications in Probability [electronic only]}
\bvolume{4}
\bpages{25--29}.
\end{barticle}
\endbibitem

\bibitem{warren05}
\begin{barticle}[author]
\bauthor{\bsnm{Warren},~\bfnm{Jon}\binits{J.}}
(\byear{2005}).
\btitle{A stochastic flow arising in the study of local times}.
\bjournal{Probability theory and related fields}
\bvolume{133}
\bpages{559--572}.
\end{barticle}
\endbibitem

\bibitem{warren98}
\begin{binproceedings}[author]
\bauthor{\bsnm{Warren},~\bfnm{J.}\binits{J.}} \AND \bauthor{\bsnm{Yor},~\bfnm{M}\binits{M.}}
(\byear{1998}).
\btitle{The {B}rownian burglar: conditioning {B}rownian motion by its local time process}.
In \bbooktitle{S{\'e}minaire de Probabilit{\'e}s XXXII}
\bpages{328--342}.
\bpublisher{Springer}, \baddress{Heidelberg}.
\end{binproceedings}
\endbibitem

\end{thebibliography}

\end{document}